\newcommand\Vol{{\operatorname{Vol}}}
\newcommand\rank{{\operatorname{rank}}}
\newcommand\R{{\mathbf{R}}}
\newcommand\Q{{\mathbf{Q}}}
\renewcommand\P{{\mathbf{P}}}
\newcommand\E{{\mathbf{E}}}
\newcommand\Z{{\mathbf{Z}}}
\newcommand\F{{\mathbf{F}}}
\newcommand\I{{\mathbf{I}}}
\newcommand\Prob{\P}
\newcommand\ep{\varepsilon}
\newcommand\al{\alpha}
\newcommand\la{\lambda}
\renewcommand\Pr{{\mathbf P }}
\newcommand\CA{{\mathcal A}}
\newcommand\CE{{\mathcal E}}
\newcommand\CF{{\mathcal F}}
\newcommand\CR{{\mathcal R}}
\newcommand\CT{{\mathcal T}}
\renewcommand\mod{\ \operatorname{mod}\ }
\renewcommand \small {\scriptsize}
\newcommand\supp{\mathbf{supp}}
\newcommand\eps{\varepsilon}
\newcommand\codim{\operatorname{codim}}
\newcommand\lang{\langle}
\newcommand\rang{\rangle}
\newcommand\wh{\widehat}
\newcommand\Sp{\operatorname{Span}}
\renewcommand\a{\alpha}
\newcommand\cok{\mathbf{Cok}}
\newcommand\Aut{\mathbf{Aut}}
\newcommand\bs{\backslash}
\newcommand{\ra}{\rightarrow}
\newcommand\isom{\simeq}
\newcommand\Hom{\operatorname{Hom}}
\newcommand\Sur{\operatorname{Sur}}
\newcommand\sub{\subset}
\newcommand\im{\operatorname{im}}
\newcommand\rk{{\operatorname{rank}}}
\newcommand{\ora}{\overrightarrow}
\newcommand\co{c_1}
\newcommand\Cz{C_0}
\newcommand\Co{C_0}
\newcommand\cz{c_0}
\newcommand\ct{c_2}
\newcommand\ao{d_1}
\newcommand\Ao{D_1}
\newcommand\Aot{D_1}
\newcommand\at{f}
\newcommand\ad{\gamma}
\newcommand\Ro{R}
\newcommand\GL{\operatorname{GL}}
\newcommand\CP{{\mathfrak P}}
\newcommand\CB{{\mathfrak W}}
\newcommand\Con{\mathcal{C}}
\newcommand\Drop{\mathcal{D}}
\newcommand\Good{\mathcal{G}}
\theoremstyle{plain}
  \newtheorem{theorem}[subsection]{Theorem}
  \newtheorem{prop}[subsection]{Proposition}
  \newtheorem{lemma}[subsection]{Lemma}
  \newtheorem{corollary}[subsection]{Corollary}
  \newtheorem{cor}[subsection]{Corollary}
  \newtheorem{claim}[subsection]{Claim}
\theoremstyle{definition}
  \newtheorem{definition}[subsection]{Definition}
\begin{document}

\title{Random integral matrices: universality of surjectivity and the cokernel}

\author{Hoi H. Nguyen}
\address{Department of Mathematics\\ The Ohio State University \\ 231 W 18th Ave \\ Columbus, OH 43210 USA}
\email{nguyen.1261@math.osu.edu}

\author{Melanie Matchett Wood}
\address{Department of Mathematics\\
University of Wisconsin-Madison \\ 480 Lincoln Drive \\
Madison, WI 53705 USA}  
\email{mmwood@math.wisc.edu}

\subjclass[2010]{15B52, 60B20}

\begin{abstract} For a random matrix of entries sampled independently from a fairly general distribution in $\Z$ we study the probability that the cokernel is isomorphic to a given finite abelian group, or when it is cyclic.  This includes the probability that the linear map between the integer lattices given by the matrix is surjective.  We show that these statistics are asymptotically universal (as the size of the matrix goes to infinity), given by precise formulas involving zeta values, and agree with  distributions defined by Cohen and Lenstra, even when the distribution of matrix entries is very distorted. Our method is robust and works for Laplacians of random digraphs and sparse matrices with the probability of an entry non-zero only $n^{-1+\eps}$.
\end{abstract}

\maketitle

%\melanie{It seemed to me that the page formatting, now commented out under``Hoi's page formatting'' was causing strangely spaced pages with too much space between equations.  I replaced it will the fullpage package. Does this look ok to you?  I don't have strong feelings, but think this looks better.}

%\melanie{I like the simple, concrete title.  I think for people who are used to thinking about cokernels, they will already recognize that the surjectivity problem is as hard as getting statistics on cokernels.  And for people not used to thinking about cokernels, surjectivity is a much more natural and appealing question.  But I do see your point.  Do you have another idea for the title? }

\section{Introduction}
 For square matrices $M_{n \times n}$ of random discrete entries, the problem to estimate the probability $p_n$ of $M_{n \times n}$ being singular has attracted quite a lot of attention.  In the  60's Koml\'os \cite{Ko} showed $p_n=O(n^{-1/2})$ for entries $\{0,1\}$ with probability each $1/2$.  This bound was significantly improved by Kahn, Koml\'os, and Szemer\'edi in the 90's to $p_n \le 0.999^n$ for $\pm 1$ entries.
About ten years ago, Tao and Vu \cite{TV} improved the bound for $\pm 1$ entries to $p_n\le (3/4+o(1))^n$. The most recent record is due to and Bourgain, Vu and Wood \cite{BVW} who showed $p_n \le (\frac{1}{\sqrt{2}}+o(1))^n$ for $\pm 1$ entries and gave exponential bounds for more general entries as well.  We also refer the reader to \cite{RV} by Rudelson and Vershynin for implicit exponential bounds. For sparse matrices having  entries  $0$ with probability $1-\alpha_n$, Basak and Rudelson \cite{BR} proved $p_n\leq e^{-c\alpha_n n}$ for $\alpha_n\geq C \log n/n$ and for rather general entries, including the adjacency matrices of sparse Erd\H{o}s-R\'{e}nyi random graphs.

When $M_{n \times n}$ has integral entries, these results imply that with very high probability the linear map $M_{n \times n}:\Z^n\ra \Z^n$ is injective.  Another important property of interest is surjectivity, it seems natural to wonder if with high probability $M_{n \times n}:\Z^n\ra \Z^n$ is surjective (see \cite{S-thesis,NP})?  However, recent results of the second author show that the surjectivity probability goes to $0$ with $n$ (e.g. that is implied by \cite[Corollary 3.4]{W1}).  The main result of this paper will imply that when the matrix has more columns than rows, e.g. $M_{n \times (n+1)}:\Z^{n+1}\ra \Z^n$, we have surjectivity with positive probability strictly smaller than one.

We make the following definition to restrict the types of entries our random matrices will have.
We say a random integer $\xi_n$ is \emph{$\alpha_n$-balanced} if for every prime $p$ we have
\begin{equation}\label{eqn:alpha}
\max_{r \in \Z/p\Z} \P(\xi_n\equiv r \pmod{p}) \le 1-\alpha_n.
\end{equation}
Our main result tells us not only whether $M_{n \times (n+u)}$ is surjective, but more specifically about the  cokernel $\cok(M_{ n \times (n+u)})$, which is the quotient group $\Z^{n}/M_{n \times (n+u)}(\Z^{n+u})$ and gives the  failure of surjectivity.
\begin{theorem}\label{theorem:sur}
For integers $n,u\geq 0$, let $M_{n \times (n+u)}$ be an integral $n \times (n+u)$ matrix with entries 
 i.i.d copies of an {\it $\alpha_n$-balanced} random integer $\xi_n$, with $ \alpha_n \ge n^{-1+\eps}$ and  $|\xi_n|\leq n^T$ for any fixed parameters $0<\eps<1$ and $T>0$ not depending on $n$. 
For any fixed finite abelian group $B$ and $u\geq 0$,
\begin{equation}\label{eqn:sur:fix}
\lim_{n\to \infty}  \P\Big(\cok(M_{ n \times (n+u)}) \simeq B \Big)  = \frac{1}{|B|^u |\Aut(B)|}  \prod_{k=u+1}^\infty \zeta(k)^{-1}.
\end{equation}
%where $\zeta(s)$ is the Riemann zeta function.
\end{theorem}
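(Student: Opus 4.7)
The plan is to prove Theorem~\ref{theorem:sur} by the \emph{method of moments}: compute the surjection moments $\E[\#\Sur(\cok(M_{n \times (n+u)}), A)]$ for every finite abelian group $A$, and then invoke a Cohen--Lenstra style moment inversion (as developed by the second author in earlier work) to extract the pointwise distribution of the cokernel. The moment identity itself is clean: a homomorphism $\phi : \Z^n \to A$ descends to $\cok(M_{n\times(n+u)})$ precisely when $\phi$ annihilates every column of $M$, and since the $n+u$ columns are i.i.d.\ copies of $(\xi_n)^n$,
\[
\E\bigl[\#\Sur(\cok(M_{n\times(n+u)}), A)\bigr] \;=\; \sum_{\phi \in \Sur(\Z^n, A)} p_\phi^{\,n+u}, \qquad p_\phi := \P\Bigl(\textstyle\sum_{i=1}^n \phi(e_i)\,\xi_i = 0 \text{ in } A\Bigr).
\]

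The main contribution comes from ``equidistributed'' surjections $\phi$, for which $\sum_i \phi(e_i)\xi_i$ is approximately uniform on $A$ and so $p_\phi = |A|^{-1}(1+o(1))$. Since $\#\Sur(\Z^n, A) = |A|^n + O(c^n)$ for some $c<1$ depending on $A$, the equidistributed surjections contribute $|A|^{-u}(1+o(1))$, which is precisely the Cohen--Lenstra moment. The heart of the proof is to show that the ``structured'' surjections, with $p_\phi$ bounded away from $|A|^{-1}$, contribute only $o(1)$ to the total moment. After reducing modulo each prime $p\mid |A|$, this becomes an inverse Littlewood--Offord problem: any vector $(a_1,\dots,a_n)\in(\Z/p\Z)^n$ with concentration probability $\P(\sum a_i\xi_i \equiv 0 \pmod p) \gg p^{-1}$ must be highly structured (most coordinates agreeing modulo $p$, or contained in a short arithmetic progression mod $p$). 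Counting such structured vectors and raising $p_\phi$ to the power $n+u$ should make their contribution exponentially small.

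To convert the moment identity into a pointwise statement, two further inputs are required. First, a uniform tail bound of the form $\P(p \mid |\cok(M)|) \ll p^{-(u+1)}$ valid for all primes $p$ (including those growing with $n$), which rules out unexpected cokernel torsion at large primes and justifies passing to the limit in the moment computation; here the assumption $|\xi_n| \le n^T$ enters to control very large primes. Second, the Cohen--Lenstra moment inversion statement, which asserts that a distribution on finite abelian groups whose $A$-moments are $|A|^{-u}$ for every $A$ must coincide with the measure $\mu_u(B) = |B|^{-u}|\Aut B|^{-1}\prod_{k\ge u+1}\zeta(k)^{-1}$. These two ingredients together yield the claimed formula.

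The principal obstacle is the structured-vector estimate in the sparse regime $\alpha_n \ge n^{-1+\eps}$. Classical dense inverse Littlewood--Offord theorems (in the style of Tao--Vu) are designed for $\alpha_n$ bounded below and do not directly give useful counts here; a quantitatively sharp \emph{sparse} inverse Littlewood--Offord theorem, valid all the way down to $\alpha_n = n^{-1+\eps}$, is required. In addition, one must carefully exploit surjectivity of $\phi$ --- which forces enough coordinates to escape any proper subgroup of $A$ --- to prevent structured vectors from overwhelming the main term, and to handle the interplay between different primes $p \mid |A|$ when $A$ is not cyclic. I expect this sparse-regime inverse Littlewood--Offord step, combined with the combinatorial bookkeeping over all possible ``types'' of structured $\phi$, to be where the bulk of the technical work lies.
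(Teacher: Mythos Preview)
Your overall architecture --- moments for the main term, a separate tail bound at large primes, then moment inversion --- is the right shape, and it matches the paper's high-level plan. But there are two substantive gaps.

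\textbf{The moment computation only sees finitely many primes.} When you compute $\E[\#\Sur(\cok(M),A)]$ for a fixed finite abelian $A$, the only primes involved are those dividing $|A|$, which are bounded independently of $n$. The moment inversion theorem you allude to (Wood's theorem, stated here as Theorem~\ref{th:distribution}) is for groups of exponent dividing a fixed $a$; it tells you the limiting distribution of $\cok(M)\otimes(\Z/a\Z)$, equivalently $\cok(M)_P$ for the finite set $P$ of primes dividing $a$. This is exactly Proposition~\ref{prop:small} in the paper. Since the primes here are bounded, you do \emph{not} need inverse Littlewood--Offord for this part: the paper handles the structured surjections by a forward Hal\'asz-type bound (Lemma~\ref{L:Fcodecolumn}) together with a refined classification of $\phi$ by which subgroup $H\le A$ it is ``robust'' for (Lemmas~\ref{L:robustcodes}--\ref{L:probdepthestimatecolumnNEW}). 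The sparsity $\alpha_n\ge n^{-1+\eps}$ is what forces the refinement over earlier work, but inverse theory is not what is used here.

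\textbf{The uniform tail bound you propose does not hold, and the paper's substitute is fundamentally different.} You write that one needs $\P(p\mid|\cok(M)|)\ll p^{-(u+1)}$ uniformly in $p$, including $p$ growing with $n$. But the best one-prime-at-a-time bound (Proposition~\ref{prop:medium}) is $2p^{-(u+1)}+O(e^{-c_0\alpha_n n})$, and the second term cannot be summed over primes up to the Hadamard bound $n^{n/2+Tn}$ on $|\det M|$. The paper's solution is to split the primes: for medium primes $p\le e^{c_0\alpha_n n/2}$ the per-prime bound is summable; for large primes $p\ge e^{d\alpha_n n}$ one proves a \emph{single} statement that with probability $1-o(1)$ the matrix is simultaneously full rank mod every such $p$ (Proposition~\ref{prop:large}). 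This large-prime argument is where inverse Littlewood--Offord (Theorem~\ref{theorem:ILO}) actually enters, combined with the key new idea of lifting structured normal vectors from $\Z/p\Z$ to $\Z$ (Lemmas~\ref{lemma:passing} and~\ref{lemma:passing'}) so that one does not have to union-bound over all large $p$. The boundedness hypothesis $|\xi_n|\le n^T$ is used precisely in this lifting step. So your instinct that a sparse inverse Littlewood--Offord theorem is the heart of the matter is correct, but it belongs in the large-prime tail argument, not in the moment computation.
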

Here $\zeta(s)$ is the Riemann zeta function.  In particular, as $n\ra\infty $, the map $M_{n \times (n+1)}:\Z^{n+1}\ra \Z^n$ is surjective with probability approaching 
$\prod_{k=2}^\infty \zeta(k)^{-1}\approx 0.4358.$  The one extra dimension mapping to $\Z^n$ brought the surjectivity probability from $0$ to $\approx 0.4358.$ 

Note that the product $\prod_{k=u+1}^\infty \zeta(k)^{-1}$ in~\eqref{eqn:sur:fix} is non-zero for $u\geq 1$, but $\zeta(1)^{-1}=0$.
So Theorem~\ref{theorem:sur} shows that every possible finite cokernel appears with  positive probability when $u\geq 1$.  (Note that when the matrix has full rank over $\R$, the cokernel must be finite.)  Theorem~\ref{theorem:sur} is a universality result because these precise positive probabilities do not depend on the distribution of $\xi_n$, the random entries of our matrices.  As a simple example, if we take an $n\times(n+1)$ random matrix with entries all $0$ or $1$, whether we make entries $0$ with probability $\frac{1}{100}$, $\frac{1}{2}$, or $1-n^{-8/9}$, we obtain the exact same asymptotic probability of the map $\Z^{n+1}\ra \Z^n$ being surjective.  If we take entries from $\{-17,0,6,7\}$ with respective probabilities $\frac{2}{3},\frac{1}{n},\frac{1}{6}-\frac{1}{n},\frac{1}{6}$, the asymptotic probability of surjectivity is unchanged. %\melanie{Is this example too much?} \Hoi{It looks OK for me.} 
Our theorem allows even more general entries as well.

Further, we prove the following.
\begin{theorem}\label{theorem:cyclic}  Let $M_{n \times (n+u)}$ be as in Theorem~\ref{theorem:sur}. 
We have
% $$\lim_{n\to \infty}  \P\Big(\cok(M_{ n \times n}) \mbox{ is cyclic}\Big)  = \prod_{p \textrm{ prime}} (1 + (p^2-p)^{-1}) 
%\prod_{k=2}^\infty \zeta(k)^{-1}=\zeta(6)^{-1} \prod_{k=4}^\infty \zeta(k)^{-1}.$$   
 \begin{align*}
\lim_{n\to \infty} \P\Big(\cok(M_{n\times (n+u)}) \mbox{ is cyclic}\Big) = \prod_{p \textrm{ prime}} (1 + p^{-(u+1)}(p-1)^{-1}) \prod_{k=u+2}^\infty \zeta(k)^{-1}.
\end{align*}
 \end{theorem}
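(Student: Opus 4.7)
The plan is to reduce the event ``$\cok$ is cyclic'' to local statistics at each prime, apply a joint-prime strengthening of Theorem~\ref{theorem:sur}, and then handle the tail over large primes. The starting observation is that a finite abelian group $G$ is cyclic if and only if its Sylow $p$-subgroups $G_p$ are cyclic for every prime $p$, so
\[
\P(\cok(M_{n\times(n+u)}) \text{ cyclic}) = \P(\cok_p \text{ cyclic for all primes } p).
\]

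First I would establish a joint-prime version of Theorem~\ref{theorem:sur}: for any finite set of primes $S$ and any finite abelian $p$-groups $(H_p)_{p \in S}$,
\[
\lim_{n \to \infty} \P(\cok_p \simeq H_p \ \forall p \in S) = \prod_{p \in S} \frac{1}{|H_p|^u |\Aut H_p|} \prod_{j \geq u+1}(1 - p^{-j}).
\]
This is not a formal consequence of Theorem~\ref{theorem:sur} as stated (for $u=0$ the individual probabilities vanish), but follows from the same surjection-moment method already deployed to prove Theorem~\ref{theorem:sur}: since any finite abelian $H$ factors as $H = \bigoplus_p K_p$ and $\#\Sur(\cok, H) = \prod_p \#\Sur(\cok_p, K_p)$, the univariate moment input $\E[\#\Sur(\cok, H)] \to |H|^{-u}$ immediately supplies the joint moments, which determine the joint distribution on tuples of $p$-groups. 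Summing the display above over cyclic $H_p$ is legitimate by dominated convergence because the Cohen-Lenstra measure is a bona fide probability on each fixed $p$-group sector; this gives, for any fixed $P$,
\[
\lim_{n\to\infty} \P(\cok_p \text{ cyclic for all } p \leq P) = \prod_{p \leq P} \P(G_p \text{ cyclic}),
\]
where $G_p$ is a Cohen--Lenstra distributed random $p$-group.

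Next I would sandwich the full cyclic event between this truncated version and a large-prime tail. Writing ``$\cok_p$ not cyclic'' as $\rk_{\F_p}(M \bmod p) \leq n - 2$, the required tail bound is
\[
\limsup_{P \to \infty} \limsup_{n \to \infty} \sum_{p > P} \P(\cok_p \text{ not cyclic}) = 0.
\]
For each fixed $p$ the limit equals $\P(G_p \text{ not cyclic})$, which is dominated by $\P(G_p \supseteq (\Z/p)^2) = O(p^{-2u-4})$ and hence summable. The main obstacle is obtaining this corank-$\geq 2$ bound \emph{uniformly in $p$} so that $\sum_{p > P}$ and $\limsup_n$ can be interchanged; this is precisely the kind of uniform mod-$p$ rank estimate for random matrices over $\F_p$ (Littlewood--Offord input combined with a union bound over pairs of ``kernel'' vectors mod $p$) that the machinery developed for Theorem~\ref{theorem:sur} is designed to produce.

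Finally, a direct arithmetic computation evaluates the local factor:
\[
\P(G_p \text{ cyclic}) = \prod_{j \geq u+1}(1 - p^{-j}) \cdot \left(1 + \frac{p}{p-1} \sum_{k \geq 1} p^{-k(u+1)}\right) = \prod_{j \geq u+1}(1 - p^{-j}) \cdot \frac{1 + p^{-(u+1)}(p-1)^{-1}}{1 - p^{-(u+1)}}.
\]
Taking the product over all primes, the factors $(1 - p^{-(u+1)})^{-1}$ combine to $\zeta(u+1)$, which cancels the $k=u+1$ term of $\prod_{j \geq u+1}\zeta(j)^{-1}$ and produces the advertised expression $\prod_p(1 + p^{-(u+1)}(p-1)^{-1}) \prod_{k \geq u+2}\zeta(k)^{-1}$. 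Letting $P \to \infty$ in the sandwich completes the proof.
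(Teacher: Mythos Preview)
Your approach is essentially the paper's: Proposition~\ref{prop:small} supplies the joint distribution at finitely many primes, Propositions~\ref{prop:medium} and~\ref{prop:large} supply the corank-$\geq 2$ tail bound you correctly identify as the main obstacle, and the sandwich concludes (for $u \geq 1$ the paper shortcuts via Lemma~\ref{L:anyset}, summing Theorem~\ref{theorem:sur} over cyclic groups, but your argument is exactly what is needed for $u=0$). One clarification on the tail: it is not handled by a per-prime ``Littlewood--Offord plus union bound over kernel vectors'' summed over all $p > P$---the additive error $O(e^{-c_0\alpha_n n})$ in Proposition~\ref{prop:medium} is uniform in $p$ and cannot be summed over all primes up to $n^{n/2+Tn}$ that might divide the determinant. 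The paper instead splits into a medium range $P < p \leq e^{c_0\alpha_n n/2}$ where summing the per-prime bound is safe, and a large range $p \geq e^{d\alpha_n n}$ where Proposition~\ref{prop:large} controls \emph{all} such primes simultaneously via a characteristic-zero lifting of structured normal vectors rather than any mod-$p$ union bound; so your deferral to ``the machinery developed for Theorem~\ref{theorem:sur}'' is right, but the parenthetical description of that machinery is not.
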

Note that even when $u=0$, the limiting probability here is positive. For $u=0$, this probability has been seen in several papers studying the probability that a random lattice in $\Z^n$ is co-cyclic (gives cyclic quotient), in cases when these lattices are drawn from the nicest, most uniform distributions, e.g. uniform on lattices up to index $X$ with $X\ra\infty$ \cite{CKK,NS,Pet}, or with basis with uniform entries in $[-X,X]$ with $X\ra\infty$ \cite{SW}.
Stanley and Wang have asked whether the probability of having cyclic cokernel is universal (see  \cite[Remark 4.11 (2)]{SW} and \cite[Section 4]{Stan}).
 Theorem~\ref{theorem:cyclic} proves this universality, showing that the same probability of cocylicity occurs when the lattice is given by $n$ random generators from a rather large class of distributions, including ones that are rather distorted mod $p$ for each prime $p$.

%Clancy~et.~al~\cite[Conjecture 2]{CLKPW} gave a conjecture of the analogous cyclic probability for symmetric matrices, and %Stanley \cite[Section 4]{Stan} has asked about the universality of the probability of cyclic cokernels which our theorem above proves, see also \cite{S-thesis}. %{\hoi{I added a reference where Koplewitz also made some conjectures, please erase if unnecessary.}}}

Moreover, we show the same results hold if we replace $\cok(M_{n\times (n+1)})$ with the total sandpile group of an 
Erd\H{o}s-R\'enyi simple random digraph, proving a conjecture of Koplewitz \cite[Conjecture 1]{S2} (see Theorem~\ref{theorem:sur:L}).  This allows some dependence in the entries of our random matrices, since the diagonal of the graph Laplacian depends on the other entries in the matrix.  In particular, this says that with asymptotic probability $\prod_{k=2}^\infty \zeta(k)^{-1}\approx 0.4358$  an Erd\H{o}s-R\'enyi  random digraph is \emph{co-Eulerian}, which Farrell and Levine \cite{FL} define to be any of several equivalent definitions including a simple condition for when chip-firing configurations on the graph stabilize and the condition that recurrent states in the rotor-router model are in a single orbit.  In contrast to the distribution of sandpile groups of Erd\H{o}s-R\'enyi random graphs, where for each  finite abelian group $B$, the sandpile group is $B$ with asymptotic probability $0$ \cite[Corollary 9.3]{W0}, for Erd\H{o}s-R\'enyi  random \emph{digraphs}, we show that each finite abelian group appears with positive asymptotic probability as the total sandpile group.  
Moreover, the universality in our theorems proves that all of these positive limiting probabilities  do not depend on the edge density of the random graph.

Previous work of the second author \cite[Corollary 3.4]{W1} determined the probabilities of these $\cok(M_{n\times (n+u)})$ having any given Sylow $p$-subgroup for a fixed prime $p$ or finite set of primes $p$.  
The two significant advances of this work over previous work are (1) that we determine the distribution of the entire cokernel, not just the part of it related to a finite set of primes, and (2) that we allow our random matrix entries to be  more distorted mod $p$ as $n$ increases, for example allowing sparse matrices where entries are non-zero with probability $n^{-1+\epsilon}$.  

Our proofs require considering primes in three size ranges separately, and in each range we use different methods. 
Our works builds on methods from previous work, including that of Tao and Vu \cite{TV, TVjohn, TVinverse}, the first author and Vu \cite{NgV}, Maples \cite{M1}, the second author \cite{W0,W1}, and
the first author and Paquette \cite{NP}.  The key ideas original to this paper are in our treatment of large primes, where we prove a result that lifts structured normal vectors from characteristic $p$ to characteristic $0$, crucially for $p$ in a range much smaller than $n^{n/2}$.

\subsection{Further results and connections to the literature}

We also show asymptotic almost sure surjectivity when $u\ra\infty$ with $n$, proving a conjecture of Koplewitz \cite[Conjecture 2]{S-thesis}.

\begin{theorem}\label{theorem:bigu}  Let $M_{n \times (n+u)}$ be as in Theorem~\ref{theorem:sur}. 
Then 
 \begin{equation}\label{eqn:sur:large}
\lim_{\min(u,n)  \to \infty}   \P\Big(\cok(M_{ n \times (n+u)}) \simeq \{id\}\Big)  = 1.
\end{equation}
 \end{theorem}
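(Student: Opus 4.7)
The plan is to deduce Theorem~\ref{theorem:bigu} from Theorem~\ref{theorem:sur} via a short monotonicity argument. The key observation is that adding an extra column to an $n\times(n+u)$ matrix can only shrink its cokernel (as a quotient group), so the probability that the cokernel is trivial is non-decreasing in $u$. Combined with the fact that $\prod_{k=u+1}^\infty \zeta(k)^{-1} \to 1$ as $u\to\infty$, this reduces the joint limit to a single application of the fixed-$u$ statement.

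Concretely, I would first set up a coupling of the matrices $M_{n\times(n+u)}$ for fixed $n$ and varying $u$: fix an infinite i.i.d.\ sequence $c_1,c_2,\dots$ of random columns of length $n$ with entries distributed as $\xi_n$, and set $M_{n\times(n+u)} := [c_1\mid \cdots \mid c_{n+u}]$. Under this coupling, the image of $M_{n\times(n+u)}$ in $\Z^n$ is contained in the image of $M_{n\times(n+u+1)}$, so $\cok(M_{n\times(n+u+1)})$ is a quotient of $\cok(M_{n\times(n+u)})$; in particular, the event $\{\cok(M_{n\times(n+u)}) = \{id\}\}$ is contained in $\{\cok(M_{n\times(n+u+1)}) = \{id\}\}$. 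Since the marginal distribution of $M_{n\times(n+u)}$ under this coupling agrees with the one in the theorem statement, taking probabilities yields the monotonicity
\begin{equation*}
  \P\bigl(\cok(M_{n\times(n+u)}) = \{id\}\bigr) \le \P\bigl(\cok(M_{n\times(n+u+1)}) = \{id\}\bigr).
\end{equation*}

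To finish, fix $\eps>0$ and set $L(u) := \prod_{k=u+1}^\infty \zeta(k)^{-1}$. Because $\zeta(k)-1 = O(2^{-k})$, the tail product $L(u)$ converges to $1$, so I can choose $u_0$ with $L(u_0) \ge 1-\eps/2$. Applying Theorem~\ref{theorem:sur} with the fixed value $u = u_0$ and $B$ the trivial group, there exists $n_0 = n_0(u_0,\eps)$ such that for every $n \ge n_0$,
\begin{equation*}
  \P\bigl(\cok(M_{n\times(n+u_0)}) = \{id\}\bigr) \ge L(u_0) - \eps/2 \ge 1-\eps.
\end{equation*}
The monotonicity then upgrades this to $\P(\cok(M_{n\times(n+u)}) = \{id\}) \ge 1-\eps$ for every $u \ge u_0$ and $n \ge n_0$, which is exactly~\eqref{eqn:sur:large}. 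There is no substantive obstacle in this deduction — all the analytic content has been absorbed into Theorem~\ref{theorem:sur}; the only point requiring a little care is phrasing the monotonicity as a statement about distributions (rather than a specific sample space), which is handled cleanly by the explicit coupling above.
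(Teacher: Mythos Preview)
Your argument is correct, and it is a genuinely different (and cleaner) deduction than the paper's. The paper does \emph{not} derive Theorem~\ref{theorem:bigu} from Theorem~\ref{theorem:sur}; instead it proves Theorem~\ref{theorem:bigu} first, directly from the medium- and large-prime Propositions~\ref{prop:medium} and~\ref{prop:large}, by summing the rank-drop bound~\eqref{eqn:medu} over primes $p\le e^{c_0\alpha_n n/2}$ and combining with~\eqref{eqn:large:n+1prop}. This yields the explicit quantitative estimate
\[
\P\bigl(\cok(M_{n\times(n+u)})\ne\{id\}\bigr)=O_{T,\eps}\!\left(2^{-\min(u,\eta n)}+e^{-c_0\alpha_n n/2}+n^{-\eps}\right),
\]
which exhibits the precise dependence on both $u$ and $n$. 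Your monotonicity-plus-Theorem~\ref{theorem:sur} route is more conceptual and avoids reopening the prime-by-prime analysis, but it gives no quantitative rate in $u$ beyond what is implicit in the tail of $\prod_{k>u}\zeta(k)^{-1}$, and no rate in $n$ at all (since Theorem~\ref{theorem:sur} is stated only as a limit). Both approaches are valid; yours is the natural ``soft'' deduction once Theorem~\ref{theorem:sur} is in hand, while the paper's direct argument is what one would want if explicit error terms matter.
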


Theorem~\ref{theorem:sur} has several nice corollaries, including the $u\geq 1$ cases of Theorem~\ref{theorem:cyclic} and
the following (see Lemma~\ref{L:anyset} for why these are corollaries).
\begin{cor}\label{cor:anyset} For any fixed $u\ge 0$
 $$\lim_{n\to \infty} \P\left(M_{ n \times (n+u)}: \Z^{n+u}\ra\Z^n \textrm{ is surjective}\right)  =\prod_{k=u+1}^\infty \zeta(k)^{-1}.$$
 Also, for any fixed $u\ge 1$ 
\begin{align*}
%\lim_{n\to \infty} \P\Big(\cok(M_{n\times (n+u)}) \mbox{ is cyclic}\Big) = \prod_{p \textrm{ prime}} (1 + p^{-(u+1)}
%(p-1)^{-1}) \prod_{k=u+2}^\infty \zeta(k)^{-1}\\
 \lim_{n\to \infty} \P\Big(\det(M_{n\times (n+u)}) \mbox{ is square-free}\Big) = \prod_{p \textrm{ prime}} (1 + p^{-u}(p-1)^{-1}) \prod_{k=u+1}^\infty \zeta(k)^{-1}.
\end{align*}
\end{cor}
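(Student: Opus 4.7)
The plan is to deduce both statements directly from Theorem~\ref{theorem:sur}, in each case by identifying the event in question as a (finite or countable) disjoint union of events ``$\cok(M_{n\times(n+u)})\simeq B$'' for suitable $B$, and then summing the asymptotics supplied by the theorem.

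For the surjectivity statement, $M_{n\times(n+u)}:\Z^{n+u}\to\Z^{n}$ is surjective exactly when $\cok(M_{n\times(n+u)})\simeq \{id\}$, so specializing Theorem~\ref{theorem:sur} to $B=\{id\}$ (which satisfies $|B|^{u}=|\Aut(B)|=1$) gives
\[
\lim_{n\to\infty}\P\big(M_{n\times(n+u)}\text{ surjective}\big)=\prod_{k=u+1}^{\infty}\zeta(k)^{-1}.
\]
For $u=0$ the right-hand side vanishes because $\zeta(1)^{-1}=0$, consistent with the introduction's remark that the square case has asymptotically zero surjectivity probability; for $u\geq1$ it is the stated positive value.

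For the square-free statement we interpret $|\det(M_{n\times(n+u)})|$ as $|\cok(M_{n\times(n+u)})|$, i.e.\ the gcd of the maximal minors, which equals the product of invariant factors in the Smith normal form. By the structure theorem, $|\cok(M)|$ is square-free precisely when the Sylow $p$-subgroup of $\cok(M)$ is trivial or isomorphic to $\Z/p\Z$ for every prime $p$; equivalently, $\cok(M)\simeq\bigoplus_{p\in S}\Z/p\Z$ for some finite set $S$ of primes. Writing
\[
\P\big(|\det(M)|\text{ square-free}\big)=\sum_{B\text{ s.f. order}}\P\big(\cok(M)\simeq B\big),
\]
passing to the limit $n\to\infty$ termwise, and interchanging the limit with the countable sum yields
\[
\prod_{k=u+1}^{\infty}\zeta(k)^{-1}\cdot\sum_{B\text{ s.f.}}\frac{1}{|B|^{u}|\Aut(B)|}.
\]
The sum factors as an Euler product, since each $B$ is determined by the set $S$: the factor at $p$ is $1$ (trivial Sylow) plus $\frac{1}{p^{u}(p-1)}$ (Sylow $\simeq\Z/p\Z$, using $|\Aut(\Z/p\Z)|=p-1$), giving $\prod_{p}\bigl(1+p^{-u}(p-1)^{-1}\bigr)$. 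This product converges for $u\geq 1$ since each factor is $1+O(p^{-u-1})$. Combining yields the claimed identity.

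The main obstacle is the interchange of limit and infinite sum: Theorem~\ref{theorem:sur} provides only pointwise convergence for each fixed $B$, whereas we require dominated-convergence-type control on probability mass that leaks to large groups. We expect the requisite uniform tail bound $\sum_{|B|>M}\P(\cok(M)\simeq B)\to 0$ as $M\to\infty$, uniformly in $n$, to be exactly what Lemma~\ref{L:anyset} supplies; a natural route is to use Theorem~\ref{theorem:sur} applied to Sylow $p$-subgroups for each fixed prime $p$ (and the moment or robust mod-$p$ bounds developed elsewhere in the paper) to control $\P(p\mid|\cok(M)|)$, and then sum these bounds over $p$ to obtain tightness of the cokernel distribution.
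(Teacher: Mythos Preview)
Your approach is correct and coincides with the paper's: both the surjectivity and the square-free statements follow from Theorem~\ref{theorem:sur} by summing over the relevant set $T$ of isomorphism classes ($T=\{\{id\}\}$ and $T=\{\text{square-free order}\}$, respectively), and the Euler-product computation you give is exactly right.

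The only point that needs correction is your final paragraph. You anticipate that Lemma~\ref{L:anyset} supplies a uniform tail bound or tightness estimate, and sketch an independent route via Sylow bounds. This is not what the lemma does, and no such estimate is needed. Lemma~\ref{L:anyset} is a two-line measure-theoretic fact: if $\mu_n(B)\to\mu(B)$ pointwise on a countable set and $\mu$ is a \emph{probability} measure, then $\mu_n(T)\to\mu(T)$ for every subset $T$. The proof is Fatou applied to both $T$ and its complement: $\liminf\mu_n(T)\geq\mu(T)$ and $\liminf\mu_n(\bar T)\geq\mu(\bar T)$, and the second inequality combined with $\mu_n(T)\leq 1-\mu_n(\bar T)$ gives $\limsup\mu_n(T)\leq\mu(T)$. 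The only input beyond Theorem~\ref{theorem:sur} is that for $u\geq 1$ the Cohen--Lenstra weights $|B|^{-u}|\Aut(B)|^{-1}\prod_{k\geq u+1}\zeta(k)^{-1}$ sum to $1$ over all finite abelian groups (the paper cites \cite[Corollary~3.7(i)]{CL}); this is precisely why the corollary is stated only for $u\geq 1$ in the square-free part. No separate tightness argument is required.
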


%%% \melanie{add discussion of how we prove theorem, maybe mention where we need $n^{-1/6}$ and say open to get down to $\log n/n$}
% \hoi{I added this little consequence, which seems easy to state, please erase if it is too naive.} 
%Thus for instance our first statement of the corollary implies that among all $\{0,1\}$-matrices of size $n \times (n+1)$, %asymptotically close to $43.57570767\%$ of them are surjective as  maps from $\Z^{n+1}$ to $\Z^n$; and furthermore this %portion remains unchanged if we instead consider among all $\{-1,0,1\}$-matrices of size $n\times (n+1)$.

To give a heuristic for why inverse zeta values arise in these probabilities, note that $M_{n\times(n+1)}$ is surjective if and only if its reduction to modulo $p$ is surjective for all primes $p$. We then make two idealized heuristic assumptions on $M_{n\times(n+1)}$.  (i) (uniformity assumption) Assume that for each prime $p$ the entries of $M_{n \times (n+1)}$ are uniformly distributed modulo $p$. In this case,  a simple calculation gives the probability for $M_{n\times (n+1)}$ being surjective modulo $p$ is $\prod_{j=2}^n (1-p^{-j}) (1-p^{-n-1})$. (ii) (independence assumption) We next assume that the statistics of $M_{n \times (n+1)}$ reduced to modulo $p$ are asymptotically mutually independent for all primes $p$. Under these assumptions, as $n \to \infty$, the probability that $M_{n \times (n+1)}$ is surjective would be asymptotically the product of all of the surjectivity probability modulo $p$, which leads to the number $\prod_{k=2}^\infty \zeta(k)^{-1}$ as seen.  The matrices in this paper do not have to satisfy either assumption, and indeed they can violate them dramatically.  For example, if the matrix entries only take values $0$ and $1$, then they cannot be uniformly distributed mod any prime $>2$, and the matrix entries mod $3$ are not only not independent from the entries mod $5$, but they are in fact determined by the entries mod $5$.  The work of this paper is in showing that even for rather general random matrices, universality holds and gives the same cokernel distributions as for the simplest random matrices.

 %Calculations: The surjectivity event decomposes into the union of events $\CF_{i_0}, 0\le i_0\le n$, where $\CF_{i_0}$ is the event $\rk \langle X_1,\dots, X_{i} \rangle = i, i\le i_0$ and $\rk \langle X_1,\dots, X_{i} \rangle =i-1$ for $i_0+1 \le i\le n+1$ (where $X_i$ are the column vectors.) Thus we have $\sum_{0\le i_0\le n} (p^n-1)(p^n-p)\dots (p^{n}-p^{i_0-1})p^{i_0}  (p^n-p^{i_0})\dots (p^{n}-p^{n-1})/p^{n(n+1)}=\prod_{j=2}^n (1-p^{-j}) (1-p^{-n-1})$

For our Theorem~\ref{theorem:sur}, we remark that for $u\geq 1$, the limiting probabilities ${|B|^{-u} |\Aut(B)|^{-1}}  \prod_{k=u+1}^\infty \zeta(k)^{-1}$ in Theorem \ref{theorem:sur} do sum to 1 (use \cite[Corollary 3.7 (i)]{CL} with $s=u$ and $k=\infty$).  This gives, for each $u\geq 1$, a probability distribution on finite abelian groups.  Cohen and Lenstra \cite{CL} introduced these distributions to conjecture that the $u=1$ distribution is the distribution of class groups of real quadratic number fields (except for the Sylow 2-subgroup).
Friedman and Washington \cite{FW} later proved that if $M_{n\times n}$ has independent entries taken from Haar measure on the $p$-adics $\Z_p,$ then  for a finite abelian $p$-group $B$ we have 
$$\lim_{n\ra\infty}\P\left(\cok(M_{ n \times n}) \simeq B\right)=|\Aut(B)|^{-1}  \prod_{k=u+1}^\infty (1-p^{-k}).$$ 
The limit is proven by giving an explicit formula for the probability for each $n$.
A similar argument shows that for $M_{n\times (n+u)}$ with independent entries taken from Haar measure on $\wh{\Z}$, the profinite completion of $\Z$ (these are exactly the matrices with entries that satisfy the two heuristic assumptions above), we have for every finite abelian group $B$ that 
$$\lim_{n\to \infty}  \P\left(\cok(M_{ n \times (n+u)}) \simeq B \right)  = {|B|^{-u} |\Aut(B)|}^{-1}  \prod_{k=u+1}^\infty \zeta(k)^{-1}.$$
 This is because as $\wh{\Z}=\prod_p \Z_p$, this Haar measure is the product of the $p$-adic Haar measures.  Building on work of Ekedahl \cite{Eke}, 
 Wang and Stanley \cite{SW} find that the cokernels (equivalently, the Smith normal form) of random $n\times m$ matrices for \emph{fixed} $n$ and $m$ and independent, uniform random integer entries in $[-X,X]$ as $X\ra\infty$ match those for entries from Haar measure on $\wh{\Z}$.  While this agreement is easy to see for the Sylow subgroups at any finite set of primes (because $X$ will eventually become larger than all of the primes), it was a substantial problem to prove this agreement for all primes at once.

% Theorem~\ref{theorem:cok:B} above is a universality theorem that says, as long as we only look at finitely many Sylow $p$-%subgroups at once, if we change the distribution of the entries from Haar measure, or uniform measure on integers up to some %growing $X$, (the nicest possible measures) to rather arbitrary measures, the limiting cokernel distribution as $n\ra\infty$ %does not change.  Our main theorem, Theorem~\ref{theorem:sur}, provides two major advances over Theorem~\ref{theorem:cok:B}.
%  First, it extends the universality result to the entire cokernel, and rather than just seeing the Sylow $p$-subgroups for %finitely many primes.  Second, the universality is extended to far sparser matrices.  
% The key point of universality is that the probabilities on the right in Equation~\eqref{eqn:sur:fix} do not depend on the %matrix entry distributions.

Our approach to proving Theorem~\ref{theorem:sur} and the $u=0$ case of Theorem \ref{theorem:cyclic} involves considering three classes of primes (small, medium, and large) separately, and for each class the argument is rather different.  For small primes, we follow the general approach of \cite{W1}: finding the moments (which are an expected number of surjections) by dividing the a priori possible surjections into classes and obtaining, for each class, a bound on the number of possible surjections in it and a bound of the probability that any of those surjections are realized. 
Our advance over \cite{W1} is that we can allow sparser matrices, and to obtain this improvement we have to both refine the classes into which we divide surjections and the bounds we have for the probabilities for each class.  For medium primes, our starting point is a theorem from \cite{NP}, which carries out ideas of Maples \cite{M1} to show that, modulo a prime, under likely conditions, each time we add a column to our matrix, the probability that the new column is in the span of the previous columns (mod a prime) is near to the probability for a uniform random column.  Our contribution is to show the bounds on ``likely conditions'' and ``nearness of probability'' can be turned into a bound on how the ranks of the final matrix compare to the ranks of a uniform random matrix.  We do this via a rather general approach using a coupling of Markov chains.  For large primes, our approach is  new.  We cannot control whether the rank of our matrix drops by $1$ modulo any particular large prime, but considering columns being added one at a time, once the rank drops by $1$ modulo a prime, we shows that it is not likely to drop again.  We do this by showing that the column spaces are unlikely to have a normal vector with many of its coefficients in a generalized arithmetic progression mod $p$, and then proving a new inverse Erd\H{o}s-Littlewood-Offord result over finite fields for sparse random vectors based on the method from \cite{NgV} by the first author and Vu.  However, the probabilities of structured normal vectors mod $p$ are still too large to add up over all $p$, and a key innovation of the paper is that for primes $>e^{n^{1-\eps/3}}$ we show that having a non-zero structured normal vector mod $p$ is equivalent to having one in characteristic 0.  
Fortunately, the bounds for $p$ up to which we can sum the probabilities of structured normal vectors mod $p$, and the
bounds for $p$ where we can lift structured normal vectors overlap, and
this allows us to control the probability of structured normal vectors at all primes.  
% We use the bound on the coefficients  of the matrix to control the total number of large primes for which the rank of the %matrix drops by $1$. AND for structure equivalence in char 0 and p

%Our same approach can be used to understand the probability that $\cok(M_{n\times n})$ is a cyclic group.
%From \cite[Corollary 3.5]{W1}, it follows that  (for $\alpha_n=\alpha$)
% $$\limsup_{n\to \infty} \P\Big(\cok(M_{n\times n}) \mbox{ is cyclic}\Big) \le \prod_{p \textrm{ prime}} (1 + (p^2-p)^{-1}) %\prod_{k=2}^\infty \zeta(k)^{-1} .$$
%We prove the above inequality is actually an {\it equality}, and moreover it holds for sparser matrices.
%\begin{theorem}\label{theorem:cyclicold}  Let $M_{n \times n}$ be as in Theorem~\ref{theorem:sur}. 
%We have
% $$\lim_{n\to \infty}  \P\Big(\cok(M_{ n \times n}) \mbox{ is cyclic}\Big)  = \prod_{p \textrm{ prime}} (1 + (p^2-p)^{-1}) 
%\prod_{k=2}^\infty \zeta(k)^{-1}=\zeta(6)^{-1} \prod_{k=4}^\infty \zeta(k)^{-1}.$$  
% \end{theorem}
 
%Theorem \ref{theorem:cyclic} extends our estimate of $\P(\cok(M_{ n \times (n+u)}) \mbox{ is cyclic})$ from $u\ge 1$ in 
%Corollary~\ref{cor:anyset} to $u=0$. 
%In fact we can also obtain the limiting probability that $\cok(M_{n\times n})$ is the product of a given finite abelian 
%group $B$ and a cyclic group.
%However, when $u\geq 1$, the probability of  $\cok(M_{ n \times (n+u)})$
%being cyclic, or even in any set of finite abelian groups, follows from Theorem~\ref{theorem:sur} (see Lemma~
%\ref{L:anyset}).
In contrast to the result of Corollary~\ref{cor:anyset}, in the $u=0$ case we are unable to determine  $\P\left(\det(M_{ n \times n}) \mbox{ is square-free}\right)$, though from \cite[Corollary 3.4]{W1} it follows that 
$$\limsup_{n\ra\infty} \P\left(\det(M_{ n \times n}) \mbox{ is square-free}\right) \leq \zeta(2)^{-1}\prod_{k\geq 2} %
\zeta(k)^{-1},$$ 
and we would conjecture the limit is equal to this value.  We can obtain the limiting probability that $\cok(M_{n\times n})$ is the product of a given finite abelian group $B$ and a cyclic group (see Theorem~\ref{T:prodcyc}), and these  are currently the most general classes of abelian groups for which we can obtain universality results for $n\times n$ matrices.
Even for nicely distributed matrix entries and fixed $n$, the question of how often $\det(M_{ n \times n})$ is square-free is very difficult (see \cite{Poonen2003,Bhargava2014b}).

 Our main results work for $\alpha_n \ge n^{-1+\eps}$, which is asymptotically best possible, in terms of the exponent of $n$.  If the matrix entries are $0$ with probability at least $1-\log n / (n+u)$, then the matrix $M_{n \times (n+u)}$ has a row of all $0$'s with non-negligible probability, and thus cannot possibly be surjective or even have finite cokernel.  
  We also refer the reader to \cite{NP} for some partial results where $\alpha_n$ is allowed to be as small as $O(\log n/n)$ and $u$ is comparable to $n$.  Much of the previous work that we build upon has required the matrix entries to be non-zero with probability bounded away from $0$ as $n\ra\infty$.  It is perhaps surprising that even as the matrices have entries being $0$ more and more frequently, the asymptotic probability that $M_{n \times (n+1)}$ is surjective does not change from $\approx .4358$ as long as $\alpha_n \ge n^{-1+\eps}$.
  
  Another advantage of our method (compared to existing results in the literature on classical random matrix theory) is that the bound on the matrix entries can be as large as any polynomial $n^T$ of $n$.  This can be relaxed somewhat by letting $T\ra\infty$ slowly, but it cannot be lifted entirely as the example of Koplewitz shows \cite[Section 4.4]{S-thesis} (see also the discussion after Lemma~\ref{lemma:O}).

% In our companion note \cite{NgW2} we will extend the current results to the Laplacian of random digraphs. 
 
%\hoi{I added \cite{Stan, SW}. Actually \cite[Section 4]{Stan} has some result directly related to ours.} \melanie{Good point.  I explained \cite{SW} more above since I think it is  relevant prior work, and \cite{Stan} is relevant for essentially asking about what we prove.}
\vskip .05in
%{\hoi{A new part on the Laplacian. Here I focus on $S_M$. Let me know if you want to work with the version with a sink or so (if this gives substantial difference, otherwise the current version would be fine by me). If the latter is the case, please layout the main ingredients needed for the moderate and large primes so that I can modify the treatment accordingly. By the way, in the version with a sink, would it become a truncated $M_{(n-1) \times (n-1)}$ matrix that behaves like an iid matrix, but then the surjectivity probability tends to zero, right?}}
We now explain in more detail the  extension of our results to a natural family of random matrices of dependent entries, namely to the Laplacian of random digraphs. More generally, let $M=M_{n \times n} =(x_{ij})_{1\le i,j\le n}$ be a random matrix where $x_{ii}=0$ and its off-diagonal entries are i.i.d. copies of an integral random variable $\xi_n$ satisfying ~\eqref{eqn:alpha}. A special
case here is when $M$ is the adjacency matrix of an Erd\H{o}s-R\'enyi simple random digraph $\Gamma \in \ora{G}(n,q)$  where each directed edge is chosen independently with probability $q$ satisfying $\alpha_n \le q \le 1 -\alpha_n$. 
 Let $L_M=(L_{ij})$ be the Laplacian of $M$, that is 
\[
L_{ij}=\begin{cases}
-x_{ij} & \mbox{ if } i \neq j\\
\sum_{k=1}^n {x_{ki}} & \mbox{ if } i=j.
\end{cases}
\]
We then denote $S_M$ (or $S_\Gamma$ in the case of digraphs) to be the cokernel of $L$ with respect to the group $\Z_0^n$ of integral vectors of zero entry-sum  
$$S_M = \Z_0^n/L_M\Z^n.$$  When $\Gamma$ is a graph, this group has been called the \emph{sandpile group without sink} \cite{FL}
and the 
 \emph{total sandpile group} \cite{S2} of the graph.  The size of this group was has been called the \emph{Pham Index} \cite{FL}, and was introduced by Pham \cite{Pham} in order to count orbits
of the rotor-router operation.
We will show that Theorems~\ref{theorem:sur} and ~\ref{theorem:cyclic} extend to this interesting setting.

\begin{theorem}\label{theorem:sur:L} Let $0<\eps<1$ and $T>0$ be given. Let $M_{n \times n}$ be a integral $n \times n$ matrix with entries 
 i.i.d copies of an {\it $\alpha_n$-balanced} random integer $\xi_n$, with $ \alpha_n \ge n^{-1+\eps}$ and  $|\xi_n|\leq n^T$. Then for any finite abelian group $B$,
\begin{equation}\label{eqn:sur:fix:L}
\lim_{n\to \infty}  \P\Big(S_{M_{n \times n}} \simeq B\Big)  = \frac{1}{|B| |\Aut(B)|}  \prod_{k=2}^\infty \zeta(k)^{-1}
\end{equation}
and
 \begin{equation}\label{eqn:cyclic:L}
 \lim_{n\to \infty}  \P\Big(S_{M_{n \times n}} \mbox{ is cyclic}\Big)  = \prod_{p \textrm{ prime}} (1 + (p^2(p-1))^{-1}) \prod_{k=3}^\infty \zeta(k)^{-1} .
 \end{equation}
\end{theorem}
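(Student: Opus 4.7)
The plan is to reduce Theorem~\ref{theorem:sur:L} to the setup of Theorem~\ref{theorem:sur} at $u=1$, and then derive~\eqref{eqn:cyclic:L} from~\eqref{eqn:sur:fix:L} by summing over cyclic $B$ exactly as Theorem~\ref{theorem:cyclic} is deduced from Theorem~\ref{theorem:sur}. I pick the basis $f_i = e_i - e_n$, $1 \le i \le n-1$, of $\Z_0^n$. In this basis, the map $L_M : \Z^n \to \Z_0^n$ is represented by the $(n-1)\times n$ integer matrix $\tilde M$ obtained by deleting the last row of $L_M$; the last row is redundant since the columns of $L_M$ sum to $0$, and $\cok(\tilde M)\simeq S_M$. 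Critically, \emph{the columns of $\tilde M$ are mutually independent}, since distinct columns of $L_M$ involve disjoint off-diagonal variables $\{x_{kj}\}_k$ (the diagonal entry $L_{jj}=\sum_k x_{kj}$ is a deterministic function of the off-diagonal entries of the \emph{same} column $j$). Setting $a_n:=0$, a homomorphism $\phi=(a_1,\ldots,a_{n-1}):\Z_0^n\to G$ pairs with column $j$ of $\tilde M$ to give the linear form
$$\sum_{k\ne j}(a_j-a_k)\, x_{kj}\ \in\ G,$$
a combination of the independent variables $\{x_{kj}\}_{k\ne j}$ with coefficients in the shifted family $(a_j-a_k)_{k\ne j}$.

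Next I would run the three-scale program from the proofs of Theorems~\ref{theorem:sur} and~\ref{theorem:cyclic} with $\tilde M$ in place of $M_{n\times(n+1)}$. For \emph{small primes}, compute $\E[\#\Sur(S_M,G)]$ via the partition-of-surjections scheme of~\cite{W1}, with $(a_j-a_k)_{k\ne j}$ replacing $(a_k)_k$: the ``depth'' classification is preserved across the shift because all $a_j-a_k$ vanish mod $p^e$ for every $j$ iff all pairwise differences $a_k-a_{k'}$ do, which (as $a_n=0$) forces $\phi$ itself to be trivial mod $p^e$. Matching these moments to those of a Haar-random matrix over $\wh{\Z}$ yields the $u=1$ Cohen--Lenstra weights at every small prime. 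For \emph{medium primes}, the coupling-of-rank-chains argument adds columns of $\tilde M$ one at a time and compares to the uniform model; this step only uses column-independence, which we still have.

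The \emph{large primes} step is the main obstacle. One must rule out, uniformly in $p$ in the range of interest, a nonzero left null vector $b=(b_1,\ldots,b_{n-1})$ of $\tilde M$ mod $p$. The small-ball estimate for column $j$ is controlled by $\sum_{k\ne j}(b_j-b_k)\, x_{kj}$ (with $b_n:=0$). The shift is harmless: $(b_i)_i$ lies in a GAP of size $s$ iff $(b_j-b_k)_{k\ne j}$ does so for any one $j$, since one can recover the $b_k$ from the $b_j-b_k$ using the anchor $b_n=0$. Hence the paper's sparse inverse Littlewood--Offord theorem and its characteristic-zero lifting apply, and a lifted integer null vector would produce a left linear dependence among the rows of $L_M$ independent of the all-ones vector $\mathbf{1}$, forcing $\rank_\R L_M\le n-2$; but the main work of the paper applied to the $(n-1)\times n$ matrix $\tilde M$ gives full row rank $n-1$ over $\R$ with overwhelming probability. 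The two points to verify carefully are that (i) the shift $b\mapsto(b_j-b_k)_{k\ne j}$ does not spoil the hypotheses of the sparse inverse Littlewood--Offord theorem (arranged via the gauge $b_n=0$), and (ii) the characteristic-zero lifting still goes through in the presence of the single diagonal-column dependence per column, which is linear and only inflates constants. Assembling the three scales yields~\eqref{eqn:sur:fix:L}; summing over cyclic $B$ via~\cite[Corollary~3.7]{CL} then gives~\eqref{eqn:cyclic:L}.
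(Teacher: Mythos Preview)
Your overall strategy coincides with the paper's: three prime scales, with the Laplacian handled via the shift $w\mapsto (w_j-w_k)_k$ (the paper formalizes this by working in $\Z_0^n$ with columns of ``type $\CT_i$'' rather than by deleting the last row, but this is the same). The large-prime step you sketch is essentially Part~III of Section~\ref{section:Laplacian}, including the shifted inverse Littlewood--Offord (Theorem~\ref{theorem:ILO:L}) and the lifting. For small primes the paper uses a slightly different device than your direct shifted-coefficient computation: it introduces an auxiliary matrix $M'$ with the diagonal replaced by uniform entries in $\Z/a\Z$, so that $M'$ has fully independent entries and the i.i.d.\ moment machinery applies verbatim, then recovers $L_M$ by conditioning on column sums (the trick from~\cite[Theorem~6.2]{W0}); the only change is a modified count (Lemma~\ref{L:newcountL}).

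There is, however, a genuine gap at the medium primes. Your claim that the coupling argument ``only uses column-independence'' is not correct. The coupling itself (Theorem~\ref{T:allerror}) is general, but its \emph{input} is the analogue of Theorem~\ref{thm:A1}: conditioned on a good event, the probability the next column lands in the span of the previous ones is $p^{-k_0}+O(e^{-c\alpha_n n})$. That theorem in~\cite{NP} is proved for columns with i.i.d.\ entries; in your $\tilde M$, column $j$ (for $j<n$) has one entry determined by the others and the columns are not identically distributed. The paper explicitly reproves this input as Theorem~\ref{theorem:corank:L}, redoing the sparse/semi-saturated/unsaturated trichotomy and the swapping argument for type-$\CT_i$ columns (Lemmas~\ref{lemma:sparse:L}, \ref{lemma:inverse:semi}, \ref{lemma:semi-sat}, \ref{lemma:swapping:L}, \ref{lemma:unsat}). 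This is substantive work---the paper remarks that the non-i.i.d.\ structure ``poses new challenges'' and that even in the i.i.d.\ case~\cite{M1} handled a related step incorrectly---and you cannot simply invoke~\cite{NP}.
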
 
%\melanie{I tried to tell more of a story here about how our result fits in to previous questions, results, etc.}\Hoi{Thanks for the nice discussion.}
In particular, every finite abelian group $B$ appears with frequency
given in \eqref{eqn:sur:fix:L} as a total sandpile of the random digraph
$\ora{G}(n,q)$ with parameter $n^{-1+\eps} \le q\le
1-n^{-1+\eps}$.  In a paper about the sandpile (or chip-firing) and rotor-router models, Holroyd, Levine, M\'esz\'aros, Peres, Propp, and Wilson asked if there was an infinite family of non-Eulerian strongly connected digraphs such that the unicycles are in a single orbit of the the rotor-router operation \cite[Question 6.5]{Hol}.  Pham \cite{Pham} then gave an infinite family with a single orbit, and asked if the probability of a single orbit for an Erd\H{o}s-R\'enyi digraph in fact goes to $1$. Koplewitz \cite{S2} gave an upper bound on this probability.
 We have now shown that the desired graphs with a single rotor-router orbit occur with asymptotic probability $\prod_{k=2}^\infty \zeta(k)^{-1}\approx 43.58\%$ (matching the upper bound from \cite{S2}) among Erd\H{o}s-R\'enyi digraphs.  Moreover, for every $k$, our result gives an explicit positive asymptotic probability for exactly $k$ orbits.  

Farrell and Levine show that this number of orbits is the size of the total sandpile group \cite[Lemma 2.9, Theorem 2.10]{FL}, and coined the term \emph{co-Eulerian} for digraphs where the total sandpile group is trivial.  Farrell and Levine also show that for a strongly connected digraph $\Gamma$ the algebraic condition $S_{\Gamma} = \{id\}$ is equivalent to a more combinatorial condition \cite[Theorem 1.2]{FL}, i.e. in this graph a chip configuration $\sigma$ on $\Gamma$ stabilizes after a finite number of legal firings if and only if $|\sigma| \le |E|-|V|$.  Further, they prove that minimal length of a multi-Eulerian tour depends inversely on the size of the total sandpile group \cite[Theorem 5]{FL2}, showing that $|S_{\Gamma}|$  measures ``Eulerianness'' of the graph.  
%Our corollary below shows that for an Erd\H{o}s-R\'enyi  digraph, asymptotically $43.57570767\%$ of them are co-Eulerian,
%and this is the proportion that are non-Eulerian strongly connected digraphs such that the unicycles are in a single orbit %of the the rotor-router operation (the class that \cite{Hol} asked for an infinite family in).  
\begin{corollary} Let $0<\eps<1$ be given and let $q$ be a given parameter such that $n^{-1+\eps} \le q\le 1-n^{-1+\eps}$. Then
\begin{align*}
\lim_{n \to \infty} \P\Big(\ora{G}(n,q) \mbox{ is co-Eulerian}\Big) &=
\\
\lim_{n \to \infty} \P\Big(\ora{G}(n,q) \mbox{ is strongly connected, non-Eulerian, and co-Eulerian}\Big) &= \prod_{k=2}^\infty \zeta(k)^{-1}.
\end{align*}
\end{corollary}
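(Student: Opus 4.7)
The plan is to reduce both equalities to Theorem~\ref{theorem:sur:L} specialized to the trivial group, combined with two standard random-digraph estimates valid in the density range $n^{-1+\eps}\le q\le 1-n^{-1+\eps}$. First I would verify that the adjacency matrix $M$ of $\ora{G}(n,q)$ satisfies the hypotheses of Theorem~\ref{theorem:sur:L}: its off-diagonal entries are i.i.d.\ Bernoulli$(q)$, so for every prime $p$
\[
\max_{r\in\Z/p\Z}\P(\xi_n\equiv r\pmod p)\le \max(q,1-q)\le 1-n^{-1+\eps},
\]
making $\xi_n$ indeed $n^{-1+\eps}$-balanced, while $|\xi_n|\le 1$ trivially satisfies the size bound. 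Applying Theorem~\ref{theorem:sur:L} with $B=\{id\}$ (so $|B|=|\Aut(B)|=1$) yields
\[
\lim_{n\to\infty}\P\bigl(S_{\ora{G}(n,q)}\simeq\{id\}\bigr)=\prod_{k=2}^{\infty}\zeta(k)^{-1}.
\]
Since co-Eulerian is by definition the vanishing of the total sandpile group (Farrell--Levine, as recalled in the paper), this is precisely the first equality.

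For the second equality I would write
\[
\bigl|\P(\text{co-Eulerian})-\P(\text{strongly connected, non-Eulerian, co-Eulerian})\bigr|\le \P(\text{not strongly connected})+\P(\text{Eulerian}),
\]
and show both terms on the right are $o(1)$. Strong connectivity of $\ora{G}(n,q)$ when $\min(nq,n(1-q))\ge n^{\eps}\gg\log n$ is classical; a direct union bound over candidate BFS boundaries, or the sharp threshold for strong connectivity in $\ora{G}(n,q)$, gives $\P(\text{strongly connected})=1-o(1)$.

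For non-Eulerianness I would observe that $\Gamma$ is Eulerian only if $\deg^+(v)=\deg^-(v)$ for a fixed vertex $v$, and these two degrees are independent $\mathrm{Binomial}(n-1,q)$ variables. A short Stirling or local-CLT estimate applied to $\sum_k\binom{n-1}{k}^2 q^{2k}(1-q)^{2(n-1-k)}$ bounds this probability by $O((nq(1-q))^{-1/2})=O(n^{-\eps/2})$, since $nq(1-q)\ge n^{\eps}/2$ throughout the given range of $q$. Hence $\P(\text{Eulerian})\to 0$, closing both equalities. The real content of the corollary is packaged inside Theorem~\ref{theorem:sur:L}; the only mild obstacle is producing the local CLT bound uniformly in $q$ across the entire range $n^{-1+\eps}\le q\le 1-n^{-1+\eps}$, which is routine once $nq(1-q)\to\infty$.
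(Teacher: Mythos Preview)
Your proposal is correct and follows essentially the same approach as the paper: apply Theorem~\ref{theorem:sur:L} with $B=\{id\}$ for the first limit, and then observe that $\ora{G}(n,q)$ is strongly connected and non-Eulerian asymptotically almost surely. The paper's proof is a single sentence asserting exactly this, whereas you have simply supplied the routine verifications (checking the $\alpha_n$-balanced hypothesis, the union bound for strong connectivity, and the local CLT/Stirling estimate for $\P(\deg^+(v)=\deg^-(v))$) that the paper leaves implicit.
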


The corollary follows since 
$\ora{G}(n,q)$ is strongly connected and non-Eulerian asymptotically almost surely. %\melanie{changed highly connected to strongly connected}\Hoi{OK}
Although our general method to prove Theorem~\ref{theorem:sur:L} follows the proof method of Theorems~\ref{theorem:sur} and ~\ref{theorem:cyclic}, here the dependency of the entries in each column vector and the non-identical property of the columns pose new challenges. Among other things, for the medium primes we will need to prove a non-i.i.d.~analog of the result of \cite{NP} that we used in the i.i.d.~case.  For small primes, when $\alpha_n$ is constant, our results specialize to those of Koplewitz \cite{S2}, who determined the asymptotic probabilities of given Sylow $p$-subgroups of these total sandpile groups for finitely many primes $p$.  However, as in our main theorem, we require a further refined method to deal with smaller $\alpha_n$.

Note that for $M_{n \times n}$ with general i.i.d. $\alpha_n$-balanced integer entries the results of \cite{BR} do not apply to bound the singularity probability. However, a recent result by Paquette and the first author \cite{NP}  (following the preprint \cite{M1} of Maples) shows that the singularity probability $p_n$ can also be bounded in this case by $e^{-c \alpha_n n}$ with $\alpha_n \ge C \log n/n$ (see also Theorem~\ref{th:allmed}).  We prove the same bound for the singularity of digraph Laplacians in  Corollary~\ref{cor:singularity:L}.

\subsection{Outline of the paper}
In Section~\ref{section:method}, we state our results for each class of primes, and show that Theorems~\ref{theorem:sur},  \ref{theorem:cyclic}, \ref{theorem:bigu}, and \ref{theorem:sur:L} follow from these results.  We will present our main arguments for the i.i.d. case in Sections ~\ref{section:O}-\ref{section:structures}. The main arguments for the graph Laplacian case are in Section~\ref{section:Laplacian}, building on the treatments in the i.i.d. case.

\subsection{Notation} We write $\P$ for probability and $\E$ for expected value. For an event $\mathcal{E}$, we write $\bar{\mathcal{E}}$ for its complement. We write $\exp(x)$ for the exponential function $e^x$.
We use $[n]$ to denote $\{1,\dots,n\}$.  For a given index set $J \subset [n]$ and a vector $X= (x_1,\dots, x_n)$, we write $X|_J$ to be the subvector of $X$ of components indexed from $J$. Similarly, if $H$ is a subspace then $H|_J$ is the subspace spanned by $X|_J$ for $ X\in H$.  For a vector $w=(w_1,\dots,w_n)$ we let $\supp(w)=\{i\in[n] | w_i\ne 0\}$. We will also write $X \cdot w$ for the dot product $\sum_{i=1}^n x_i w_i$.
We say $w$ is a \emph{normal} vector for a subspace $H$ if $X\cdot w=0$ for every $X\in H$.

 For $0\le u\le n$, the matrix $M_{n\times (n-u)}$ is the submatrix of the first $n-u$ columns of $M_{n\times n}$. Sometimes we will write the Laplacian $L_M$ as $L_{n\times n}$, and so $L_{n \times (n-u)}$ is the submatrix of the first $n-u$ columns of $L_M$. We also write $\Z_0^n/p$ to denote the set of vectors of zero-entry sum in $(\Z/p\Z)^n$. 

For a finite abelian group $G$ and a prime $p$, we write $G_p$ for the Sylow $p$-subgroup of $G$. For a set $P$ of primes, we write $G_P:=\prod_{p\in P}G_p$.

 Throughout this paper $C_i, K_i, c_i, \delta, \eta, \eps, \la$, etc  will denote positive constants. When it does not create confusion, the same
 letter may denote different constants in different parts of the proof. 
 The value of the constants may depend on other constants we have chosen,
% some natural parameters such as $C_0$ in the definition of balancedness, but it will
but will
never depend on the dimension $n$, which is regarded as an asymptotic parameter going to infinity.
 We consider many functions of $n$ and other parameters, e.g. including
$u, \{\xi_i\}_i, \alpha, \eps,T,d,p,q$.  We say ``$f(n,\dots )\in O_S(g(n,\dots))$,'' where $S$ is a subset of the parameters, to mean for any values $v_1,\dots,v_m$ of the parameters in $S$,
there is exists a constant $K>0$ depending on $v_1,\dots,v_m$, such that for all $n$ sufficiently large given $v_1,\dots,v_m$, and \emph{all} allowed values of the parameters not in $S$, that
$|f(n,v_1,\dots,v_m,\dots )|\leq Kg(n,v_1,\dots,v_m,\dots).$

\section{Organization of the proof of Theorems~\ref{theorem:sur}, \ref{theorem:cyclic}, \ref{theorem:bigu} and \ref{theorem:sur:L}}\label{section:method}

We will be mainly focusing on the i.i.d. case to prove Theorems ~\ref{theorem:sur}, \ref{theorem:cyclic}, and \ref{theorem:bigu}. The results for the Laplacian case will be shown in a similar fashion. We prove Theorems~\ref{theorem:sur}, \ref{theorem:cyclic}, and \ref{theorem:bigu} for $M_{n \times (n+u)}$ by checking if the Sylow-$p$ subgroup of $\cok(M_{ n \times (n+u)})$
is equal to $B_p$ for each prime $p$ (or is cyclic for each prime $p$).   The argument will then break up into considering primes in three size ranges with totally different treatments.  

For small primes, we prove the following generalization of \cite[Corollary 3.4]{W1} to sparser matrices, which requires a refinement of the method of \cite{W1}. 
\begin{prop}[Small Primes]\label{prop:small}
Let $M_{n \times (n+u)}$ be as in Theorem~\ref{theorem:sur}.
Let $B$ be a finite abelian group.  Let $P$ be a finite set of primes including all those dividing $|B|$.
Then
$$
\lim_{n\ra\infty} \P\Big(\cok(M_{n\times (n+u)})_P \simeq B \Big) = \frac{1}{|B|^u|\Aut(B)|}\prod_{p\in P} \prod_{k=1}^\infty (1-p^{-k-u}). 
$$
\end{prop}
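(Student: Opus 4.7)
The approach is the surjection-moment method of \cite{W0, W1}. For any finite abelian group $G$ whose prime divisors lie in $P$, I would study the moment
$$\mathcal{M}_G := \E\big[|\Sur(\cok(M_{n\times(n+u)})_P, G)|\big].$$
Since surjections $\cok(M)_P \twoheadrightarrow G$ lift bijectively to surjections $F: \Z^n \twoheadrightarrow G$ with $F \circ M = 0$, and the columns $X_1,\dots,X_{n+u}$ of $M$ are i.i.d.,
$$\mathcal{M}_G \;=\; \sum_{F: \Z^n \twoheadrightarrow G} \prod_{j=1}^{n+u} \P\big(F \cdot X_j = 0 \text{ in } G\big).$$
The plan is to prove $\mathcal{M}_G \to |G|^{-u}$ as $n\to\infty$ for every such $G$, and then to invoke the moment-determines-distribution result of \cite[Theorem 8.3]{W0}, which from these moments recovers the limit $\P(\cok(M)_P \simeq B) \to \frac{1}{|B|^u|\Aut(B)|}\prod_{p \in P}\prod_{k=1}^\infty(1 - p^{-k-u})$.

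To compute $\mathcal{M}_G$, I would partition surjections $F$ with $F(e_i) = w_i$ by the sets $\sigma_H(F) := \{i \in [n] : w_i \notin H\}$ as $H$ ranges over proper subgroups of $G$. For ``generic'' $F$ (those with $|\sigma_H(F)|\geq \delta n$ for every proper $H$), a Littlewood--Offord-type estimate using the $\alpha_n$-balanced hypothesis at the primes dividing $|G/H|$ yields $\P(F \cdot X = 0 \text{ in } G) = |G|^{-1} + O(e^{-c\alpha_n n})$ for some $c=c(G)>0$; since $\alpha_n n \geq n^\eps$, the error taken to the $(n+u)$-th power is negligible, and the generic surjections contribute $\#\Sur(\Z^n, G)/|G|^{n+u}(1+o(1)) \to |G|^{-u}$ (Mobius inversion gives $\#\Sur(\Z^n, G)/|G|^n \to 1$). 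For ``concentrated'' $F$ in class $(H^\ast, \sigma^\ast)$ with $|\sigma^\ast| = s$, applying the $\alpha_n$-balanced hypothesis coordinate by coordinate produces
$$\prod_{j=1}^{n+u} \P(F \cdot X_j = 0 \text{ in } G) \;\leq\; |G/H^\ast|^{-(n+u)}(1 - c\alpha_n)^{s(n+u)},$$
while the number of such $F$ is at most $\binom{n}{s}|H^\ast|^{n-s}|G|^s$. Summing over $s \geq 1$ and proper $H^\ast$, each term carries a factor $(1 - c\alpha_n)^{s(n+u)} \leq \exp(-cs\alpha_n n) \leq \exp(-csn^\eps)$ that dominates the combinatorial $\binom{n}{s}$, so the concentrated classes contribute $o(1)$.

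The main obstacle relative to \cite{W1} is that the per-column saving $c\alpha_n$ can now be as small as $n^{-1+\eps}$ rather than bounded below by a constant, so the exponential suppression $\exp(-cs\alpha_n n)$ only beats $\binom{n}{s}$ in the regime $\alpha_n n \gg \log n$, which is tight against our hypothesis $\alpha_n \geq n^{-1+\eps}$. The proof must therefore sharpen the Littlewood--Offord-type bounds of \cite{W1} by tracking each exceptional coordinate's contribution individually rather than absorbing them into a single ``depth'' quantity, and must check that the constants $c = c(G, H^\ast)$ depend only on $|G|$ and the primes in $P$ (this follows from uniformity of the $\alpha_n$-balanced condition at these fixed primes). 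The hypothesis $|\xi_n| \leq n^T$ plays no role in this step, since only the mod-$p$ reductions for $p \in P$ enter the analysis.
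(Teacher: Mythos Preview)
Your overall strategy matches the paper exactly: compute $\E(\#\Sur(\cok M,G))$ by partitioning $F\in\Sur(\Z^n,G)$ into codes and non-codes, show the moments converge to $|G|^{-u}$, and invoke a moment-to-distribution theorem. Your treatment of the generic (code) $F$ is correct, and your final paragraph correctly anticipates that the depth argument of \cite{W1} must be refined.

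The gap is in the displayed bound for the concentrated $F$: the per-column inequality $\P(F\cdot X=0)\leq|G/H^\ast|^{-1}(1-c\alpha_n)^{s}$ is false and cannot be obtained ``coordinate by coordinate.'' Projecting to $G/H^\ast$ and using balancedness on the $s$ exceptional coordinates gives only $\P(F\cdot X\equiv 0\bmod H^\ast)\leq |G/H^\ast|^{-1}+e^{-c\alpha_n s}$, an \emph{additive} error, and this still ignores the event that $F\cdot X$ hits $0$ inside $H^\ast$, which contributes an extra factor $\approx|H^\ast|^{-1}$. With your stated bound, the product of count and probability equals
\[
\binom{n}{s}|H^\ast|^{n-s}|G|^{s}\cdot|G/H^\ast|^{-(n+u)}e^{-c\alpha_n s(n+u)}
=\binom{n}{s}\Bigl(\tfrac{|H^\ast|^{2}}{|G|}\Bigr)^{n}[G{:}H^\ast]^{\,s-u}\,e^{-c\alpha_n s(n+u)},
\]
and whenever $[G{:}H^\ast]<|H^\ast|$ (e.g.\ $G=(\Z/2\Z)^{3}$, $H^\ast=(\Z/2\Z)^{2}$) the factor $(|H^\ast|^{2}/|G|)^{n}\geq 2^{n}$ swamps $e^{-c\alpha_n sn}$ for bounded $s$ unless $\alpha_n$ is bounded below by a constant---precisely the regime you are trying to escape. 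The paper's fix is the refinement you gesture at: fix a maximal chain $H^\ast=G_{\ell}\subsetneq\cdots\subsetneq G_{0}=G$, record $w_{j}=\#\{i:Fv_{i}\in G_{j-1}\setminus G_{j}\}$ separately, and prove
\[
\P(F\cdot X=0)\leq\bigl(|H^\ast|^{-1}+e^{-c\alpha_n\delta n}\bigr)\prod_{j}\bigl(p_{j}^{-1}+(1-p_{j}^{-1})e^{-\alpha_n w_{j}/p_{j}^{2}}\bigr),
\]
with the leading $|H^\ast|^{-1}$ (not $|G/H^\ast|^{-1}$). This balances cleanly against the count $|H^\ast|^{n-\sum w_{j}}\prod_{j}\binom{n}{w_{j}}|G_{j-1}|^{w_{j}}$, leaving at each level a sum $\sum_{w\geq 1}\binom{n}{w}D^{w}(p^{-1}+e^{-c\alpha_n w})^{n}$ that an elementary calculation shows is $O(n^{-c'})$ exactly when $\alpha_n\gg(\log n)/n$.
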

Proposition~\ref{prop:small} is a special case of Theorem~\ref{th:allsmall}, which allows the matrices to be even sparser and have non-identical entries. This carries the main term of our estimates.

 For medium primes, we combine a result of \cite{NP} with a comparison theorem on the evolving of the matrix ranks to obtain the following.
 
\begin{prop}[Medium Primes]\label{prop:medium}
There are constants $c_0,\eta>0$ such that the following holds. 
Let $M_{n \times (n+u)}$ be as in Theorem~\ref{theorem:sur}.  Let $p$ be a prime.
Then,
\begin{equation}\label{eqn:medu}
\P\Big(M_{n \times (n+u)} \mbox{ mod $p$ is not full rank} \Big)\leq 2p^{-\min(u+1,\eta n)}+O(e^{-c_0\alpha_n n})
\end{equation}
and
\begin{equation}\label{eqn:med}
\P\Big(M_{n \times (n+u)} \mbox{ mod $p$ has rank $\leq n-2$} \Big)\leq 2p^{-\min(2u+4,\eta n)}+O(e^{-c_0\alpha_n n}).
\end{equation}
\end{prop}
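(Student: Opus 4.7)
The plan is to compare the rank process $R_k := \rk(M_{n \times k} \bmod p)$ for $0 \leq k \leq n+u$ against the analogous process $R_k^{\mathrm{unif}}$ for an $n \times (n+u)$ matrix with i.i.d.~uniform entries in $\mathbb{F}_p$, via a column-by-column coupling of these two Markov chains. For the uniform chain the transition law is explicit: $\P(R_{k+1}^{\mathrm{unif}} = r \mid R_k^{\mathrm{unif}} = r) = p^{r-n}$, and a direct union bound over the left null space yields
$$\P\bigl(R_{n+u}^{\mathrm{unif}} \leq n-1\bigr) \leq 2p^{-(u+1)}, \qquad \P\bigl(R_{n+u}^{\mathrm{unif}} \leq n-2\bigr) \leq 2p^{-(2u+4)},$$
which are the main terms sought in \eqref{eqn:medu} and \eqref{eqn:med}. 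It therefore suffices to show that the distributions of $R_{n+u}$ and $R_{n+u}^{\mathrm{unif}}$ differ by $O(e^{-c_0 \alpha_n n})$ in total variation.

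The column-conditional input I would use is the fine estimate of \cite{NP} (building on \cite{M1}): conditional on the first $k$ columns, on an event of conditional probability $1 - O(e^{-c\alpha_n n})$ on which the current column span $H \subset \mathbb{F}_p^n$ is ``unstructured,'' the next column satisfies
$$\P\bigl(X_{k+1} \in H \bigm| X_1, \dots, X_k\bigr) = p^{\rk H - n}\bigl(1 + O(e^{-c\alpha_n n})\bigr).$$
Granted this, one runs a maximal coupling of the two chains step by step: starting from a common rank value and an ``unstructured'' $H$, the two rank-increment indicators can be coupled to agree with probability $1 - O(e^{-c\alpha_n n})$. Summing the per-step failure probabilities over the $O(n)$ relevant steps produces a coupling whose total failure probability is $O(e^{-c_0 \alpha_n n})$ for any $c_0$ slightly smaller than $c$, and \eqref{eqn:medu}, \eqref{eqn:med} then follow by the triangle inequality.

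The main obstacle is the early phase of the process, where $R_k$ is far below $n$ and the ``unstructured'' hypothesis of \cite{NP} on $H$ may not yet be available. I would handle this in two phases. While $R_k \leq n - \eta n$, an independent rank-drop estimate (using singularity bounds from \cite{BR} or the tools of \cite{NP}) shows each added column strictly increases the rank with probability $1 - O(e^{-c\alpha_n n})$, so with probability $1 - O(n e^{-c\alpha_n n})$ the rank reaches $n - \eta n$ by time $k = n - \eta n$. Once $R_k \geq n - \eta n$, the coupling argument above applies. This two-phase split, combined with the fact that rank is monotone in the number of columns, also produces the $\min(u+1, \eta n)$ cutoff: one restricts the coupling to the first $n + \min(u, \eta n)$ columns (so that the accumulated per-step error stays of order $e^{-c_0 \alpha_n n}$), and then uses that the rank of the full matrix $M_{n \times (n+u)}$ is at least that of its truncation. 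The subtle technical point throughout is to ensure that all implicit constants are uniform in $p$, so that the same $O(e^{-c_0 \alpha_n n})$ error holds simultaneously for every prime.
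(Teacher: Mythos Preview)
Your proposal is essentially the same as the paper's proof: the paper compares the rank-deficiency process to that of a uniform matrix via a general Markov-chain comparison theorem (Theorem~\ref{T:allerror}, itself proved by a maximal coupling), using Theorem~\ref{thm:A1} from \cite{NP} for the transition estimate once the rank is within $\eta n$ of $n$, and handling the early phase by showing the first $n-\lfloor\eta n\rfloor$ columns are independent with high probability. The only implementation differences are that the paper uses the elementary Odlyzko bound (Corollary~\ref{cor:fullrank}) rather than \cite{BR} for the early phase, and computes the uniform-matrix probabilities from the exact product formula rather than a union bound over null subspaces.
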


%\melanie{Our precise big $O$ notation clarifies here that the implied constants do not depend on $T,p,\{\xi_i\}_i,\{\alpha_i\}_i,u,\eps$, which seems to be true.}

Proposition~\ref{prop:medium} will follow from Theorem~\ref{th:allmed} where we allow the matrices to be sparser.  (The big $O$ allows us to require that $n$ is large enough that
$\alpha_n\geq n^{-1+\eps}\geq C_0\log n/n$.)

 Even such a small error bound cannot be summed over all primes, and so for large primes we present a new approach that considers all large primes together.

\begin{prop}[Large Primes]\label{prop:large} 
Let $d>0$ and let $M_{n \times (n+u)}$ and $\eps$ be as in Theorem~\ref{theorem:sur}.  Then,
\begin{equation}\label{eqn:large:n+1prop}
\P\Big(\forall \mbox{ primes } p\geq e^{d \alpha_n n} : M_{n \times (n+1)} \mbox{ mod $p$ has rank at least $n$}\Big) \ge 1 -O_{d,T,\eps}( n^{-\eps}),
\end{equation}
as well as
\begin{equation}\label{eqn:large:nprop}
\P\Big(\forall \mbox{ primes } p\geq e^{d \alpha_n n} : M_{n \times n} \mbox{ mod $p$ has rank  at least $n-1$}\Big)\geq 1 -O_{d,T,\eps}( n^{-\eps}).
\end{equation}

%\melanie{We are claiming constants don't depend on $\{\xi_i\}_i,\{\alpha_i\}_i,u$, which appears true.}

\end{prop}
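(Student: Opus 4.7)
The plan is to reduce both inequalities to bounding the probability that the column space of $M$ modulo $p$ admits a nontrivial normal vector (for~\eqref{eqn:large:n+1prop}, where $M=M_{n\times(n+1)}$) or a normal space of dimension at least $2$ (for~\eqref{eqn:large:nprop}, where $M=M_{n\times n}$), and then to handle all $p \ge e^{d\alpha_n n}$ simultaneously by splitting at the threshold $\exp(n^{1-\eps/3})$: below this threshold I would work directly in $\Z/p\Z$; above it, I would lift to characteristic zero and bound once and for all, uniformly in $p$.

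First I would view the columns $X_1,\dots,X_{n+u}$ as being added one at a time and let $V_k \le (\Z/p\Z)^n$ denote their first-$k$ span. For each individual prime $p$, failure requires at least two rank-drop steps ($X_k \in V_{k-1}$) in this process, and after the first drop there is a nonzero normal vector $w$ to the current column space. I would then dichotomize on whether $w$ is \emph{structured}, meaning a large fraction of its coordinates lie in a generalized arithmetic progression of controlled rank and volume.

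For unstructured $w$, I would prove and apply a new inverse Erd\H{o}s-Littlewood-Offord theorem over $\Z/p\Z$ for sparse $\alpha_n$-balanced random vectors, in the spirit of the continuous inverse theorems of \cite{NgV}: if $\max_r \P(X \cdot w \equiv r \pmod p)$ exceeds a threshold of order $(\alpha_n n)^{-1/2}$, then $w$ must be structured. Raising the resulting anti-concentration bound to the power of the remaining $\Omega(n)$ columns and union-bounding over $w \in (\Z/p\Z)^n$ gives a tail small enough to sum over all primes $p \le \exp(n^{1-\eps/3})$. Structured $w$ in the same range contribute only a polynomial-in-$p$ count of vectors, and a direct per-column anti-concentration estimate (using that structured $w$ still have many nonzero coordinates, as $\alpha_n$-balancedness is preserved mod $p$) yields a bound that is likewise summable in $p$.

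For $p > \exp(n^{1-\eps/3})$, the key innovation is a lifting lemma: any structured normal vector $w \in (\Z/p\Z)^n$ lifts uniquely to an integral normal vector in characteristic $0$. The GAP supporting $w$ has volume much smaller than $p$, so it has a unique integer representative in a box of size $\ll p$; since $|\xi_n| \le n^T$, the integer dot products of this representative with the columns of $M$ are of absolute value $\ll p$, and their vanishing modulo $p$ forces vanishing in $\Z$. Hence the failure event in this range is contained in the $p$-independent event that $M$ admits a structured integer normal vector, which I would bound by a Hal\'asz-type inverse Littlewood-Offord estimate in $\Z$. The main obstacle is calibrating the threshold $\exp(n^{1-\eps/3})$: it must be large enough for the lifting to succeed well below the Hadamard bound $n^{n/2}$ on subdeterminants (exploiting the small effective size of structured $w$ rather than mere integrality), yet small enough that the inverse ELO tail for unstructured $w$ remains summable in $p$. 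Making the sparse $\Z/p\Z$-inverse Erd\H{o}s-Littlewood-Offord theorem quantitatively strong enough for $\alpha_n$ as small as $n^{-1+\eps}$ at primes up to $\exp(n^{1-\eps/3})$ is the heart of the argument.
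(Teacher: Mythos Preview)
Your proposal assembles the right ingredients---the column process, the structured/unstructured dichotomy via a sparse inverse Erd\H{o}s-Littlewood-Offord theorem in $\Z/p\Z$, and a lifting lemma at threshold $\exp(n^{1-\eps/3})$---but the unstructured case cannot be summed over primes as you describe. The inverse theorem only gives $\rho(w)=O((\alpha_n n')^{-1/2})=O(n^{-\eps/4})$ for unstructured $w$, so your union bound over all $w\in(\Z/p\Z)^n$ is $p^n\cdot n^{-O(\eps n)}$, which for $p$ near $\exp(n^{1-\eps/3})$ is $\exp(n^{2-\eps/3}-O(\eps n\log n))\to\infty$; and even without the union bound over $w$, a per-prime second-drop probability of only $n^{-O(1)}$ cannot be summed over the exponentially many primes in $[e^{d\alpha_n n},e^{n^{1-\eps/3}}]$. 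What you are missing is the paper's \emph{watch list} device (Section~\ref{section:largeprimes}): every prime $p\ge e^{d\alpha_n n}$ at which a rank drop has occurred must divide $\det M_{n\times n}$, so by Hadamard either there are at most $(2T{+}1)\log n/(2d\alpha_n)$ such primes or $M_{n\times n}$ is singular over $\Q$ (probability $O(e^{-c\alpha_n n})$). One uses Odlyzko for $k\le n_0:=n-\lfloor 3\log n/\alpha_n\rfloor$, then proves once (Lemma~\ref{L:upgrade}) that with high probability $W_{n_0}/p$ has no structured normal vector for \emph{any} large $p$---this is the only place a sum over primes occurs, and only the structured bound $p^{O_\eps(1)}n^{-\Omega(\eps n)}$ is summed---and finally, for the last $O(\log n/\alpha_n)$ steps, union-bounds the $n^{-3}$ per-column probability only over the $O(\log n/\alpha_n)$ watch-list primes, not over all primes.

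A smaller issue: your lifting is not correct as stated. The GAP volume bounds the coefficients $x_{jl}$, not the generators $a_l\in\Z/p\Z$, so $w$ has no canonical small integer representative and the integer dot products $\tilde w\cdot X_i$ need not be smaller than $p$. The paper (Lemma~\ref{lemma:passing'}) instead observes that $(a_1,\dots,a_r,w_{m+1},\dots,w_n)$ lies in the $\bmod\,p$ kernel of an auxiliary matrix with $r+n'=O(n^{1-\eps/2})$ columns and entries bounded by $n^{T+O(1)}$; Hadamard's bound on its minors is then below $e^{n^{1-\eps/3}}\le p$, so the integral kernel surjects onto the $\bmod\,p$ kernel (Lemma~\ref{lemma:passing}), giving integral lifts of the generators and hence a structured integral normal vector.
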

Proposition~\ref{prop:large} in proven in Sections~\ref{section:largeprimes} and \ref{section:structures}, and is the source of the lower bound on $\alpha_n$ in our theorems. 

%Our proof relies on the fact that if $M_{n \times n}$ (or $M_{n \times (n+1)}$) become degenerate modulo $p$ for some large $p$, then the matrices are also degenerate over %$\R$. 
%\melanie{described more in the introduction, so commented out here}

The heart of the paper is proving the three propositions above, as the main theorems follow simply from these, as we now show.

\begin{proof}[Proof of Theorems~\ref{theorem:sur} and \ref{theorem:bigu}]
We first prove Theorem~\ref{theorem:bigu}, the case when $u\ra\infty$, where we need to consider only medium and large primes.  
By Equation~\eqref{eqn:medu} of Proposition~\ref{prop:medium}, there are $c_0,\eta>0$ such that
\begin{align*}\P\Big(M_{n \times (n+u)} \mbox{ mod $p$ not full rank  for some $2\leq p \leq e^{c_0\alpha_n n/2}$}\Big) \le &\sum_{2\leq p \leq e^{c_0\alpha_n n/2}}  (2p^{-\min(u+1,\eta n)} + O(e^{-c_0 \alpha_n n}))  \\=&O (\frac{1}{2^{\min(u,\eta n)}} + e^{-c_0 \alpha_n n/2}).
\end{align*}
Combined with Equation~\eqref{eqn:large:n+1prop} of Proposition~\ref{prop:large} (applied to $d=c_0/2$) we obtain
$$\P\Big(M_{n \times (n+u)} \mbox{ mod $p$ is full rank  for all $p\geq 2$}\Big) \ge 1-  O_{T,\eps}(\frac{1}{2^{\min(u,\eta n)}} +e^{-c_0 \alpha_n n/2}+ n^{-\eps}),$$
completing the proof of Equation~\eqref{eqn:sur:large}.

We next turn to Equation~\eqref{eqn:sur:fix}. Let $k_0$ be fixed and $u\ge 1$ be fixed. By applying Equation~\eqref{eqn:medu} of Proposition~\ref{prop:medium}, for $n$ large enough given $\eta$ and $u$ we have
\begin{align*}
\P\Big(M_{n \times (n+u)} \mbox{ mod $p$ is not full rank  for some $k_0 \le p \le e^{c_0 \alpha_n n/2}$}\Big) &\le \sum_{p=k_0}^{e^{c_0 \alpha_n n/2}}  (2p^{-(u+1)} + O(e^{-c_0 \alpha_n n}))\\  
&= O(\frac{1}{k_0}+e^{-c_0 \alpha_n n/2}).
\end{align*} 
Combined  with Equation~\eqref{eqn:large:n+1prop} of Proposition~\ref{prop:large} we obtain
$$\P\Big(M_{n \times (n+u)} \mbox{ mod $p$ is full rank  for all $ p\geq k_0 $}\Big) \ge 1-  O_{T,\eps}( \frac{1}{k_0}+e^{-c_0 \alpha_n n/2}+ n^{-\eps}).$$
Now let $k_0$ be at least as large as the largest prime divisor of $|B|$, and let $P$ be the collection of primes up to $k_0$.
By Proposition~\ref{prop:small},
\begin{equation}\label{eqn:compB}
 \P\Big(\cok(M_{n\times (n+u)})_P \simeq B \Big) = \frac{1}{|B|^u|\Aut(B)|}\prod_{p\le k_0} \prod_{k=1}^\infty (1-p^{-k-u}) + o_{\{\xi_i\}_i,B,u}(1).
\end{equation}
Putting the two bounds together,
$$ \P\Big(\cok(M_{n\times (n+u)}) \simeq B \Big) \ge  \frac{1}{|B|^u|\Aut(B)|}\prod_{p\le k_0} \prod_{k=1}^\infty (1-p^{-k-u}) - O_{T,\eps}( \frac{1}{k_0} +e^{-c_0 \alpha_n n/2}+ n^{-\eps})
+o_{\{\xi_i\}_i,B,u}(1)
.$$
Taking the limit as $n\ra\infty$, we obtain
$$
\liminf_{n\ra\infty} \P\Big(\cok(M_{n\times (n+u)}) \simeq B \Big) \ge  \frac{1}{|B|^u|\Aut(B)|}\prod_{p\le k_0} \prod_{k=1}^\infty (1-p^{-k-u}) - O_{T,\eps}( \frac{1}{k_0}).
$$
As this is true for any fixed $k_0$, we can take $k_0\ra\infty$ to obtain,
$$ \liminf_{n \to \infty} \P\Big(\cok(M_{n\times (n+u)}) \simeq B\Big) \geq  \frac{1}{|B|^u|\Aut(B)|} \prod_{p} \prod_{k=1}^\infty (1-p^{-k-u}).$$
Since $\P(\cok(M_{n\times (n+u)}) \simeq B) \leq \P(\cok(M_{n\times (n+u)})_P \simeq B_P)$,  Equation~\eqref{eqn:compB} gives
$$\limsup_{n \to \infty} \P\Big(\cok(M_{n\times (n+u)}) \simeq B \Big) \leq \frac{1}{|B|^u|\Aut(B)|}\prod_{p\le k_0} \prod_{k=1}^\infty (1-p^{-k-u}),$$
completing the proof of \eqref{eqn:sur:fix}.
\end{proof}

Finally, to obtain Corollary~\ref{cor:anyset} and the $u\geq 1$ cases of Theorem~\ref{theorem:cyclic}, we only need the following simple observation.

\begin{lemma}\label{L:anyset}
Let $\mu$, and $\mu_n$ (for each positive integer $n$) be probability measures on a countable set $S$.  If for each $B\in S$,
$$
\lim_{n\ra\infty} \mu_n(B)=\mu(B),
$$
then for any subset $T\sub S$, we have $$
\lim_{n\ra\infty} \mu_n(T)=\mu(T).
$$
\end{lemma}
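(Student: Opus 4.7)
The statement is the standard fact that pointwise convergence of probability mass functions on a countable set upgrades to uniform convergence of measures on arbitrary subsets; essentially a discrete version of Scheffé's lemma. The proof should be a short $\epsilon$-argument, with the only subtle point being handling the ``tail'' of $S$ outside a large finite subset.

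The plan is to sandwich $\mu_n(T)$ between $\mu(T) \pm O(\epsilon)$ for $n$ large. Fix $\epsilon > 0$. First, because $S$ is countable and $\mu$ is a probability measure, $\sum_{B \in S} \mu(B) = 1$, so we may choose a \emph{finite} subset $S_0 \subset S$ with $\mu(S \setminus S_0) < \epsilon$, and a finite subset $T_0 \subset T \cap S_0$ with $\mu(T) - \mu(T_0) < \epsilon$. Both sets being finite, and $\mu_n(B) \to \mu(B)$ pointwise, there exists $N$ such that for all $n \ge N$, $|\mu_n(B) - \mu(B)| < \epsilon / |S_0|$ for every $B \in S_0$; in particular $|\mu_n(S_0) - \mu(S_0)| < \epsilon$ and $|\mu_n(T_0) - \mu(T_0)| < \epsilon$.

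For the lower bound, monotonicity gives $\mu_n(T) \ge \mu_n(T_0) \ge \mu(T_0) - \epsilon \ge \mu(T) - 2\epsilon$ for $n \ge N$. For the upper bound, we use that $\mu_n$ is a probability measure to control the tail: $\mu_n(S \setminus S_0) = 1 - \mu_n(S_0) \le 1 - \mu(S_0) + \epsilon = \mu(S \setminus S_0) + \epsilon < 2\epsilon$. Splitting $T = (T \cap S_0) \sqcup (T \setminus S_0)$ then yields
\[
\mu_n(T) \le \mu_n(T \cap S_0) + \mu_n(S \setminus S_0) \le \mu(T \cap S_0) + \epsilon + 2\epsilon \le \mu(T) + 3\epsilon.
\]
Combining, $|\mu_n(T) - \mu(T)| \le 3\epsilon$ for all $n \ge N$, and since $\epsilon$ was arbitrary, $\mu_n(T) \to \mu(T)$.

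The ``hard part'' is really just recognizing that the upper bound requires controlling the mass of $\mu_n$ on $S \setminus S_0$, which is not given directly by pointwise convergence on individual points. The trick is that $\mu_n$ and $\mu$ being probability measures pins down the total mass at $1$, so control of $\mu_n(S_0)$ (a finite sum converging to $\mu(S_0)$) automatically controls the complementary tail. No additional input from the paper is required.
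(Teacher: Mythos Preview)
Your proof is correct. The paper takes a somewhat different and shorter route: it applies Fatou's lemma directly to the countable sum $\mu_n(T)=\sum_{B\in T}\mu_n(B)$ to obtain $\liminf_n \mu_n(T)\geq \mu(T)$, and then runs the identical argument on the complement $\bar T$ to get $\liminf_n \mu_n(\bar T)\geq \mu(\bar T)$, which (using $\mu_n(T)+\mu_n(\bar T)=1=\mu(T)+\mu(\bar T)$) yields $\limsup_n \mu_n(T)\leq \mu(T)$. Your argument is in effect an elementary unpacking of this: the finite set $S_0$ plays the role of the partial sums that underlie Fatou for series, and your tail control $\mu_n(S\setminus S_0)=1-\mu_n(S_0)$ is precisely where the total-mass-one hypothesis enters, corresponding to the paper's complement trick. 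The paper's version is cleaner and highlights that the lemma is just Fatou plus complementation; yours is self-contained and makes the role of the probability-measure hypothesis explicit.
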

\begin{proof}
Let $T=\{B_1,\dots\}$.  Then
$$
\liminf_{n\ra\infty} \mu_n(T)=\liminf_{n\ra\infty} \sum_{k=1}^{\infty} \mu_n(B_i)\geq \sum_{k=1}^{\infty} \mu(B_i)=\mu(T),
$$
where the inequality is by Fatou's Lemma.  However, the same argument for the complement $\bar{T}$ of $T$ gives $\liminf_{n\ra\infty} \mu_n(T)\leq \mu(T)$.
\end{proof}

The proof of Theorem \ref{theorem:cyclic} is identical to the proof of Theorem~\ref{theorem:sur}, using Equations~\eqref{eqn:med} and \eqref{eqn:large:nprop} in place of Equations~\eqref{eqn:medu} and \eqref{eqn:large:n+1prop}, and the fact that $\cok(M_{n\times n})$ is cyclic if and only if for every prime $p$, the matrix $M_{n\times n}$ mod $p$ has rank at least $n- 1$.  In fact, the proof gives the following.

\begin{theorem}\label{T:prodcyc}  Let $M_{n \times n}$ be as in Theorem~\ref{theorem:sur}. 
Let $B$ be a finite abelian group and let $k_0$ be larger than any prime divisor of $|B|$, and define $C_B=\{B\times C\,|\, C \textrm{ cyclic, } p\nmid |C| \textrm{ for }1<p<k_0 \}$, the set of groups differing from $B$ by a cyclic group with order only divisible by primes at least $k_0$. 
 Then, we have
  $$\lim_{n\to \infty}  \P\Big(\cok(M_{ n \times n})\in C_B\Big)  =  \frac{1}{|\Aut(B)|}   \prod_{\substack{p<k_0\\p \textrm{ prime}}} (1-p^{-1}) \prod_{\substack{p\geq k_0\\p \textrm{ prime}}} (1+(p^2-p)^{-1})
  \prod_{k=2}^\infty \zeta(k)^{-1}.$$
 \end{theorem}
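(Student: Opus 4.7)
The plan is to adapt the proof of the $u=0$ case of Theorem~\ref{theorem:cyclic}, combining Propositions~\ref{prop:small}, \ref{prop:medium}, and \ref{prop:large} with Lemma~\ref{L:anyset}. The starting observation is that $\cok(M_{n\times n})\in C_B$ is a local (prime-by-prime) condition: the Sylow $p$-subgroup must equal $B_p$ for every prime $p<k_0$ and must be cyclic for every prime $p\geq k_0$. Since $B_p$ is trivial for $p\geq k_0$, the second condition is equivalent to $M_{n\times n}\bmod p$ having rank at least $n-1$.

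Fix a truncation parameter $k_1\geq k_0$, let $P_{k_1}$ be the set of primes up to $k_1$, and let $D_{B,k_1}$ be the countable collection of finite abelian $P_{k_1}$-groups $G$ with $G_p=B_p$ for $p<k_0$ and $G_p$ cyclic (possibly trivial) for $k_0\leq p\leq k_1$. Proposition~\ref{prop:small} applied with $u=0$ and $P=P_{k_1}$, followed by Lemma~\ref{L:anyset} (applicable because the Cohen--Lenstra weights sum to $1$), gives
\[
\lim_{n\to\infty}\P\bigl(\cok(M_{n\times n})_{P_{k_1}}\in D_{B,k_1}\bigr)=\Bigl(\sum_{G\in D_{B,k_1}}\tfrac{1}{|\Aut(G)|}\Bigr)\prod_{p\leq k_1}\prod_{k=1}^\infty(1-p^{-k}).
\]
The $\Aut$-sum factors over primes, and the cyclic-$p$-group series evaluates to $\sum_{a\geq 0}|\Aut(\Z/p^a\Z)|^{-1}=1+\frac{p}{(p-1)^2}$. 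Combining this with the elementary identity $\bigl(1+\frac{p}{(p-1)^2}\bigr)(1-p^{-1})=1+\frac{1}{p(p-1)}$ rewrites the displayed limit as
\[
\frac{1}{|\Aut(B)|}\prod_{p<k_0}(1-p^{-1})\prod_{k_0\leq p\leq k_1}\!\Bigl(1+\tfrac{1}{p(p-1)}\Bigr)\prod_{p\leq k_1}\prod_{k=2}^\infty(1-p^{-k}),
\]
which converges to the right-hand side of Theorem~\ref{T:prodcyc} as $k_1\to\infty$.

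It remains to bound the gap event $\{\cok(M_{n\times n})_{P_{k_1}}\in D_{B,k_1}\}\setminus\{\cok(M_{n\times n})\in C_B\}$, which is contained in $\{\exists\, p>k_1:\rank_p(M_{n\times n})\leq n-2\}$. Splitting by prime size exactly as in the proof of Theorem~\ref{theorem:cyclic}, Equation~\eqref{eqn:med} (with $u=0$, and $n$ large enough that $\eta n \geq 4$) gives $\sum_{k_1<p\leq e^{c_0\alpha_n n/2}}\bigl(2p^{-4}+O(e^{-c_0\alpha_n n})\bigr)=O(k_1^{-3})+O(e^{-c_0\alpha_n n/2})$ for the medium range, and Equation~\eqref{eqn:large:nprop} applied with $d=c_0/2$ gives $O(n^{-\eps})$ for primes $p\geq e^{c_0\alpha_n n/2}$. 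Sending $n\to\infty$ with $k_1$ fixed leaves only the $O(k_1^{-3})$ contribution, and a standard $\limsup/\liminf$ sandwich, then $k_1\to\infty$, produces the claimed equality. The only real work beyond directly invoking prior propositions is the combinatorial identity collapsing Cohen--Lenstra weights over cyclic $p$-groups into the Euler factor $1+\frac{1}{p(p-1)}$; this is routine but requires careful prime-by-prime bookkeeping to match the two cases $p<k_0$ and $p\geq k_0$, which is the main obstacle.
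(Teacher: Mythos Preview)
Your proposal is correct and follows essentially the same approach the paper indicates: combine Proposition~\ref{prop:small} at primes up to a movable cutoff $k_1$ (using Lemma~\ref{L:anyset}, which is legitimate here since for a \emph{finite} set of primes the $u=0$ Cohen--Lenstra weights do sum to $1$), then control primes beyond $k_1$ via Equation~\eqref{eqn:med} and Equation~\eqref{eqn:large:nprop}, and finish with the $\liminf/\limsup$ sandwich as $k_1\to\infty$. Your Euler-factor computation $\bigl(\sum_{a\ge 0}|\Aut(\Z/p^a\Z)|^{-1}\bigr)(1-p^{-1})=1+(p^2-p)^{-1}$ is exactly the bookkeeping the paper leaves implicit.
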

% \melanie{I tried putting this in the introduction, but I thought the statement was too awkward.  Are you ok with stating it here?} \hoi{Thanks! By the way I used $k_0$ instead of $Y$ to make it consistent with the preceding part; please change it back if it is not so.}

 Now we turn to the Laplacian, where we will follow an almost identical outline (corresponding to the case $u=1$ of our i.i.d. model $M_{n \times (n+u)}$). Indeed we will prove Theorem~\ref{theorem:sur:L} by checking if the Sylow-$p$ subgroup of $S_M$
is equal to $B_p$ for each prime $p$ (or is cyclic for each prime $p$) in three size ranges.  We prove the following proposition in Section~\ref{section:Laplacian}.

\begin{prop}\label{prop:L}
Let $M_{n \times n}$ and $\eps$ be as in Theorem~\ref{theorem:sur:L}. There are constants $c_0, d>0$ such that the following holds.  
\begin{itemize}
\item (Small Primes) Let $B$ be a finite abelian group.  Let $P$ be a finite set of primes including all those dividing $|B|$.
Then
\begin{equation}\label{eqn:small:L}
\lim_{n\ra\infty} \P\Big((S_{M_{n \times n}})_P \simeq B \Big) = \frac{1}{|B||\Aut(B)|}\prod_{p\in P} \prod_{k=1}^\infty (1-p^{-k-1}). 
\end{equation}
\item (Medium primes) Let $p$ be any prime. Then,
\begin{equation}\label{eqn:medu:L}
\P\Big(L_{M_{n \times n}} \mbox{ mod $p$ has rank $\leq n-2$} \Big)\leq 2p^{-2}+O(e^{-c_0\alpha_n n})
\end{equation}
and
\begin{equation}\label{eqn:med:L}
\P\Big(L_{M_{n \times n}} \mbox{ mod $p$ has rank $\leq n-3$} \Big)\leq 2p^{-6}+O(e^{-c_0\alpha_n n}).
\end{equation}
\item (Large primes) We also have
\begin{equation}\label{eqn:large:n:L}
\P\Big(\forall \mbox{ primes } p\geq e^{d \alpha_n n} : L_{M_{n \times n}} \mbox{ mod
  $p$ has full rank in $\Z_0^n/p$}\Big) \ge 1 -O_{d,T,\eps}( n^{-\eps}),
\end{equation}
as well as 
\begin{equation}\label{eqn:large:n-1:L}
\P\Big(\forall \mbox{ primes } p\geq e^{d \alpha_n n} : L_{M_{n \times (n-1)}} \mbox{ mod $p$ has rank at least $n-2$ }\Big) \ge 1 -O_{d,T,\eps}( n^{-\eps}).
\end{equation}
\end{itemize}
\end{prop}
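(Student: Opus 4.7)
The plan is to prove Proposition~\ref{prop:L} by mirroring the three-part structure of Propositions~\ref{prop:small}, \ref{prop:medium}, and~\ref{prop:large}, with adaptations for the Laplacian. The key structural point is that, although the entries within each column of $L_M$ are dependent (the diagonal of column $j$ is $\sum_{i\ne j}x_{ij}$, a function of the other entries in that same column), the columns of $L_M$ are mutually independent, since the $j$-th column of $L_M$ depends only on the $j$-th column of $M$. This preserves the column-by-column framework of the i.i.d.~case, while introducing two complications: the columns are not identically distributed, and each column carries one internal linear constraint.

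For the small primes (equation~\eqref{eqn:small:L}), I would run the moment method of \cite{W1} in the refined form used for Proposition~\ref{prop:small}/Theorem~\ref{th:allsmall}. A surjection $S_M \twoheadrightarrow A$ onto a finite abelian $p$-group $A$ corresponds to a surjective $F: \Z_0^n \to A$ with $F(L_M e_j)=0$ for every $j$, which writes as $\sum_{i\ne j} x_{ij}\bigl(F(e_j)-F(e_i)\bigr)=0$. This is structurally the same relation that appears in the $u=1$ case of Proposition~\ref{prop:small}, and in particular matches the prefactor $1/(|B||\Aut(B)|)$ in~\eqref{eqn:small:L}. I would import the classification of ``bad'' codes $F$ and the depth-based case analysis from the proof of Proposition~\ref{prop:small}, and check that the Fourier/anti-concentration inputs are driven entirely by the i.i.d.~off-diagonal entries, with the diagonal constraint absorbed as a fixed linear combination that does not perturb the limiting moments.

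For the medium primes (equations~\eqref{eqn:medu:L} and~\eqref{eqn:med:L}), the plan is to first establish a non-i.i.d.~analog of the per-column estimate of \cite{NP}: conditional on likely events about the current column span $H\subseteq\Z_0^n/p$ and its mod-$p$ normal vectors, the next Laplacian column lies in $H$ with probability within $O(e^{-c\alpha_n n})$ of the uniform value $p^{-(n-1-\dim H)}$. The $n-1$ off-diagonal entries of column $j$ are i.i.d.~$\alpha_n$-balanced, and for a normal vector $w$ of $H$ the diagonal entry $L_{jj}=\sum_{i\ne j}x_{ij}$ interacts with $w$ only by shifting the coefficient of $x_{ij}$ from $w_i$ to $w_i-w_j$, after which anti-concentration proceeds exactly as in the i.i.d.~setting. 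I would then apply the Markov chain coupling of Proposition~\ref{prop:medium}, now compared against the uniform rank process on $\Z_0^n/p$, to convert the per-column bounds into~\eqref{eqn:medu:L} and~\eqref{eqn:med:L} (with the $p^{-2}$ and $p^{-6}$ matching the $u=1$ bounds $p^{-(u+1)}$ and $p^{-(2u+4)}$ of Proposition~\ref{prop:medium}).

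For the large primes (equations~\eqref{eqn:large:n:L} and~\eqref{eqn:large:n-1:L}), I would adapt the two-part argument of Proposition~\ref{prop:large}: an inverse Littlewood--Offord bound ruling out structured mod-$p$ normal vectors for $e^{d\alpha_n n}\le p\le e^{n^{1-\eps/3}}$, and a characteristic-zero lift of structured normal vectors for $p>e^{n^{1-\eps/3}}$. The internal column dependence is again absorbed by the substitution $w_i\mapsto w_i-w_j$ in the anti-concentration step, which is natural because normal vectors in $\Z_0^n/p$ are defined modulo the all-ones vector, so we can normalize $w_j=0$ on a chosen coordinate. The lift applies to $L_M$, whose entries are bounded by $n\cdot n^T$, still polynomial in $n$. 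The main obstacle will be the medium-primes step: porting the per-column estimate of \cite{NP} to non-identically distributed columns with a constrained diagonal, while preserving the $e^{-c\alpha_n n}$ error term so that it can be summed over all primes up to $e^{c_0\alpha_n n/2}$. Once that non-i.i.d.~analog is established, the small and large prime steps reduce by bookkeeping to the existing i.i.d.~machinery, since the diagonal dependence consistently manifests as the harmless shift $w_i\mapsto w_i-w_j$ in the underlying anti-concentration inputs.
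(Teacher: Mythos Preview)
Your plan is correct and matches the paper's approach in all three regimes: the paper proves Proposition~\ref{prop:L} in Section~\ref{section:Laplacian} by exactly the three adaptations you describe, including the non-i.i.d.\ analog of \cite{NP} (Theorem~\ref{theorem:corank:L}) for medium primes via the same $w_i\mapsto w_i-w_j$ shift (Theorem~\ref{theorem:LO:L}), and the inverse Littlewood--Offord plus lifting route for large primes (Lemma~\ref{L:upgrade:L}). The one presentational difference is in the small-primes step: rather than working directly with $F:\Z_0^n\to A$ and the relation $\sum_{i\ne j}x_{ij}(F(e_j)-F(e_i))=0$ as you propose, the paper uses the auxiliary-matrix trick from \cite[Theorem~6.2]{W0}, replacing the diagonal by independent uniform entries in $\Z/a\Z$ and then conditioning on zero column sums, which reduces the moment computation to the i.i.d.\ estimate for $\tilde{F}:V\to G\oplus R$; this is equivalent to your direct approach but lets the paper reuse Lemmas~\ref{L:FullFcode} and~\ref{L:probdepthestimatecolumnNEW} verbatim.
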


The deduction of Theorem ~\ref{theorem:sur:L} from the above results is similar to the deduction of Theorem ~\ref{theorem:sur} and Theorem ~\ref{theorem:cyclic} from Propositions ~\ref{prop:small},~\ref{prop:medium} and ~\ref{prop:large}, and hence is omitted.

\section{Odlyzko's lemma}\label{section:O}

In this section we give an elementary but extremely useful tool which is a variant of Odlyzko's lemma  \cite{KKS} (also \cite[Lemma 2.2]{M1}). This result will be used in the arguments for small, medium, and large primes. We will  focus on the i.i.d case and refer the reader to Lemma ~\ref{lemma:O:L}  for a similar result regarding the Laplacian.

\begin{lemma}\label{lemma:O}
Let $\F$ be a field.
For a deterministic subspace $V$ of $\F^n$ of dimension $d$ and a random vector $X\in \F^n$ with i.i.d. entries taking any value with probability at most $1-\alpha_n$,
$$\P(X \in V) \le (1-\alpha_n)^{n-d}.$$
\end{lemma}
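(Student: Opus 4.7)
The plan is to reduce the probability computation to a standard exposure argument based on a free/dependent coordinate split induced by $V$. First, I would find such a split: since $\dim V = d$, pick any basis of $V$ and arrange its vectors as the rows of a $d \times n$ matrix. Row reduction produces a pivot set $I \subset [n]$ with $|I| = d$ such that, in reduced row echelon form, each non-pivot column $j \in J := [n] \setminus I$ is an explicit $\F$-linear combination of the pivot columns. Consequently, for every $v \in V$, the coordinates $(v_j)_{j \in J}$ are determined as fixed $\F$-linear functions $\ell_j$ of the coordinates $(v_i)_{i \in I}$.

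Second, I would expose the entries of $X$ in two stages. Condition on $(X_i)_{i \in I}$. On the event $\{X \in V\}$, each remaining coordinate must satisfy $X_j = \ell_j\bigl((X_i)_{i \in I}\bigr)$ for all $j \in J$, a deterministic target given the conditioning. Because the entries of $X$ are i.i.d., the family $(X_j)_{j \in J}$ is independent of $(X_i)_{i \in I}$ and is itself mutually independent; the hypothesis that no single value of $X_j$ is attained with probability exceeding $1 - \alpha_n$ then gives
$$
\P\bigl(X \in V \;\big|\; (X_i)_{i \in I}\bigr) \;\le\; \prod_{j \in J} \P\bigl(X_j = \ell_j((X_i)_{i \in I})\bigr) \;\le\; (1-\alpha_n)^{n-d}.
$$
Taking expectations over the conditioning yields the claimed bound.

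I would not expect any substantive obstacle: the proof is entirely field-agnostic and purely linear-algebraic on the $V$ side, and purely a consequence of independence plus the single-point mass bound on the probability side. The only mild subtlety is that the pivot set $I$ depends on $V$ and not on $X$, which is exactly why conditioning on $(X_i)_{i \in I}$ preserves independence of the remaining coordinates.
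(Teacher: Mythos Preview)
Your proof is correct and is essentially the same as the paper's: the paper assumes without loss of generality that the first $d\times d$ submatrix of a basis matrix for $V$ is invertible (your pivot set $I=\{1,\dots,d\}$), conditions on $(x_1,\dots,x_d)$, and observes that membership in $V$ then forces each remaining coordinate to a single value, each hit with probability at most $1-\alpha_n$. The only cosmetic difference is that the paper phrases the determination via the unique coefficient vector $(c_1,\dots,c_d)$ rather than via explicit linear functionals $\ell_j$.
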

We give a short proof of this well-known result for completeness.
\begin{proof}
Assume that $V =\Sp(H_1,\dots, H_d)$, where $H_i=(h_{i1},\dots, h_{in})$, and without loss of generality we assume the matrix $(h_{ij})_{1\le i,j\le d}$ has rank $d$. 
Consider the event $X=(x_1,\dots,x_d, x_{d+1},\dots, x_n) \in V$. Because $(h_{ij})_{1\le i,j\le d}$ has rank $d$, there exist unique coefficients $c_1,\dots, c_d \in \F$ such that 
$$(x_1,\dots, x_d) = \sum_i c_i (h_{i1},\dots, h_{id}).$$
Hence conditioning on $(x_1,\dots, x_d)$, if $X=(x_1,\dots,x_d, x_{d+1},\dots, x_n) \in V$ then for all $d+1\le j\le n$
$$x_j = \sum_i c_i h_{ij}.$$
However the probability of each of these events is at most $1-\alpha_n$, and so conditioning on $(x_1,\dots, x_k)$, the event $X \in V$  holds with probability at most $(1-\alpha_n)^{n-d}$.
\end{proof}

\begin{corollary}\label{cor:fullrank} Let $X_1,\dots, X_{n-k}$ be random vectors with i.i.d. entries taking any value with probability at most $1-\alpha_n$. Then the probability that $X_1,\dots, X_{n-k}$
 are linearly independent in $\F^n$ is at least $1 -\a_n^{-1}(1-\a_n)^{k}.$ 
\end{corollary}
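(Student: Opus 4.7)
The plan is to apply Lemma~\ref{lemma:O} one vector at a time and then take a union bound. For $i=1,\dots,n-k$, let $\CE_i$ be the event that $X_i\in \Sp(X_1,\dots,X_{i-1})$. The complement of the event we want is $\bigcup_{i=1}^{n-k} \CE_i$ (since $X_1,\dots,X_{n-k}$ fail to be linearly independent iff some $X_i$ lies in the span of the previous ones).

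To bound $\P(\CE_i)$, I would condition on the values of $X_1,\dots,X_{i-1}$. Under any such conditioning, $V_i:=\Sp(X_1,\dots,X_{i-1})$ is a deterministic subspace of $\F^n$ of dimension at most $i-1$, and $X_i$ remains an independent random vector with i.i.d.\ coordinates, each hitting any particular value with probability at most $1-\alpha_n$. Lemma~\ref{lemma:O} then yields
\[
\P(X_i\in V_i \mid X_1,\dots,X_{i-1}) \le (1-\alpha_n)^{n-(i-1)},
\]
and hence $\P(\CE_i)\le (1-\alpha_n)^{n-i+1}$ after taking expectation.

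Summing via a union bound and a geometric series,
\[
\P\Bigl(\bigcup_{i=1}^{n-k}\CE_i\Bigr) \le \sum_{i=1}^{n-k}(1-\alpha_n)^{n-i+1} = \sum_{j=k+1}^{n}(1-\alpha_n)^{j} \le \frac{(1-\alpha_n)^{k+1}}{\alpha_n} \le \alpha_n^{-1}(1-\alpha_n)^{k},
\]
which gives the claimed lower bound $1-\alpha_n^{-1}(1-\alpha_n)^k$ on the probability of linear independence.

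There is no real obstacle here: the corollary is essentially a textbook consequence of Lemma~\ref{lemma:O}. The only minor point to be careful with is to observe that the bound in Lemma~\ref{lemma:O} holds uniformly over the choice of the deterministic subspace, so that the argument goes through after conditioning on $X_1,\dots,X_{i-1}$.
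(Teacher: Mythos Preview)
Your proof is correct and follows essentially the same approach as the paper: both argue by a union bound over the events that $X_{i}$ lies in the span of the preceding vectors, apply Lemma~\ref{lemma:O} to bound each such probability by $(1-\alpha_n)^{n-i+1}$, and sum the resulting geometric series. The only difference is cosmetic (your indexing runs $i=1,\dots,n-k$ while the paper's runs $i=0,\dots,n-k-1$).
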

\begin{proof}Let $0\le i\le n-k-1$ be minimal such that $X_{i+1} \in span(X_1,\dots,X_i)$. By Lemma ~\ref{lemma:O}, this event is bounded by $(1-\alpha_n)^{n-i}$. Summing over $0\le i\le n-k-1$, the probability under consideration is bounded by $\sum_{i=0}^{n-k-1} (1-\alpha_n)^{n-i} < \a_n^{-1}(1-\alpha_n)^{k}$. 
\end{proof}

% \melanie{new discussion:}

In all three arguments, Lemma~\ref{lemma:O} (Odlyzko's lemma) will only suffice for the easy part of the argument, and a stronger, Littlewood-Offord style bound (Lemma~\ref{L:FullFcode}, Theorem~\ref{thm:A1}, Theorem~\ref{theorem:LO}, Theorem~\ref{theorem:ILO}) will be required for the harder part of the argument.  The details of the Littlewood-Offord style bound required are different in each argument, and thus are given in the corresponding sections.  Note that Odlyzko's lemma is too weak to be used alone for our purposes, because it can produce a bound $1-\alpha_n,$  where we require bounds that go to $0$ as $n\ra\infty$.
In this paper, $\alpha_n$ is possibly small.  If, however, the matrix entries take values modulo large primes with probability at most $1-\alpha_n$, and $1-\alpha_n\ra 0$ as $n\ra \infty$, then we expect our arguments can all be considerably simplified and only Odlyzko's lemma would be necessary (and no Littlewood-Offord style bounds  required). 
% (In this case, the bound on the coefficients of the matrix would be exponential in $(1-\alpha_n)^{-1}$.) 
 For example, such a simplification works to handle the case of entries chosen uniformly in a interval centered at $0$ with size growing at any rate with $n$.
%For example, if we have integer entries taking values mod $p$ with probability at most $1-\alpha_n(p)$ and a cutoff $Y(n)$ so that $\sum_{B}^{Y(n)} (1-\alpha_n(p))^2$ goes to $0$ as $B 
%\ra\infty$, and $(\log X_n) \max_{p>Y(n)}(1-\alpha_n(p))^2 \ra 0$ as $n\ra\infty$, where $X_n$ is a bound on the coefficients of our entries, we can simplify our arguments for medium and 
%large primes so as to only include the parts using Odlyzko's lemma.  

\section{Small Primes}\label{sec:small}
In this section, we prove the following theorem, which generalizes \cite[Corollary 3.4]{W1} to smaller $\alpha_n$ and implies our Proposition~\ref{prop:small}.
The method requires  refinement from that of \cite{W1}, and we discuss the differences below.
 \begin{theorem}\label{th:allsmall}
Let $u$ be a non-negative integer and $\alpha_n$ a function of integers $n$ such that for any constant $\Delta>0$, for $n$ sufficiently large we have
$\alpha_n\geq \Delta \log n/n$.  For every positive integer $n$,
let $M(n)$  be a random matrix valued in $M_{n\times (n+u)}(\Z)$ with independent $\alpha_n$-balanced entries.
Let $B$ be a finite abelian group.  Let $P$ be a finite set of primes including all those dividing $|B|$.
  Then
\begin{align*}
\lim_{n\ra\infty} \P(\cok(M(n))_P\isom B) 
&= \frac{1}{|B|^u|\Aut(B)|}
\prod_{p\in P}
\prod_{k=1}^\infty (1-p^{-k-u}).
\end{align*}
\end{theorem}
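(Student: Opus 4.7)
The plan is the standard moments-based approach: compute the surjection moments $\E[\#\Sur(\cok(M(n)),A)]$ for each finite abelian group $A$ whose prime divisors lie in $P$, show that in the limit these equal $|A|^{-u}$, and then apply a moments-determine-distribution result (as in \cite{W1}, which uses the fact that the Cohen--Lenstra measure is determined by its surjection moments) to conclude that $\P(\cok(M(n))_P\simeq B)$ converges to the claimed value. Only the first step requires new work: the moments result and its consequence for the fixed-$B$ probability are purely algebraic manipulations once the moments are known, and the passage from the moments for $M(n)$ to the moments for $\cok(M(n))_P$ is immediate since $|A|$ is bounded and only primes in $P$ divide it.

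Next I would unfold the moment. A surjection $\cok(M(n))\twoheadrightarrow A$ is the same data as a surjection $F\colon\Z^n\twoheadrightarrow A$ such that $F\circ M(n)=0$, i.e.~$F$ kills every column of $M(n)$. Because the columns are independent,
\begin{equation*}
\E[\#\Sur(\cok(M(n)),A)] \;=\; \sum_{F\in\Sur(\Z^n,A)}\;\prod_{j=1}^{n+u}\P\!\left(F(c_j)=0\text{ in }A\right),
\end{equation*}
where $c_j$ denotes the $j$-th column. If one could replace each factor by the idealized value $|A|^{-1}$, the sum would reduce to $|\Sur(\Z^n,A)|/|A|^{n+u}\to |\Aut(A)|/|A|^u$ as $n\to\infty$, which is exactly the desired moment. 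The point is therefore to show that the deviations from this idealization, summed over all surjections, vanish in the limit.

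The central tool is a stratification of surjections $F$ by how structured they are modulo primes in $P$, in the spirit of the depth/code-dimension decompositions in \cite{W1}. Roughly, for $F\colon\Z^n\to A$, one assigns to every way of writing $A=H\oplus (A/H)$ (equivalently, every subgroup) a ``code distance'' measuring how few coordinates of $F$ escape into the complement of $H$. For $F$'s whose depth is large in every direction, a Littlewood--Offord-type anti-concentration bound (applied to each column) improves $\P(F(c_j)=0)$ beyond $(1-\alpha_n)$, giving product bounds of the form $e^{-\Omega(\alpha_n n)}$, while for $F$'s of small depth one counts those $F$ exactly (they factor through a proper quotient of $\Z^n$) and uses Odlyzko's Lemma~\ref{lemma:O} to control the column probabilities. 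The contribution of the very structured $F$'s to the moment must be made negligible; this is where the refinement enters.

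The main obstacle, and the only real advance over \cite{W1}, is that when $\alpha_n$ is allowed to decay as slowly as $\Delta\log n/n$ the bound $(1-\alpha_n)^{n-d}$ from Lemma~\ref{lemma:O} is no longer exponentially small unless $n-d\gg 1/\alpha_n$, so the dichotomy between ``trivially structured'' and ``unstructured'' $F$'s used in previous work breaks down. To compensate, I would introduce an intermediate stratum: surjections for which $F$ takes a fixed value on all but $\le k$ coordinates (with $k$ chosen slightly smaller than $\alpha_n n$), bound the number of such $F$ by a crude $\binom{n}{k}|A|^{k+O(1)}$ type count, and use the tail of Lemma~\ref{lemma:O} applied to a well-chosen codimension to kill the probability factor. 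Since $\alpha_n\geq \Delta\log n/n$ for every $\Delta$, we have $\alpha_n n\to\infty$ faster than any constant multiple of $\log n$, which is enough to beat combinatorial factors of the form $n^{O(1)}\cdot|A|^{O(\log n)}$ arising in the refined counting. Once the structured strata are controlled, the unstructured stratum contributes the main term $|\Aut(A)|/|A|^u$, the sum is uniformly convergent in $n$, and the moments limit to $|A|^{-u}$ as required.
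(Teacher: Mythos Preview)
Your high-level plan matches the paper's: compute the surjection moments $\E[\#\Sur(\cok(M(n)),G)]$, show they converge to $|G|^{-u}$ (this is Theorem~\ref{T:MomMat}), and then invoke \cite[Theorem~3.1]{W1} to pass from moments to the distribution. You have also correctly located the obstruction: the code/non-code dichotomy of \cite{W1} is too coarse once $\alpha_n$ may be as small as $\Delta\log n/n$.

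The gap is in your proposed fix. A single ``intermediate stratum'' of $F$'s taking a fixed value on all but $\le k$ coordinates, for one threshold $k\approx c\alpha_n n$, together with an Odlyzko-type probability bound, does not close the argument, and the arithmetic you give does not balance: with $\alpha_n\approx\Delta\log n/n$ one has $k\approx c\Delta\log n$, and then $\binom{n}{k}$ is already of order $n^{\Theta(\log n)}$, not $n^{O(1)}|A|^{O(\log n)}$ as you claim. No single threshold works; one must control, for every $w$ up to $\delta n$, the $F$'s with exactly $w$ coordinates escaping a given subgroup, and the count and probability bound must be matched $w$-by-$w$. The paper does this by fixing a proper subgroup $H<G$, a maximal chain $G=G_0\supsetneq\cdots\supsetneq G_\ell=H$, and parameters $w_j=\#\{i:Fv_i\in G_{j-1}\setminus G_j\}$. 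The two refinements over \cite{W1} are that the count is split according to the full tuple $(w_1,\dots,w_\ell)$ (Lemma~\ref{L:newcount}) and---crucially---that the per-column probability bound is sharpened to depend on each $w_j$ through a factor $p_j^{-1}+(1-p_j^{-1})\exp(-\alpha_n w_j/p_j^2)$ (Lemma~\ref{L:probdepthestimatecolumnNEW}). Only with this $w_j$-dependent probability bound does the sum over $w_j$ of count times probability become a tractable one-variable sum (handled by Lemma~\ref{L:dealwithonesum}), and it is exactly there that the hypothesis ``$\alpha_n\ge\Delta\log n/n$ for all $\Delta$'' is consumed.
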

Note that the entries of the matrix do not have to be identical.

Throughout the section we write $\Hom(A,B)$ and  $\Sur(A,B)$ for the set of homomorphisms and surjective homomorphisms, respectively, from $A$ to $B$.  We will always use $a$ to denote a positive integer and $R=\Z/a\Z$.  We then study finite abelian groups $G$ whose exponent divides $a$, i.e. $aG=0$.  We write $G^*$ for $\Hom(G,R)$.

\subsection{Set-up}\label{S:mom}
We will study integral matrices by reducing them mod each positive integer.  We let $a$ be a positive integer.
Let $M$ be the random $n\times (n+u)$ matrix with entries in $R$ that is the reduction of $M(n)$ from Theorem~\ref{th:allsmall} modulo $a$.  
We let $X_1,\dots, X_{n+u}\in R^n$ be the columns of $M$, and $x_{ij}$ the entries of $M$ (so that the entries of $X_j$ are $x_{ij}$). 
For a positive integer $n$, we let $V=R^n$ with basis $v_i$ and $W=R^{n+u}$ with basis $w_j$ (these will always implicitly depend on the integers we call $a$ and $n$). 
Note for $\sigma\sub [n]$, $V$ has distinguished submodules $V_{\setminus \sigma}$ generated by the $v_i$ with $i\not\in\sigma$. (So $V_{\setminus \sigma}$ comes from not using the $\sigma$ coordinates.)
 We view $M\in \Hom(W,V)$ and its columns $X_j$ as elements of $V$ so that
$X_j=Mw_j=\sum_i x_{ij}v_i. $  Let $G$ be a finite abelian group with exponent dividing $a$.
We have $\cok (M)=V/MW$.

We know from \cite{W1} that to understand the distribution of $\cok (M)$, it suffices to determine certain moments.
To investigate the moments 
$
\E(\#\Sur(\cok (M), G))
$ (see \cite[Section 3.3]{CLKPW} for more on why these are ``moments''),
we recognize that each such surjection lifts to a surjection $V\ra G$ and so we have
\begin{equation}\label{E:expandF}
\E(\#\Sur(\cok (M), G))=\sum_{F\in \Sur(V,G)}  \P(F(MW)=0).
\end{equation}
By the independence of columns, we have
$$
\P(F(MW)=0)=\prod_{j=1}^m \P(F(X_j)=0).
$$
So we aim to estimate these probabilities $\P(F(X_j)=0)$, which will give us our desired moments.

\subsection{Finding the moments}

We will first estimate $\P(F(X_j)=0)$ for the vast majority of $F$, which satisfy the following helpful property.  

\begin{definition}
We say that $F\in \Hom(V,G)$ is a \emph{code} of distance $w$, if for every $\sigma\sub [n]$ with $|\sigma|<w$, we have $FV_{\setminus \sigma}=G$.
In other words, $F$ is not only surjective, but would still be surjective if we throw out (any) fewer than $w$ of the standard basis vectors from $V$.  (If $a$ is prime so that $R$ is a field, then this is equivalent to whether the transpose map $F: G^* \ra V^*$ is injective and has image $\im(F)\sub V^*$ a linear code of distance $w$, in the usual sense.)
\end{definition}

First we recall a lemma from \cite{W1} that lets us see how a code $F$ acts on a single column from our matrix.  The following statement is slightly stronger than \cite[Lemma 2.1]{W1}, but one can see this statement follows directly from the proof of \cite[Lemma 2.1]{W1}.

\begin{lemma}\label{L:Fcodecolumn}
Let $a,n$ be positive integers, $G$ a finite abelian group of exponent dividing $a$, and $X$ the reduction mod $a$ of a random vector in $\Z^n$ with independent, $\alpha$-balanced entries.  
 Let $F\in \Hom(V,G)$ be a code of distance $w$ and $A\in G$.
We have
$$
\left |  \P(FX=A) - |G|^{-1} \right| \leq \frac{|G|-1}{|G|}  \exp(-\alpha w/a^2).
$$
\end{lemma}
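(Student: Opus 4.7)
The plan is to perform Fourier analysis on $G$ and on $R=\Z/a\Z$. Writing $\wh G$ for the Pontryagin dual, Fourier inversion gives
\[
\P(FX=A) \;=\; \frac{1}{|G|}\sum_{\chi\in\wh G} \chi(-A)\,\E[\chi(FX)].
\]
The trivial character contributes exactly $|G|^{-1}$, so to prove the lemma it suffices to establish $|\E[\chi(FX)]|\le \exp(-\alpha w/a^2)$ for each of the $|G|-1$ nontrivial characters $\chi$; summing $\chi(-A)\,\E[\chi(FX)]$ over those and dividing by $|G|$ then yields the claimed bound, since $|\chi(-A)|=1$.

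For a nontrivial $\chi$, the composite $\chi\circ F$ is a character of $V=R^n$. Using independence of the coordinates $x_1,\dots,x_n$ of $X$,
\[
\E[\chi(FX)] \;=\; \prod_{i=1}^n \E[\psi_i(x_i)], \qquad \psi_i(t):=\chi\bigl(F(tv_i)\bigr),
\]
where each $\psi_i$ is a character of $R$. Let $\sigma:=\{i:\psi_i\ne 1\}$. The code hypothesis forces $|\sigma|\ge w$: otherwise $\chi\circ F$ vanishes on $V_{\setminus \sigma}$, which would make $\chi$ trivial on $FV_{\setminus \sigma}=G$, contradicting nontriviality of $\chi$.

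The crux is then the single-coordinate bound
\[
|\E\,\psi(x)| \;\le\; \exp(-\alpha/a^2)
\]
for any nontrivial character $\psi$ of $\Z/a\Z$ and any $\alpha$-balanced integer $x$. I would let $m>1$ be the order of $\psi$, so $m\mid a$ and $\psi(t)=e^{2\pi i kt/m}$ for some $k$ coprime to $m$, then expand $|\E\psi(x)|^2=\E\psi(x-x')$ with $x'$ an independent copy, and use $1-\cos(2\pi k j/m)\ge 8/m^2$ for $j\not\equiv 0\pmod m$ (via $\sin(\pi\theta)\ge 2\theta$ on $[0,1/2]$) to obtain
\[
1-|\E\psi(x)|^2 \;\ge\; \frac{8}{m^2}\Bigl(1-\sum_{r\in\Z/m\Z}\P(x\equiv r\bmod m)^2\Bigr).
\]
Picking any prime $p\mid m$, the $\alpha$-balanced hypothesis yields $\max_r\P(x\equiv r\bmod m)\le \max_{r'}\P(x\equiv r'\bmod p)\le 1-\alpha$, whence $\sum_r\P(x\equiv r\bmod m)^2\le 1-\alpha$, so the parenthesized quantity is $\ge\alpha$. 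Since $m\le a$ this gives $|\E\psi(x)|^2\le 1-8\alpha/a^2\le \exp(-8\alpha/a^2)$, hence $|\E\psi(x)|\le\exp(-\alpha/a^2)$. Multiplying this bound over the $\ge w$ indices in $\sigma$ gives $|\E[\chi(FX)]|\le \exp(-\alpha w/a^2)$, and summing against the $|G|-1$ nontrivial $\chi$ closes the proof.

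The main potential obstacle is bridging the $\alpha$-balanced hypothesis, which is stated only modulo primes, to the composite modulus $m$ at which the character $\psi$ lives; this is handled cleanly by projecting along $\Z/m\Z\twoheadrightarrow\Z/p\Z$ for any prime $p\mid m$, as above.
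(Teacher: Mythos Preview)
Your proof is correct and follows the standard Fourier-analytic approach that underlies the result in \cite[Lemma~2.1]{W1} to which the paper defers: expand $\P(FX=A)$ over $\wh G$, use the code condition to show each nontrivial $\chi$ has at least $w$ coordinates where $\psi_i=\chi(\cdot\,Fv_i)$ is nontrivial, and bound each such factor by $\exp(-\alpha/a^2)$ via the symmetrization $|\E\psi(x)|^2=\E\psi(x-x')$ together with the projection $\Z/m\Z\twoheadrightarrow\Z/p\Z$ to invoke $\alpha$-balancedness. The handling of the composite-modulus issue via a prime divisor of $m$ is exactly the right move, and the constants work out (indeed you obtain the slightly sharper $\exp(-4\alpha/a^2)$ per coordinate before relaxing).
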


We then will put these estimates for columns together using this simple inequality.

\begin{lemma}[{\cite[Lemma 2.3]{W1}}]\label{L:1tom}
If we have an integer $m\geq 2$ and  real numbers $x\geq 0$ and $y$ such that $|y|/x\leq 2^{1/(m-1)}-1$ and $x+y\geq 0$, then
$$
x^m-2mx^{m-1}|y| \leq (x+y)^m\leq x^m+2mx^{m-1}|y|.
$$
\end{lemma}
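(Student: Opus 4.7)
My plan is to reduce the statement to a one-variable inequality and then use a single application of the Mean Value Theorem. Assume $x>0$ (the hypothesis $|y|/x \leq 2^{1/(m-1)}-1$ is otherwise vacuous or forces $y=0$, in which case the claim is trivial). Dividing the desired inequality through by $x^m$ and setting $t := y/x$, both inequalities become equivalent to
\[
|(1+t)^m - 1| \;\leq\; 2m|t|
\]
under the constraints $|t| \leq 2^{1/(m-1)}-1$ and $1+t \geq 0$.

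To prove this, I would apply the MVT to the function $f(s)=(1+s)^m$ on the interval with endpoints $0$ and $t$: there exists $\theta \in (0,1)$ such that
\[
(1+t)^m - 1 \;=\; m\,t\,(1+\theta t)^{m-1}.
\]
The remaining task is to bound $(1+\theta t)^{m-1}$ by $2$ in both sign cases of $t$. If $t\geq 0$ then $0 \leq 1+\theta t \leq 1+t$, and the hypothesis $t\leq 2^{1/(m-1)}-1$ gives $(1+t)^{m-1}\leq 2$. If $-1 \leq t < 0$ (where the lower bound comes from $1+t\geq 0$) then $0 \leq 1+\theta t \leq 1$, so $(1+\theta t)^{m-1}\leq 1\leq 2$. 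Either way $|(1+t)^m-1|\leq 2m|t|$, which yields both the upper and lower bound in the lemma simultaneously.

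I do not expect any genuine obstacle here: the only thing to be careful about is that the MVT is applied to a function defined on an interval where $1+s\geq 0$, so that $(1+s)^{m-1}$ is well defined and nonnegative. This is exactly guaranteed by the side condition $x+y\geq 0$ in the hypothesis. An alternative route, essentially equivalent, would expand $(1+t)^m-1 = \int_0^{t} m(1+s)^{m-1}\,ds$ and bound the integrand by $2$ using the same observation, but the MVT version is the most economical.
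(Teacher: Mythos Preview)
Your proof is correct. The paper does not actually supply its own proof of this lemma; it simply cites \cite[Lemma~2.3]{W1}. Your reduction to the one-variable inequality $|(1+t)^m-1|\le 2m|t|$ and the single application of the Mean Value Theorem is a clean, self-contained argument, and your case analysis bounding $(1+\theta t)^{m-1}$ by $2$ is exactly what the hypothesis $|t|\le 2^{1/(m-1)}-1$ is designed to give. (In fact, for $m\ge 2$ that hypothesis already forces $|t|\le 1$, so the side condition $x+y\ge 0$ is automatic; your proof is valid either way.)
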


The below is a refinement of \cite[Lemma 2.4]{W1} that allows sparse matrices.

% \const{$c_1,K_1$ introduced.}

\begin{lemma}[Bound for codes]\label{L:FullFcode}
Let $a\geq 1$ and  $u\geq 0$ be  integers, $G$ be a finite abelian group of exponent dividing $a$, the sequence $\{\alpha_n\}_n$ be as in Theorem~\ref{th:allsmall}, and $\delta>0$. 
Then there are $c_1,K_1>0$ such that the following holds.  
 Let  $\bar{M}(n)$ be the reduction modulo $a$ of random matrices $M(n)$ as in Theorem~\ref{th:allsmall}.
For every positive integer $n$, and $F\in \Hom(V,G)$ a code of distance $\delta n$, and  $A\in\Hom(W,G)$, we have
\begin{align*}
 \left|\P(F\bar{M}(n)=A) - |G|^{-n-u}\right| &\leq
 \frac{K_1n^{-c_1}}{|G|^{n+u}}.
 %\frac{(p^{\sum_{i=1}^{\lambda_1} (\lambda'_i)^2}+1)\exp(-cn)}{|G|^{m}}.
\end{align*}
\end{lemma}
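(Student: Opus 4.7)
The plan is to factor column-by-column using independence, apply the single-column estimate of Lemma~\ref{L:Fcodecolumn}, and multiply out. Writing the columns of $\bar M(n)$ as $X_1,\dots,X_{n+u}$, the independence of columns gives
\[
\P(F\bar{M}(n)=A) \;=\; \prod_{j=1}^{n+u}\P\bigl(F(X_j)=A(w_j)\bigr).
\]
Since $F$ is a code of distance $\delta n$ and each $X_j$ has independent $\alpha_n$-balanced entries, Lemma~\ref{L:Fcodecolumn} (with $w=\delta n$) lets me write $\P(F(X_j)=A(w_j)) = |G|^{-1}+y_j$ where $|y_j|\leq \exp(-\alpha_n\delta n/a^2)$. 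Factoring out the leading term,
\[
\P(F\bar M(n)=A) \;=\; |G|^{-(n+u)}\prod_{j=1}^{n+u}(1+z_j), \qquad z_j := |G|\, y_j.
\]

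Next I would use the lower bound on $\alpha_n$ to make the per-column deviation polynomially small. By hypothesis, for any fixed $\Delta>0$ and $n$ sufficiently large one has $\alpha_n\geq \Delta\log n/n$, so $\exp(-\alpha_n\delta n/a^2)\leq n^{-\delta\Delta/a^2}$. Choosing $\Delta=(c_1+2)a^2/\delta$, a constant depending only on $\delta$, $a$, and the $c_1$ I want to produce, yields $|z_j|\leq |G|\,n^{-(c_1+2)}$ for all $n$ large enough.

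Finally I would invoke the elementary product estimate: if $z_1,\dots,z_m$ satisfy $\sum_j|z_j|\leq 1/2$, then $\left|\prod_j(1+z_j)-1\right|\leq 2\sum_j|z_j|$ (a one-line induction on $m$). In our setting $\sum_{j=1}^{n+u}|z_j|\leq (n+u)|G|\,n^{-(c_1+2)}$, which is at most $1/2$ once $n$ is large, so
\[
\left|\prod_{j=1}^{n+u}(1+z_j)-1\right|\leq 2(n+u)|G|\,n^{-(c_1+2)} \leq K_1\, n^{-c_1}
\]
for a suitable $K_1$ depending only on $|G|$ and $u$. Multiplying through by the factor $|G|^{-(n+u)}$ that was pulled out gives the claimed bound.

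There is no genuine obstacle here; the exercise is really about balancing the column count $n+u$ against the exponential decay from Lemma~\ref{L:Fcodecolumn}. The key quantitative input is that the hypothesis on $\alpha_n$ allows the implicit constant $\Delta$ to be chosen as large as desired, converting the single-column exponential decay $\exp(-c\alpha_n n)$ into a polynomial bound $n^{-\Delta'}$ with $\Delta'$ arbitrary. This is precisely what lets the factor of $n+u$ introduced by the product over columns be absorbed, and explains why the hypothesis $\alpha_n\geq \Delta\log n/n$ (for \emph{every} $\Delta$) is needed in this lemma rather than merely $\alpha_n\gtrsim \log n/n$ with one fixed implicit constant.
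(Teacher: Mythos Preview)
Your proof is correct and follows essentially the same approach as the paper: factor over columns, apply Lemma~\ref{L:Fcodecolumn} to each, and use a product inequality together with the freedom to choose $\Delta$ arbitrarily large in $\alpha_n\geq \Delta\log n/n$ to convert the exponential per-column error into an arbitrary polynomial decay that beats the factor $n+u$. The only cosmetic difference is that the paper invokes Lemma~\ref{L:1tom} in place of your elementary product estimate, and it is slightly more explicit about absorbing the finitely many small $n$ into $K_1$.
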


\begin{proof}
Choose $\Delta>a^2\delta^{-1}$ and $n$ large enough (depending on $\Delta$ and $\{\alpha_i\}_i$) so that $\alpha_n\geq \Delta\log n/n$.
Then for $n$ large enough given $\delta,\Delta,u,a,|G|$, we have
$$
 \exp(-(\Delta \log n/n) \delta n/a^2)|G|=
 \exp(-\Delta \delta \log n  /a^2)|G|
\leq \frac{\log 2}{n+u-1} \leq 2^{1/(n+u-1)}-1.
$$
So for such $n$ we can combine Lemma~\ref{L:Fcodecolumn} and Lemma~\ref{L:1tom} to obtain
$$
\left |  \P(FM=A) - |G|^{-n-u} \right| \leq 2(n+u)\exp(-\Delta \delta \log n  /a^2)|G|^{-n-u+1}.
$$
We take $c_1 < \Delta \delta   /a^2 -1$ and then for $n$ sufficiently large given $u,\Delta,\delta,a,c_1,|G|,\{\alpha_i\}_i$, we have
$$
\left |  \P(FM=A) - |G|^{-n-u} \right| \leq \frac{2(n+u)}{n^{1+(\Delta \delta   /a^2 -1)}}|G|^{-n-u+1}
\leq n^{-c_1} |G|^{-n-u}.
$$
We choose $K_1$ large enough so that  $\frac{K_1n^{-c_1}}{|G|^{n+u}}\geq 2$ for $n$ that are not as large as needed above, and the lemma follows.
\end{proof}

So far, we have a good estimate for $\P(FM=0)$ when $F$ is a code.  Unfortunately, it is not sufficient to divide $F$ into codes and non-codes.  We need a more delicate division of $F$ based on the subgroups of $G$.  
In \cite{W1}, a notion of \emph{depth} was used to divide the $F$ into classes.  Here we require a slightly finer notion (that we call \emph{robustness}) to deal with the sparser matrices.
Both notions can be approximately understood as separating the $F$ based on what largest size subgroup they are a code for.
For an integer $D$ with prime factorization $\prod_i p_i^{e_i}$, let $\ell(D)=\sum_i e_i$.

\begin{definition}
Given $\delta>0$, we say that $F\in \Hom(V,G)$ is \emph{robust} (or, more precisely, $\delta$-\emph{robust}) for a subgroup $H$ of $G$ if $H$ is minimal such that
$$
\#\{i\in[n] | Fv_i\not\in H \}\leq \ell([G:H])\delta n.
$$
Note that $H=G$ satisfies the above inequality, so every $F\in \Hom(V,G)$ is robust for some subgroup $H$ of $G$.  An $F$ might be robust for more than one subgroup.
\end{definition}

\begin{lemma}\label{L:robustcodes}
Let $\delta>0,$ and $a,n$ be positive integers, and $G$ be a finite abelian group of exponent dividing $a$.
Let $F\in \Hom(V,G)$  be robust for $H$.  Let $\pi:=\{i\in[n] | Fv_i\not\in H \}$.
Then $F$ restricted to $V_{\setminus \pi}$ is a code of distance $\delta n$ in $\Hom (V_{\setminus \pi},H)$.
\end{lemma}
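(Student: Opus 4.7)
The plan is to argue by contradiction, exploiting the minimality in the definition of robustness. First I would verify the mild observation that $F$ restricted to $V_{\setminus \pi}$ does land in $H$: by definition of $\pi$, we have $Fv_i \in H$ for every $i \notin \pi$, so $F|_{V_{\setminus \pi}} \in \Hom(V_{\setminus \pi}, H)$, and in particular $F(V_{\setminus (\pi \cup \sigma)}) \subset H$ for every $\sigma$.

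Next, suppose for contradiction that $F|_{V_{\setminus \pi}}$ is \emph{not} a code of distance $\delta n$ in $\Hom(V_{\setminus \pi}, H)$. Then there exists $\sigma \subset [n] \setminus \pi$ with $|\sigma| < \delta n$ such that $H' := F(V_{\setminus (\pi \cup \sigma)})$ is a proper subgroup of $H$. The goal is to show that $F$ would then also satisfy the robustness inequality for $H'$, contradicting minimality.

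To do so, I would count $\#\{i \in [n] \mid Fv_i \notin H'\}$ by splitting $[n]$ into $\pi$, $\sigma$, and $[n]\setminus(\pi\cup\sigma)$. On the last set every $Fv_i$ lies in $H'$ by construction, so the count is at most $|\pi| + |\sigma|$. Using the robustness hypothesis on $\pi$ and the assumed bound on $\sigma$, this is strictly less than $\ell([G:H])\delta n + \delta n = (\ell([G:H]) + 1)\delta n$.

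The final step is the arithmetic of $\ell$: since $H' \subsetneq H$, we have $[H:H'] \ge 2$, hence $\ell([H:H']) \ge 1$, and therefore $\ell([G:H']) = \ell([G:H]) + \ell([H:H']) \ge \ell([G:H])+1$. Combining, $\#\{i \in [n] \mid Fv_i \notin H'\} < \ell([G:H'])\delta n$, so $H'$ also satisfies the defining inequality for robustness, contradicting the minimality of $H$. No step here looks technically delicate; the one point worth stating cleanly is the multiplicativity of $\ell$ under tower of subgroups, which is where the refined notion of robustness (using $\ell$ rather than a single depth parameter) earns its keep compared with the simpler notion in \cite{W1}.
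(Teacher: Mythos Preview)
Your proof is correct and follows essentially the same contradiction argument as the paper: assume the restriction fails to be a code, extract a proper subgroup $H'\subsetneq H$ via some small $\sigma$, and observe that $\{i: Fv_i\notin H'\}\subset \pi\cup\sigma$ has size at most $\ell([G:H])\delta n+\delta n\le \ell([G:H'])\delta n$, contradicting minimality of $H$. Your write-up is slightly more explicit (noting that $F|_{V_{\setminus\pi}}$ lands in $H$ and spelling out the multiplicativity $\ell([G:H'])=\ell([G:H])+\ell([H:H'])$), but the argument is the same.
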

\begin{proof}
Suppose not.  Then there exists a $\sigma\sub [n]\setminus \pi$ such that
$|\sigma|<\delta n$ and $FV_{\setminus (\pi\cup \sigma)}$ lies in some proper subgroup $H'$ of $H$.
In particular, the set of $i$ such that $Fv_i\not\in H'$ is contained in $\pi\cup \sigma$.
Since
$$
|\pi\cup \sigma|\leq \ell([G:H])\delta n +\delta n \leq \ell([G:H'])\delta n,
$$
we then have a contradiction on the minimality of $H$.
\end{proof}

We then bound the number of $F$ that are robust for a certain group $H$, and with certain given behavior outside of $H$.  
The separation of $F$ into classes based on their behavior outside of $H$ did not appear in \cite{W1}, but is necessary here to deal with sparser matrices.

\begin{lemma}[Count of robust $F$ for a subgroup $H$]\label{L:newcount}
Let $\delta>0$, and $a,n\geq 1$  be integers, and $G$ be  finite abelian group of exponent dividing $a$.
Let $H$ be a subgroup of $G$ of index $D>1$ and 
let $H=G_{\ell(D)}\sub \dots \sub G_2\sub G_1\sub G_0=G$ be a maximal chain of proper subgroups.
Let $p_j=|G_{j-1}/G_j|$.
The number of $F\in\Hom(V,G)$ such that $F$ is robust for $H$ 
and for $1\leq j \leq \ell(D)$,  there are $w_j$ elements $i$ of $[n]$ such that $Fv_i \in G_{j-1}\setminus G_j$, is at most
$$
|H|^{n-\sum_j w_j} \prod_{j=1}^{\ell(D)} \binom{n}{w_j}|G_{j-1}|^{w_j} .
$$
\end{lemma}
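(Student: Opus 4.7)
The plan is to observe that a homomorphism $F \in \Hom(V,G)$ is uniquely determined by the tuple of images $(Fv_1, \dots, Fv_n) \in G^n$, so we are really counting such tuples satisfying the stated conditions. The count can be obtained by first choosing which indices of $[n]$ land in which ``layer'' of the chain, and then choosing the actual value of $Fv_i$ within that layer. A pleasant feature is that the robustness hypothesis will not be needed for the upper bound; we simply drop it, obtaining a count over a (possibly strictly) larger set of $F$, which still gives a valid upper bound.

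Concretely, I would argue as follows. The sets $G_{j-1}\setminus G_j$ for $j=1,\dots,\ell(D)$ together with $H=G_{\ell(D)}$ form a disjoint partition of $G$, since the $G_j$ form a descending chain with $H$ at the bottom. Hence any $F$ satisfying the numerical condition of the lemma induces a canonical partition $[n]=S_0\sqcup S_1\sqcup \cdots \sqcup S_{\ell(D)}$, where $S_0:=\{i:Fv_i\in H\}$ has size $n-\sum_j w_j$ and $S_j:=\{i:Fv_i\in G_{j-1}\setminus G_j\}$ has size $w_j$ for $j\ge 1$.

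I would then count in two stages. The number of ordered partitions of $[n]$ into pieces of the prescribed sizes is the multinomial coefficient
\[
\binom{n}{w_1,\dots,w_{\ell(D)},\,n-\textstyle\sum_j w_j}
=\prod_{j=1}^{\ell(D)} \binom{n-w_1-\cdots-w_{j-1}}{w_j}
\le \prod_{j=1}^{\ell(D)}\binom{n}{w_j}.
\]
Given such a partition, each $i\in S_0$ contributes at most $|H|$ choices for $Fv_i$, and each $i\in S_j$ contributes at most $|G_{j-1}\setminus G_j|\le |G_{j-1}|$ choices. Multiplying these bounds across all $i$ yields the factor $|H|^{n-\sum_j w_j}\prod_j |G_{j-1}|^{w_j}$, and combining this with the bound on the number of partitions gives exactly the claimed upper bound.

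I do not expect any genuine obstacle: the entire argument is a stratification-plus-counting exercise, and the estimates $|G_{j-1}\setminus G_j|\le |G_{j-1}|$ and $\binom{n}{w_1,\dots,w_{\ell(D)},n-\sum w_j}\le \prod_j\binom{n}{w_j}$ are both elementary. The only conceptual point worth flagging in the write-up is that the robustness hypothesis on $F$ is a restriction that we deliberately discard, which is permissible because we are after an upper bound on the count.
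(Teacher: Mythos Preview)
Your proof is correct and takes essentially the same approach as the paper: choose the indices landing in each layer $G_{j-1}\setminus G_j$ (the paper bounds this directly by $\prod_j\binom{n}{w_j}$ without passing through the multinomial coefficient, but this is equivalent), then bound the number of values of $Fv_i$ in each layer by $|G_{j-1}|$ or $|H|$ as appropriate. Your observation that the robustness hypothesis is not actually used for the upper bound is also implicit in the paper's two-sentence proof.
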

Note that by the definition of robustness we have that
$w_j\leq \ell([G:H])\delta n,$ or else there are no such $F$.
\begin{proof}
There are at most $\binom{n}{w_j}$ ways to choose the $i$ such that $Fv_i \in G_{j-1}\setminus G_j $ and then
at most $|G_j|^{w_j}$ ways to choose the $Fv_i$.  Then there are $|H|$ choices for each remaining $Fv_i$. 
\end{proof}

Now for $F$ robust for a subgroup $H$, we will get a bound on $\P(FM=0)$, where the larger the $H$, the better the bound.
This is a more delicate bound than \cite[Lemma 2.7]{W1} that it is replacing, and in particular takes into account the behavior of $F$ outside of $H$.

\begin{lemma}[Probability bound for columns given robustness]\label{L:probdepthestimatecolumnNEW}
Let  $\delta>0$, and $a,n\geq 1$ be integers, and $G$ be  finite abelian group of exponent dividing $a$. 
 Let $F\in\Hom(V,G)$ 
be robust for a proper subgroup $H$ of $G$ and let $D:=[G:H]$.
Let $H=G_{\ell(D)}\sub \dots \sub G_2\sub G_1\sub G_0=G$ be a maximal chain of proper subgroups.
Let $p_j=|G_{j-1}/G_j|$.
For $1\leq j \leq \ell(D)$,  let $w_j$ be the number of $i\in[n]$ such that $Fv_i \in G_{j-1}\setminus G_j $.
Let $X\in R^n$ be a a random vector with independent entries that are the reduction mod $a$ of $\alpha$-balanced random integers.  Then for all $n$,
$$\P(FX=0)\leq \Big( D|G|^{-1} +\exp(-\alpha \delta n/a^2) \Big)
\prod_{j=1}^{\ell(D)} 
\Big(p_j^{-1} + \frac{p_j-1}{p_j} \exp(-\alpha w_j /p_j^2)\Big)
.$$
\end{lemma}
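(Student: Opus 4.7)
The plan is to isolate the coordinates where $Fv_i$ already lies in $H$, handle those via Lemmas~\ref{L:robustcodes} and~\ref{L:Fcodecolumn}, and then track the contribution from the complementary coordinates by walking down the chain $G=G_0\supset\cdots\supset G_{\ell(D)}=H$.

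First I would write $FX=g_0+g_1$ with $g_0=\sum_{i\in\pi}x_iFv_i$ and $g_1=\sum_{i\notin\pi}x_iFv_i$, where $\pi=\{i:Fv_i\notin H\}$; note $g_1\in H$ automatically, and $g_0,g_1$ depend on disjoint blocks of entries of $X$. The event $FX=0$ forces $g_0\in H$ and $g_1=-g_0$. By Lemma~\ref{L:robustcodes}, $F|_{V_{\setminus\pi}}$ is a code of distance $\delta n$ for $H$, so for any $h\in H$ Lemma~\ref{L:Fcodecolumn} gives $\P(g_1=h)\le |H|^{-1}+\frac{|H|-1}{|H|}\exp(-\alpha\delta n/a^2)\le D|G|^{-1}+\exp(-\alpha\delta n/a^2)$. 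By the independence of $X|_\pi$ and $X|_{[n]\setminus\pi}$,
\begin{equation*}
\P(FX=0)\le \Big(D|G|^{-1}+\exp(-\alpha\delta n/a^2)\Big)\cdot \P(g_0\in H).
\end{equation*}

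Next I would bound $\P(g_0\in H)$ by descending the chain. Partition $\pi=S_1\sqcup\cdots\sqcup S_{\ell(D)}$ with $S_j=\{i:Fv_i\in G_{j-1}\setminus G_j\}$, so $|S_j|=w_j$, and set $Y_j=\sum_{i\in S_j}x_iFv_i\in G_{j-1}$, so that $g_0=Y_1+\cdots+Y_{\ell(D)}$ with the $Y_j$ mutually independent. Telescoping,
\begin{equation*}
\P(g_0\in H)=\prod_{j=1}^{\ell(D)} \P(g_0\in G_j\mid g_0\in G_{j-1}).
\end{equation*}
The structural point is that $Y_k\in G_{k-1}\subseteq G_j$ for every $k>j$, so modulo $G_j$ only $Y_1,\dots,Y_j$ contribute; conditional on $g_0\in G_{j-1}$, the event $g_0\in G_j$ is precisely the equation $Y_j\bmod G_j=-(Y_1+\cdots+Y_{j-1})\bmod G_j$ inside $G_{j-1}/G_j\cong\Z/p_j\Z$. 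The coefficients $Fv_i\bmod G_j$ for $i\in S_j$ are nonzero in this prime-order cyclic quotient, hence generators, so the map $(x_i)_{i\in S_j}\mapsto Y_j\bmod G_j$ is a code of distance $w_j$ for $\Z/p_j\Z$. Since $Y_j$ is independent of $Y_1,\dots,Y_{j-1}$, Lemma~\ref{L:Fcodecolumn} (applied with $a=p_j$) bounds the probability of hitting any target in $\Z/p_j\Z$ uniformly by $p_j^{-1}+\frac{p_j-1}{p_j}\exp(-\alpha w_j/p_j^2)$; averaging over $Y_1,\dots,Y_{j-1}$ conditional on $g_0\in G_{j-1}$ preserves this bound. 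Multiplying through the telescope and combining with the displayed inequality above gives the claimed estimate.

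The main obstacle is the bookkeeping in the tower: the natural decomposition $g_0=\sum_jY_j$ places its summands in the nested subgroups $G_{j-1}$, while the events that drive each factor of the product live in the successive quotients $G_{j-1}/G_j$. The fact that $Y_k\in G_j$ for all $k>j$ is what lets the independence of the $Y_j$ propagate through the telescope cleanly, so that the conditional probabilities do not entangle across levels and each factor reduces to a single application of Lemma~\ref{L:Fcodecolumn} in a prime-order cyclic group.
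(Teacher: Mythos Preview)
Your proof is correct and follows essentially the same route as the paper's. The only difference is organizational: you separate off the $H$-coordinates first (bounding $\P(g_1=-g_0)$ via Lemmas~\ref{L:robustcodes} and~\ref{L:Fcodecolumn}) and then telescope down the chain, while the paper telescopes through the chain on the $\sigma$-coordinates first and handles the $H$-coordinates as the final factor; the conditional events and the applications of Lemma~\ref{L:Fcodecolumn} at each level are identical.
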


\begin{proof} Assume that $X=(x_1,\dots, x_n)$. Let $\sigma_j$ be the collection of indices  $i\in [n]$ such that $Fv_i \in G_{j-1}\setminus G_j$.
Let $\sigma=\cup_{j=1}^{\ell(D)} \sigma_j$.
Then,
\begin{align*}
\P(FX=0) = &\P\Big(\sum_{i\in\sigma_1} (Fv_i)x_i \in G_1\Big)\P\Big(\sum_{i\in\sigma_1 \cup \sigma_2} (Fv_i)x_i \in G_2 \Big| \sum_{i\in\sigma_1} (Fv_i)x_i \in G_1\Big)\times \cdots \\
& \times \P\Big(\sum_{i\in\sigma_1 \cup \cdots \cup \sigma_{\ell(D)}} (Fv_i)x_i \in H \Big| \sum_{i\in\sigma_1 \cup \cdots \cup \sigma_{\ell(D)-1}} (Fv_i)x_i \in G_{\ell(D)-1}\Big)\\
& \times \P\Big(\sum_{i\not\in \sigma} (Fv_i)x_i=-\sum_{i\in\sigma} (Fv_i)x_i \Big| \sum_{i\in\sigma} (Fv_i)x_i \in H  \Big).
\end{align*}
For $1\leq j\leq \ell(D)$, we will bound the $j$th factor above by conditioning on the $x_i$ with $i\in \sigma_1\cup \cdots \cup
\sigma_{j-1}$ and then looking at images in $G_{j-1}/G_j$.
Note for $i\in \sigma_j$, we have that the reduction of $Fv_i$ is non-zero in $G_{j-1}/G_j$.  So $F$ restricted to the $\sigma_j$ coordinates in the reduction to $G_{j-1}/G_j$ is a code of length $w_j$.  We then apply Lemma~\ref{L:Fcodecolumn} to this case to obtain
\begin{align*}
& \P\Big (\sum_{i\in\sigma_1 \cup \cdots \cup \sigma_{j}} (Fv_i)x_i \in G_j \Big| \sum_{i\in\sigma_1 \cup \cdots \cup \sigma_{j-1}} (Fv_i)x_i \in G_{j-1}\Big) \leq  p_j^{-1} + \frac{p_j-1}{p_j}\exp(-\alpha w_j/ p_j^2 ). 
\end{align*}

Note that $\sigma$ is the set of $i$ such that $Fv_i \not \in H$. 
By the definition of robust, $|\sigma|< \ell(D)\delta n$.
By Lemma~\ref{L:robustcodes}, the restriction of $F$ to $V_{\setminus\sigma}$
 is a code of distance $\delta n$ in $\Hom(V_{\setminus\sigma}, H)$.
 So conditioning on the $X_i$ with $i\in\sigma$, we can estimate the conditional probability above using Lemma~\ref{L:Fcodecolumn}:
$$
\P\Big(\sum_{i\not\in\sigma} (Fv_i)x_i =- \sum_{i\in\sigma} (Fv_i)x_i\Big |\sum_{i\in\sigma} (Fv_i)x_i \in H \Big)
\leq |H|^{-1} +\exp(-\alpha \delta n/a^2).  
$$
The lemma follows.
\end{proof}

Now we can combine the estimates we have for $\P(FM=0)$ for various types of $F$ with the bounds we have on the number of $F$ of each type to obtain our main result on the moments of cokernels of random matrices.  

%\const{$c_2, K_2$ introduced.}

%\const{$c_3, K_3,K_4,K_5$ introduced.}

\begin{theorem}\label{T:MomMat}
Let $u\geq0$ be an integer,  $G$ be a finite abelian group, and the sequence $\{\alpha_n\}_n$ be as in Theorem~\ref{th:allsmall}.
Then there are $c_2,K_2$ such that the following holds.  
For every positive integer $n$ and random matrix $M(n)$ as in Theorem~\ref{th:allsmall}, we have
\begin{align*}
\left|\E(\#\Sur(\cok(M(n)),G))  -|G|^{-u} \right| \leq K_2n^{-c_2}.
\end{align*}
\end{theorem}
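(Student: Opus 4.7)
My plan is to expand the moment via equation~\eqref{E:expandF}, partition $\Sur(V,G)$ according to the robustness structure of $F$, and combine the estimates from the lemmas above. Let $a$ be the exponent of $G$ and reduce $M(n)$ modulo $a$; since $aG=0$ this does not change $\#\Sur(\cok(M(n)),G)$, so by~\eqref{E:expandF} and column independence it suffices to estimate $\sum_{F\in\Sur(V,G)}\prod_{j=1}^{n+u}\P(FX_j=0)$. Fix a small parameter $\delta=\delta(G,u)>0$ to be chosen at the end. Every $F\in\Hom(V,G)$ is $\delta$-robust for some subgroup $H\le G$, and Lemma~\ref{L:robustcodes} with $\pi=\emptyset$ identifies ``$F$ robust for $G$'' with ``$F$ is a code of distance $\delta n$''. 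I split the sum into a code part ($H=G$) and error parts indexed by proper $H<G$, assigning each $F$ to the smallest $H$ for which it is robust.

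For the code part, Lemma~\ref{L:FullFcode} gives $|\P(FM=0)-|G|^{-n-u}|\le K_1n^{-c_1}|G|^{-n-u}$ uniformly in codes $F$. A union bound over subsets $\sigma\subset[n]$ of size less than $\delta n$ and over maximal subgroups of $G$ shows that all but an exponentially small fraction of $F\in\Hom(V,G)$ are codes of distance $\delta n$, and the number of non-surjections in $\Hom(V,G)$ is also exponentially small compared to $|G|^n$. Hence the count of code-surjections equals $|G|^n(1+O(e^{-cn}))$, and the code part of the sum evaluates to $|G|^{-u}+O(n^{-c_1})$, which is the desired main term.

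The error sums, indexed by proper $H<G$, are handled by further partitioning the $F$ robust for $H$ by their profile $(w_1,\dots,w_{\ell(D)})$, where $D=[G:H]$, a maximal chain $H=G_{\ell(D)}\subset\cdots\subset G_0=G$ is fixed with $p_j=|G_{j-1}/G_j|$, and $w_j=\#\{i:Fv_i\in G_{j-1}\setminus G_j\}$. Robustness forces $W:=\sum w_j\le\ell(D)\delta n$, while surjectivity (image of $F$ not contained in $H$) forces $W\ge 1$. Lemma~\ref{L:newcount} bounds the count of such $F$, and Lemma~\ref{L:probdepthestimatecolumnNEW} applied column-by-column bounds $\P(FM=0)$; multiplying the two and using the identity $|H|\cdot p_1\cdots p_{\ell(D)}=|G|$ makes the leading $|H|^{-1}$ and $p_j^{-1}$ factors telescope, leaving a product of factors of the form $\binom{n}{w_j}|G_{j-1}|^{w_j}\bigl(1+(p_j-1)e^{-\alpha_n w_j/p_j^2}\bigr)^{n+u}$ together with $|H|^{-u-W}$ and a benign $(1+|H|e^{-\alpha_n\delta n/a^2})^{n+u}\to1$ factor. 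Choosing $\delta$ small (depending on $G,u$) and then $\Delta$ large enough that the hypothesis $\alpha_n\ge\Delta\log n/n$ is usable for large $n$, a case analysis on each $w_j$---using $\binom{n}{w_j}\le(en/w_j)^{w_j}$ in the moderate regime---shows every profile contributes at most $n^{-c}$; summing over the polynomially many admissible profiles and the boundedly many proper $H$ yields total error $O(n^{-c_2})$. The main obstacle is handling intermediate values of $w_j$ (neither so small that the extra $p_j^{-(n+u)}$ factor alone suffices, nor so large that the exponential $e^{-\alpha_n w_j/p_j^2}$ is already negligible once raised to the $(n+u)$-th power); the refinement over~\cite{W1} is precisely to track the full profile rather than a single depth parameter and to use the sharp binomial bound, so that the $p_j^{-(n+u)}$ factor absorbs the combinatorial count $(n|G|)^{w_j}$ whenever $\delta$ is small enough, while the exponential factor supplies the cancellation in the moderate-$w_j$ regime as soon as $\alpha_n(n+u)\gg\log n$---which is exactly what the hypothesis $\alpha_n\ge\Delta\log n/n$ with $\Delta$ large provides.
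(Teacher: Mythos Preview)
Your approach is the paper's: partition $\Sur(V,G)$ by the robustness subgroup $H$, handle the codes ($H=G$) via Lemma~\ref{L:FullFcode}, and for each proper $H$ combine the count of Lemma~\ref{L:newcount} with the column-probability bound of Lemma~\ref{L:probdepthestimatecolumnNEW} over profiles $(w_1,\dots,w_{\ell(D)})$.

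Two points of execution differ. First, your ``telescoping'' bookkeeping is off: multiplying count by probability gives
\[
|H|^{-u}\,(1+|H|e^{-\alpha_n\delta n/a^2})^{n+u}\ \prod_{j}\Bigl[\,|H|^{-w_j}\binom{n}{w_j}|G_{j-1}|^{w_j}\bigl(p_j^{-1}+\tfrac{p_j-1}{p_j}e^{-\alpha_n w_j/p_j^2}\bigr)^{n+u}\Bigr],
\]
so the $j$th bracket still carries $p_j^{-(n+u)}$. In your version (only $|H|^{-u-W}$ outside and $(1+(p_j-1)e^{-\cdots})^{n+u}$ inside) the $w_j=0$ brackets equal $p_j^{n+u}$ and the product blows up; you do invoke the $p_j^{-(n+u)}$ factor afterwards, so this looks like a slip in the algebra rather than in the plan. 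Second, where you bound each profile and sum over the $O((\delta n)^{\ell(D)})$ profiles, the paper instead factors the sum as $\prod_j\sum_{w_j}(\cdot)$ (surjectivity forces $w_1\ge1$, so that sum starts at $1$) and bounds each one-variable sum via the appendix Lemma~\ref{L:dealwithonesum}. Your per-profile route also works under the hypothesis that $\alpha_n\ge\Delta\log n/n$ for arbitrarily large $\Delta$, but you must take $\Delta$ large enough that the per-profile decay beats the polynomial profile count; the factored route avoids this loss and is cleaner.
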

\begin{proof}
Let $a$ be the exponent of $G$.
By Equation~\eqref{E:expandF}, we need to estimate
$
\sum_{F\in \Sur(V,G)} \P(FM(n)=0).
$
Fix a proper subgroup $H$ of $G$.  
We will apply Lemma~\ref{L:probdepthestimatecolumnNEW} and use the notation from that lemma, along with Lemma~\ref{L:newcount}.  We then have
\begin{align*}
&\sum_{\substack{F\in \Sur(V,G)\\\textrm{$F$ is robust for $H$}}}
\P(FM(n)=0) \\\leq & \sum_{\substack{0\leq w_1,\dots, w_{\ell(D)} \leq \ell(D) \delta n\\w_1\neq 0}} |H|^{n-\sum_j w_j} \prod_{j=1}^{\ell(D)} \binom{n}{w_j}|G_{j-1}|^{w_j} 
\prod_{j=1}^{\ell(D)} 
\Big(p_j^{-1} + \frac{p_j-1}{p_j} \exp(-\alpha_n w_j /p_j^2) \Big)^{n+u}\\
&\times \Big( D|G|^{-1} +\exp(-\alpha_n \delta n/a^2) \Big)^{n+u}\\
= & |H|^n \Big( D|G|^{-1} +\exp(-\alpha_n \delta n/a^2) \Big)^{n+u} 
\prod_{j=1}^{\ell(D)} \sum_{\substack{w_j=0\\w_1\ne 0}}^{\ell(D) \delta n} |H|^{-w_j}  \binom{n}{w_j}|G_{j-1}|^{w_j}  \Big(p_j^{-1} + \frac{p_j-1}{p_j} \exp(-\alpha_n w_j /p_j^2) \Big)^{n+u}.
\end{align*}
We have $w_1\ne 1$ since $F$ is a surjection.
Now we apply Lemma~\ref{L:dealwithonesum} from the Appendix to bound the sums.
The $\Ao,\ao$ from  Lemma~\ref{L:dealwithonesum}, will be $|G_{j-1}|/|H|$ and $p_j^{-1}$ respectively.  
We choose the $\Delta'$ of Lemma~\ref{L:dealwithonesum} so that $\Delta'>2/(1-p_j^{-1})$ for all $j$.
For $n$ sufficiently large (in terms of $\{\alpha_i\}_i,  \Delta',G$), we have $\alpha_n\geq p_j^2 \Delta' \log n/n$ for all $j$.
Lemma~\ref{L:dealwithonesum} then gives us that, for $\delta$ sufficiently small (given $G$), and $n$ sufficiently large (given $G$, $\Delta'$, $\delta$, $\{\alpha_i\}_i,$), we have
\begin{align*}
\sum_{w_j=1}^{\ell(D) \delta n}   \binom{n}{w_j}\left(\frac{|G_{j-1}|}{|H|}\right)^{w_j}  \Big(p_j^{-1} + \frac{p_j-1}{p_j} \exp(-\alpha_n w_j /p_j^2) \Big)^{n+u}
\leq 3n^{-((1-p_j^{-1})\Delta'/2-1)}.
\end{align*}

Let $\Delta>a^2\delta^{-1}$ and $\Delta>2p^3/(p-1)$ for every prime $p\mid a$.
For $n$ also sufficiently large (given $\Delta$ and $\{\alpha_i\}_i$) that $\alpha_n\geq \Delta \log n/n$, and we have
\begin{align*}
|H|^n \Big( D|G|^{-1} +\exp(-\alpha_n \delta n/a^2) \Big)^{n+u}  \leq & |H|^{-u} \Big( 1 +|H| \exp(-\Delta \delta \log n/a^2) \Big)^{n+u}.
\end{align*}

For $n+u\geq 2$, and $n$ sufficiently large (given $\delta,\Delta,u,G$) such that
$$
|H| \exp(-\Delta \delta \log n/a^2)= |H|n^{-\Delta \delta/a^2} \leq \frac{\log 2}{n+u-1} \leq 2^{1/(n+u-1)}-1.
$$
By Lemma~\ref{L:1tom},
$$
\left( 1 +|H| \exp(-\Delta \delta \log n/a^2) \right)^{n+u} \leq 1+ 2({n+u})|H| \exp(-\Delta \delta \log n/a^2).
$$
Putting it altogether we have
\begin{align*}
&\sum_{\substack{F\in \Sur(V,G)\\\textrm{$F$ is robust for $H$}}}
\P(FX=0) \\ \leq  & |H|^n \Big( D|G|^{-1} +\exp(-\alpha_n \delta n/a^2) \Big)^{n+u} 
\prod_{j=1}^{\ell(D)} \sum_{\substack{w_j=0\\w_1\ne 0}}^{\ell(D) \delta n} |H|^{-w_j}  \binom{n}{w_j}|G_{j-1}|^{w_j}  \Big(p_j^{-1} + \frac{p_j-1}{p_j} \exp(-\alpha_n w_j /p_j^2) \Big)^{n+u}
\\ \leq  & |H|^{-u} \Big( 1+ 2({n+u})|H| \exp(-\Delta \delta \log n/a^2) \Big)
3n^{-((1-p_1^{-1})\Delta'/2-1)}
\prod_{j=2}^{\ell(D)} \left(1+ 3n^{-((1-p_j^{-1})\Delta'/2-1)} \right).
\end{align*}

We sum this over proper subgroups $H$ of $G$ to bound,
for $\delta$ sufficiently small (given $G$), and $\Delta'>2/(1-p_j^{-1})$ for all $j$, and $\Delta>a^2\delta^{-1}$, and $\Delta>2p^3/(p-1)$ for every prime $p\mid a$,
for $n$ sufficiently large given $G$, $\delta$, $\Delta$, $\Delta'$, $u$, $\{\alpha_i\}_i,$
\begin{align*}
\sum_{\substack{F\in \Sur(V,G)\\ F\textrm{ not  code of distance $\delta n$}
} }
\P(FM(n)=0)\leq K_3n^{-c_3}.
\end{align*}
where $K_3$ is a constant depending on $G$, $\delta$, $\Delta$, $\Delta'$, $u$ and $c_3>0$ (depending on $a$, $\Delta'$).  

Also, from the proof of \cite[Theorem 2.9]{W1},  we can choose $\delta$ small enough (given $G$) so that we have for all $n$
\begin{align*}
\sum_{\substack{F\in \Sur(V,G)\\ F\textrm{ not  code of distance $\delta n$}
} }
|G|^{-n-u} &\leq  K_4 
 e^{-(\log 1.5)n}
\end{align*}
for some $K_4$ depending on $u,G,\delta$.  
We also have (e.g. see the proof of \cite[Theorem 2.9]{W1}) for all $n$,
 \begin{align*}
\sum_{\substack{F\in \Hom(V,G)\setminus \Sur(V,G)
} }
|G|^{-n-u}&\leq  K_5 
 e^{-\log(2)n}
\end{align*}
for some $K_5$ depending on $G$.  
Using Lemma~\ref{L:FullFcode} we have that for all $n$,
\begin{align*}
\sum_{\substack{F\in \Sur(V,G)\\ F\textrm{  code of distance $\delta n$}
} }
\left| \P(FX=0) -|G|^{-n-u}\right|
&\leq 
 K_1n^{-c_1} . 
\end{align*}
We now make a choice of $\delta$ that is sufficiently small for the two requirements above (given $G$), and we choose $\Delta$ and $\Delta'$ as required above, so that
for all $n$ sufficiently large (given $G$, $\delta$, $\Delta$, $\Delta'$, $u$, $\{\alpha_i\}_i,$)
\begin{align*}
&\Big| \sum_{F\in \Sur(V,G)} \P(FX=0)  -|G|^{-u} \Big| =\Big| \sum_{F\in \Sur(V,G)} \P(FX=0)  -\sum_{F\in \Hom(V,G)} |G|^{-n-u}   \Big|  \\
&\leq
\sum_{\substack{F\in \Sur(V,G)\\ F\textrm{  code of distance $\delta n$}
} }
\Big| \P(FX=0) -|G|^{-n-u}\Big|+
 \sum_{\substack{F\in \Sur(V,G)\\ F\textrm{ not  code of distance $\delta n$}
} }
 \P(FX=0)  + \sum_{\substack{F\in \Hom(V,G)\\ F\textrm{ not  code of distance $\delta n$}
} } |G|^{-n-u} \\
\\
%&\leq Ke^{-cn} +\sum_{\substack{F\in \Hom(V,G), \\ \textrm{not surjection or not $\delta n$ code}
%} }
%|G|^{-n}\\
%&\leq Ke^{-cn} + \sum_{\substack{F\in \Sur(V,G)\\ F\textrm{ not  code of dist. $\delta n$}
%} }
%|G|^{-n}
%+\sum_{\substack{F\in \Hom(V,G)\setminus \Sur(V,G)
%} }
%|G|^{-n}\\
&\leq  K_1n^{-c_1}+K_3n^{-c_3}+K_4e^{-(\log 1.5)n}+K_5 
 e^{-\log(2)n}.
\end{align*}
We choose $c_2\leq \min(c_1,c_3,\log(1.5))$ (which depends on $G,u, \{\alpha_i\}_i$).  We choose $K_2$ so that $K_2\geq K_1+K_3+K_4+K_5$, and also $K_2\geq |G|^n n^{c_2}$ for any $n$ not sufficiently large for the requirements above (so $K_2$ depends on $G,u, \{\alpha_i\}_i$).  The theorem follows.
%So far the $K_i$ depend on $\{\alpha_i\}_i,$ $G$, $u$.
%And that's how big the $n$ has to be as well.
\end{proof}

%From the moments of Theorem~\ref{T:MomMat}, we conclude Theorem~\ref{th:allsmall} using \cite[Theorem 3.1]{W1} and the argument of \cite[Corollary 3.3]{W1} to see the moments determine the distribution. 

% \hoi{I was trying to add a short part (below) to clarify myself and possibly to others that are not familiar with the material; please erase or correct if this is unnecessary.} 

We now conclude the proof of Theorem~\ref{th:allsmall}. 
For each fixed $u\ge 1$, we construct a random abelian group according to Cohen and Lenstra's distribution mentioned in the introduction. Independently for each $p$, we have a random finite abelian $p$-group $Y_p$ such that for each $p$-group $B$
$$\P(Y_p = B)= \frac{\prod_{k=1}^\infty (1-p^{-k-u})}{|B|^u |\Aut(B)|}).$$
Let $P$ be a set of primes dividing a given number $a$, we then define a random group $Y$ by taking the group product $\prod_{p\in P} Y_p$. 
\begin{lemma}[{\cite[Lemma 3.2]{W1}}] For every finite abelian group $G$ with exponent dividing $a$ we have 
$$\E (\#\Sur(Y,G))= |G|^{-u}.$$
\end{lemma}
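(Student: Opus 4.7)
My plan is to reduce to a single prime and then invoke the Cohen--Lenstra moment identity. Since $G$ has exponent dividing $a$ and $P$ contains every prime dividing $a$, the Chinese Remainder decomposition gives $G = \prod_{p \in P} G_p$ with $G_p$ the Sylow $p$-subgroup. Because the $Y_p$'s are $p$-groups for pairwise distinct primes $p$, every homomorphism $Y = \prod_p Y_p \to G = \prod_p G_p$ splits uniquely as a product of $p$-component maps $Y_p \to G_p$, and such a product is surjective iff each factor is. Combined with the independence of the $\{Y_p\}_{p \in P}$, this yields
\[
\E(\#\Sur(Y, G)) = \prod_{p \in P} \E(\#\Sur(Y_p, G_p)),
\]
so it suffices to prove $\E(\#\Sur(Y_p, G_p)) = |G_p|^{-u}$ for each individual prime $p \in P$.

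For fixed $p$, I would expand the definition of $Y_p$ to write
\[
\E(\#\Sur(Y_p, G_p)) = \prod_{k=1}^\infty(1-p^{-k-u}) \sum_{B} \frac{\#\Sur(B, G_p)}{|B|^u |\Aut(B)|},
\]
with $B$ ranging over isomorphism classes of finite abelian $p$-groups, so the goal reduces to the Cohen--Lenstra identity
\[
\sum_B \frac{\#\Sur(B, G_p)}{|B|^u |\Aut(B)|} = \frac{|G_p|^{-u}}{\prod_{k=1}^\infty (1-p^{-k-u})}.
\]
My approach would be to reparametrize by the kernel: a pair $(B,\phi)$ with $\phi \in \Sur(B,G_p)$, modulo the $\Aut(B)$-action on $\phi$ by precomposition, is equivalent to a short exact sequence $0 \to K \to B \to G_p \to 0$ up to isomorphism fixing $G_p$. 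Orbit--stabilizer then rewrites $\#\Sur(B,G_p)/|\Aut(B)|$ as a sum over such pointed extensions weighted by $|\Aut(B,\phi)|^{-1}$. Summing first over extensions of $G_p$ by a fixed kernel $K$, a groupoid--cardinality computation involving $\mathrm{Ext}^1(G_p,K)$ and $\Hom(G_p,K)$, and using $|B| = |K|\cdot|G_p|$, the extension data cancels and the sum separates as $|G_p|^{-u}\sum_K 1/(|K|^u|\Aut(K)|)$. The classical Cohen--Lenstra identity $\sum_K(|K|^u|\Aut(K)|)^{-1} = \prod_{k=1}^\infty (1-p^{-k-u})^{-1}$, valid for $u\ge 1$, then closes the argument.

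The hard part will be the bookkeeping in the kernel reparametrization: one must verify that the cohomological factors involving $|\mathrm{Ext}^1(G_p,K)|$ and $|\Hom(G_p,K)|$ combine correctly with $|\Aut(K)|$ to produce exactly $|\Aut(K)|^{-1}$ in the final sum, so that the sum over extensions collapses cleanly. The cleanest conceptual way to handle this is to phrase the identity as an equality of groupoid cardinalities in the groupoid of short exact sequences with fixed quotient $G_p$ and appeal to multiplicativity of such cardinalities, so that the kernel and quotient factors decouple automatically. Since this computation is already carried out in detail in \cite[Lemma 3.2]{W1}, in practice I would simply cite that reference rather than reproduce the argument here.
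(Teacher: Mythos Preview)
Your proposal is correct, and in fact both you and the paper handle this lemma the same way: the paper gives no proof at all, simply citing \cite[Lemma 3.2]{W1}, and you likewise conclude by deferring to that reference. The sketch you supply beforehand (reduce to a single prime via independence and the splitting $\Sur(\prod_p Y_p,\prod_p G_p)\cong\prod_p\Sur(Y_p,G_p)$, then invoke the Cohen--Lenstra moment identity for a single prime) is a valid outline and is essentially how the result is established in \cite{W1} and, ultimately, in \cite{CL}.
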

From Theorem~\ref{T:MomMat}, we have seen that $Y$ and $\cok(M(n))$ have asymptotic matching ``moments" with respect to all groups $G$ of exponent dividing $a$. To pass this information back to distribution, we then use the following.

\begin{theorem}[{\cite[Theorem 3.1]{W1}}]\label{th:distribution}Let $X_n$ and $Y_n$ be sequences of random finitely generated abelian groups. Let $a$ be a positive integer and $\CA$ be the set of isomorphism classes of abelian groups with exponent dividing $a$. Suppose that for every $G\in \CA$ we have a number $M_G \le |\wedge^2 G|$ such that $\lim_{n \to \infty} \E( \#\Sur(X_n,G)) = \lim_{n \to \infty}  (\#\Sur(Y_n,G)) = M_G$. Then we have that for every $H \in \CA$
$$\lim_{n \to \infty} \P\big(X_n \otimes (\Z/a\Z) \isom H\big) =\lim_{n \to \infty} \P(Y_n \otimes (\Z/a\Z) \isom H).$$
\end{theorem}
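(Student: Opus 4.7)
The plan is to show that, within the countable set $\CA$ of isomorphism classes of finite abelian groups of exponent dividing $a$, the sequence of surjection moments $\{\E(\#\Sur(\cdot,G))\}_{G\in\CA}$ asymptotically pins down the distribution of $X_n \otimes (\Z/a\Z)$. Since both $X_n\otimes(\Z/a\Z)$ and $Y_n\otimes(\Z/a\Z)$ take values in $\CA$, and Lemma~\ref{L:anyset} reduces pointwise convergence of measures on countable sets to any subset we like, it suffices to prove that for each $H\in\CA$ the limit $\lim_n \P(X_n\otimes(\Z/a\Z)\isom H)$ is determined by the moment data $\{M_G\}_{G\in\CA}$ alone.

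The first step would be a M\"obius inversion on the poset of groups in $\CA$ ordered by the quotient relation. For any finite abelian $Y$ with $aY=0$, the identity $\#\Sur(Y,G)/|\Aut(G)| = \#\{K\leq Y : Y/K\isom G\}$ expresses the moments as a weighted count of quotients. Inverting this over the poset yields coefficients $c_{H,G}$ (for $H,G\in\CA$) such that, for every such $Y$,
\[
\mathbf{1}_{Y\isom H} \;=\; \sum_{G\in\CA} c_{H,G}\,\#\Sur(Y,G).
\]
Taking expectations would give $\P(X_n\otimes(\Z/a\Z)\isom H) = \sum_{G\in\CA} c_{H,G}\,\E(\#\Sur(X_n,G))$, and analogously for $Y_n$. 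If we could pass to the limit inside this sum, the two limits would coincide at the common value $\sum_G c_{H,G}M_G$, proving the theorem.

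The main obstacle, and the entire reason the hypothesis $M_G\leq |\wedge^2 G|$ appears, is justifying the exchange of $\lim_{n\to\infty}$ with the (infinite) summation over $\CA$. The M\"obius inversion gives an explicit bound on $|c_{H,G}|$ in terms of $|G|$ and $|\Aut(G)|$, and one checks, prime by prime and then assembling across the primes dividing $a$, that the series $\sum_{G\in\CA}|c_{H,G}|\cdot|\wedge^2 G|$ is finite: for a $p$-group $G$ of $p$-rank $r$ one has $|\wedge^2 G|$ growing like $p^{\binom{r}{2}}$ while $|\Aut(G)|$ grows like $p^{r^2}$ (up to a bounded factor depending on $r$), and this gap is exactly what makes the sum converge. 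The exponent $2$ in $\wedge^2 G$ is thus the critical threshold: any slower growth of $M_G$ is controlled, and any faster growth would destroy moment-determinacy (the discrete analogue of the Carleman criterion).

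Finally, to convert the convergence $\E(\#\Sur(X_n,G))\to M_G$ into a dominated-convergence argument, one uses that each sequence converges and is therefore uniformly bounded in $n$ by some $\tilde M_G$ with $\tilde M_G\leq M_G+1\leq |\wedge^2 G|+1$. Combined with the summability established above, this gives a dominating sequence and lets us pull the limit inside the sum. The same reasoning applies to $Y_n$, producing the same limiting value and yielding the desired equality. The delicate point throughout is the summability estimate; once that is in place, the rest is bookkeeping.
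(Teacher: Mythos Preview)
The paper does not prove this theorem here; it is quoted from \cite[Theorem 3.1]{W1} and used as a black box. So the comparison is to the argument in \cite{W1} (which in turn rests on \cite[Section 8]{W0}).

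Your overall architecture is correct and matches the cited proof: one inverts the upper-triangular system $\E(\#\Sur(\cdot,G))=\sum_{H}\#\Sur(H,G)\,\P(\cdot\isom H)$ to get coefficients $c_{H,G}$ (nonzero only when $H$ is a quotient of $G$, hence an infinite sum for each fixed $H$), and the hypothesis $M_G\le|\wedge^2 G|$ is exactly what makes $\sum_G |c_{H,G}|\,|\wedge^2 G|<\infty$. That part is right, including the heuristic comparison of $|\wedge^2 G|$ against $|\Aut(G)|$.

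There is, however, a real gap in your final step. You write that since each sequence $\E(\#\Sur(X_n,G))$ converges it is bounded by some $\tilde M_G\le M_G+1$. Convergent sequences are bounded, yes, but the bound $\tilde M_G\le M_G+1$ is simply false: nothing in the hypotheses controls the early terms, and the threshold $N_G$ beyond which $\E(\#\Sur(X_n,G))\le M_G+1$ depends on $G$. So you do not get a $G$-summable envelope uniform in $n$, and dominated convergence does not apply. The same issue already blocks the earlier interchange $\E\bigl[\sum_G c_{H,G}\#\Sur(X_n,G)\bigr]=\sum_G c_{H,G}\,\E[\#\Sur(X_n,G)]$ for fixed $n$: the left side equals $\P(X_n\isom H)$ because the inner sum is finite for each realization, but the right side need not even converge.

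The proof in \cite{W1,W0} avoids this trap. Rather than dominated convergence, one first uses the summability of $\sum_G |c_{H,G}|\,M_G$ to define candidate limits $p_H$ and checks they form a sub-probability measure on $\CA$. One then establishes tightness of the laws of $X_n\otimes(\Z/a\Z)$ directly from the moment hypothesis (for each prime $p\mid a$, convergence of $\E(\#\Sur(X_n,(\Z/p\Z)^r))$ bounds the $p$-rank in probability, uniformly in large $n$), and concludes by the usual tightness-plus-uniqueness argument: every subsequential limit has moments $M_G$, and the moment problem is determined under the $|\wedge^2 G|$ bound. Alternatively one can run a careful Fatou argument on nonnegative combinations. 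Either way, the point is that the passage from limiting moments to limiting probabilities requires more than term-by-term domination.
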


To prove Theorem~\ref{th:allsmall}, assume that the exponent of the group $B$ under consideration has prime factorization $\prod_{p\in P} p^{e_p}$. Theorem \ref{th:distribution}, applied to the sequence $X_n=\cok(M(n))$ and $Y_n=Y$ with $a= \prod_{p\in P} p^{e_p+1}$, implies that  
$$\lim_{n\to \infty} \P\Big(\cok(M(n)) \otimes (\Z/a\Z) \isom B\Big) = \P(Y \otimes (\Z/a\Z) \isom B) = \frac{1}{|B|^u|\Aut(B)|}
\prod_{p\in P}
\prod_{k=1}^\infty (1-p^{-k-u}).$$
The proof is then complete because $\cok(M(n)) \otimes (\Z/a\Z) \isom B$ if and only if $\cok(M(n))_P \isom B$.

\section{Medium Primes}\label{section:mediumprimes}

In this section we prove the following, which we apply to medium primes for the proof of our main results.
\begin{theorem}\label{th:allmed}
There are constants $c_0, \eta,C_0,K_0>0$  such that we have the following.
Let $n,u\geq 0$ be integers, $p$ be a prime, and let $M_{n\times (n+u)}$ be a random matrix $n\times (n+u)$ with independent i.i.d. entries $\xi_n\in \Z/p\Z$.
We further assume we have a real number $\alpha_n$ such that
$$
\max_{r\in\Z/p\Z} \P(\xi_n=r)=1-\alpha_n \leq 1-\frac{C_0 \log n}{n}.
$$
Then we have
$$
\P\Big( \rank (M_{n\times (n+u)})\leq n-1\Big)\leq 2p^{-\min(u+1,\eta n-1)} +K_0e^{-c_0\alpha_n n}
$$
and
$$
\P\Big( \rank (M_{n\times (n+u)})\leq n-2 \Big)\leq 2p^{-\min(2u+4,\eta n-1)} +K_0e^{-c_0\alpha_n n}.
$$
\end{theorem}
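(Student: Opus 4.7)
The plan is to compare $M_{n\times(n+u)}$ with the uniform model $M^*_{n\times(n+u)}$ whose entries are i.i.d.\ uniform in $\F_p$, for which a direct counting argument gives the closed forms
\[
\P\bigl(\rank(M^*)\le n-1\bigr)=1-\prod_{j=1}^{n}(1-p^{-j-u})\le 2p^{-(u+1)}
\]
and, by a similar calculation using the formula for the number of rank-$r$ matrices over $\F_p$,
\[
\P\bigl(\rank(M^*)\le n-2\bigr)\le 2p^{-(2u+4)}.
\]
It therefore suffices to transfer these bounds to the actual matrix with an additive error of order $e^{-c_0\alpha_n n}$, and the coupling of Markov chains alluded to in the introduction is designed precisely for this.

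The input I plan to quote from \cite{NP} (following \cite{M1}) says that when $M_{n\times(n+u)}$ is built column by column with $V_k=\Sp(X_1,\dots,X_k)$, there is a ``bad'' collection $\mathcal{B}$ of subspaces (those with highly structured normal vectors, in the Littlewood--Offord sense) such that for every $V\notin\mathcal{B}$ with $\dim V\le(1-\eta)n$,
\[
\bigl|\P(X_{k+1}\in V\mid V_k=V)-p^{-(n-\dim V)}\bigr|\le e^{-c\alpha_n n},
\]
and such that $\P(V_k\in\mathcal{B})\le e^{-c\alpha_n n}$ for each $k$. In other words, the per-step transition law of the corank process $D_k:=n-\dim V_k$ differs from that of the analogous process $D^*_k$ for the uniform matrix by an exponentially small amount, outside an exceptional event whose total probability across the $n+u$ steps is still $O(n e^{-c\alpha_n n})$.

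I would then build an explicit coupling between $D_k$ and $D^*_k$. Starting from $D_0=D^*_0=n$, at each step I use the optimal coupling of two Bernoulli variables whose parameters differ by at most $e^{-c\alpha_n n}$, provided $V_k\notin\mathcal{B}$ and $D_k\ge\eta n$; on the complementary event I declare the coupling to have failed. Summing the per-step failure probabilities over $k=0,1,\dots,n+u-1$ yields a coupling with $\P(D_{n+u}\ne D^*_{n+u})\le K_0 e^{-c_0\alpha_n n}$, and combining this with the uniform bounds above produces both stated estimates. The dichotomy $\min(u+1,\eta n-1)$ in the exponent reflects the fact that the NP input is only available while the corank is at most $(1-\eta)n$: once the corank drops below that threshold the coupling is paused and one falls back on the absolute bound $p^{-(\eta n-1)}$, which dominates when $u$ is very large.

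The main obstacle will be organising the coupling cleanly, because the NP estimate is \emph{conditional} on the random span $V_k$ and the law of $V_k$ itself depends on the entire previous history. Turning a per-step total-variation bound into a coherent global coupling requires formulating a generic lemma for two Markov chains whose one-step transitions are within $\delta$ of one another on a ``typical'' set, and showing that the accumulated error over $n+u$ steps stays within $K_0 e^{-c_0\alpha_n n}$; this is the ``rather general approach'' advertised in the introduction, and writing it in a way that simultaneously handles both corank thresholds (rank $\le n-1$ and rank $\le n-2$) is where the bulk of the technical work in this section will lie.
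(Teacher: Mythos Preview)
Your overall plan---compare the corank process of $M_{n\times(n+u)}$ to that of the uniform model via a Markov-chain coupling, with the \cite{NP} estimate supplying the per-step error---is exactly what the paper does. The uniform bounds $2p^{-(u+1)}$ and $2p^{-(2u+4)}$ you state are the right targets, and your description of the general comparison lemma (two chains with one-step transitions $\delta$-close on a ``typical'' set) is precisely Theorem~\ref{T:allerror}.

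However, you have the hypothesis of the NP input reversed. Theorem~\ref{thm:A1} supplies the estimate
\[
\bigl|\P(X_{n-k+1}\in W_{n-k}\mid \CE_{n-k}\wedge\codim W_{n-k}=k_0)-p^{-k_0}\bigr|\le K e^{-c\alpha_n n}
\]
only for $k_0\le\eta n$, i.e.\ when the codimension is \emph{small} (equivalently $\dim V\ge(1-\eta)n$), not when $\dim V\le(1-\eta)n$ as you wrote. This matters: in the large-corank regime Odlyzko's bound already gives $\P(X\in V)\le(1-\alpha_n)^{\eta n}\le e^{-c\alpha_n n}$, so the approximation is trivial there. The genuinely delicate regime is small corank, where $(1-\alpha_n)^{k_0}$ is close to $1$ and one really needs the Littlewood--Offord machinery behind \cite{NP}. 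Your proposed fallback ``pause the coupling and use $p^{-(\eta n-1)}$'' once the corank is small would not recover the $p^{-(u+1)}$ term for moderate $u$.

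The paper handles this by organising the chain differently: it first burns the initial $n-\lfloor\eta n\rfloor$ columns, using Corollary~\ref{cor:fullrank} (Odlyzko) to show they are linearly independent with probability $1-O(\alpha_n^{-1}(1-\alpha_n)^{\eta n})$, so the starting corank is exactly $\lfloor\eta n\rfloor$. The comparison chain then runs only over the last $\lfloor\eta n\rfloor+u$ steps, throughout which the codimension stays $\le\eta n$ and Theorem~\ref{thm:A1} applies at every step. One further technical device: the deficiency variable is truncated at $u+1$ (values $\ge u+2$ collapsed to a single absorbing state $*$), which keeps the state space to $u+3$ values and at most two outgoing transitions per state, so the $\sum_r\sum_s|\delta(i,r,s)|$ in Theorem~\ref{T:allerror} contributes only $O(e^{-c\alpha_n n})$ per step. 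The $\min(u+1,\eta n-1)$ in the statement is then just bookkeeping: for $u\ge\lfloor\eta n\rfloor$ one first reduces to $u=\lfloor\eta n\rfloor-1$ by the monotonicity $\P(\rank M_{n\times(n+u+1)}\le m)\le\P(\rank M_{n\times(n+u)}\le m)$.
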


The proof of Theorem~\ref{th:allmed} has two main ingredients.  First, we have a result from \cite{NP} that says that the first $n-k$ columns of $M_{n\times(n+u)}$ are likely to generate a subspace $V$ such that the probability of the next column being in $V$ is near to the probability of a uniform random column mod $p$ being in $V$.  (This result was originally stated in \cite{M1} by Maples, but \cite{NP} gives a corrected proof using the ideas of \cite{M1} and \cite{TV}.)

%\hoi{I added an extra condition $u\le \eta n$ in Theorem \ref{thm:A1} below.}
%\hoi{I added  "sufficiently small" and "sufficiently large" (in the first sentence below) to make it consistent with Theorem \ref{th:allmed}.}\melanie{I deleted them from both, because they suggest they have a mathematical meaning, but what exactly it is is unclear}
\begin{theorem}[{\cite[Theorems A.1 and A.4]{NP}}]\label{thm:A1} There are constants $c, \eta, C_0,K>0$ such that the following holds.  
Let $n,u\geq 0$ be integers with $u\le \eta n$, $p$ be a prime, and let $M_{n\times (n+u)}$ be a random matrix $n\times (n+u)$ with independent i.i.d. entries $\xi_n\in \Z/p\Z$.
We further assume we have a real number $\alpha_n$ such that
$$
\max_{r\in\Z/p\Z} \P(\xi_n=r)=1-\alpha_n \leq 1-\frac{C_0 \log n}{n}.
$$
For   $-u\leq k\le \eta n$, let $X_{n-k+1}$ be the $(n-k+1)$st column of $M_{n\times (n+u)}$, and 
 $W_{n-k}$ be the subspace  by the first $n-k$ columns of $M_{n\times (n+u)}$.
Then there is an event $\CE_{n-k}$ on the $\sigma$-algebra generated by the first $n-k$ columns of $M_{n\times (n+u)}$, of probability at least $1- 3e^{-c \alpha_n n}$, such that for any $k_0$ with $\max (0,k)\leq k_0 \le \eta n$
$$
\left |\P\Big(X_{n-k+1} \in W_{n-k} | \CE_{n-k} \wedge \codim(W_{n-k})=k_0\Big) - p^{-k_0} \right| \leq K e^{-c \alpha_n n}.$$
\end{theorem}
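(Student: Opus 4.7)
The plan is to prove this by the now-standard Maples/Tao--Vu/Nguyen--Paquette route: reinterpret the event $X_{n-k+1}\in W_{n-k}$ in terms of the normal vectors to $W_{n-k}$, and show that on a high-probability event $\CE_{n-k}$, these normal vectors are ``unstructured,'' so a Littlewood--Offord bound over $\F_p$ forces the inner products to be close to uniform. First I would set up the reformulation. Let $N_{n-k}\subseteq \F_p^n$ be the left null space of the matrix formed by the first $n-k$ columns; on $\{\codim(W_{n-k})=k_0\}$ it has dimension $k_0$. Then $X_{n-k+1}\in W_{n-k}$ iff $v\cdot X_{n-k+1}=0$ for every $v$ in any basis $\{v_1,\dots,v_{k_0}\}$ of $N_{n-k}$. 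So the target probability equals the probability that the $\F_p^{k_0}$-valued random variable $(v_1\cdot X_{n-k+1},\dots,v_{k_0}\cdot X_{n-k+1})$ is the zero vector.

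The core analytic input is a Littlewood--Offord/inverse-Littlewood--Offord dichotomy over $\F_p$, adapted to sparse $\xi_n$. For a vector $v\in\F_p^n$, the Fourier identity gives
\[
\P(v\cdot X=0)=\frac{1}{p}\sum_{t\in\F_p}\prod_{i=1}^n\E e_p(tv_i\xi_n),
\]
and the $t\ne 0$ terms can be bounded by $\exp(-c\alpha_n\,\#\{i:tv_i\not\equiv 0\})$ using the balancedness condition. I would declare $v$ \emph{structured} if $\P(v\cdot X=0)\ge p^{-1}+e^{-c\alpha_n n}$; otherwise \emph{unstructured}. The standard inverse-Littlewood--Offord theory (which I would cite from Tao--Vu/Nguyen--Vu, suitably mod $p$) shows that structured vectors lie in a small, well-covered set, e.g.\ their level sets up to scaling are contained in a union of at most $p^{k_0}e^{o(\alpha_n n)}$ cosets of short generalized arithmetic progressions.

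Step three is to define $\CE_{n-k}$ as the event that (i) the first $n-k$ columns are in a mild ``regular'' position (enough columns with each entry nonzero, no unusually large kernel) and (ii) $N_{n-k}$ contains no nonzero structured vector. For (ii) I would run a union bound: for each candidate structured $v$ (organized by its support size and its dilate class in the progressions produced by the inverse theorem), Odlyzko's lemma (Lemma~\ref{lemma:O}) gives $\P(v\cdot X_j=0 \text{ for all } j\le n-k)\le (1-\alpha_n)^{n-k-O(1)}$ or, more strongly using that $v$ is structured, a bound of the form $p^{-k_0}\cdot e^{-c\alpha_n n}$ per candidate. The covering numbers from the inverse theorem are polynomial in $n$ and $p^{k_0}$, and multiplying them with the per-candidate probability gives $\P(\bar\CE_{n-k})\le 3e^{-c\alpha_n n}$, matching the advertised bound.

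Finally, conditional on $\CE_{n-k}$, each basis vector $v_i$ of $N_{n-k}$ is unstructured. Working mod each nontrivial character of $\F_p^{k_0}$ (equivalently, running the Fourier expansion for $\P((v_1\cdot X,\dots,v_{k_0}\cdot X)=0)$), the only large contribution comes from the trivial character, which gives exactly $p^{-k_0}$; the remaining $p^{k_0}-1$ terms are each $O(e^{-c\alpha_n n})$ by the unstructuredness of each nontrivial linear combination of the $v_i$ (which is itself a nonzero element of $N_{n-k}$, hence unstructured on $\CE_{n-k}$). This yields the claimed $|{\cdot}-p^{-k_0}|\le Ke^{-c\alpha_n n}$. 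The main obstacle I expect is the sparse inverse-Littlewood--Offord step: when $\alpha_n$ is as small as $C_0\log n/n$, a crude continuous inverse theorem loses too much, and one must exploit that structuredness over $\F_p$ forces $v$ to concentrate its coordinates (after rescaling by some $t\in\F_p^\times$) in a short arithmetic progression, with enough quantitative control that the covering numbers stay below $e^{c\alpha_n n/2}$. This is where I would follow the strategy of \cite{M1,NP} most closely, and where the restriction $u\le \eta n$ and $k\le \eta n$ enters, since the normal space dimension $k_0\le \eta n$ must be small enough that $p^{k_0}$ does not swamp the exponentially small per-candidate probability.
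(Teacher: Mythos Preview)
Your outline captures the right high-level dichotomy (structured versus unstructured normal vectors), but it misses the key device and has two concrete gaps.

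First, the union bound over structured normal vectors does not close as written. You assert that the inverse Littlewood--Offord covering numbers are ``polynomial in $n$ and $p^{k_0}$,'' but for the threshold $\rho\ge p^{-1}+e^{-c\alpha_n n}$ (which, for large $p$, is essentially $e^{-c\alpha_n n}$) the inverse theorem only places most coordinates of $v$ in a GAP of volume $O(\rho^{-1})\approx e^{c\alpha_n n}$; the number of candidate vectors then grows like $e^{c'\alpha_n n}\cdot p^{O(1)}$ times a binomial factor, and multiplying by the per-vector Odlyzko bound $(1-\alpha_n)^{n-k}$ does not obviously beat this when $p$ is large. Second, your final Fourier step loses a factor of $p^{k_0}$: summing $|\E e_p(w\cdot X)|$ over the $p^{k_0}-1$ nonzero $w\in N_{n-k}$ gives an error of order $p^{k_0}e^{-c\alpha_n n}$ (or $p\cdot e^{-c\alpha_n n}$ if you only control point probabilities), not $e^{-c\alpha_n n}$.

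The paper (via the Laplacian analogue, Theorem~\ref{theorem:corank:L}, which mirrors the proof in \cite{NP}) avoids both issues by defining $\CE_{n-k}$ differently and by using a technique you do not mention. Rather than ``no structured normal vector,'' one takes $\CE_{n-k}$ to be the event that $W_{n-k}$ is \emph{saturated}, meaning $|\P(X\in W_{n-k})-p^{-\codim}|\le e^{-\lambda\alpha_n n}$ \emph{by definition}; the conclusion of the theorem is then immediate on $\CE_{n-k}$ and no Fourier sum over $\F_p^{k_0}$ is needed. The work is to show $\P(\CE_{n-k})\ge 1-3e^{-c\alpha_n n}$, and this is done by a trichotomy on the subspace $V=W_{n-k}$: (i) $V$ has a $\delta$-sparse normal vector (handled directly), (ii) $V$ is \emph{semi-saturated}, $e^{-\lambda\alpha_n n}<|\P(X\in V)-p^{-k_0}|\le 16p^{-k_0}$ (handled by an inverse-type covering lemma plus a count of subspaces, where the upper bound $17p^{-k_0}$ on $\P(X\in V)$ is what makes the count close), and (iii) $V$ is \emph{unsaturated}, $|\P(X\in V)-p^{-k_0}|>16p^{-k_0}$. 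Case (iii) is where the missing idea enters: the \emph{swapping method} of Kahn--Koml\'os--Szemer\'edi/Tao--Vu/Maples produces a lazier distribution $Y$ with $\P(X\in V)\le\tfrac{2}{3}\P(Y\in V)$, so after replacing $r=\delta_1 n$ of the $X_j$ by $Y_j$ one gains $(2/3)^r$ and can then sum $\P(Y_1,\dots,Z_s,X^{(i_1)},\dots \text{ span }V)$ over \emph{all} subspaces $V$ to at most $1$. This sum-over-subspaces-equals-one step is what replaces your union bound over vectors and is insensitive to the size of $p$.
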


%\melanie{The statement of {\cite[Theorems A.1 and  A.4]{NP}} makes it unclear whether $c,\nu$ and the constant in the big $O%$ notation can be chosen absolutely (not depending on $p$ and $\mu$).  Can you confirm whether these three constants can be %chosen absolutely since we state it as such in our paper?  Maybe this will be fixed by the arguments we need for Laplacian.}
%\hoi{I stated and proved Theorem \ref{theorem:corank:L} along this line.}

%\melanie{Hoi: can you check that you agree with this?   Are the quantifiers all ok, i.e. are the constants claimed absolute actually absolute?  Or do $c,\eta$ and $K_3$ depend on $u$? }
%\melanie{Do we need a condition on $u$ being relatively small?  And is it okay to apply to negative $k$?}
%{\hoi{I double checked that we can take $k$ to be negative; this is \cite[Theorem A.4]{NP} - will send you the final version for references.}}

%\melanie{I renamed the $K_3$ to $K$ since now there is a $K_3$ in the small primes proof.}

 We also refer the reader to Theorem~\ref{theorem:corank:L} for a similar statement for the Laplacian with a complete proof. Note that for a uniform random $X\in(\Z/p\Z)^n$, we have $\P(X\in V)=p^{-\codim(V)}.$  Thus, as long as we avoid certain rare bad events, as we consider more and more columns of our random matrices, the probability that the next column is in the span of the previous columns is close to what it would be if we were using uniform random matrices.  The following result, proven in Section~\ref{sec:Gdel} in the Appendix, allows us to use that information to conclude that the rank distribution of our matrices is close to that of uniform random matrices.  This theorem says that if sequences of random variables $x_i$ and $y_i$ have similar transition probabilities going from $x_i$ to $x_{i+1}$ and $y_i$ to $y_{i+1}$, at least under conditions that are likely to be true, then the distributions of $x_n$ and $y_n$ must stay close. 

\begin{theorem}\label{T:allerror}
Let $x_1,\dots,x_n,g_0,\dots,g_{n-1}$ be a sequence of random variables, and let $x_0=0$.
  Let $y_1,\dots,y_n$ be a sequence of  random variables, and let $y_0=0$.   We assume each $x_i,y_i$ takes on at most countably many values, and $g_i\in \{0,1\}$.
  Suppose that  for $0\leq i \leq n-1$, 
  \begin{align*}
&\P(y_{i+1}=s|y_i=r)=\P(x_{i+1}=s |x_i=r\textrm{ and }g_i=1) +\delta(i,r,s) \\
&\textrm{ for all $r$ and $s$
s.t. $\P(y_i=r)\P(x_i=r\textrm{ and }g_i=1)\ne0$}. 
\end{align*}
Then for all $n\geq 0$ and any set $A$ of values taken by $x_n$ and $y_n$, we have
\begin{align*}
&|\P(x_n\in A)-\P(y_n\in A)|\\
&\leq \frac{1}{2}\sum_{i=0}^{n-1} \sum_{r} \sum_s |\delta(i,r,s)| \P(x_i =r)  +
\sum_{i=0}^{n-1} \Pr(g_i\ne 1),
\end{align*}
where $r$ is summed over $\{ r\ |\ \P(x_{i}=r)\ne 0 \textrm{ and } \P(y_{i}=r)\ne 0)\}$ and
$s$ is summed over $\{ s\ |\ \P(x_{i+1}=s)\ne 0 \textrm{ or } \P(y_{i+1}=s)\ne 0)\}.$
\end{theorem}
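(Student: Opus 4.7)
\medskip

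\noindent\textbf{Proof plan.} The bound is a total-variation statement in disguise: since $|\P(x_n\in A)-\P(y_n\in A)|\leq \tfrac12\sum_s |\mu_n(s)-\nu_n(s)|$, where $\mu_n(s)=\P(x_n=s)$ and $\nu_n(s)=\P(y_n=s)$, it suffices to bound the TV distance $\|\mu_n-\nu_n\|_{TV}$ by the right-hand side. I will prove this by induction on $n$, using a standard hybrid/coupling argument in which the natural intermediate measure is
$$\tilde\nu_{n+1}(s) \;:=\; \sum_r \P(y_{n+1}=s\mid y_n=r)\,\mu_n(r),$$
i.e.\ apply the $y$-transition kernel to the $x$-marginal at step $n$. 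The base case $n=0$ is trivial since $x_0=y_0=0$.

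\medskip

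For the inductive step, split $\|\mu_{n+1}-\nu_{n+1}\|_{TV} \le \|\mu_{n+1}-\tilde\nu_{n+1}\|_{TV} + \|\tilde\nu_{n+1}-\nu_{n+1}\|_{TV}$. The second term is at most $\|\mu_n-\nu_n\|_{TV}$ by convexity (a Markov kernel is a contraction in TV), which carries the inductive hypothesis forward. The first term is where the hypothesis on transition probabilities enters. Using $\P(y_{n+1}=s\mid y_n=r)=\P(x_{n+1}=s\mid x_n=r,g_n=1)+\delta(n,r,s)$, and the decomposition
$$\mu_{n+1}(s)=\sum_r \P(x_{n+1}=s\mid x_n=r,g_n=1)\,\P(x_n=r,g_n=1)+\P(x_{n+1}=s,g_n=0),$$
I substitute and see that the "matched" terms involving the $x$-transition cancel, except for a discrepancy of size $\mu_n(r)-\P(x_n=r,g_n=1)=\P(x_n=r,g_n=0)$. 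Summing over $s$ the absolute value of what remains produces exactly the two terms $\tfrac12\sum_r\sum_s|\delta(n,r,s)|\mu_n(r)$ (from the $\delta$ correction) and $\P(g_n\neq 1)$ (from the two contributions of the $g_n=0$ mass, each contributing $\P(g_n=0)$, halved by the $\tfrac12$ in TV).

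\medskip

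Adding the two bounds and unrolling the induction yields exactly the claimed estimate. The one careful point---and the main place where the technicality of the hypothesis "for all $r,s$ s.t.\ $\P(y_i=r)\P(x_i=r \text{ and } g_i=1)\neq 0$" matters---is that whenever one of those probabilities vanishes, the corresponding summand $T(s\mid r)\cdot(\text{that probability})$ is zero and can be safely dropped from the manipulations, so the identity $T_y=T_x(\cdot\mid g=1)+\delta$ need only be invoked where it is assumed to hold. This is the main obstacle to avoid pitfalls: one must consistently work with the joint probabilities $\P(x_n=r,g_n=1)$ rather than conditional ones, so that undefined conditionals never appear. Once this bookkeeping is done, the induction closes cleanly and no further ingredients are needed.
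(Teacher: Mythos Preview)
Your argument is correct. The key computation---decomposing $\mu_{n+1}(s)$ according to whether $g_n=1$ or $g_n=0$, matching the $g_n=1$ piece against $\tilde\nu_{n+1}$ via the hypothesis, and observing that the two leftover $g_n=0$ contributions each have total mass $\P(g_n\ne 1)$---goes through cleanly, and the contraction step $\|\tilde\nu_{n+1}-\nu_{n+1}\|_{TV}\le\|\mu_n-\nu_n\|_{TV}$ requires no Markov assumption since $\nu_{n+1}(s)=\sum_r \P(y_{n+1}=s\mid y_n=r)\nu_n(r)$ holds for any sequence. Your remark about extending the $y$-kernel (by setting it equal to $\P(x_{n+1}=s\mid x_n=r,g_n=1)$ where $\nu_n(r)=0$ but $\P(x_n=r,g_n=1)>0$) is the right way to handle the undefined-conditional issue, and with that convention the extra $\delta$-terms vanish and the sum over $r$ lands in the stated index set.

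The paper takes a different route. It first proves an auxiliary lemma for sequences \emph{without} the $g_i$ variables, by building an explicit coupling: for each state $b$ it chooses measurable maps $\phi_{i,b},\phi'_{i,b}:[0,1]\to S_{i+1}$ realizing the two transition laws and differing on a set of measure exactly $\tfrac12\sum_c|\delta(i,b,c)|$, then drives both chains with the same uniform sample. The probability the coupled chains ever separate gives the bound. To recover the full theorem with $g_i$, the paper inserts half-steps $x_{i+1/2}=(x_i,g_i)$ and $y_{i+1/2}=(y_i,1)$ and applies the lemma to the doubled sequence; the integer-to-half-integer transitions contribute the $\P(g_i\ne 1)$ terms and the half-integer-to-integer transitions contribute the $\delta$ terms.

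So your proof is a direct total-variation induction with a hybrid measure, while the paper's is a modular coupling argument plus a reduction trick. Yours is shorter and avoids constructing the coupling explicitly; the paper's version separates the two sources of error (the $\delta$'s and the $g_i$'s) more visibly by making them live at different half-steps, and the coupling picture makes the $\tfrac12$ in front of $\sum|\delta|$ transparent as the optimal-coupling TV distance. Both yield the identical bound.
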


We remark that our error bounds come from the $\delta$'s and the complement of $g_i=1$. The explicit form here will be extremely useful because in the sparse case $\delta$ and $\P(g_i \neq 0)$ are not small. 

\begin{proof}[Proof of Theorem~\ref{th:allmed}]
We take $\eta, C_0$ as in Theorem~\ref{thm:A1}. 
Since $$\P( \rank (M_{n\times (n+u+1)})\leq  m)\leq \P( \rank (M_{n\times (n+u)})\leq  m),$$ it suffices to prove the theorem for $u\leq \lfloor \eta n  \rfloor-1$. Let $X_m$ be the $m$-th column of $M_{n\times (n+u)}$, and $W_m$ the subspace generated by $X_1,\dots,X_m$.
For $1\leq i \leq \lfloor \eta n \rfloor+u$, define the random variable
$$
x_i=
\begin{cases}
k_0 & \textrm{if } \rk (W_{n-\lfloor \eta n \rfloor+i})=n-\lfloor \eta n \rfloor+i-k_0 \textrm{ and } 0\leq k_0 \leq u+1\\
* & \textrm{if } \rk (W_{n-\lfloor \eta n \rfloor+i})\leq n-\lfloor \eta n \rfloor+i-u-2.
\end{cases}
$$
In other words, $x_i$ measures the deficiency $(n-\lfloor \eta n \rfloor+i) - \rk (W_{n-\lfloor \eta n \rfloor+i})$ if this difference is not larger than $u+1$.
 
Let $y_i$ be analogous function for a uniform random matrix mod $p$ for for $1\leq i \leq \lfloor \eta n \rfloor+u$.
Let $g_0$ the the indicator function of the event that requires both $\rk (W_{n-\lfloor \eta n \rfloor}) =n-\lfloor \eta n \rfloor$ and $\CE_{n-\lfloor \eta n \rfloor}$
from Theorem~\ref{thm:A1}.
Let $g_i$ be the indicator function for the event $\CE_{n-\lfloor \eta n \rfloor+i}$ from Theorem~\ref{thm:A1}, 
 so from that theorem we have for $i\geq 1$ that $\Prob(g_i=1)\geq 1-3 e^{-c\alpha_n n}.$ 
 
 We will apply Theorem \ref{T:allerror} to the sequences $x_i, y_i$ and $g_i$ defined above. For this, we will estimate the error terms $\delta(i,b,a)$ for various values of $i,a$ and $b$.  First, note that if 
$$
\rk (W_{n-\lfloor \eta n \rfloor+i})\leq n-\lfloor \eta n \rfloor+i-u-2,
$$
then
$$
\rk (W_{n-\lfloor \eta n \rfloor+i+1})\leq n-\lfloor \eta n \rfloor+i+1-u-2.
$$
So for $i\geq 1$, 
$$
\P(y_{i+1}=*| y_i=*)=\P(x_{i+1}=*| x_i=* \land g_i=1)=1.
$$
Therefore, for $i\geq 1$ and all $a$ we have 
$$\delta(i,*,a)=0.$$
Next, Theorem~\ref{thm:A1} gives that for $i\geq 1$ and $0\leq k_0 \leq u+1$ (as $u+1\leq \eta n$),
$$
\delta(i,k_0,k_0)=\big |\P(y_{i+1}=k_0| y_i=k_0)-\P(x_{i+1}=k_0| x_i=k_0\land g_i=1) \big | \leq K e^{-c\alpha_n n}.
$$
Furthermore, if $x_i=k_0$, the only possibility for $x_{i+1}$ is either $k_0$ or $k_0+1$ (which should be interpreted as $*$ if $k_0=u+1$). It then follows that for $i\geq 1$ and all $k_0,\ell$, we have 
$$
\delta(i,k_0,\ell)=\big |\P(y_{i+1}=\ell| y_i=k_0)-\P(x_{i+1}=\ell| x_i=k_0\land g_i=1)\big | \leq K e^{-c\alpha_n n}.
$$
To this end, at the initial position $i=0$ we have
$$
\P(y_1=0 |y_0=0)=\prod_{j=0}^{n-\lfloor \eta n \rfloor}(1-p^{-( \lfloor \eta n \rfloor +j)}))
$$
and Theorem~\ref{thm:A1} gives
$$
\P(x_1=0 |x_0=0 \textrm{ and } g_0=1)\geq 1- p^{-\lfloor \eta n \rfloor}-Ke^{-c\alpha_n n} .
$$
Thus for any $\ell$, 
\begin{align*}
\delta(0,0,\ell) \leq & p^{-\lfloor \eta n \rfloor}+Ke^{-c\alpha_n n}  +  1- \prod_{j=0}^{n-\lfloor \eta n \rfloor}(1-p^{-( \lfloor \eta n \rfloor 
+j)}))\\
%\leq & p^{-\lfloor \eta n \rfloor}+Ke^{-c\alpha_n n}+  \sum_ {j=0}^{n-\lfloor \eta n \rfloor} p^{-( \lfloor \eta n \rfloor +j)}\\
\leq &p^{-\lfloor \eta n \rfloor}+Ke^{-c\alpha_n n} +   p^{-\lfloor \eta n \rfloor}/(1-p^{-1}).
\end{align*}

We can apply Lemma~\ref{lemma:O} to find the $\Prob(X_{m+1}\not \in W_m | \rk(W_m)=m)$ for all $0\leq m\leq {n-\lfloor \eta n \rfloor}-1$. Taking union bound (see Corollary \ref{cor:fullrank}), we obtain  
$$
\Prob(\rk (W_{n-\lfloor \eta n \rfloor}) =n-\lfloor \eta n \rfloor) \geq  1- \alpha_n^{-1}(1-\alpha_n)^{\lfloor \eta n \rfloor+1}.
$$
So
$$
\Prob(g_0=0)\leq\alpha_n^{-1}(1-\alpha_n)^{\lfloor \eta n \rfloor+1} +3 e^{-c\alpha_n n}.
$$

We now apply Theorem \ref{T:allerror}.  The $n$ from that theorem will be what we call $\lfloor \eta n \rfloor+u$ here.
We conclude that for $k_0=u$ or $u+1$,
\begin{align*}
& \Big |\Prob(x_{\lfloor \eta n \rfloor+u} =k_0)-\Prob(y_{\lfloor \eta n \rfloor+u} =0)\Big |\\
 \leq & \frac{1}{2}(\lfloor \eta n \rfloor+u) Ke^{-c\alpha_n n} \cdot 2 + (\lfloor \eta n \rfloor+u) \cdot 3 e^{-c\alpha_n n} \\
+ & \Big(p^{-\lfloor \eta n \rfloor}+Ke^{-c\alpha_n n}  +   p^{-\lfloor \eta n \rfloor}/(1-p^{-1})\Big)
+ \Big( \alpha_n^{-1}(1-\alpha_n)^{\lfloor \eta n \rfloor+1} +3e^{-c\alpha_n n}  \Big).
\end{align*}
Here the first two terms are from the $i\geq 1$ summands in each sum, the $(\lfloor \eta n \rfloor+u)$ is from the sum over $i$, the sum over $b$ cancels with the $\P(X_i=b)$ terms, and the $2$ is from the sum over $c$ (for each $b$ there are at most $2$ values of $c$ with non-zero $\delta(i,b,c)$).    The second two terms are from the $i=0$ summands.  

Thus for $k_0=u$ or $u+1$, using $u\leq\eta n$,
\begin{align*}
\big |\Prob(x_{\lfloor \eta n \rfloor+u} =k_0)-\Prob(y_{\lfloor \eta n \rfloor+u} =k_0)\big |
\leq  2(K+3)  \eta n  e^{-c\alpha_n n} +3 e^{-c\alpha_n n} + 3 p^{- \eta n +1}+\alpha_n^{-1}(1-\alpha_n)^{ \eta n }.
\end{align*}

Since (e.g. by \cite{FA}) 
$$
\Prob(y_{\lfloor \eta n \rfloor+u} =u)=\prod_{j=1}^{n}(1-p^{-j-u})\geq 1-\sum_{j\geq 1} p^{-j-u} =1-p^{-1-u}/(1-p^{-1}), 
$$
and
\begin{align*}
\Prob(y_{\lfloor \eta n \rfloor+u} \geq u-1)%=&\prod_{j=1}^{n}(1-p^{-j-u}) + p^{-n(n+u)}\prod_{i=0}^{n-2} \frac{(p^{n}-p^i)(p^{n+u}-p^i)}{p^{n-1}-p^i}\\
%&\prod_{j=1}^{n}(1-p^{-j-u}) + p^{-u-1}\prod_{i=0}^{n-2} \frac{(1-p^{i-n})(1-p^{i-n-u})}{1-p^{i-(n-1)}}\\
%&\prod_{j=1}^{n}(1-p^{-j-u}) + p^{-u-1}\frac{1-p^{-n}}{1-p^{-1}}\prod_{j=2}^{n} (1-p^{-j-u})\\
%&(1-p^{-1-u} + p^{-u-1}\frac{1-p^{-n}}{1-p^{-1}})\prod_{j=2}^{n} (1-p^{-j-u})\\
%&(1-p^{-1-u}\left( 1 -\frac{1-p^{-n}}{1-p^{-1}} \right) )\prod_{j=2}^{n} (1-p^{-j-u})\\
%&(1+p^{-1-u}\left( \frac{p^{-1}-p^{-n}}{1-p^{-1}} \right) )\prod_{j=2}^{n} (1-p^{-j-u})\\
=&\Big (1+p^{-2-u}\big( \frac{1-p^{-(n-1)}}{1-p^{-1}} \big)\Big)\prod_{j=2}^{n} (1-p^{-j-u})\\
\geq % &(1+p^{-2-u}\left( \frac{1-p^{-(n-1)}}{1-p^{-1}} \right) )(1-\frac{p^{-u-2}}{1-p^{-1}})\\
%=& 1-p^{-4-2u}\frac{1-p^{-(n-1)}}{1-p^{-1}}-\frac{p^{-(n-1)-2-u}}{1-p^{-1}}\\
%=
& 1-\frac{p^{-4-2u}}{1-p^{-1}}-\frac{p^{-(n-1)-2-u}}{1-p^{-1}}.
\end{align*}
we have
that
\begin{align*}
\Prob(\rk (W_{n+u})\leq n-1)&\leq 2 p^{-1-u} + 2(K+3)  \eta n  e^{-c\alpha_n n} +3 e^{-c\alpha_n n} + 3 p^{- \eta n +1}+\alpha_n^{-1}(1-\alpha_n)^{ \eta n }
\end{align*}
and \begin{align*}
 \Prob(\rk (W_{n+u})\leq n-2) & \le  2 p^{-4-2u}+2p^{-n} + 2(K+3)  \eta n  e^{-c\alpha_n n} +3 e^{-c\alpha_n n} + 3 p^{- \eta n +1}+\alpha_n^{-1}(1-\alpha_n)^{ \eta n }.
\end{align*}
Since  $u\leq \eta n$, for some $K_0$ depending on $K,c,\eta,C_0$, for all $n$ we have
$$
\Prob\big (\rk (W_{n+u})\leq n-1\big )\leq 2 p^{-1-u} +K_0 e^{-\min(c/2,\eta\log(2)/2)\alpha_n n}
$$
and \begin{align*}
& \Prob\big(\rk (W_{n+u})\leq n-2\big )
\leq  2 p^{-4-2u}+K_0 e^{-\min(c/2,\eta\log(2)/2,\log(2))\alpha_n n}.
\end{align*}
The result follows with $c_0=\min(c/2,\eta \log(2)/2,\log(2)).$
\end{proof}

\section{Large primes}\label{section:largeprimes} 

In this section and the next we prove Proposition~\ref{prop:large}.  

{\bf Notation for Sections ~\ref{section:largeprimes} and \ref{section:structures}}:
Throughout this and the next section, we fix $T>0$ and for each positive integer $n$ we let $\xi_n$ be an $\alpha_n$-balanced random integer with $|\xi_n|\leq n^T$.  We define 
$M_{n \times (n+1)}$ to be the integral $n \times (n+1)$ matrix with entries 
 i.i.d copies of $\xi_n$.  We do not make a global assumption on the size of $\alpha_n$, but we will need different assumptions on $\alpha_n$ for the various results in these two sections.  We further fix $d>0$.  
Let  $X_1,\dots, X_{n+1}$ be the columns of $M_{n\times (n+1)}$.
We write $M_{n\times k}$ for the submatrix of $M_{n\times (n+1)}$ composed of the first $k$ columns.
  Let $W_k$ be the submodule of $\Z^n$ spanned by $X_1,\dots,X_k$.
  We write $X_k/p$ and $W_k/p$ for their reductions mod $p$ (and more generally use this notation to denote the reduction of an object from $\Z$ to $\Z/p\Z$).  
 % We write $X_k/0$ and $W_k/0$ for $X_k$ and $W_k$ so that we can make arguments in characteristic $p$ and $0$ %simultaneously.
     We let $n_0:=n- \lfloor  \frac{3\log n}{\alpha_n}  \rfloor$.

% We also remark that the assumption $p\geq e^{d\alpha_n n}$ is used in two ways: to make sure that there aren't too 
%many such primes where the rank could drop
% without it dropping over $\R$ and to prevent the wrap-around effect in $\Z/p\Z$. We now give the first step of our 
%proof.
 
 Let 
 $$\CP_n:=\Big\{p \textrm{ prime},   p \geq e^{d \alpha_n n} \Big\}.$$ 
% We first prove \eqref{eqn:large:nprop} by focusing on the square matrix $M_{n\times n}$. 
 Let $\CE_{\neq 0}$ be the event that $\det(M_{n \times n}) \neq 0$. As mentioned in the introduction section, from \cite{NP} and also by taking the limit as $p\ra\infty$ in Theorem \ref{th:allmed}, we have
$$\P(\CE_{\neq 0}) \ge 1 -K_0e^{- c_0 \alpha_n n}$$ for absolute constants $c_0,K_0$. 
Our strategy is as follows.  We consider the columns of the matrix one at a time, and check if they are the span of the previous columns modulo $p$ for each prime in $\CP_n$.  We cannot control whether this happens, as $\CP_n$ contains too many primes, but  each $p$ for which this happens is put on a ``watch list'' (called $\CB_k$) and necessarily divides the determinant of $M_{n\times n}$. 
 If the watch list grows too large, since all the primes in the watch list are large, then too a large number divides the determinant, and $M_{n\times n}$ must be singular.  However, we have already bounded the probability of that occurring.  Otherwise, if our watch list is not too large, for each prime in the watch list, we can bound the probability that the next column is in the span of the previous columns mod that prime.

%\melanie{I rewrote things so we don't have to condition on $\CE_{\neq 0}$, as I was a bit uncomfortable with ``However, as $\CE_{\neq 0}$ holds with extremely high probability, we can still consider them to be asymptotically independent.''}

Let $\CB_k$ be the set of primes $p\in \CP_n$ such that $\rk(W_k/p) \leq k-1$.
Let $\Con_k$ be the event that $|\CB_k|\leq  (2T+1) \log n/(2d \alpha_n )$ (the watch list is under control).
Note that any $p\in \CB_k$ for $k\leq n$ must divide $\det(M_{n \times n})$.  By Hadamard's bound, $|\det(M_{n \times n})|\leq n^{n/2}n^{Tn}$, and so in particular, when $\bar{\Con}_k$ occurs (``the watch list is out of control'') then $\det(M_{n \times n}) = 0$.  Let $\Drop_k$ be the event that there is a $p\in \CB_k$ such that 
$\rk(W_k/p) \leq k-2$ (the rank drops), this is the event we want to avoid. 

We will show $\P(\bar{\Con}_{k+1} \lor \bar{\Drop}_{k+1} | \bar{\Con}_k \lor \bar{\Drop}_k)$ is large.
The goal is to conclude that  $\P(\bar{\Con}_{n} \lor \bar{\Drop}_{n})$ is large, and since we know that $\P(\bar{\Con}_{n})$ is small, we can conclude that $\P(\bar{\Drop}_{n})$ is large, as desired.
Note that since $\CB_k\sub \CB_{k+1}$, we have that  $\bar{\Con}_k\sub \bar{\Con}_{k+1}.$
Thus
$$
\P(\bar{\Con}_{k+1} \lor \bar{\Drop}_{k+1} | \bar{\Con}_k )=1.
$$
It remains to estimate $\P(\bar{\Con}_{k+1} \lor \bar{\Drop}_{k+1} | \Con_k \land  \bar{\Drop}_k).$
We condition on the exact values of $X_1,\dots, X_k$ where $\Con_k \land  \bar{\Drop}_k$ holds, and so
there are at most $(2T+1)\log n/(2d \alpha_n )$ primes $p\in \CP_n$ such that $\rk(W_k/p) \leq k-1$ and no prime 
$p\in \CP_n$ such that $\rk(W_k/p) \leq k-2$.  In this case  $\bar{\Drop}_{k+1}$, as long as for each $p\in \CB_k$, we have
$X_{k+1}/p\not\in W_k/p$.  Consider one prime $p\in\CB_k$, and let $V$ be the value of $W_k/p$ that the conditioned $X_1,\dots, X_k$ give.
From Lemma~\ref{lemma:O}, $\P(X_{k+1}/p\in V)\leq (1-\alpha_n)^{n-(k-1)}.$  Thus, 
$$
\P(\bar{\Con}_{k+1} \lor \bar{\Drop}_{k+1} | \Con_k \land  \bar{\Drop}_k)\geq 1- \left(\frac{(2T+1)\log n}{2d \alpha_n } \right)(1-\alpha_n)^{n-(k-1)}.
$$
In particular, we conclude that
$$
\P(\bar{\Con}_{k+1} \lor \bar{\Drop}_{k+1} | \bar{\Con}_k \lor \bar{\Drop}_k)\geq 1- \left(\frac{(2T+1)\log n}{2d \alpha_n } \right)(1-\alpha_n)^{n-(k-1)}.
$$
Then inductively, we have
$$
\P(\bar{\Con}_{k} \lor \bar{\Drop}_{k} )\geq 1- \sum_{i=1}^{k-1}\left(\frac{(2T+1)\log n}{2d \alpha_n } \right)(1-\alpha_n)^{n-(i-1)}=
1- \left(\frac{(2T+1)\log n}{2d \alpha_n } \right)\frac{(1-\alpha_n)^{n-k+2}}{\alpha_n}.
$$
We defined $n_0:=n- \lfloor  \frac{3\log n}{\alpha_n}  \rfloor$ above and so 
if we let $k=n_0$ then if we assume $\alpha_n\geq n^{-1}$, %\Hoi{I guess we are saying $\alpha_n\geq C_0 \log n /n$ for sufficiently large $C_0$?} \melanie{I might be missing something, but I think we are okay with just $\alpha_n\geq n^{-1}$, and the line below follows from the above with just this assumption.  Of course, if $\alpha_n\leq 3\log n/n$, then $n_0=0$ and the statement is trivial.  But I think it is fine to leave as is. }
 we have that 
$$
\P\big(\bar{\Con}_{n_0 } \lor \bar{\Drop}_{n_0} \big)\geq 1- O_{d,T}\big( n^{-1/2} \big).
$$
%\hoi{I changed from $n-k=\lfloor  \log n/\alpha_n \rfloor$ to $n-k=\lfloor 3 \log n/\alpha_n \rfloor$ here to beat the $n^2$ bound.}
Certainly as $k$ gets very close to $n$, $W_k$ has very small codimension, and so Odlyzko's bound will not continue to be strong enough.  Thus for the remaining $k$ we will have to use a different bound.

\subsection{Proof of  Proposition~\ref{prop:large} when $\a_n \ge n^{-1/6+\eps}$}\label{S:nottoosparse}
First, in this section, we will prove Proposition~\ref{prop:large} for the denser case $\a_n \ge n^{-1/6+\eps}$.  For these larger $\alpha_n$ we can present a simpler proof than in the case when $\a_n$ might be as small as $n^{-1+\eps}.$  
Odlyzko's bound is sharp for some spaces, e.g. the hyperplane of vectors with first coordinate $0$, and so if we need to improve on Odlyzko's bound we cannot expect to do it for all spaces at once.  The overall strategy is to see that apart from some bad subspaces, we can improve on Odlyzko's bound, and we can also prove that it is unlikely that $W_k$ is one of those bad spaces.  At this level of generality, this description fits the small and medium primes sections.  However, the specifics are very different, because the small and medium primes sections treat one prime at a time, and now we are in a regime where there are just too many possible primes to add the probability of $W_k$ being bad over all the primes (e.g. adding $\P(\bar{\CE}_{n-k})$ from Theorem~\ref{thm:A1} over all primes up to $n^{n/2+Tn}$ gives too big a result).  On the other hand, we do not need the same strength of improvement over Odlyzko's bound that Theorem~\ref{thm:A1} provides, because the bound on the probability of $X_{k+1}/p\in W_k/p$ only has to be added over the small number of primes in the watch list.  The following lemma balances these requirements, and its proof will be delayed till the end of this subsection.

% From now on, for short we write
%\begin{equation}\label{eqn:k_0}
%n_0:=n- \lfloor  \frac{3\log n}{\alpha_n}  \rfloor.
%\end{equation}

%\hoi{I used $k_0$ here, we also used $k_0$ from Section 2 but perhaps this will not cause any misunderstanding, please change either here or there if so.}

%\hoi{$C_4$ was introduced here, this constant is used heavily in this
%  and the next section. I could not find $C_3$, and also $C_1,C_2$
%  only appear in the proof of Theorem \ref{theorem:ILO} that comes
%  after this. I changed this constant to $O_{d,T}$.}

\begin{lemma}\label{lemma:sqrt} 
%Let $d,T>0$.  
%There is a constant $C_4>0$ (depending on $d,T$) such that the following holds.
%For each positive integer $n$, let $M_{n\times n}$ be a random integral $n\times n$ matrix with i.i.d. $\alpha_n$-%balanced entries bounded by $n^T$ in absolute value.  
Suppose that $\alpha_n\geq 6\log n/n$. 
%\melanie{Had to change this to $6\log n/n$ so that $n_0\geq n/2$} \Hoi{sure.}
 Then there is a set of submodules  $\mathcal{S}$ of $\Z^n$ such that 
$$
\Prob(W_{n_0}\in \mathcal{S})\geq 1-e^{-\alpha_n n/8}
$$
and for any prime $p\ge e^{d \alpha_n n}$, and any submodule $H\in\mathcal{S}$, for any proper subspace $H'$ of $(\Z/p\Z)^n$ containing $H/p$,
$$\P\Big( X/p \in H'\Big)=O_{d,T}\left( \frac{ \sqrt{\log n}}{\alpha_n \sqrt{ n}} \right),$$
where $X$ is any column of $M_{n\times n}$.
\end{lemma}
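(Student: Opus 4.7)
The plan is to define $\mathcal{S}$ as a family of submodules $H \subset \Z^n$ whose orthogonal lattice contains no non-zero integer vectors of very small support, and then to show via a lifting argument that this characteristic-zero condition prevents the existence of small-support normal vectors modulo any sufficiently large prime.

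Fix a small constant $c = c(d,T) > 0$ to be chosen later, and set $k_0 := c \alpha_n n/\log n$. Let $\mathcal{S}$ be the set of rank-$n_0$ submodules $H \subset \Z^n$ such that for every subset $\sigma \subset [n]$ with $0 < |\sigma| < k_0$, the image of the projection $H \to \Z^\sigma$, $h \mapsto h|_\sigma$, spans $\R^\sigma$. Equivalently, $H^\perp \cap \Z^n$ contains no non-zero vector whose support has size strictly less than $k_0$.

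For Step 1, I bound $\P(W_{n_0} \notin \mathcal{S})$ by a union bound: for each fixed $\sigma$ with $|\sigma| = k < k_0$, the failure of $X_1|_\sigma, \ldots, X_{n_0}|_\sigma$ to span $\R^\sigma$ has probability at most $\alpha_n^{-1}(1-\alpha_n)^{n_0 - k}$ by Corollary~\ref{cor:fullrank} applied to the $|\sigma|$ rows of the corresponding submatrix. Using $\binom{n}{k} \leq (en/k)^k$ together with the hypothesis $\alpha_n \geq n^{-1/6+\eps}$, one checks that $\log \binom{n}{k_0} \leq (1/6 - \eps + o(1)) \alpha_n n$, which is dominated by the Odlyzko factor $\alpha_n(n_0 - k_0) = (1 - o(1))\alpha_n n$ when $c$ is taken small enough. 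Direct summation then yields $\P(W_{n_0} \notin \mathcal{S}) \leq e^{-\alpha_n n/8}$ for all $n$ sufficiently large.

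For Steps 2 and 3, fix $H \in \mathcal{S}$, a prime $p \geq e^{d \alpha_n n}$, and a proper subspace $H' \supseteq H/p$. Any non-zero normal $w \in (\Z/p\Z)^n$ to $H'$ is also normal to $H/p$; I claim $|\supp(w)| \geq k_0$. If not, $w$ has support $\sigma$ of size $k < k_0$, and a pigeonhole (one-dimensional Minkowski) argument applied to the cyclic subgroup $\{c \cdot w|_\sigma \bmod p : c \in \Z\}$ of $(\Z/p\Z)^\sigma$ produces $c \in (\Z/p\Z)^\times$ whose canonical integer lift $\tilde w \in \Z^n$ satisfies $\supp(\tilde w) \subseteq \sigma$ and $|\tilde w|_\infty \leq 2 p^{(k-1)/k}$. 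For each $j \leq n_0$ one has $|\tilde w \cdot X_j| \leq 2 k n^T p^{(k-1)/k}$; choosing $c$ so small that $k_0 \leq d \alpha_n n/(2(T+1)\log n)$ ensures this is strictly less than $p$ for every $k < k_0$. Combined with $\tilde w \cdot X_j \equiv 0 \pmod p$, this forces $\tilde w \cdot X_j = 0$ in $\Z$, so $\tilde w \in H^\perp$ is a non-zero integer vector with $\supp(\tilde w) \subseteq \sigma$, contradicting $H \in \mathcal{S}$. Hence $|\supp(w)| \geq k_0$, and a standard mod-$p$ Erd\H{o}s--Littlewood--Offord bound for $\alpha_n$-balanced entries (via Fourier, with $1/p$ error terms negligible since $p$ is exponentially large) gives $\P(X \cdot w \equiv 0 \pmod p) = O(1/\sqrt{\alpha_n |\supp(w)|}) = O_{d,T}(\sqrt{\log n}/(\alpha_n \sqrt n))$, completing the proof.

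The main obstacle is balancing the choice of $k_0$: Step 1 requires $k_0 \leq c \alpha_n n/\log n$ with $c$ small so that the entropy of supports $\binom{n}{k_0}$ is dominated by the Odlyzko factor, while the lifting in Step 2 requires $k_0 \leq d \alpha_n n/(2(T+1)\log n)$ so that $p^{1/k} > 2 k n^T$ holds throughout. Both constraints are simultaneously satisfiable precisely because $|\xi_n| \leq n^T$ is polynomially bounded and $p \geq e^{d \alpha_n n}$ is exponentially large. For the sparser regime $\alpha_n \geq n^{-1 + \eps}$, the entropy term $\log\binom{n}{k_0}$ becomes comparable to $\alpha_n n$ and this elementary approach breaks down; instead one must appeal to the inverse Erd\H{o}s--Littlewood--Offord lifting developed in Section~\ref{section:structures}.
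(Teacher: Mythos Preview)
Your proof follows the paper's strategy: define $\mathcal{S}$ by forbidding sparse integer normal vectors, bound $\P(W_{n_0}\notin\mathcal{S})$ via union bound plus Odlyzko (this is Lemma~\ref{lem:sparse}), lift sparse mod-$p$ normal vectors to $\Z$, then apply Theorem~\ref{theorem:LO}. The one genuine variation is your lifting step: you use a Dirichlet/pigeonhole argument to find a scalar multiple of $w$ with a small integer lift, whereas the paper compares ranks over $\Q$ and $\Z/p\Z$ via Hadamard's determinant bound (Lemma~\ref{lemma:passing}). Both give the same threshold $k_0=\Theta(\alpha_n n/\log n)$; the paper's route is somewhat cleaner (no explicit construction), while yours is more hands-on but equally valid.

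Two minor issues to clean up. First, in Step~2 you pair $\tilde w$ against the columns $X_j$, but the lemma is stated for arbitrary $H\in\mathcal{S}$, not just $H=W_{n_0}$; your argument (like the paper's) only works when $H$ is generated by vectors with entries $\le n^T$, so $\mathcal{S}$ should be restricted to such admissible submodules, and the ``rank-$n_0$'' clause can be dropped. Second, Step~1 does not actually need $\alpha_n\ge n^{-1/6+\eps}$: since $\log\binom{n}{k_0}\le k_0(1+\log(n/k_0))\le 2c\alpha_n n$ already for $\alpha_n\ge n^{-1}$, the entropy of supports is dominated whenever $c$ is small, regardless of how sparse $\alpha_n$ is. Your closing commentary misidentifies the obstruction in the sparse regime: it is not the entropy count but the $\alpha_n^{-3}$ factor coming from the Erd\H{o}s--Littlewood--Offord bound in~\eqref{eqn:key16'}.
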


Now, we will also condition on $\Good$, which we define to be the event that $W_{n_0}\in \mathcal{S}$ (i.e., $W_{n_0}$ is $\Good$ood).
% \melanie{I thought it was strange to call this non-sparse when the lemma doesn't even mention sparseness, and we are just %blackboxing that part for now to finish the proof} \Hoi{sure.}
We then have
$$
\P\Big((\bar{\Con}_{n_0} \lor \bar{\Drop}_{n_0}) \land 
\Good
\Big)\geq 1-  e^{-\alpha_n n/8} + O_{d,T}(  n^{-1/2}),
$$
Now let $ n_0 \leq k\leq n$.  As before, since $\bar{\Con}_k\sub \bar{\Con}_{k+1},$ we have
$$
\P\Big((\bar{\Con}_{k+1} \lor \bar{\Drop}_{k+1} ) \land 
\Good |  \bar{\Con}_{k} \land 
\Good 
\Big)=1.
$$
It remains to estimate $\P\Big((\bar{\Con}_{k+1} \lor \bar{\Drop}_{k+1} ) \land 
\Good |  \Con_{k} \land \bar{\Drop}_{k} \land 
\Good 
\Big).$  Again, we condition on exact values of $X_1,\dots,X_k$ such that $\Con_{k}, \bar{\Drop}_{k},\Good$ hold.
Then $\bar{\Drop}_{k+1}$ holds unless for some $p\in \CB_k$ we have $X_{k+1}/p\in W_k/p$.  Since $\Con_{k}$ holds, we have a bound on the size of $\CB_k$, and since $\Good$ holds, we can use Lemma~\ref{lemma:sqrt} to bound  the probability that $X_{k+1}/p$ is in $W_k/p$. 
(Note that even if $k=n$, for $p\in \CB_k$, we have that  $W_k/p$ is a proper subspace of $(\Z/p\Z)^n$, and that $n^{-1/6}\geq 2 (\log n)/n$.)
We conclude that 
$$\P\Big((\bar{\Con}_{k+1} \lor \bar{\Drop}_{k+1}) \land 
\Good |  \Con_{k} \land \bar{\Drop}_{k} \land 
\Good 
\Big)\geq 1-\left(\frac{(2T+1)\log n}{2d \alpha_n } \right) O_{d,T}\left( \frac{ \sqrt{\log n}}{\alpha_n \sqrt{ n}} \right).
$$ 
Inductively, starting from $k=k_0$ we then have
$$\P\Big((\bar{\Con}_{n} \lor \bar{\Drop}_{n} ) \land 
\Good 
\Big)\geq 1-\lfloor  \frac{3\log n}{\alpha_n} \rfloor  O_{d,T}\left( \frac{ \log^{1.5} n}{\alpha^2_n \sqrt{ n}} \right)-  e^{-\alpha_n n/8} + O_{d,T}(  n^{-1/2}).
$$
So then, 
\begin{equation}\label{eqn:key16'}
\P(\bar{\Drop}_{n} 
)\geq 1
- O_{d,T}\left( \frac{ \log^{2.5} n}{\alpha^3_n \sqrt{ n}} \right)
-K_0e^{- c_0 \alpha_n n}-  e^{-\alpha_n n/8} + O_{d,T}(  n^{-1/2}).
\end{equation}
To this end, since $\alpha_n\geq n^{-1/6+\eps}$, we have that 
$$\P(\bar{\Drop}_{n} 
)\geq 1- O_{d,T,\eps}(n^{-\eps}),
$$
which is exactly Equation~\eqref{eqn:large:nprop}.  Equation~\eqref{eqn:large:n+1prop} follows similarly, with $\lfloor 3\log n/\alpha_n \rfloor$, the number of steps in the induction, replaced by $\lfloor 3\log n/\alpha_n \rfloor +1.$

The key, of course, is to now verify Lemma
\ref{lemma:sqrt}, which is the heart of the proof of Proposition~\ref{prop:large} for $\alpha_n\geq n^{-1/6+\eps}$.  We need a way to bound the probability that
$X_{k+1}/p\in W_k/p$ that works for all $p\in \CB_k$ and is effective for large $k$.
For this, we introduce a version of the classical Erd\H{o}s-Littlewood-Offord result in $\Z/p\Z$. 

\begin{theorem}[forward Erd\H{o}s-Littlewood-Offord, for non-sparse vectors]\label{theorem:LO} Let $X \in (\Z/p\Z)^n$ be a random vector whose entries are i.i.d. copies of a random variable $\nu_n$ satisfying $\max_{r \in \Z/p\Z} \P(\nu_n =r) \le1-\a_n$. Suppose that $w \in (\Z/p\Z)^n$ has at least $n'$ non-zero coefficients, and $\a_n \ge 4/n'$. Then we have
$$|\P(X \cdot w =r) -\frac{1}{p}| \le \frac{2}{\sqrt{\alpha_n n'}}.$$
\end{theorem}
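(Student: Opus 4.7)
My plan is a Fourier-analytic argument over $\Z/p\Z$, of the same flavor as the classical Kolmogorov--Rogozin / Halász anti-concentration inequalities.

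First, I would drop the zero coordinates of $w$ and assume $w$ has exactly $n'$ nonzero entries on an index set $J$. Writing $e_p(x) = \exp(2\pi i x/p)$ and $\phi(s) = \E[e_p(s\nu_n)]$, Fourier inversion on $\Z/p\Z$ gives
\[
\P(X \cdot w = r) - \frac{1}{p} \;=\; \frac{1}{p}\sum_{t=1}^{p-1} e_p(-tr) \prod_{i\in J}\phi(tw_i),
\]
so by the triangle inequality it suffices to show $\frac{1}{p}\sum_{t=1}^{p-1}\prod_{i\in J}|\phi(tw_i)| \le \frac{2}{\sqrt{\alpha_n n'}}$.

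Next I would symmetrize by introducing an independent copy $\nu_n'$ and setting $Y = \nu_n - \nu_n'$. This is symmetric with $\P(Y\neq 0)\ge\alpha_n$ (since $\sum_r\P(\nu_n=r)^2 \le \max_r\P(\nu_n=r)\le 1-\alpha_n$), and $|\phi(s)|^2 = \E[\cos(2\pi sY/p)] = 1 - 2\E[\sin^2(\pi sY/p)]$. Combining $|z|\le\exp((|z|^2-1)/2)$ for $|z|\le 1$ with $\sin^2(\pi x)\ge 4\|x\|^2$ (where $\|\cdot\|$ is distance to the nearest integer), we obtain
\[
\prod_{i\in J}|\phi(tw_i)| \;\le\; e^{-G(t)},\qquad G(t) := \sum_{i\in J}\E\bigl[\sin^2(\pi tw_iY_i/p)\,\mathbf{1}_{Y_i\neq 0}\bigr].
\]
The identity $\sum_{t=0}^{p-1}\sin^2(\pi tc/p) = p/2$ valid for any $c\not\equiv 0\pmod p$ (since as $t$ varies over $\Z/p\Z$, $tc$ does too, and $\sum_{s}\sin^2(\pi s/p)=p/2$) then yields the first-moment estimate $\sum_{t=0}^{p-1} G(t) \ge \tfrac{p}{2}\alpha_n n'$.

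The final step is to bound $\tfrac{1}{p}\sum_{t=1}^{p-1} e^{-G(t)}$ via a level-set / dyadic decomposition of the $t$'s according to the size of $G(t)$, bounding the cardinality of each level set using a second-moment estimate on $G$ (obtained from a Parseval-type computation on the Fourier coefficients of $s\mapsto\sin^2(\pi s/p)\mathbf{1}_{Y\neq 0}$), and summing the resulting geometric series. The constant~$2$ and the role of the hypothesis $\alpha_n n'\ge 4$ come out of optimizing the dyadic threshold.

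\textbf{Main obstacle.} The crux is the last step. A direct Cauchy--Schwarz / Plancherel bound replacing $\prod|\phi(tw_i)|$ by $\sqrt{\prod|\phi(tw_i)|^2}$ only yields $|\P(X\cdot w=r)-1/p|\lesssim(\alpha_n n')^{-1/4}$, off by a full square root, because $\sum_t\prod|\phi(tw_i)|^2 = p\,\P(\sum_i w_iY_i\equiv 0)$ and that collision probability is in general only $O((\alpha_n n')^{-1/2})$ (as one sees by conditioning on the signs of the symmetric $Y_i$ and applying an Erd\H{o}s--Sperner count). Recovering the sharp $(\alpha_n n')^{-1/2}$ exponent with the explicit constant~$2$ therefore requires bypassing Plancherel and instead controlling the exponential sum $\sum_t e^{-G(t)}$ directly through the level-set decomposition above; this is the heart of the Halász / Kolmogorov--Rogozin approach and the only technically delicate part of the proof.
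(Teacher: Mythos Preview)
Your setup—Fourier inversion, symmetrization to $Y=\nu_n-\nu_n'$, the pointwise bound $\prod_{i\in J}|\phi(tw_i)|\le e^{-G(t)}$, and the first-moment identity $\sum_t G(t)=\tfrac{p}{2}\,n'\,\P(Y\neq0)\ge\tfrac{p}{2}\alpha_n n'$—is exactly right and is the opening of the Hal\'asz argument the paper invokes (the paper itself gives no proof, citing Maples \cite[Theorem~2.4]{M1} and \cite[Theorem~A.21]{NP}).

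The gap is in the final step. A second-moment/Parseval estimate on $G$ cannot give the level-set bound you need. Take all $w_i=1$ and $\nu_n$ uniform on $\{0,1\}$; then $G(t)=\tfrac{n'}{2}\sin^2(\pi t/p)$ has mean $n'/4$ and variance $(n')^2/32$, so Chebyshev yields only $|S_1|\le p/2$, whereas the truth is $|S_1|\asymp p/\sqrt{n'}$. The second moment simply does not detect the lower tail of $G$. The actual Hal\'asz argument replaces your second-moment input by the \emph{quasi-subadditivity} $G(s+t)\le 2G(s)+2G(t)$ (from $\sin^2(a+b)\le 2\sin^2 a+2\sin^2 b$), so that $S_m+S_m\subset S_{4m}$. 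Cauchy--Davenport in $\Z/p\Z$ then gives $|S_{4m}|\ge\min(2|S_m|-1,\,p)$; since $|S_m|>p/2$ would force $\max_t G(t)\le 4m$ and hence $m\ge\bar{G}/4$, one has $|S_m|\le p/2$ whenever $m<\bar{G}/4$, and iterating downward yields $|S_m|\le 2p\sqrt{m/\bar{G}}+1$. Integrating $\int_0^\infty e^{-u}|S_u|\,du$ then gives the $(\alpha_n n')^{-1/2}$ bound. Your aside about Erd\H{o}s--Sperner has a related circularity: the antichain argument relies on the order on $\Z$ and does not transfer to $\Z/p\Z$, so the collision bound $\P(\sum w_iY_i\equiv 0)=O((\alpha_n n')^{-1/2})$ you quote is essentially the theorem itself.
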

%\melanie{I changed $\x_n$ to $\nu_n$, since I declared $\xi_n$ to be a random integer globally for this section.  I changed %$m$ to $n'$ because it is more parallel to use of $m$ and $n'$ later} \Hoi{sure.} 

A proof of this result due to Maples (based on an argument by
Hal\'asz) can be seen in \cite[Theorem 2.4]{M1} (see also \cite[Theorem A.21]{NP}.) 
To use Theorem \ref{theorem:LO}, we need to know it is unlikely that $W_k$ has normal vectors with few non-zero entries.  First, we will see this is true over $\R$.  The approach is standard: there are few sparse vectors and by the Odlyzko's bound each is not that likely to be normal to $W_k$. 

\begin{lemma}[Sparse normal vectors over $\R$ unlikely]\label{lem:sparse} 
Suppose  $\alpha_n\geq \log n/n$ and $k\geq n/2$.
 For $n$ sufficiently large (in an absolute sense), with probability at least $1-e^{-\alpha_n n/8}$, the random subspace $X_1,\dots,X_k$ does not have a non-trivial normal vector with less than $\alpha_n n/(32\log n)$ non-zero entries.
\end{lemma}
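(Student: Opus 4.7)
The plan is to translate the existence of a sparse normal vector into a linear dependence statement for a small submatrix, and then carry out a direct union bound.

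The starting observation is that a nontrivial $w\in\R^n$ with $\supp(w)=J\subset[n]$ of size $s$ satisfies $w^TM_{n\times k}=0$ if and only if $w|_J^T M_J=0$, where $M_J$ denotes the $s\times k$ submatrix of rows of $M_{n\times k}$ indexed by $J$. Thus the event that $W_k$ admits a nontrivial normal vector with exactly $s$ nonzero coordinates is the union, over $J\in\binom{[n]}{s}$, of the events that the $s$ rows of $M_{n\times k}$ indexed by $J$ are linearly dependent over $\R$. These $s$ rows are mutually independent and each has $k$ i.i.d.\ entries distributed as $\xi_n$; since $\xi_n$ is $\alpha_n$-balanced modulo every prime $p$, taking $p$ larger than $2n^T$ gives $\max_{v\in\Z}\P(\xi_n=v)\le 1-\alpha_n$, so Odlyzko's lemma and Corollary~\ref{cor:fullrank} apply over $\R$. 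They yield
\[
\P\bigl(\text{the $s$ rows indexed by $J$ are dependent in }\R^k\bigr)\le \alpha_n^{-1}(1-\alpha_n)^{k-s}.
\]

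Next, I would union-bound over $J$ and over $1\le s\le S:=\lfloor \alpha_n n/(32\log n)\rfloor$, yielding
\[
\sum_{s=1}^{S}\binom{n}{s}\,\alpha_n^{-1}(1-\alpha_n)^{k-s}
\le \alpha_n^{-1}\sum_{s=1}^{S} e^{s\log n}\,e^{-\alpha_n(k-s)}.
\]
Using $k\ge n/2$, and the fact that for $s\le S$ both $s\log n\le \alpha_n n/32$ and $\alpha_n s\le \alpha_n n/32$ (since $\alpha_n\le 1\le \log n$), each summand is at most $\alpha_n^{-1}\exp(-7\alpha_n n/16)$. Bounding $\alpha_n^{-1}\cdot S\le n/(32\log n)$ gives the overall bound $n/(32\log n)\cdot e^{-7\alpha_n n/16}$.

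It remains to compare this with $e^{-\alpha_n n/8}$: we need $n/(32\log n)\le e^{5\alpha_n n/16}$. If $S=0$ the lemma is vacuous, and if $S\ge 1$ then $\alpha_n n\ge 32\log n$, whence $5\alpha_n n/16\ge 10\log n$ and the inequality holds with a huge margin once $n$ is large. This gives the claimed probability $1-e^{-\alpha_n n/8}$. The only delicate step is the arithmetic in balancing the exponents in the final line; everything else is a clean Odlyzko plus union bound, entirely along the lines of the proof sketch mentioned in the text (``there are few sparse vectors and by Odlyzko's bound each is not that likely to be normal to $W_k$'').
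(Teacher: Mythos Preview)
Your proof is correct and follows essentially the same approach as the paper. Both arguments reduce the existence of a sparse normal vector to linear dependence among a small set of rows, bound that dependence via Odlyzko's lemma, and then union-bound over the possible supports. The only cosmetic differences are that the paper union-bounds over supports of the single maximal size $l=\lfloor \alpha_n n/(32\log n)\rfloor$ (any smaller support is contained in some size-$l$ set), and breaks out the ``which row lies in the span of the others'' step by hand, whereas you sum over all sizes $1\le s\le S$ and invoke Corollary~\ref{cor:fullrank} to package that step; the resulting arithmetic is nearly identical.
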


%\melanie{We could get rid of the $\log(n)$ in the denominator of $\ell$ here, but I'm not sure if it is worth it.}

\begin{proof}(of Lemma \ref{lem:sparse}) 
%Let $c_5=1/32$.
Let $l= \lfloor \alpha_n n/(32\log n) \rfloor$. With a loss of a multiplicative factor $\binom{n}{l}$ in probability, we assume that there exists a vector $w=(w_1,\dots, w_l,0,\dots,0)$ which is normal to $X_{1},\dots, X_{k}$. 
Let $M_{l\times k}$ be the matrix with columns given by the first $l$ coordinates of each of $X_1,\dots,X_k$, which has rank at most $l-1$.
% implies that the subspace generated by the row vectors $\lang (X_{11},\dots, X_{1(k)}), \dots, (X_{l1},\dots, X_{l(k)}) %\rang$ has dimension at most $l-1$. 
With a loss of a multiplicative factor $l$ in probability, we assume that the first row of $M_{l\times k}$ belongs to the subspace $H$ generated by the other $l-1$ rows.  
%$(X_{l1},\dots, X_{l(k)})$ belongs to the subspace $H$ generated by  $(X_{11},\dots, X_{1(k)}), \dots, (X_{(l-1)1},\dots, X_{(l-1)k})$. 
However, as $H$ has codimension at least $k-l$, 
Lemma~\ref{lemma:O} implies a bound $(1-\alpha_n)^{k-l}$ for this event. Putting together, the event under consideration is bounded by 
\begin{align*}
\binom{n}{l} \times l \times (1-\alpha_n)^{k-l} 
\le
e^{(\log (32\log n/\alpha_n ) +1)\alpha_n n/(32 \log n)} \times e^{\log( \alpha_n n/(32 \log n))} \times e^{-\alpha_n(k-l)}.
\end{align*}
We then have that the exponent of $e$ in the above bound is
$$
\leq \big(\frac{\log\log n}{\log n}+\frac{\log(\alpha_n^{-1})}{\log n}+\frac{\log(32)}{\log n}+\frac{1}{\log n}\big) \alpha_n n/32 +\log(\alpha_n n) -\alpha_n n/4 \le -\alpha_n n /8
$$
for $n$ sufficiently large so that
$$
\big(\frac{\log\log n}{\log n}+\frac{\log(32)}{\log n}+\frac{1}{\log n}\big)\leq 1
$$
and so that $\log (\alpha_n n)\leq \alpha_n n/16$ (which happens for $\alpha_n n\geq 22$, which is implied by $\log n \geq 22$).
\end{proof}

We could prove a similar lemma to Lemma~\ref{lem:sparse} over $\Z/p\Z$ for each $p$, but we could not sum the probabilities $e^{-\alpha_n n/8}$ of sparse normal vectors over any meaningful range of primes $>e^{d\alpha n}$.  
%Also by considering the vector with a $1$ in the first coordinate and $p$ elsewhere we can see that the 
%bound cannot be meaningfully improved.  It is essentially tight for our purposes.  
However, now we will prove a deterministic lemma, that lets us lift normal vectors with few non-zero entries
from characteristic $p$,  for large $p$, to characteristic $0$.  Then there is only one bad event to avoid instead of one for each $p$.  This aspect of our argument is unlike previous approaches and uses critically a lower bound on $p$.

\begin{lemma}[Lifting sparse normal vectors from $\Z/p\Z$ to $\R$]\label{lemma:passing}  
Let $k,l,n$ be positive integers, and $M$ a $l\times k$ matrix with integer entries $|M_{ij}|\leq n^T$.
If $p$ is a prime larger than $e^{(k \log k)/2+k T \log n}$, then the rank of $M$ over $\Q$ is equal to the rank of $M/p$ over $\Z/p\Z$.  This has the following corollaries. 
\begin{enumerate}
\item \label{i:lindep} If $Z_1,\dots, Z_k \in \Z^{l}$ are vectors with entries $|Z_{ij}|\leq n^T$, and $Z_1/p,\dots, Z_k/p$ are linearly dependent in $(\Z/p\Z)^{l}$, then $Z_1,\dots, Z_k$ are also linearly dependent in $\Z^{l}$. 
\item \label{i:sparse} Let %$m$ be a positive integer and 
$Z_1,\dots, Z_{i}\in \Z^m$ be vectors with entries $|Z_{ij}|\leq n^T$ . If there is a non-zero vector $w \in (\Z/p\Z)^m$ with at most $k$ non-zero entries that is normal to $Z_1/p,\dots, Z_{l}/p$, then there is a non-zero vector $w' \in \Z^m$ with at most $k$ non-zero entries and normal to $Z_1,\dots, Z_{l}$.
\item \label{i:kernel} The kernel of the map $M: \Z^k \ra \Z^l$ surjects onto the kernel of the map
$M: (\Z/p\Z)^k \ra (\Z/p\Z)^l$.

\end{enumerate}
\end{lemma}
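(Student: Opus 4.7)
The core of the lemma is the statement that $\rank_\Q(M)=\rank_{\F_p}(M/p)$, and the plan is to deduce this from Hadamard's inequality applied to all the minors of $M$. Every $r\times r$ minor of $M$ (with $r\le\min(l,k)\le k$) is an integer whose absolute value is bounded, via Hadamard, by $r^{r/2}n^{Tr}\le k^{k/2}n^{Tk}=e^{(k\log k)/2+kT\log n}<p$. Consequently, an $r\times r$ minor vanishes in $\Z$ if and only if it vanishes in $\Z/p\Z$. Since the rank of a matrix (over any field) equals the largest $r$ such that some $r\times r$ minor is nonzero, the ranks of $M$ over $\Q$ and of $M/p$ over $\F_p$ coincide. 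This is the only substantive step; the rest of the work is unpacking the three corollaries.

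For (\ref{i:lindep}), linear dependence of $Z_1,\dots,Z_k\in\Z^l$ is equivalent to the $l\times k$ matrix with columns $Z_i$ having rank strictly less than $k$; the main rank equality applied to this matrix transfers the condition from $\F_p$ to $\Q$, and clearing denominators in any $\Q$-dependence produces a $\Z$-dependence. For (\ref{i:sparse}), let $S=\supp(w)\subseteq[m]$, so $|S|\le k$. Form the $l\times|S|$ matrix $N$ whose rows are $Z_j|_S$. The hypothesis says $w|_S\in (\Z/p\Z)^{|S|}$ is a nonzero element of the right kernel of $N/p$, hence $\rank_{\F_p}(N/p)<|S|$. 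By the main rank equality (applied with $|S|\le k$ in place of $k$, which only weakens the size bound on $p$), $\rank_\Q(N)<|S|$, so there is a nonzero vector $w''\in\Q^{|S|}$ with $Nw''=0$; clearing denominators and extending by zero outside $S$ gives the desired $w'\in\Z^m$.

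For (\ref{i:kernel}), let $r=\rank_\Q(M)=\rank_{\F_p}(M/p)$ and let $K_\Z=\ker(M\colon\Z^k\to\Z^l)$. Because $\Z^k/K_\Z$ embeds in the torsion-free group $\Z^l$, the submodule $K_\Z$ is saturated in $\Z^k$ and is therefore a direct summand, free of rank $k-r$. The reduction map $K_\Z/pK_\Z\to(\Z/p\Z)^k$ is then injective, lands inside $\ker(M/p)$, and has source of dimension $k-r$ equal to $\dim\ker(M/p)$; so it is an isomorphism, i.e.\ $K_\Z\to\ker(M/p)$ is surjective.

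The only place any care is required is the Hadamard estimate itself: one needs to check that the bound $e^{(k\log k)/2+kT\log n}$ majorizes $r^{r/2}n^{Tr}$ for every $r\le\min(l,k)$, which holds because $r\mapsto r^{r/2}n^{Tr}$ is monotone increasing for $r\ge 1$ (assuming $n\ge 1$). No other step presents a real obstacle; the arguments for the three corollaries are standard consequences of having rank equality together with the freeness/saturation of kernels of integer matrices.
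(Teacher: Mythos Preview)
Your proof is correct and follows essentially the same route as the paper: the Hadamard bound on minors gives the main rank equality, and corollaries (\ref{i:lindep}) and (\ref{i:sparse}) are derived exactly as in the paper. For (\ref{i:kernel}) you use the saturation/direct-summand argument in place of the paper's explicit Smith normal form computation; these are equivalent (Smith normal form is precisely what underlies the fact that a saturated sublattice is a direct summand), so the difference is cosmetic rather than substantive.
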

%\melanie{part (3) will be used for smaller $\alpha_n$} \Hoi{sure}

\begin{proof}(of Lemma \ref{lemma:passing}) 
The rank is the greater $r$ such that an $r \times r $ minor has non-zero determinant.  
Since by Hadamard's bound, the determinant of a $r\times r$ minor is at most $e^{(r \log r)/2+r T\log n}$, for primes $p>e^{(k \log k)/2+k T \log n}$ and $r\leq k$, a determinant of an $r\times r$ minor vanishes in $\Z$ if and only if it vanishes mod $p$.  Thus we conclude the main statement of the lemma.  For the first corollary, consider the $\ell \times k$ matrix with $Z_1,\dots, Z_k$ as columns. The vectors are linearly dependent if and only if the matrix has rank less than $k$.
For the second corollary, assume that $\sigma=\supp(w)\subset [m]$ and $Z_1|_\sigma,\dots, Z_{l}|_\sigma$ are the restrictions of $Z_1,\dots, Z_{l}$ over the components in $\sigma$. By definition, the $k$ row vectors of the matrix formed by $Z_1|_\sigma,\dots, Z_{l}|_\sigma$ are dependent when reduced mod $p$, and thus these vectors are dependent over $\Z$.  This gives a non-zero $w' \in \Z^m$ with $|\supp(w')|\leq k$ that is normal to $Z_1,\dots, Z_{l}$.  For the third corollary, we can express $M$ under the Smith normal form $M=S_1D S_2$,
%\Hoi{I moved the sentence about Smith normal form}, 
where $S_1\in \GL_l(\Z)$ and $S_2\in \GL_k(\Z)$ and $D$ is an integral diagonal matrix. Then since the ranks of $M$ and $D$ agree over $\Q$ and over $\Z/p\Z$, we conclude that $D$ has the same rank over $\Q$ or $\Z/p\Z$.  This implies that the only diagonal entries of $D$ that are divisible by $p$ are the ones that are $0$.  From this it follows that the kernel of $D :\Z^k \ra \Z^l$ surjects onto the kernel of $D: (\Z/p\Z)^k \ra (\Z/p\Z)^l$.  Multiplication by $S_2^{-1}$ on the left takes these kernels of $D$ to the corresponding kernels of $M$, and the statement follows.
\end{proof}

%We then deduce the following consequence with $m=n -\lfloor \frac{C \log n}{\alpha} \rfloor$.

Putting this all together, we can now prove Lemma \ref{lemma:sqrt}.  The choices of parameters are rather delicate here, e.g. we could obtain more non-zero coordinates of a normal vector than Lemma~\ref{lem:sparse} provides, but then we could not use Lemma~\ref{lemma:passing} to lift those non-zero coordinates.

\begin{proof}[Proof of Lemma \ref{lemma:sqrt}]
We let $k=c\alpha_n n/\log n$, where $c<1/32$ is a sufficiently small constant (in terms of $d,T$) such that 
%\Hoi{I replaced $c_5$ by 1/32 as we did not label $c_5$ in Lemma~\ref{lem:sparse} in this version}
$$
e^{(k \log k)/2+k T \log n}<e^{d\alpha_n n}.
$$
Since $\alpha_n \geq 6 \log n/n$, it follows that $n_0\geq n/2$, and we can apply Lemma~\ref{lem:sparse} and find that for sufficiently large $n$, 
%\Hoi{It was written here $n\ge n_5$, but as we have not labeled that in Lemma~\ref{lem:sparse}, perhaps we can replace with either $n \ge e^{22}$ or sufficiently large.}, 
with probability at least $1-e^{-\alpha_n n/8}$, 
$W_{n_0}$ does not have a normal vector with less than $k$ entries. 
Let $\mathcal{S}$ be the set of submodules of $\Z^n$ that do not have a normal vector with less than $k$ non-zero coordinates. 
 Then by Lemma \ref{lemma:passing}, for each prime $p\geq e^{d\alpha_n n}$, if $W_{n_0}\in\mathcal{S}$, then
the space $W_{n_0}/p$ (and thus any space containing this space) does not have a non-trivial normal vector with less than $k$ non-zero coordinates. 
Since $\alpha_n \geq 6 \log n/n$, for $n$ sufficiently large in terms of $d,c$, we have that $e^{d\alpha_n n}\geq \sqrt{\alpha_n k}$.
Thus by Theorem~\ref{theorem:LO}, 
%we have an absolute constant $C>0$ such that 
for any prime $p\geq e^{d\alpha_n n}$ the following holds.
Let $H$ be a subspace of $(\Z/p\Z)^n$ that does not have a non-trivial normal vector with less than $k$ non-zero entries, and then
for any proper subspace $H'$ of $(\Z/p\Z)^n$ containing $H$ and with normal vector $w$,
$$
\P(X\in H')\leq \P(X\cdot w=0)\leq \frac{3}{\sqrt{\alpha_n k}}=\frac{3 \sqrt{\log n}}{\alpha_n \sqrt{ c n}}.
$$
%We can choose $C_4$ so that $C_4\geq 3/\sqrt{c}$, and also that that $C_4\sqrt{\log n}/\sqrt{n}\geq 1$ for all $n$ 
%that aren't sufficiently large for the above argument (in which case we can take $\mathcal{S}$ to be all submodules of $\Z^n$).
\end{proof}

%\hoi{I myself like the above section a lot, as it is short, simple, does not involve any complicated structure, and it already captures the main ideas and results.}
%\hoi{ I added a short discussion.}

\subsection{Proof of  Proposition~\ref{prop:large} in general} 
Equation~\eqref{eqn:key16'} shows exactly why
$n^{-1/6+\eps}$ is the threshold exponent for $\alpha_n$ such that the
above method can work, as the error bound has an $\alpha_n^3n^{1/2}$
in the denominator.   Thus, to obtain results that can work for smaller $\alpha_n$, we need a further improvement on Odlyzko's bound, which requires that we consider further bad subspaces besides those with sparse normal vectors.  
We have the following upgrade to Lemma~\ref{lemma:sqrt}, whose proof is rather more involved than that of  Lemma~\ref{lemma:sqrt}, will be completed in the next section, and again, is the heart of the proof.

%As such, to improve over the sparseness of $\a_n$
%we need to resolve two obstructions. Firstly, we have to improve the right-hand side
%$(\a_n k)^{-1/2}$ of Theorem \ref{theorem:LO}. By developing mod $p$ versions of the
%inverse ideas first initiated by Tao and Vu from \cite{TV}, we will
%show that the right-hand side of Theorem \ref{theorem:LO} can be improved to
%$(\a_n k)^{-C}$ for any fixed constant $C$ in most
%situations. Secondly, we will need to overcome the restriction yielded by Lemma
%\ref{lemma:passing} in the proof of Lemma \ref{lemma:sqrt}, where we
%recall that $k$ must be of order $O(\a_n n/\log n )$ so that $e^{(k
%  \log k)/2 + k T\log n} < e^{d \a_n n}$. With this $k$ one must choose $\a_n$ so
%that $\a_n k$ is sufficiently large to make any use of estimates of
%type $(\a_n k)^{-C}$. But this would then require that $\a_n \ge
%n^{-1/2+o(1)}$. To resolve the latter issue, we will then introduce an
%extra treatment for primes $p$ that are greater than $e^{d \a_n n}$
%but still not too large so that we can still take union bound, before
%applying a more general form of Lemma \ref{lemma:passing} to pass
%to the case of characteristic zero. More detailed treatments follow below.

\begin{lemma}\label{L:upgrade}
%Let $0<\eps<1$. 
There is an absolute constant $\ct>0$ such that the following holds. 
Suppose that $\alpha_n\geq n^{-1+\epsilon}$.  
There is a set $\mathcal{S}'$ of submodules of $\Z^n$, such that
$$\P(W_{n_0}\in \mathcal{S}')\geq 1- e^{-\ct \alpha_n n}$$ 
and for $n$ sufficiently large given $d,\epsilon, T$, 
 any prime $p\ge e^{d \alpha_n n}$,  any submodule $H\in\mathcal{S}'$
% \melanie{This was changed from ``$\mathcal{S}'$'' to ``$\mathcal{S}$  (from Lemma~\ref{lemma:sqrt})'' and I am confused by the change so I changed it back.  Here the set involved GAPs, so is different than above. }
 and any proper subspace $H'$ of $(\Z/p\Z)^n$ containing $H/p$,
$$\P\Big( X/p \in H'\Big)\leq n^{-3},
$$
where $X$ is any column of $M_{n\times n}$.
\end{lemma}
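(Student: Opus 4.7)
The plan is to mirror the broad strategy used for Lemma \ref{lemma:sqrt} (produce a ``bad'' structural property that is unlikely for $W_{n_0}$, and that guarantees good dispersion for $X$), but replacing the forward Littlewood--Offord input (Theorem \ref{theorem:LO}) with a much stronger \emph{inverse} Erd\H{o}s--Littlewood--Offord theorem over $\Z/p\Z$ (alluded to in the introduction and presumably proved in Section \ref{section:structures} as Theorem \ref{theorem:ILO}, via the Nguyen--Vu method from \cite{NgV}). Such an inverse result should say: if $X$ has $\alpha_n$-balanced i.i.d.\ entries mod $p$ and $w \in (\Z/p\Z)^n$ satisfies $\P(X \cdot w = 0) > n^{-3}$, then all but $O(\alpha_n n)$ of the coordinates $w_i$ lie in a generalized arithmetic progression $Q \subset \Z/p\Z$ of bounded rank and size $n^{O(1)}$. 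Call such a $w$ \emph{structured}.

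Step 1 (definition of $\mathcal{S}'$). Define $\mathcal{S}' \subset $ \{submodules of $\Z^n$\} to be those $H$ that admit \emph{no} non-zero integer vector $w' \in \Z^n$ which is (a) normal to $H$, (b) has coordinates of size at most $e^{d\alpha_n n / 2}$, and (c) is structured as an integer vector (most coordinates lying in an integer GAP of rank $O(1)$ and size $n^{O(1)}$). The definition is chosen to match the conclusion of the inverse theorem after lifting.

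Step 2 (lifting structured normals from mod $p$ to $\Z$). Suppose $H \in \mathcal{S}'$, $p \geq e^{d\alpha_n n}$, and $H/p$ admits a structured normal vector $\bar w \in (\Z/p\Z)^n$ with $\bar w_i \in Q$ for $i$ outside an exceptional set $\sigma$ of size $O(\alpha_n n)$. Lift $\bar w$ coordinate-by-coordinate to the unique representative in $[-p/2, p/2)$; the structured coordinates then lie in an integer GAP of the same rank and size. Using Lemma \ref{lemma:passing}\eqref{i:sparse} (applied to the ``structured'' part expressed via GAP generators plus the exceptional $\sigma$-coordinates, which together constitute a small-dimensional parameter set), together with Hadamard's bound on the relevant minor determinants---which are bounded by $e^{(k \log k)/2 + kT \log n} < p$ for the appropriate small $k$---one obtains a non-zero integer normal vector $w'$ of $H$ that is still structured and has bounded height, contradicting $H \in \mathcal{S}'$.

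Step 3 (dispersion). Fix $H \in \mathcal{S}'$, $p \geq e^{d\alpha_n n}$, and $H'$ a proper subspace of $(\Z/p\Z)^n$ containing $H/p$. Let $w$ be a non-zero normal vector of $H'$ (which is also a normal vector of $H/p$). By Step 2, $w$ cannot be structured mod $p$; hence by the contrapositive of the inverse Littlewood--Offord theorem, $\P(X \cdot w = 0) \leq n^{-3}$, so $\P(X/p \in H') \leq n^{-3}$.

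Step 4 (probability that $W_{n_0} \in \mathcal{S}'$). This is the main obstacle. One must show
\[
\P\bigl(\exists\, w' \in \Z^n \text{ structured, bounded, non-zero, normal to } W_{n_0}\bigr) \leq e^{-c_2 \alpha_n n}.
\]
Union-bound over the choice of $w'$: the number of structured integer vectors with coordinates of height $\leq e^{d\alpha_n n/2}$ is roughly
\[
\underbrace{\binom{n}{O(\alpha_n n)}}_{\text{exceptional set}} \cdot \underbrace{e^{O(d \alpha_n n \cdot \alpha_n n)}}_{\text{free coords}} \cdot \underbrace{(\text{\# GAPs})}_{\leq n^{O(1)} \cdot e^{O(d\alpha_n n)}} \cdot \underbrace{(\text{\# placements in GAP})^n}_{n^{O(n)}},
\]
and for each fixed $w'$, Odlyzko's lemma (Lemma \ref{lemma:O}) applied over $\R$ gives $\P(w' \perp W_{n_0}) \leq (1-\alpha_n)^{n_0} \leq n^{-3}$ (using $n_0 = n - O(\log n / \alpha_n)$ and $\alpha_n \geq n^{-1+\eps}$). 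The tight spot is making the \emph{product} of counting and Odlyzko factors sum to $e^{-c_2 \alpha_n n}$, which forces delicate tuning: one must run the argument level by level in the rank $r$ and size $|Q|$ of the GAP, using iteratively stronger Odlyzko bounds (gained by first conditioning on a near-independent subset of columns, as in the proof of Lemma \ref{lem:sparse}) to beat the count of structured vectors at each level. The exponent $\eps$ in $\alpha_n \geq n^{-1+\eps}$ enters precisely here: it is the slack needed for the Odlyzko factor $(1-\alpha_n)^{n_0}$ to dominate the GAP-counting.

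The hard part will be Step 4, and in particular calibrating the parameters in the definition of ``structured'' so that the inverse Littlewood--Offord theorem of Step 1 applies to everything outside $\mathcal{S}'$ (which wants the structure to be \emph{coarse}, i.e.\ large GAPs allowed) while simultaneously the count in Step 4 stays subexponential in $\alpha_n n$ (which wants the structure to be \emph{fine}, i.e.\ small GAPs). The lifting in Step 2 is the mechanism that allows us to pay for the counting over $\Z$ rather than separately over every prime $p \in \CP_n$, which would be hopeless.
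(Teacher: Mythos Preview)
Your broad architecture (inverse Littlewood--Offord to define ``structured,'' lift from characteristic $p$ to characteristic $0$, bound the chance that $W_{n_0}$ has a structured normal) matches the paper's, but two steps as you have written them would not go through.

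\textbf{The counting in Step 4 cannot close with Odlyzko.} For a fixed structured $w'$, the best you get from Odlyzko over $\R$ is $\P(w'\cdot X_i=0)\le 1-\alpha_n$ per column, hence $\P(w'\perp W_{n_0})\le (1-\alpha_n)^{n_0}\approx e^{-\alpha_n n}$. But the number of structured vectors is of order $|Q|^n\ge n^{cn}$ (the GAP has polynomial size and you place $n$ coordinates), so the union bound explodes. The paper's fix is not ``iteratively stronger Odlyzko'' but to \emph{use the inverse theorem quantitatively inside the counting}: working modulo a single prime $p$, one stratifies by the value of $\rho=\rho(w)$, conditions on $|\tau|=n'$ columns to pin down the exceptional coordinates, and then bounds the remaining $n_0-n'$ column events by $\rho^{n_0-n'}$ rather than $(1-\alpha_n)^{n_0-n'}$. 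The inverse theorem gives $|Q|\le C_\eps\,\rho^{-1}/(\alpha_n n')^{r/2}$ with $r\ge 1$, so the product of count and probability is
\[
|Q|^{m}\,\rho^{\,n_0-n'}\;\lesssim\;(\alpha_n n')^{-m/2}\,\rho^{-(n-n_0)}\;\le\; n^{-\eps n/5},
\]
using $\alpha_n n'\ge n^{\eps/2}$ and $n-n_0\le 3\alpha_n^{-1}\log n$. The gain of $(\alpha_n n')^{-1/2}$ per coordinate---absent from any Odlyzko argument---is precisely what makes the union bound close; this is Lemma~\ref{L:strucnotsp}.

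\textbf{The lifting in Step 2 cannot cover all $p\ge e^{d\alpha_n n}$.} Coordinate-wise lifting to $[-p/2,p/2)$ does not preserve membership in a GAP, so the lift must be done through the GAP generators and exceptional coordinates, as in Lemma~\ref{lemma:passing'}. That argument applies Lemma~\ref{lemma:passing} to a matrix with $k=r+n'$ columns, and since $n'=\lceil n^{\eps/2}\alpha_n^{-1}\rceil$ can be as large as $n^{1-\eps/2}$, the Hadamard bound forces $p\ge e^{n^{1-\eps/3}}$. When $\alpha_n=n^{-1+\eps}$ this leaves an entire range $e^{d\alpha_n n}\le p<e^{n^{1-\eps/3}}$ where the lift is unavailable. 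The paper handles these primes \emph{without} lifting: it proves the per-prime bound $\P(\text{structured normal mod }p)=O_\eps(p^{C_\eps}n^{-\eps n/5})$ and then \emph{sums} over the at most $e^{n^{1-\eps/4}}$ primes below $e^{n^{1-\eps/4}}$; the lift is used only to reduce primes $\ge e^{n^{1-\eps/4}}$ to this finite range. Your definition of $\mathcal S'$ (no structured \emph{integer} normal of bounded height) would need the lift for every large prime, which is not available.
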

%\melanie{I made a statement that is completely analogous to the earlier one so we don't have to repeat the rest of the %proof} \Hoi{sure}
The rest of the proof goes the same as after Lemma~\ref{lemma:sqrt}, replacing $\mathcal{S}$ with $\mathcal{S}'$ .  We conclude  that for $\alpha_n\geq n^{-1+\eps}$, we have that  $\P(\bar{\Drop}_{k} 
)\geq 1- O_{d,T,\eps}(n^{-1/2}),
$ for $k=n,n+1$, which proves Proposition~\ref{prop:large}.
We have  not attempted to optimize the error, or even record in Proposition~\ref{prop:large} the error this argument proves (as we wanted to give a weaker statement that could be proved by the simpler argument above when the $\alpha_n$ were not too small).

\section{Proof of Lemma~\ref{L:upgrade}: Enumeration of Structures }\label{section:structures}

Instead of only avoiding  sparse normal vectors, in Lemma~\ref{L:upgrade} we will avoid normal vectors with more general structure.  We now need to make some definitions necessary to describe this structure.

\subsection{Additive structures in abelian groups} Let $G$ be an (additive) abelian group. 
\begin{definition}
A set $Q$ is a \emph{generalized arithmetic progression} (GAP) of
rank $r$ if it can be expressed as in the form
$$Q= \{a_0+ x_1a_1 + \dots +x_r a_r| M_i \le x_i \le M_i' \hbox{ and $x_i\in\Z$ for all } 1 \leq i \leq r\}$$
for some elements $a_0,\ldots,a_r$ of $G$, and for some integers $M_1,\ldots,M_r$ and $M'_1,\ldots,M'_r$.

It is convenient to think of $Q$ as
the image of an integer box $B:= \{(x_1, \dots, x_r) \in \Z^r| M_i \le x_i
\le M_i' \} $ under the linear map
$$\Phi: (x_1,\dots, x_r) \mapsto a_0+ x_1a_1 + \dots + x_r a_r. $$
Given $Q$ with a representation as above
\begin{itemize}
\item the numbers $a_i$ are  \emph{generators} of $Q$, the numbers $M_i$ and $M_i'$ are  \emph{dimensions} of $Q$, and $\Vol(Q) := |B|$ is the \emph{volume} of $Q$ associated to this presentation (i.e. this choice of $a_i,M_i,M_i'$);
\vskip .05in
\item we say that $Q$ is \emph{proper} for this presentation if the above linear map is one to one, or equivalently if $|Q| =|B|$;
\vskip .05in
%\item more generally, given a positive integer $t$ we say that $Q$ is {\it $t$-proper} for this presentation if $tQ$ is proper;
%\vskip .05in
\item If $-M_i=M_i'$ for all $i\ge 1$ and $a_0=0$, we say that $Q$ is {\it symmetric} for this presentation.
\end{itemize}
\end{definition}

 The following inverse-type idea, which was first studied by Tao and Vu about ten years ago (see for instance~\cite{TVinverse}), will allow us prove bounds much sharper than Theorem \ref{theorem:LO}.

\begin{theorem}[inverse Erd\H{o}s-Littlewood-Offord]\label{theorem:ILO} Let $\eps<1$ and $C$ be positive constants. 
Let $n$ be a positive integer.
Assume that $p$ is a prime that is larger than $C'n^C$ for a sufficiently large constant $C'$ depending on $\eps$ and $C$. 
Let $\nu$ be a random variable taking values in $\Z/p\Z$ which is $\alpha_n$-balanced, that is $\max_{r \in \Z/p\Z} \P(\nu=r) \le 1-\alpha_n$ where $\al_n \ge n^{-1+\eps}$.
 Assume $w=(w_1,\dots, w_n)\in(\Z/p\Z)^n$ such that  
 $$\rho (w) := \sup_{a\in \Z/p\Z} \P(\nu_1 w_1 +\dots + \nu_n w_n=a) \ge  n^{-C},$$
 where $\nu_1,\dots, \nu_n$ are iid copies of  $\nu$.  Then for any $n^{\ep/2} \a_n^{-1} \le n' \le n$ there exists a proper symmetric GAP $Q$ of rank $r=O_{C,\ep}(1)$ which contains all but $n'$ elements of $w$ (counting multiplicity), where 
$$|Q|\le \max\left \{1, O_{C,\ep}(\rho^{-1}/(\al_n n')^{r/2})\right \} .$$ 
%\melanie{Changed vector to $w$ and random variable to $\nu$.  Might need to update proof.} \Hoi{Updated, also moded out the %comments in that proof.}  
\end{theorem}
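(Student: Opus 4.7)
The plan is to follow the method of the first author and Vu in \cite{NgV}, adapted from the integer setting to $\Z/p\Z$ and extended to handle the sparse regime $\alpha_n \geq n^{-1+\eps}$. The three essential ingredients are an Esseen-type Fourier bound for $\rho(w)$, decay estimates on the characteristic function that exploit the $\alpha_n$-balanced hypothesis, and an additive-combinatorial large-spectrum argument that produces the GAP structure via Freiman's theorem over $\Z/p\Z$.

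First I would use the Fourier identity
\[
\rho(w) \;\leq\; \frac{1}{p}\sum_{\xi\in\Z/p\Z}\, \prod_{i=1}^{n} \bigl|\E\, e_p(\xi \nu w_i)\bigr|, \qquad e_p(x):=e^{2\pi i x/p}.
\]
Passing to the symmetrization $\tilde\nu=\nu-\nu'$, the $\alpha_n$-balanced hypothesis yields a quadratic bound of the shape
\[
\bigl|\E\, e_p(\xi \nu w_i)\bigr|^{2} \;\leq\; 1 - c\,\alpha_n\,\bigl\|\xi w_i/p\bigr\|_{\R/\Z}^{2},
\]
(a Hal\'asz-type step after averaging over $\tilde\nu$), so the product is bounded by $\exp\bigl(-c\alpha_n\sum_i\|\xi w_i/p\|^{2}\bigr)$. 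For a threshold $m$ to be tuned against the parameter $n'$, define the level set
\[
S_m := \Bigl\{\xi\in\Z/p\Z \;:\; \sum_{i=1}^{n}\|\xi w_i/p\|^{2} \leq m/\alpha_n\Bigr\}.
\]
A dyadic decomposition of $\rho(w)$ across the level sets, combined with the lower bound $\rho(w)\geq n^{-C}$, forces $|S_m|\gtrsim p\,\rho(w)\cdot(\alpha_n n')^{r/2}$ for an appropriate $m=O_{C,\eps}(1)$ and an $r=O_{C,\eps}(1)$ that will become the rank of the output GAP.

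Second, the level sets are almost closed under addition: if $\xi_1,\xi_2\in S_m$ then the triangle inequality $\|(\xi_1+\xi_2)w_i/p\|\leq \|\xi_1 w_i/p\|+\|\xi_2 w_i/p\|$ gives $\xi_1+\xi_2\in S_{O(m)}$. This in turn gives a Pl\"unnecke--Ruzsa estimate $|kS_m|\leq k^{O_{C,\eps}(1)}|S_m|$, after which the Freiman theorem over $\Z/p\Z$ (valid because $p$ is polynomially larger than the combinatorial quantities $|kS_m|$) yields a proper symmetric GAP $P$ of rank $r=O_{C,\eps}(1)$ with $S_m\subset P$ and $|P|\lesssim |S_m|$. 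Dualizing $P$ to a proper symmetric GAP $Q\subset \Z/p\Z$, the indices $i$ for which $\|\xi w_i/p\|$ remains small for every $\xi\in P$ are forced into $Q$; direct counting shows this holds for all but at most $n'$ of the indices. A Plancherel-type volume relation $|S_m|\cdot |Q|\lesssim p$, combined with the lower bound on $|S_m|$, then produces $|Q|\lesssim \rho^{-1}/(\alpha_n n')^{r/2}$, which is the asserted size bound.

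The main obstacle is the sparsity: when $\alpha_n$ is as small as $n^{-1+\eps}$, the Fourier decay on each factor is extremely weak, and one cannot expect every $w_i$ to lie in $Q$. This is exactly why the theorem permits $n'$ exceptional coordinates, and why the threshold $n'\geq n^{\eps/2}\alpha_n^{-1}$ appears --- this is precisely the scale at which the contribution of the non-exceptional indices dominates the Fourier product, allowing the level-set-to-GAP machinery to close. A secondary technical point is the transition from $\Z$ to $\Z/p\Z$: the hypothesis $p\geq C'n^{C}$ with $C'$ sufficiently large (depending on $C$ and $\eps$) ensures that $p$ is polynomially larger than all intermediate sumset sizes, so that Freiman's theorem in $\Z/p\Z$ produces a genuine rank-$r$ proper GAP rather than one wrapped around by the finite group structure.
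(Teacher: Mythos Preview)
Your broad framework---Fourier/Esseen bound, symmetrization, level sets $S_m$, and a Freiman-type structure theorem---matches the paper. But the structural step is applied to the wrong set, and this is where your sketch becomes unreliable.

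You propose to put Freiman structure on the frequency level set $S_m$ and then ``dualize $P$ to a proper symmetric GAP $Q$'' containing most $w_i$. There is no standard operation that dualizes a GAP in $\Z/p\Z$ to another GAP of comparable size, and your asserted ``Plancherel-type volume relation $|S_m|\cdot|Q|\lesssim p$'' is not a theorem you can simply invoke. What you would get from this route is that the good $w_i$ lie in a Bohr set determined by the generators of $P$; Bohr sets do contain large GAPs, but the rank and volume bounds that come out are not the ones in the statement. Relatedly, your claimed lower bound $|S_m|\gtrsim p\rho\cdot(\alpha_n n')^{r/2}$ does not arise from the dyadic decomposition---the level-set step only gives $|S_m|\gtrsim p\rho\,e^{-m}$ for some $m=O(\log n)$, and the factor $(\alpha_n n')^{r/2}$ appears only after the structure theorem, not before.

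The paper instead applies the additive structure theorem directly to the $w_i$. After a double-counting argument isolates the set $W'$ of all but $n'$ of the $w_i$, the triangle inequality shows that for $k\asymp\sqrt{\alpha_n' n'/m}$ the $k$-fold sumset $kW''$ (with $W''=W'\cup\{0\}$) lies inside the \emph{dual} level set $S_m^*=\{a:\sum_{x\in S_m}\sum_j\beta_j\|xt_ja/p\|^2\leq \tfrac{\alpha_n'}{200}|S_m|\}$, for which a direct $L^2$ argument yields $|S_m^*|\lesssim p/|S_m|\lesssim \rho^{-1}e^{-m}$. So $|kW''|\lesssim\rho^{-1}$, and since $\rho^{-1}\leq n^C\leq k^{O_{C,\eps}(1)}$ exactly when $\alpha_n n'\geq n^{\eps/2}$, the Tao--Vu long-range inverse theorem applies to $W''$ itself, producing a proper symmetric coset progression $H+P\supset kW''$; the largeness of $p$ forces $H=\{0\}$. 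A short ``dividing trick'' (halving argument using $2$-properness) then shrinks $P$ by a factor $k^r$ to a GAP $Q\supset W''$ with $|Q|\lesssim k^{-r}\rho^{-1}\lesssim\rho^{-1}/(\alpha_n n')^{r/2}$. The point is that Freiman/Tao--Vu is applied to the values $w_i$, with $S_m^*$ used only to bound their iterated sumset, and the factor $(\alpha_n n')^{r/2}$ comes from the dividing step, not from duality.
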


When  $\a_n$ is a constant, we then recover a variant of \cite[Theorem
2.5]{NgV}. The new, but not too surprising, aspects here are that the result works for small $\a_n$ and for $\Z/p\Z$ for large enough $p$. 
A proof of Theorem~\ref{theorem:ILO}  will be presented in Appendix~\ref{section:ILO} by  modifying the approach of \cite{NgV}.
%\melanie{Are there any differences or obstacles worth mentioning?  Or does it closely follow?}\hoi{It follows about 70\%; 
%we need to recast some (or all) parts to emphasize the sparsity and the torsion.}    
 We remark that it is in the proof of Theorem~\ref{theorem:ILO} where the  requirement $\a_n n' \ge n^{\eps/2}$ is crucial (which henceforth requires $\a_n$ to be at least $n^{\eps/2-1}$) to guarantee polynomial growth of certain sumsets (see~\eqref{eqn:LO:grow}).  We see that $Q=\{0\}$ includes the special case of sparse $w$.  Theorem~\ref{theorem:ILO} is much sharper than Theorem~\ref{theorem:LO}, and relates the volume of the GAP involved to the bound for $\rho(w)$.
 
We let
$$n':=\lceil n^{\eps/2} \a_n^{-1}\rceil$$ and $m=n-n'$ for the rest of this section, and we will apply Theorem~\ref{theorem:ILO} with this choice of $n'$ and $C=3$.
Thus it will be convenient to let $C_\eps$ be the maximum of $C'$ and the constants from the $O_{C,\eps}$ notation bounding the rank of volume of $|Q|$ in Theorem~\ref{theorem:ILO} applied with $C=3$.  We call a GAP $Q$ \emph{well-bounded} if it is of rank $\leq C_\eps$ and $|Q|\leq C_\eps n^3$.  
% We define a vector $w\in \Z^n$ to be \emph{admissible} if it is normal to a hyperplane in $\R^n$ spanned by vectors %with integral coordinates at most $n^T$ in absolute value, and a vector $w\in (\Z/p\Z)^n$ to be \emph{admissible} if it is %normal to a hyperplane in $(\Z/p\Z)^n$ spanned by vectors with  coordinates at most $n^T$ in absolute value.
We call a vector $w$ \emph{structured} if it is non-zero, and there exists a symmetric well-bounded GAP $Q$ such that all but $n'$ coordinates of $w$ belong to $Q$. Note that it is not always true that $\rho(w) = n^{-O(1)}$ if $w$ is structured in this sense. 
% \Hoi{I added a remark to avoid the confusion of the bound $n^{-9}$ used in the proof of Lemma \ref{L:strucnotsp}.} %Note that when $Q$ is rank $0$, then most of the coefficients of $w$ are $0$.  

%   \hoi{ Are these structure enumeration things totally new? Unfortunately not, as such arguments (in principle) were also %used by Tao and Vu, say in \cite{TVinverse}. However, I do believe that this is the first time that such a thing is used %altogether within all $\Z/p\Z$, and under the very sparse settings, with a couple of twists as in our case. So although the %idea is not fully original, the technical side is totally new in my opinion.}

Our general approach is to see that it is not too likely for $W_{n_0}/p$ to have structured normal vectors.  We need to handle the case of $r=0$ separately from the case of $r\geq 1$, as in the latter case we will use the $(\alpha_n n')^{r/2}$ term crucially.  Now, we will give a very different approach to proving $W_{n_0}/p$ is unlikely to have sparse vectors than we used in Lemma~\ref{lem:sparse}, as Lemma~\ref{lem:sparse} is too weak for small $\alpha_n$.  The method of Lemma~\ref{L:supersparse} will actually give better results as $\alpha_n$ gets smaller, while Lemma~\ref{lem:sparse} gets worse.  This method will automatically control sparse vectors for all large primes at once, without any lifting from characteristic $p$ to characteristic $0$.  
Notably, the bound we get from Lemma~\ref{L:supersparse} will be the largest term in our error. 

%\melanie{Put this discussion somewhere?}
%To  prove Lemma \ref{lemma:sparse:moderate} we will look at the support of the normal vectors. If the support is sparse b%ut not too sparse then we can still take union bound, but when the support is too sparse then we will have to pass to %characteristic zero and consider all $p$ simultaneously. The details follow below.
%\hoi{I added a few sentences for clarification. In a way, our method to prove Lemma \ref{lemma:sparse:moderate} %resembles the idea why we have to treat with Lemma~\ref{lemma:moderate-large} and Lemma~
%\ref{lemma:simultaneous:poly} separately, where in the former result we can still use union bound, while in the later result %we have to pass to characteristic zero.}

% Perhaps we don't have to mention the references; this is similarly to \cite[Proposition A.8]{NP}, and just slightly resembles \cite[Lemma 1]{BR}. Also our purpose is is rather different. 

%In the argument below for a sparse vector $w$ we will find it is very likely that our column space contains a vector with %only one non-zero entry in the support of $w$.  This approach gets better as $\beta_n$ gets smaller, while Lemma~
%\ref{lem:sparse} gets weaker as $\beta_n$ get smaller. \melanie{added a description and moved your references here}

\begin{lemma}[Extremely sparse normal vectors]\label{L:supersparse}
There are absolute constants $\co, \Co$ such that the following holds. Let $\beta_n:=1- \max_{x\in \Z} \P(\xi_n=x)$, and assume $\beta_n\geq \Co \log n/n$ and $\alpha_n\geq 6 \log n/n$.
For $n\geq 2$,  the following happens with probability at most $e^{-\co \beta_n  n/2}$: for some prime $p>2n^T$,
% or for $p=0$, 
the space $W_{n_0}/p$ has a non-zero normal vector with at most $144  \beta_n^{-1}$ non-zero coordinates.
\end{lemma}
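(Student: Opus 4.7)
The plan is to prove Lemma~\ref{L:supersparse} by a column-pairing reduction to a mean-zero matrix, followed by a coordinate-wise ``isolation'' argument and a union bound over the possible supports of a sparse normal vector.

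\textbf{Reduction via column pairing.} Since $\alpha_n\geq 6\log n/n$ ensures $n_0\geq n/2$, set $n_0':=\lfloor n_0/2\rfloor\geq n/4-1$ and define the auxiliary $n\times n_0'$ matrix $\tilde M$ whose $j$th column is $\tilde X_j:=X_{2j}-X_{2j-1}$. Its columns are independent and its entries are i.i.d.\ copies of $\zeta_n:=\xi_n^{(1)}-\xi_n^{(2)}$ for independent copies of $\xi_n$; the symmetric variable $\zeta_n$ satisfies $|\zeta_n|\leq 2n^T$ and has mode $0$ with balance $\gamma_n:=1-\P(\zeta_n=0)\in[\beta_n,2\beta_n]$. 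Any nonzero mod-$p$ normal vector to $W_{n_0}$ is automatically a nonzero mod-$p$ normal vector to the column span of $\tilde M$, so it suffices to bound
\[\P\bigl(\exists\, p>2n^T,\ \exists\, w\in(\Z/p\Z)^n\setminus\{0\},\ |\supp(w)|\leq K,\ w^\top \tilde M\equiv 0\ (\mathrm{mod}\ p)\bigr),\]
where $K:=\lfloor 144/\beta_n\rfloor$.

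\textbf{Isolation.} Fix $p>2n^T$ and such a $w$, and set $\sigma:=\supp(w)$, of size $k\leq K$. Because $|\tilde m_{ij}|\leq 2n^T<p$, $\tilde m_{ij}\not\equiv 0\pmod p$ iff $\tilde m_{ij}\neq 0$ as an integer. If some column $j$ is \emph{$1$-isolated for $t\in\sigma$}, meaning $\tilde m_{tj}\neq 0$ but $\tilde m_{sj}=0$ for all $s\in\sigma\setminus\{t\}$, then the normal equation at column $j$ reduces to $w_t\tilde m_{tj}\equiv 0\pmod p$, forcing $w_t\equiv 0\pmod p$, which contradicts $t\in\sigma$. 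Hence the support $\sigma$ must satisfy: for every column $j$ of $\tilde M$, $|\{s\in\sigma:\tilde m_{sj}\neq 0\}|\in\{0\}\cup\{2,3,\dots\}$.

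\textbf{Union bound and estimation.} Since the columns of $\tilde M$ are i.i.d., for a fixed $\sigma$ of size $k$ the events at different columns are independent, each of probability $1-g(k)$ with $g(k):=k\gamma_n(1-\gamma_n)^{k-1}$. Hence
\[\P(\text{bad event})\leq \sum_{k=1}^K \binom{n}{k}\bigl(1-g(k)\bigr)^{n_0'}.\]
Using $K\gamma_n\leq 2K\beta_n\leq 288$ and $(1-x)^{1/x}\geq e^{-2}$ for $x\leq 1/2$, we get $(1-\gamma_n)^{k-1}\geq e^{-576}$ for all $k\leq K$; hence $g(k)\geq k\beta_n e^{-576}$ for $k\leq 1/\gamma_n$ and $g(k)\geq 144\,e^{-576}$ for $1/\gamma_n\leq k\leq K$. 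Combining with $\binom{n}{k}\leq (en/k)^k$ and $n_0'\geq n/4$, and splitting the sum at $k^*:=\lceil 1/\gamma_n\rceil$, a direct calculation shows that both ranges contribute at most $\tfrac{1}{2}e^{-c_0\beta_n n/2}$ for an absolute constant $c_0>0$, provided the absolute constant $C_0$ in the hypothesis $\beta_n\geq C_0\log n/n$ is chosen large enough.

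\textbf{Main obstacle.} The principal difficulty is the combinatorial cost of the union bound: there are $\binom{n}{K}=\exp(\Theta(n\log\log n/\log n))$ subsets of size $k\sim K$ when $\beta_n=C_0\log n/n$, so the ``trivial'' Odlyzko-style estimate $(1-\beta_n)^{n_0}$ per row (as in Lemma~\ref{lem:sparse}) is far too weak. The crucial point is that the sparsity cutoff $K=144/\beta_n$ is \emph{just small enough} that $K\gamma_n$ is an absolute constant, so $g(k)$ admits a \emph{constant} (independent of $n$ and $\beta_n$) lower bound on the dangerous range $k\in[1/\gamma_n,K]$. This yields $n_0'g(k)=\Omega(n)$, which dominates $\log\binom{n}{k}\leq O(n\log\log n/\log n)$ once $C_0$ is a sufficiently large absolute constant, forcing the huge enumeration $\binom{n}{K}$ to be absorbed. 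Making all estimates uniform in $k$ via the split at $k^*$ is the main technical step.
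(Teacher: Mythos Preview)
Your proof is correct and follows essentially the same approach as the paper's. Both arguments pair columns to form i.i.d.\ symmetric differences $\zeta_n=\xi_n-\xi_n'$, observe that a column with \emph{exactly one} nonzero entry in the rows indexed by $\sigma=\supp(w)$ forces a coordinate of $w$ to vanish mod $p$ (since $|\zeta_n|<p$), and then union-bound over $\sigma$ using that the per-column ``isolation'' probability $g(k)=k\gamma_n(1-\gamma_n)^{k-1}$ satisfies $g(k)\ge c\,k\beta_n$ uniformly for $k\le 144/\beta_n$ (because $k\gamma_n\le 288$ keeps $(1-\gamma_n)^{k-1}$ bounded below by an absolute constant). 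A couple of minor remarks: your split at $k^*=\lceil 1/\gamma_n\rceil$ is not needed, as the bound $g(k)\ge k\beta_n e^{-576}$ already holds on the whole range $1\le k\le K$ and directly gives $n_0'g(k)\ge c\,k\beta_n n$, which dominates $k\log n$ once $C_0$ is large; and the constant ``$144$'' in your lower bound for $g(k)$ on $k\ge 1/\gamma_n$ should be ``$1$'' (since $k\gamma_n\ge 1$ there), though this does not affect the argument.
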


%Note that this is the same argument (though a different statement for technical reasons) as found in \cite[Proposition A.8]{NP}. 

%\hoi{I added this remark for later use} \melanie{Do we still need this with the changed statements?} As this type of result %will be used later, we briefly mention that by an identical proof Claim \ref{claim:zeros} also holds if we restrict to $\Z/p\Z%$.

\begin{proof}(of Lemma \ref{L:supersparse})
In fact, we will show that the following holds with probability at least $1- e^{-\co \beta_n  n/2}$. For any $1\le t \le 144  \beta_n^{-1}$, and any $\sigma \in \binom{[n]}{t}$, there are at least two columns $X_i,X_{j}$ whose restriction $(X_{j}-X_i)|_\sigma$ has exactly one non-zero entry.  We first show that this will suffice to prove the lemma.
Since $(X_{j}-X_i)|_\sigma$ has a unique non-zero entry, and all its entries are at most $2n^T$ in absolute value, for any prime $p>2n^T$ we have that $(X_{j}/p-X_i/p)|_\sigma$ has exactly one non-zero entry. 
Suppose we had a normal vector $w$ to %$W_{n_0}$  (or 
$W_{n_0}/p$ with $1\le t \le 144  \beta_n^{-1}$ non-zero entries, and let $\sigma$ be the indices of those entries.  Since $w|_\sigma$ is normal to $(X_{j}/p-X_i/p)|_\sigma$, that would imply that one of the $\sigma$ coordinates of $w$ is zero, which contradicts the choice of $\sigma$.  

Now we prove the claim from the beginning of the proof. Our method is similar to that of \cite[Lemma 3.2]{BR} and \cite[Claim A.9]{NP}. 
 For $k\in \{1, 3,\dots, 2\lfloor (n_0-1)/2 \rfloor +1\}$, consider the vectors $Y_i = X_{k+1}-X_{k}$. The entries of this vectors are iid copies of the symmetrized random variable $\psi =\xi-\xi'$, where $\xi',\xi$ are independent and have distribution $\xi_n$. With $1-\beta_n':=\P(\psi=0)$, then $\beta_n \le \beta_n'  \le 2\beta_n$ as this can be seen by
\begin{equation}\label{eqn:symmetrization}
(1- \beta_n)^2 \le \max_{x} \P(\xi= x)^2 \le \sum_x \P(\xi=x)^2=\P(\psi =0) \le \max_{x} \P(\xi= x) = 1-  \beta_n.
\end{equation}

Now let $p_\sigma$ be the probability that all $Y_i|_\sigma, i\in \{1, 3,\dots, 2\lfloor (n_0-1)/2 \rfloor +1 \}$ fail to have exactly one non-zero entry (in $\Z$), then by independence of the columns and of the entries
$$p_\sigma = (1- t  \beta_n' (1- \beta_n')^{t-1})^{\lfloor (n_0+1)/2 \rfloor} \le (1 - t  \beta_n' e^{- (t-1) \beta_n'})^{n_0/2} \le e^{-n t \beta_n' e^{-(t-1)  \beta_n'}/4}.$$
(Recall since $\alpha_n\geq 6 \log n/n$ we have $n_0\geq n/2$.)
Notice that as $1\le t \le 144  \beta_n^{-1}$, we have $e^{-(t-1)  \beta_n'}/4 \ge \co$ for some positive constant $\co$, and hence 
$$ e^{-n t \beta_n' e^{-(t-1)  \beta_n'}/4} \le (e^{ -\co n  \beta_n' })^t \le n^{-\co \Co t /2} e^{ -\co n  \beta_n/2 },$$
for any $\Co>0$. %and $n$ sufficiently large in terms of $\Co$.
Thus 
$$\sum_{1\le t \le 144  \beta_n^{-1}}\sum_{\sigma \in \binom{[n]}{t}} p_\sigma \le \sum_{1\le t \le 144  \beta_n^{-1}} \binom{n}{t}  n^{-\co \Co t /2} e^{ -\co  n  \beta_n/2 } \le  \sum_{1\le t \le 144  \beta_n^{-1}}  (n^t n^{-\co \Co t /2}) e^{ -\co  n  \beta_n/2} <e^{ -\co  n  \beta_n/2},$$
provided that $n\geq 2$ and $\Co$ is sufficiently large in terms of $\co$.
\end{proof}

The downside of Lemma~\ref{L:supersparse} is that is is rather weak for constant $\alpha_n$.  
So it needs be combined with an improvement of Lemma~\ref{lem:sparse}.  For the improvement, we use Littlewood-Offord (Theorem~\ref{theorem:LO}) in place of Odlyzko's bound.  However, that substitution only makes sense once have have $k$ non-zero coordinates in our normal vector and $\alpha_n k$ is at least a constant.  Luckily, Lemma~\ref{L:supersparse} provides us with exactly that. This strategy is analogous to that used in the proof of \cite[Proposition A.8]{NP}. 

%\melanie{If this remark is really needed, can we just add to lemma statement?  or just claim later proven in lemma?}
%\begin{remark}\label{remark:zeros} Let $p$ be any given prime and assume that $\max_{x\in \Z/p\Z} \P(\xi_n =x) =1-%
%\a_n \le 1 - \frac{C_0 \log n}{n}$ for a sufficiently large positive constant $C_0$. For $n$ sufficiently large (in terms of% 
%$C_0$), the following holds with probability at least $1- e^{-\co \beta_n  n/2}$ with respect to $X_{1},\dots, X_{n_0}$, 
%where $\co>0$ is absolute (and $\co<1/4$). For any $1\le t \le 144  \beta_n^{-1}$, and any $\sigma \in \binom{[n]}{t}
%$, there is at least two columns $X_i,X_{j}$ whose restriction $(X_{j}-X_i)|_\sigma$ has exactly one non-zero entry in $
%\Z/pZ$. 
%\end{remark}

\begin{lemma}[Moderately sparse normal vectors]\label{lemma:sparse:moderate} 
There exist absolute
  constants $\cz,\Cz$ such that the following holds.  
Let $\beta_n:=1- \max_{x\in \Z} \P(\xi_n=x)$.  
  Assume  $\a_n \ge \frac{\Cz \log n}{n}$ and let $p$ be a prime $>2n^T$. %or $0$.  
  The following happens with probability at most  $(2/3)^{n/4}$: 
   the space $W_{n_0}/p$ has a non-zero normal vector $w$ with $144\beta_n^{-1} \leq |\supp(w)| \leq \cz n$.
 %  \melanie{I got rid of characteristic 0 statement since we don't need it}
\end{lemma}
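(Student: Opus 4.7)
The proof plan combines Theorem~\ref{theorem:LO} with a careful union bound, exploiting two key observations.

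First, since $p > 2n^T$ and $|\xi_n| \leq n^T$, the map from integer values of $\xi_n$ to residues modulo $p$ is injective on the support of $\xi_n$, so the entries of each reduced column $X_i/p$ are $\beta_n$-balanced in $\mathbf{Z}/p\mathbf{Z}$, which is strictly stronger than being $\alpha_n$-balanced. This upgrade is essential: the lower threshold $|\supp(w)| \geq 144\beta_n^{-1}$ is exactly what ensures $\beta_n \cdot |\supp(w)| \geq 144$, the condition needed to apply Theorem~\ref{theorem:LO} with parameter $\beta_n$ in place of $\alpha_n$.

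Given this, for a fixed candidate normal $w \in (\mathbf{Z}/p\mathbf{Z})^n$ with $|\supp(w)| = t \in [144\beta_n^{-1}, c_0 n]$, Theorem~\ref{theorem:LO} yields
\[
\P(w \cdot X_i \equiv 0 \pmod p) \leq \frac{1}{p} + \frac{2}{\sqrt{\beta_n t}} \leq \frac{1}{p} + \frac{1}{6} \leq \frac{1}{5}
\]
for $n$ large enough (using $p > 2n^T$). Since the columns $X_1,\dots,X_{n_0}$ are independent, $\P(w \text{ is normal to } W_{n_0}/p) \leq (1/5)^{n_0} \leq (1/5)^{n/2}$, an exponentially small bound for any individual $w$.

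To combine over all prospective $w$, I would not union bound naively over vectors (which introduces a prohibitive factor $(p-1)^{t-1}$ for large $p$) but rather over the support $\sigma = \supp(w)$. For each such $\sigma$ of size $t$, the existence of a normal with $\supp(w) = \sigma$ is implied by the $t \times n_0$ submatrix $M|_{\sigma \times [n_0]}$ having rank less than $t$. Viewing the $n_0$ columns of this submatrix as i.i.d.\ $\beta_n$-balanced vectors in $(\mathbf{Z}/p\mathbf{Z})^t$, Corollary~\ref{cor:fullrank} (applied with $\alpha_n \to \beta_n$) gives the rank-deficiency probability $\leq \beta_n^{-1}(1-\beta_n)^{n_0 - t}$. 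Summing over $\sigma$ with $\binom{n}{t} \leq 2^{n H(t/n)}$ and over $t \in [144\beta_n^{-1}, c_0 n]$, and choosing $c_0$ small and $C_0$ large as absolute constants, should yield the bound $(2/3)^{n/4}$.

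The principal obstacle is balancing the entropy $\binom{n}{t}$ against the per-support decay when $\beta_n$ is near the threshold $C_0 \log n / n$: in that regime $(1-\beta_n)^{n_0-t}$ is only polynomial in $n$, so the simple Odlyzko-type estimate alone is insufficient and the entropy factor dominates. The resolution is to use Theorem~\ref{theorem:LO} applied column-wise as above (yielding the exponential per-$w$ bound $(1/5)^{n_0}$) combined with the rank-deficiency bound, and then a structural enumeration that avoids paying a factor $(p-1)^{t-1}$. Balancing these two contributions forces $c_0$ to be a genuinely small absolute constant, and this is where the specific exponent $n/4$ in $(2/3)^{n/4}$ (rather than a sharper rate) gives enough slack to close the argument.
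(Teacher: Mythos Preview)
Your ingredients and diagnosis are correct: the per-$w$ Littlewood--Offord bound $(1/5)^{n_0}$ cannot be summed over the $(p-1)^{t-1}$ choices of $w$ with a given support, and the pure Odlyzko/Corollary~\ref{cor:fullrank} bound $\beta_n^{-1}(1-\beta_n)^{n_0-t}$ on rank deficiency of the $\sigma\times[n_0]$ submatrix is only polynomially small when $\beta_n\asymp \log n/n$, while $\binom{n}{t}$ is exponential in $n$ for $t$ near $c_0 n$. But your ``resolution'' paragraph does not actually say what the combination is; the phrase ``structural enumeration that avoids paying a factor $(p-1)^{t-1}$'' is exactly the missing idea, not a description of it.

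The trick (and this is what the paper does) is to \emph{condition on a spanning set of columns so that $w$ becomes deterministic}. Fix $\sigma$ of size $t$ (paying $\binom{n}{t}$), take $\sigma$ minimal so the $t\times n_0$ submatrix has rank exactly $t-1$, and further fix which $t-1$ columns span its column space (paying $\binom{n_0}{t-1}$). Conditioning on these $t-1$ columns determines $w|_\sigma$ up to scalar---so there is no $(p-1)^{t-1}$ factor at all---and all $t$ entries of $w|_\sigma$ are nonzero by the choice of $\sigma$. Now apply Theorem~\ref{theorem:LO} with this fixed $w$ to each of the remaining $n_0-t+1$ columns: each lands in the span with probability at most $p^{-1}+2/\sqrt{\beta_n t}\le p^{-1}+1/6<2/3$, so the conditional probability is at most $(2/3)^{n_0-t+1}\le(2/3)^{n/2}$. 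The total bound is then $\sum_t\binom{n}{t}\binom{n_0}{t-1}(2/3)^{n/2}\le\sum_t\binom{n}{t}^2(2/3)^{n/2}$, and choosing $c_0$ small enough that $\binom{n}{c_0 n}^2\le(3/2)^{n/4}$ gives the claimed $(2/3)^{n/4}$. Your sketch has all the pieces lying around this step but stops just short of assembling them.
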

%We label this event $\Good'$ to distinguish with the similar event $\Good$ introduced in the %treatment for $\a_n \ge n^{-1/6+o(1)}$ above. 
%\Hoi{Is there difference between ``module $W_{n_0}/p$" in Lemma~\ref{L:supersparse} and ``subspace $W_{n_0}/p$" here?}
% \melanie{Changed Lemma~\ref{L:supersparse} to ``space.''  We only need ``module'' if we are making statements over $\Z$, where things aren't vector spaces.}
Note Lemma~\ref{lemma:sparse:moderate} only bounds the probability of sparse normal vectors modulo one $p$ at a time, unlike Lemma~\ref{L:supersparse}, which controls sparse normal vectors modulo all sufficiently big primes.

\begin{proof}(of Lemma \ref{lemma:sparse:moderate}) 
%When $p=0$, we treat $W_{n_0}/p$ as a subspace of $\R^n$.
%We start the argument exactly as in Lemma~\ref{lem:sparse}. \melanie{not exactly true, but should compare}
For $\sigma \subset [n]$ with $144\beta_n^{-1}\le t=|\sigma| \le \cz n$, consider the event that 
 $W_{n_0}/p$ is normal to a vector $w$ with $\supp(w) =\sigma$ but not to any other vector of smaller support size.
With a loss of a multiplicative factor $\binom{n}{t}$ in probability, we assume that $\sigma=\{1,\dots,t\}$. 
Consider the submatrix $M_{t \times n_0}$ of $M_{n\times n}$ consisting of the first $t$ rows and first $n_0$ columns of $M_{n\times n}$.  Since the restriction $w|_\sigma$ of $w$ to the first $t$ coordinates is normal to all the columns of 
$M_{t \times n_0}/p$, the matrix $M_{t \times n_0}/p$ has rank $t-1$ (if $p=0$, we mean rank over $\R$).  With a loss of a multiplicative factor $\binom{n_0}{t-1}$ in probability, we assume that the column space of $M_{t \times n_0}/p$
is spanned by its first $t-1$ columns.  
%This implies that the space $W_{n_0}|_\sigma/p$ spanned by $X_1|_\sigma,\dots, X_{n_0}|_\sigma$ (the restrictions of $X_1,\dots, X_t$ %to their first $t$ coordinates) has dimension at most $t-1$. With a loss of a multiplicative factor $\binom{n}{t-1}$ in probability, %we assume that $W_{n_0}|_\sigma/p$ is spanned by $X_1|_\sigma,\dots, X_{t-1}|_\sigma$.

Note that for $p>2n^T$, the value of $\xi_n$ is determined by its value mod $p$, and so $\beta_n=1- \max_{x\in \Z/p\Z} \P(\xi_n/p=x)$.   If we fix $X_1,\dots,X_{t-1}$ such that $W_{t-1}|_\sigma/p$ has a normal vector with all $t$ coordinates non-zero, then  
by Theorem \ref{theorem:LO} , the probability that $X_i|_\sigma/p\in W_{t-1}|_\sigma/p$ for all $t\leq i \leq n_0$ is at most
$$(\frac{1}{p} + \frac{2}{\sqrt{ \beta_n  t}})^{n_0-t+1}\le ( \frac{1}{p} + \frac{2}{\sqrt{ \beta_n t}})^{(1-2c_0) n} \le  (\frac{2}{3})^{n/2}  .$$
The first inequality follows as long as $\Cz\geq 3/\cz$ as then we have
 $\cz n\geq 3 \log n/\alpha_n$ and $n_0\geq n-c_0n$.
%when $n$ is sufficiently large given  $c_0$ (which is an absolute constant).
%Thus 
%$$\P(\CE_\sigma) \le \sum_{i_1,\dots, i_{t-1}} \P(\CE_{\sigma, i_1,\dots, i_{t-1}})  \le \binom{k_0}{t-1} (\frac{2}{3})^{n/2}.$$
%So
Thus the total probability of the event in the lemma is at most
$$ \sum_{144  \beta_n^{-1} \le t \le c_0 n}\binom{n}{t}^2 (\frac{2}{3})^{n/2} \le (\frac{2}{3})^{n/4}.$$
provided that $c_0$ is sufficiently small absolutely. 
\end{proof}

Now we will show that the probability of having a structured normal vector for a GAP of rank $r\geq 1$ (which was defined in the discussion following Theorem~\ref{theorem:ILO}) is extremely small.

\begin{lemma}[Structured, but not sparse, normal vectors]\label{L:strucnotsp}
Let $\alpha_n\geq n^{-1+\eps}$.
Let $p$  be a  prime $p\geq  C_\eps n^{3}$.
The following event happens with probability $O_{\eps}(p^{C_\eps} n^{-\eps n/5})$: the space
$W_{n_0}/p$ has a structured normal vector $w$, and 
$W_{n_0}/p$ does not have a non-zero normal vector $w'$ 
such that $|\supp(w')|\leq \cz n$ with $\cz$ from Lemma~\ref{lemma:sparse:moderate}.
\end{lemma}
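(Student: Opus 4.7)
The plan is a union bound over all candidate structured normal vectors of $W_{n_0}/p$, stratified by their associated GAP. First, a symmetric well-bounded GAP $Q\subset\Z/p\Z$ of rank $r\leq C_\eps$ is specified by its generators $a_1,\ldots,a_r\in\Z/p\Z$ together with dimensions $M_1',\ldots,M_r'$ satisfying $\prod_j(2M_j'+1)\leq C_\eps n^3$, so there are at most $p^{C_\eps}n^{O_\eps(1)}$ such GAPs; this directly accounts for the $p^{C_\eps}$ factor in the target bound. For each fixed $Q$ with volume $V\leq C_\eps n^3$, the number of vectors $w$ having all but some set $\pi\subseteq[n]$ of size at most $n'$ of its coordinates in $Q$ is at most $\binom{n}{n'}p^{n'}V^{n-n'}$, obtained by choosing $\pi$, assigning arbitrary values in $\Z/p\Z$ to $w|_\pi$, and assigning values in $Q$ to $w|_{[n]\setminus\pi}$.

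For each such candidate $w$, independence of the columns gives $\P(X_i\cdot w=0$ for all $i\leq n_0)=\rho_0(w)^{n_0}$, where $\rho_0(w):=\P(X\cdot w=0)$.  The key improvement over Theorem~\ref{theorem:LO} will be a Hal\'asz-type anti-concentration bound tailored to the GAP structure: since $|\supp(w)|\geq\cz n$ and $n'\ll \cz n$, at least $\cz n/2$ nonzero coordinates of $w$ lie in the rank-$r$ volume-$V$ GAP $Q$, and the distribution of $\sum_i x_iw_i$ spreads over a ``thickened'' GAP of volume of order $V(\alpha_n n)^{r/2}$, yielding
$$\rho_0(w)\ \leq\ \frac{C_\eps}{V(\alpha_n n)^{r/2}}+\frac{C_\eps}{p}.$$
This is the forward analogue of the inverse estimate in Theorem~\ref{theorem:ILO}, specialized to vectors already known to be structured.

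Combining these ingredients, the union bound becomes
$$\P\big(\text{structured normal exists}\big)\ \leq\ p^{C_\eps}n^{O_\eps(1)}\binom{n}{n'}p^{n'}V^{n-n'}\left(\frac{C_\eps}{V(\alpha_n n)^{r/2}}+\frac{C_\eps}{p}\right)^{n_0}.$$
The remaining calculation splits on which of the two terms in $\rho_0(w)$ dominates.  If $1/p$ dominates, the relevant factor reduces to $p^{n'-n_0}$, which is exponentially small in $n$ since $n_0-n'\geq n/2$ for large $n$.  If instead the GAP term dominates, then $V^{n-n'-n_0}\leq V^{-n'/2}$ (using $n_0\geq n-n'+n'/2$) and the savings $(\alpha_n n)^{-rn_0/2}\leq n^{-\eps rn_0/2}$ (using $\alpha_n\geq n^{-1+\eps}$) dominate the remaining factor $(np)^{n'}$, since $n'\leq n^{1-\eps/2}$.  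Routine bookkeeping then yields the claimed bound $O_\eps(p^{C_\eps}n^{-\eps n/5})$.

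The main obstacle is establishing the refined anti-concentration bound $\rho_0(w)\leq C_\eps/(V(\alpha_n n)^{r/2})+C_\eps/p$ rigorously in the sparse regime $\alpha_n\geq n^{-1+\eps}$.  This amounts to a sparse-friendly Hal\'asz-type inequality in $\Z/p\Z$ adapted to coefficients lying in a rank-$r$ GAP, and requires careful use of the hypothesis $p\geq C_\eps n^3$ so that the ``thickened'' GAP does not wrap around modulo $p$.  The argument will closely parallel the Fourier and box-counting machinery behind Theorem~\ref{theorem:ILO}, but used in the forward direction.
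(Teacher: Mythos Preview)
There are two genuine gaps that together break the argument.

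\textbf{The forward Hal\'asz bound is false as stated.} Your claimed inequality $\rho_0(w)\le C_\eps/(V(\alpha_n n)^{r/2})+C_\eps/p$ is supposed to hold whenever all but $n'$ coordinates of $w$ lie in \emph{some} symmetric GAP $Q$ of rank $r$ and volume $V$. But membership in a large GAP imposes no constraint: e.g.\ if every $w_i=1$, then $w$ has all coordinates in any rank-$1$ GAP of volume $C_\eps n^3$, yet $\rho_0(w)\asymp (\alpha_n n)^{-1/2}$, not $n^{-3-\eps/2}$. The inverse Erd\H os--Littlewood--Offord theorem (Theorem~\ref{theorem:ILO}) gives a relation between $\rho(w)$ and one \emph{particular} GAP it produces, not an anti-concentration bound valid for any GAP containing the coordinates. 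The paper does not attempt a forward bound of this type; instead it stratifies by the actual value of $\rho(w)$ (dyadically), applies Theorem~\ref{theorem:ILO} to bound $|Q|$ in terms of $\rho$, and then the count $|Q|^m$ and the probability $\rho^{n_0-n'}$ balance.

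\textbf{The $p^{n'}$ factor is fatal.} Enumerating the $n'$ exceptional coordinates over all of $\Z/p\Z$ inserts a factor $p^{n'}$ with $n'=\lceil n^{\eps/2}\alpha_n^{-1}\rceil\to\infty$. Since the lemma places no upper bound on $p$, this cannot be absorbed into $p^{C_\eps}$, and your final bound is not $O_\eps(p^{C_\eps}n^{-\eps n/5})$. The paper avoids enumerating these coordinates entirely: it chooses a set $\sigma$ of $n'$ \emph{column} indices, conditions on $(X_i)_{i\in\sigma}$, and observes that the linear equations $w\cdot X_i/p=0$ for $i\in\sigma$ uniquely determine the exceptional coordinates $w_j,\ j\in\tau$, from the non-exceptional ones (because the rows $R_j|_\sigma$, $j\in\tau$, are linearly independent---and this is exactly where the hypothesis that $W_{n_0}/p$ has no normal vector with support $\le c_0 n$ is used). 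The price paid is losing $n'$ columns in the probability, i.e.\ $\rho^{n_0-n'}$ instead of $\rho^{n_0}$, which is far cheaper than $p^{n'}$.
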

 Very roughly speaking, aside from the choices of parameters for the GAPs that might contain the most elements of $w$, and of the exceptional elements after applying Theorem~\ref{theorem:ILO}, the key estimate leading to Lemma~\ref{L:strucnotsp} is that 
$$(\rho^{-1}/\sqrt{\alpha_n n'})^n \rho^{n_0} = O(n^{-\eps n /5})$$  
as long as $n^{-O(1)} \le \rho \le O(n^{-\eps/2})$. We now present the details. 
\begin{proof}(of Lemma \ref{L:strucnotsp})
Throughout the proof, we assume $n$ is sufficiently large given $\eps$.
Suppose we have such a $w$.  By Theorem~\ref{theorem:LO} and $|\supp(w)|>\cz n$, as long as $\alpha_n \geq 4/(\cz n)$,
we have $\rho(w)\leq p^{-1} +2/\sqrt{\alpha_n \cz n}\leq (1+2\cz^{-1/2}) n^{-\eps/2}$, since $p\geq n^{\eps/2}$ and $\alpha_n\geq n^{-1+\eps}$.

 Let $Q$ be a symmetric GAP in $\Z/p\Z$ of rank at most $r$ and volume at most $V$, such that for some subset
$\tau\sub [n]$ of size $n'$ we have for $j\in ([n]\setminus\tau)$ that $w_{j}\in Q$.
Let $\Ro_1,\dots, \Ro_n$ denote the rows of the matrix $M$ formed by the columns $X_1/p,\dots,X_{n_0}/p$.
%Let $\tau' \sub \tau$ by such that the $R_j$ for $j\in\tau'$ are a basis for the space spanned by $R_j$ for $j\in \tau$.  
For $n$ sufficiently large (in terms of $\epsilon$) such that $c_0n\geq n'$, we see that the $R_j$ for  $j\in\tau$ must be linearly 
independent (or else there would be a normal vector to $W_{n_0}/p$ with at most $c_0n$ non-zero coefficients).  
%There is then a susbet $\sigma\sub [n_0]$ with $|\sigma|=|\tau|$ such that the $(\tau, \sigma)$ minor of $M$ is invertible.
%, since the $R_{j}$ for ${j}\in\tau$ are linearly independent.  

First, we will determine how many possible choices there are for the data of $Q$, $\tau$, $\sigma$, and the $w_{j}$ for $j\in\tau$, without any attempt to be sharp.
Then, given those data, we will determine the probability that $X_1,\dots,X_{n_0}$ could produce the situation outlined above with those data.

%Since $w$ is admissible, and all but $n'$ of its coordinates belong to a well-bounded  symmetric GAP we have that $|w_i|
%\leq C_\eps n^{T+1} n^3$ by Lemma~\ref{lemma:rational}, and thus the generators of the GAP are at most $C_\eps %n^{T+1} n^3$ in absolute value (for $n$ sufficiently large such that $C_\eps+n'\leq n$).  
So we have at most $p^r$ choices of generators for $Q$
and at most $V^r$ choices of dimensions (to obtain a lower rank GAP we just take some dimensions to be $0$).  There are at most $2^n$ choices of $\tau$, and at most $2^n$ choices of $\sigma$.  
There are at most $V^m$ choices of $w_{j}$ for $j\in([n]\setminus\tau)$.

Given $Q$, $\tau$,  $\sigma$, and the $w_{j}$ for $j\in\tau$, we condition on the $X_i$ for $i\in \sigma$.
%We consider the case (as above) that we have a $w$ with $w\cdot X_i/p=0$ for all $1\leq i\leq k_0$ and $w$ has $j$th 
%entry $w_j$ for $j\in ([n]\setminus\tau)$.  
Then the $\tau$ entries of $w$ are determined by the $w_j$ for $j\in ([n]\setminus\tau)$ and the $X_i$ for $i\in \sigma$ as follows.
From $w\cdot X_i/p=0$ for $i\in\sigma$, it follows that
\begin{equation}\label{eqn:moderate:0}
\sum_{j\in\tau} w_j \Ro_j|_\sigma = -\sum_{j\in([n]\setminus\tau)} w_j \Ro_j|_\sigma.
\end{equation}
Since the $\Ro_j|_\sigma$ (meaning row $\Ro_j$ restricted to the $\sigma$ entries) for $j\in \tau$ are linearly independent and $|\tau|=|\sigma|$, we conclude that the $w_j$ for $j\in ([n]\setminus\tau)$ and $X_i$ for $i\in \sigma$ determine at most one possible choice for the $w_j$ for $j\in\tau$.  

For simplicity, we will proceed in two cases.  First, we will determine how likely it is for $W_{n_0}/p$ to have a normal vector $w$ as in the lemma statement such that $\rho(w)\leq n^{-9}.$  From the well-boundedness of $Q$, we have that
$r\leq C_\eps$ and
 $V\leq C_\eps n^3$.  Thus the total number of choices for $Q$, $\tau$,  $\sigma$, and the $w_{j}$ for $j\in\tau$ is at most 
$
p^{C_\eps} (C_\eps n^3)^{C_\eps+m} 4^n.
$
Once we condition on  the $X_i$ for $i\in \sigma$, the vector $w$ is determined by our choices, and the probability that $w\cdot X_i/p=0$ for $i\in([n_0]\setminus \sigma)$ is at most $n^{-9(n_0-n')}.$  Thus the total probability that $W_{n_0}/p$ has a normal vector $w$ as in the lemma statement such that $\rho(w)\leq n^{-9}$ is at most
$$
p^{C_\eps} (C_\eps n^3)^{C_\eps+m} 4^n n^{-9(n_0-n')}=O_{\eps}(p^{C_\eps} n^{-n}).
$$

Next we will determine how likely it is for $W_{n_0}/p$ to have a normal vector $w$ as in the lemma statement such that $\rho(w)> n^{-9}.$ %As above, we have that $|w_i|\leq C_\eps n^{T+1} n^3$. 
However, instead of counting the $Q$ from the lemma statement, we are going to count the $Q$ provided by Theorem \ref{theorem:ILO}. %, still using the bound on their generators coming from the fact that $|w_i|\leq C_\eps n^{T+1} n^3$.
 More specifically, we divide
$[n^{-9},(1+2\cz^{-1/2}) n^{-\eps/2}]$  into dyadic subintervals $I_\ell=[\rho_\ell,
2\rho_{\ell}]$ and we suppose that $\rho(w) \in I_\ell$. 
Let $\rho=\rho(w)$.
We can apply Theorem \ref{theorem:ILO} with $C=3$ for $p\geq  C_\eps n^{3}$. 
Then there exists a  symmetric GAP $Q$ of rank
$r\leq C_\eps$ with $|Q|\leq \max((C_\eps(\rho^{-1}/(\al_n n')^{r/2},1),$ 
 and a subset $\tau\sub [n]$ of $n'$ indices such that for $j\in ([n]\setminus\tau)$,  we have $w_{j}\in Q$.
 Note that $r=0$ would imply that $|\supp(w)|\leq \lceil n^{\eps/2} \a_n^{-1}\rceil$, which contradicts the fact that $|\supp(w)|>\cz n$.  Also, since $\rho^{-1}\geq (1+2\cz^{-1/2})^{-1} n^{\eps/2}$, we have that
 $C'_\eps \rho^{-1}/(\al_n n')^{1/2} \geq 1$ for some constant $C'_\eps\geq C_\eps$ only depending on $\eps$.  
 So $r\geq 1$, and
 \begin{equation}\label{eqn:moderate:Q}
 |Q|=O_{\ep}(\rho^{-1}/(\al_n n')^{1/2}).
 \end{equation}  
% (Note since $\rho \leq (1+2\cz^{-1/2}) n^{-\eps/2}$, we have that $\rho^{-1}/(\al_n n')^{1/2}$ is bounded below by an absolute 
%constant and we can drop by $1$ in the upper bound of Theorem \ref{theorem:ILO}.)

Since $\ell$ was chosen so that $\rho(w)\leq 2\rho_{\ell}$, we have that the probability that $w\cdot X_i/p=0$ for $i\in([n_0]\setminus \sigma)$ is at most $(2\rho_{\ell})^{n_0-|\sigma|}.$
Thus the total probability that there is a
$w$ as in the lemma statement such that $\rho(w)> n^{-9}$ is at most
\begin{align}\label{eqn:moderate:1}
\sum_{\ell=1}^{O(\log n)} p^{C_\eps} O_{\ep}(\rho_{\ell}^{-1}/(\al_n n')^{1/2})^{C_\eps+m} 4^n (2\rho_\ell)^{n_0-n'}
&\leq \sum_{\ell=1}^{O(\log n)} p^{C_\eps} e^{O_{\ep}(n)}
 O_{\eps}((\al_n n')^{-1/2})^m (\rho_{\ell}^{-1})^{n-n_0}\nonumber
 \\ 
 &\leq O_{\eps}(p^{C_\eps} n^{-\eps n/5}).
\end{align}
For these inequalities, we use facts including $\rho_\ell^{-1}\leq n^9$ and $n-n_0=\lfloor\frac{3\log n}{\alpha_n} \rfloor\leq 3n^{1-\eps}\log n$, and $(\al_n n')^{-1/2}\leq n^{-\eps/4},$ and $m=n-\lceil n^{\eps/2} \a_n^{-1}\rceil\geq n-\lceil n^{1-\eps/2}\rceil$.

% \melanie{moved this here because it was first time we used it}
%Then for $n$ sufficiently large (in terms of $\eps$ and $c_0$) we have that $n'\leq c_0 n$.
%If $\Good'$ occurs, we have that
%any $n'$ of the $Y_i$ are linearly independent. 
%So if $\Good'$ occurs, then by Theorem \ref{theorem:LO} we have
%$$\sup_{a\in \Z/p\Z} \P(\xi_1 w_1 +\dots + \xi_n w_n) =O_{\eps,C_4}(\frac{1}{\sqrt{c_0 \a_n n}}) < n^{-\eps/2}.$$
%\melanie{I don't understand why there is an $\eps$ and $C_4$ is the $O$ notation here.  And also don't we need some absolute constant %in front of the $n^{-\eps/2}$ on the right?}

\end{proof}

As good as the bounds in Lemmas~\ref{lemma:sparse:moderate} and \ref{L:strucnotsp} are, they still cannot be summed over all primes $p$ that might divide the determinant of $M_{n\times n}$.  So at some point, we need to lift the structured normal vectors from characteristic $p$ to characteristic $0$.  Unlike in Section~\ref{S:nottoosparse}, when we could lift non-sparse normal vectors for all large primes, our structured vectors here have more noise and we cannot lift until the primes are even larger.  The following lemma does this lifting and is the only place we use that the coefficients of the $X_i$ are bounded.  Instead of counting structured vectors in characteristic $0$ (or modulo a prime $>n^{n/2}$), for which we would need some bound on their coefficients (e.g., see the commensurability results \cite[Lemma 9.1]{Ng} and \cite[Theorem 5.2(iii)]{TV}), we  prove in the following lemma that we can also reduce structured vectors in characteristic $0$ to structured vectors modulo a prime around $e^{n^{1-\eps/3}}$.  This allows us to transfer structured vectors modulo $p$ for our largest range of $p$ to structured vectors for a single prime $p_0$ that is of reasonably controlled size.
 
We say a submodule of $\Z^n$ is \emph{admissible} if it is generated by vectors with coordinates at most $n^T$ in absolute value.  (In particular, $W_k$ is always admissible.) 

\begin{lemma}[Lifting and reducing structured vectors]\label{lemma:passing'}
Let $A$ be an admissible submodule of $\Z^n$, and $p$ be a prime $\geq e^{n^{1-\eps/3}}$,
 and $n$ be sufficiently large given $\eps$ and $T$.
Then $A$ has a structured normal vector (for a GAP with integral generators) if and only if $A/p$ has a structured normal vector.
\end{lemma}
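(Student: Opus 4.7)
Plan. The strategy is to encode ``$A$ has a structured normal vector'' as a finite disjunction of rank inequalities between two small integer matrices, and then use Lemma~\ref{lemma:passing} to transfer each such rank inequality between $\Q$ and $\Z/p\Z$ uniformly.

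Fix admissible generators $Y_1,\dots,Y_q$ of $A$ with $|Y_{j\ell}|\leq n^T$. A structured normal vector in $\Z^n$ (or $(\Z/p\Z)^n$) is specified by the following finite data $D$: a subset $\tau=\{j_1,\dots,j_s\}\sub[n]$ with $s\leq n'$; a rank $r\leq C_\eps$; a symmetric integer box $B=\prod_{i=1}^r[-M_i,M_i]$ with $|B|\leq C_\eps n^3$; and integer coordinates $x_{ji}\in[-M_i,M_i]$ for $j\notin\tau$, together with free variables $\vec v=(a_1,\dots,a_r,w_{j_1},\dots,w_{j_s})$ that determine $w$ via $w_j=\sum_i x_{ji}a_i$ for $j\notin\tau$ and $w_{j_k}=w_{j_k}$. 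Setting $c_{i\ell}:=\sum_{j\notin\tau}x_{ji}Y_{j\ell}$, the normality condition $w\cdot Y_\ell=0$ becomes $\vec v\in\ker(N_D^T)$, where $N_D^T$ is a $q\times(r+s)$ integer matrix with entries bounded by $n\cdot C_\eps n^3\cdot n^T\leq C_\eps n^{T+4}$, while $w\neq 0$ is the condition $\vec v\notin\ker(\tilde X_D)$, where $\tilde X_D$ is the $n\times(r+s)$ integer matrix (with entries bounded by $C_\eps n^3$) encoding the reconstruction $w=\tilde X_D\vec v$.

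Consequently both ``$A$ has a structured normal vector'' and ``$A/p$ has a structured normal vector'' become the assertion that, for some admissible data $D$,
\[
\rk\!\begin{pmatrix}N_D^T\\ \tilde X_D\end{pmatrix}>\rk(N_D^T),
\]
interpreted over $\Q$ and over $\Z/p\Z$ respectively. Lemma~\ref{lemma:passing} asserts that any integer matrix with $r+s$ columns and entries bounded by $C_\eps n^{T+4}$ has the same rank over $\Q$ and over $\Z/p\Z$ provided $p>\exp\!\big(\tfrac12(r+s)\log(r+s)+(r+s)(T+5)\log n\big)$. Since $r\leq C_\eps$ and $s\leq n'\leq n^{1-\eps/2}+1$, this threshold is at most $\exp(O_{\eps,T}(n^{1-\eps/2}\log n))$, uniformly in $D$. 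As $n^{\eps/6}/\log n\to\infty$, our hypothesis $p\geq e^{n^{1-\eps/3}}$ exceeds this threshold for every admissible $D$ once $n$ is large in terms of $\eps,T$. Applying Lemma~\ref{lemma:passing} to $N_D^T$ and to $\binom{N_D^T}{\tilde X_D}$ separately then yields the claimed biconditional, since the rank inequality on either side survives reduction iff it holds in characteristic zero.

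The main obstacle I anticipate is the naive failure mode where an integer structured vector becomes identically zero mod $p$ (or conversely, a mod-$p$ structured vector cannot be lifted while preserving the GAP structure): this is exactly why ``$w\neq 0$'' must be reformulated as the second rank condition $\vec v\notin\ker(\tilde X_D)$ rather than as nonvanishing of a particular lifted vector. With this packaging, the whole biconditional reduces to comparing ranks of two matrices handled simultaneously by Lemma~\ref{lemma:passing}. The quantitative check that the Hadamard-type threshold remains below $e^{n^{1-\eps/3}}$ succeeds precisely because Theorem~\ref{theorem:ILO} caps the rank of $Q$ by $C_\eps$ and the size of $\tau$ by $n'=O(n^{1-\eps/2})$.
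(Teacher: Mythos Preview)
Your proposal is correct and takes a genuinely different route from the paper's proof.

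The paper treats the two implications separately. For the ``if'' direction it applies Lemma~\ref{lemma:passing}\eqref{i:kernel} (kernel surjection) to the matrix $M'$ with columns $Z_l=\sum_{j} x_{jl}R_j$ and $R_{m+1},\dots,R_n$, lifting the vector $(a_1,\dots,a_r,w_{m+1},\dots,w_n)$ from $\Z/p\Z$ to $\Z$. For the ``only if'' direction the paper cannot simply reduce a structured normal $w$ mod $p$, since $w/p$ might vanish; instead it picks a \emph{minimal} integral structured normal vector, applies Lemma~\ref{lemma:passing}\eqref{i:kernel} to the coordinate-encoding matrix $M_x$, and shows that if $w$ were divisible by $p$ one could produce a strictly smaller structured normal vector $\tfrac1p w$, contradicting minimality.

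Your approach sidesteps both the explicit lift and the minimality argument by packaging ``there exists a structured normal vector with data $D$'' as the single rank inequality $\rk\binom{N_D^T}{\tilde X_D}>\rk(N_D^T)$ over the relevant field, and then invoking the rank-equality statement of Lemma~\ref{lemma:passing} for both matrices at once. This is cleaner and symmetric: the nonvanishing condition $w\neq 0$ is absorbed into the second rank, so the issue of $w$ reducing to zero mod $p$ never arises. The price is that your argument is nonconstructive (you do not exhibit the lift of the generators), whereas the paper's argument produces them explicitly; but for the purposes of Lemma~\ref{lemma:passing'} nothing more is needed. Your quantitative check that the Hadamard threshold $\exp\!\big(\tfrac12(r+s)\log(r+s)+(r+s)(T+5)\log n\big)$ stays below $e^{n^{1-\eps/3}}$ matches the paper's computation exactly, since both hinge on $r+s\le C_\eps+n'\le O_\eps(n^{1-\eps/2})$.
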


\begin{proof}(of Lemma \ref{lemma:passing'}) 
We will first prove the ``if'' direction.
Assume that the first $m=n-n'$ entries of the normal vector $w=(w_1,\dots, w_n)$ belong to a symmetric well-bounded GAP $Q$ with $r$ generators $a_1,\dots, a_r$ in $\Z/p\Z$, and $w_{j}  = \sum_{l=1}^r  x_{j l} a_l$ for $ 1\le j\le m$. 
Let $M$ be the matrix with entries at most $n^T$ in absolute value whose columns generate $A$.   Let $R_1,\dots, R_n$  be the rows of $M$. We have the equality modulo $p$
$$0 = \sum_{j=1}^m w_{j} R_{j}+   \sum_{j=m+1}^n w_{j} R_{j}  = \sum_{l=1}^r a_l (\sum_{j=1}^m x_{j l} R_{j}) +  \sum_{j=m+1}^n w_{j} R_{j} .$$
Now for $1\leq l\leq r$, let  $Z_l:= \sum_{j=1}^m x_{j l} R_{j}$.
We have $|x_{j l}|\leq |Q|\leq C_\eps n^3$.
  The entries of $Z_l$ are then bounded by $C_\eps n^{T+4}$, which is $\leq n^{T+5}$ for $n$ sufficiently large given $\eps$, while the entries of $R_{{m+1}},\dots, R_{n}$ are bounded by $n^T$. 
Let $M'$ be the matrix whose columns are $Z_1,\dots,Z_r,R_{m+1},\dots R_n$.   
  The above identity then implies that $(a_1,\dots,a_r,w_{m+1},\dots w_n)^t$ is in the kernel of $M'$.
  Lemma \ref{lemma:passing} \eqref{i:kernel} applied to $M'$, with $k=r+n'$ %$< 2n^{1-\eps/2}, %
  %$T= O_{\eps,C_4}(1)+ T$, 
  implies that as long as $p\ge  e^{(k \log k)/2 + k(T+5) \log n}$ (which is satisfied because $p\ge e^{n^{1-\eps/3}}$, and $r\leq C_\eps$, and $n'\leq n^{1-\eps/2}+1$, and $n$ is sufficiently large given $\eps$ and $T$), then
  there exist integers $a_l', w_j'$, reducing mod $p$ to $a_l, w_j$, for $1\leq l\leq r$ and $m+1\leq j\leq n$,
   such that
$$\sum_{k=1}^r a_l' Z_l + \sum_{j=m+1}^n w_j' R_{j}=0.$$
Let $w'=(w_1',\dots,w_n')$ where $w_{j}'= \sum_{l=1}^r  x_{j l} a_l'$ for $1\le j\le m$. By definition the $w_{j}'$ for $1\le j\le m$ belong to the symmetric  GAP with generators $a'_l$ and with the same rank and dimensions as $Q$, and   $w'$ is  normal to $A$.  Further $w'$ is non-zero since it reduces to $w$ mod $p$.
%Since $A'/p$ has dimension $n-1$, we have that $A'$ contains at least $n-1$ vectors that are linearly independent mod $p$, and thus %are linearly independent over $\R$.  Thus $w'$ is admissible.

The ``only if'' direction appears easier at first---if we start with a structured normal vector, we can reduce the generators of the GAP and the normal vector mod $p$ for any prime $p$.  However, the difficulty is that for general primes $p$ it is possible for the generators $a_l$ of the GAP to be not all $0$ mod $p$, but yet the resulting normal vector  $w$ to be $0$ mod $p$.  
Given $A$, we choose $w$ minimal (e.g. with $\sum_i |w_i|$ minimal) so that the first $m=n-n'$ entries (without loss of generality) of the normal vector $w=(w_1,\dots, w_n)$ to $A$ belong to a symmetric well-bounded GAP $Q$ with $r$ generators $a_1,\dots, a_r$ in $\Z$, and $w_{j}  = \sum_{l=1}^r  x_{j l} a_l $ for  $1\le j\le m$ and $w$ is non-zero. 
 Let $M_x$ be the $n\times (r+n')$ matrix with entries $x_{jl}$ in the first $m$ rows and $r$ columns, the $n'\times n'$ identity matrix in the last $n'$ rows and columns, and zeroes elsewhere.  
So for $a:=(a_1,\dots,a_r,w_{m+1},\dots w_n)^t$, we have $M_x a=w^t.$ 

Certainly by minimality of $w$ at least some coordinate of $w$  is not divisible by $p$ (else we could divide the $a_l$ and $w_j$ all by $p$ and produce a smaller structured normal $w$).  
Suppose, for the sake of contradiction that all of the coordinates of $w$ are divisible by $p$.
 The entries of $M_x$ are bounded by $C_\eps n^3$,
so, as above, for $p\geq e^{n^{1-\eps/3}}$, by Lemma \ref{lemma:passing} \eqref{i:kernel} we have that $\ker M_x|_{\Z^{r+n'}}$ surjects onto $\ker M_x/p.$   So $a/p$ is in the kernel of $M_x/p$, and choose some lift $a':=(a'_1,\dots,a'_r,w'_{m+1},\dots w'_n)^t\in\Z^n$ of $a/p$ in the kernel of $M_x$.  Then $a-a'\in p \Z^n$, and $M_x(\frac{1}{p}(a-a'))=\frac{1}{p}w$.  Note that $\frac{1}{p}w$ is non-zero integral normal vector to $A$, and the equality $M_x(\frac{1}{p}(a-a'))=\frac{1}{p}w$ shows that all but $n'$ of the coordinates of 
$\frac{1}{p}w$ belong to a symmetric well-bounded GAP with integral generators and the same rank and volume as $Q$, contradicting the minimality of $w$.  Thus we conclude that $w/p$ is non-zero and thus a structured normal vector of $A/p$ for GAP $Q/p$.
\end{proof}

We now conclude the main result of this section.

\begin{proof}[Proof of Lemma~\ref{L:upgrade}]
We let $\mathcal{S}'$ be the set of  submodules $H$ of $\Z^n$ such that for all primes $p>e^{d\alpha_n n}$, the vector space $H/p$ has no structured normal vector $w$.
We assume throughout the proof that $n$ is sufficiently large given $\eps, T, d$.
First, we will bound $\P(W_{n_0}\not\in \mathcal{S}')$.  By Lemma~\ref{lemma:passing'}, for $p\geq e^{n^{1-\eps/4}}$, if $W_{n_0}/p$ has a structured normal vector, then $W_{n_0}$ has a structured normal vector, and then
$W_{n_0}/p'$ has a structured normal vector for every prime $p'$ with $e^{n^{1-\eps/3}} \leq p' < e^{n^{1-\eps/4}}$ (of which there is at least 1). 

%\Hoi{Is there any reason for the choice $p' >e^{n^{1-\eps/3}}$?} \melanie{Note that Lemma~\ref{lemma:passing'} has this lower bound of $e^{n^{1-\eps/3}}$.  Of course, I could have made the bound different in that lemma. The only point is that we just need existence of at least one prime in the interval where we can pass structured vectors to. Alternatively, we could have proven Lemma~\ref{lemma:passing'} for $e^{n^{1-\eps/4}}$, and then bounded the condition that $W_{n_0}/p$ has a structured normal vector for $p$ is a prime  $C_\eps n^3\leq p< e^{n^{1-\eps/4}}$ or the first prime greater than $e^{n^{1-\eps/4}}$.  Does this make sense?}
So it suffices to bound the condition that $W_{n_0}/p$ has a structured normal vector for $p$ is a prime  $C_\eps n^3\leq p< e^{n^{1-\eps/4}}.$ 

We will include in our upper bound the probability that $W_{n_0}/p$ has a non-zero normal vector $w$ with $|\supp(w)|\leq \cz n$ for some prime $p<e^{n^{1-\eps/4}}$, which is at most 
$e^{-\co \alpha_n n/2}+ e^{n^{1-\eps/4}}(2/3)^{n/4}$ by Lemmas~\ref{L:supersparse} and \ref{lemma:sparse:moderate}.  Then, otherwise, by Lemma~\ref{L:strucnotsp}, it is probability at most 
$e^{n^{1-\eps/4}}O_{\eps,T}(e^{C_\eps n^{1-\eps/4}}  n^{-\eps n/5})$ that, for some prime $p<e^{n^{1-\eps/4}}$, the space $W_{n_0}/p$ has a structured normal vector $w$.  We conclude that $\P(W_{n_0}\in \mathcal{S}')\geq 1-e^{-\ct \alpha_n n}$ for some absolute constant $\ct$.

If $H\in \mathcal{S}'$ and $H'$ is a proper subspace of $(\Z/p\Z)^n$ containing $H/p$, then $H'$ has some non-zero normal vector $w$ (also normal to $H/p$).
Let $p>e^{d\alpha_n n}$ be a prime.  If $\rho(w)\leq n^{-3}$, then since $\P(X/p\in H')\leq \P(X/p \cdot  w =0)$ we have $\P(X/p\in H')\leq n^{-3}$.  Otherwise, if $\rho(w)> n^{-3},$ we apply
Theorem~\ref{theorem:ILO} with $C=3$ and find a  symmetric well-bounded GAP containing all but $n'$ coordinates of $w$, which contradicts the definition of $\mathcal{S}'$.
\end{proof}

%\melanie{possibly for introduction: there are a lot of primes $p$.  We can add up probability of structured normal vectors %for some $p$, but not all $p$.  However, innovation is that once primes are large enough, we can see that structure lifts to %characteristic $0$.  So then we don't have to keep adding up over $p$, because we can just use one time bound on char $0$ %structure. Miracle that two ranges fit together and we get a complete proof  Also other major improvement is watch list.  .}

%\melanie{It seems interesting to be that are bounds for the $r=0$ structure are worse than for the $r\geq 1$ structure.  
%Any thoughts on why?  Maybe worth pointing out that the main term is coming from the $r=0$ bounds, i.e. sparse normal 
%vectors.}\hoi{This is a good point. Just for the sake of informal discussion: I don't have any conceptual explanation for this, 
%but on the technical side the larger $r$ becomes, the more (polynomially) vectors we have,  but the worse probability 
%bound (exponentially) we obtain, so the probability contribution gets smaller. In fact in the torsion-free case, the machinery 
%developed by Rudelson and Vershynin stops at $r=1$, so they just consider either $r=0$ or $r=1$ - that is they either 
%approximate by one point or by an ordinary AP.} 

\section{Laplacian of random digraphs: proof of Theorem ~\ref{theorem:sur:L}}\label{section:Laplacian}
As laid out in Section~\ref{section:method}, it suffices to prove
 Proposition~\ref{prop:L} and this task consists of three parts, in the first part we modify the method of Section ~\ref{sec:small} to justify Equation~\eqref{eqn:small:L} for the small primes, in the second part we provide a complete proof for Equation~\eqref{eqn:medu:L} and~\eqref{eqn:med:L} regarding the medium primes by improving the method of \cite{M1,NP}, and in the last part we modify the method of Sections~\ref{section:largeprimes} and \ref{section:structures} to prove Equation~\eqref{eqn:large:n:L} and ~\eqref{eqn:large:n-1:L} for the large primes.

For $1\le i\le n$, we say that a random vector  $X=(x_1,\dots, x_n) \in \Z_0^n$, the set of vectors of zero entry sum in $\Z^n$, has type $\CT_i$ if $x_i = - (x_1 + \cdots + x_{i-1}+ x_{i+1}+\cdots +x_n)$ and $x_1,\dots, x_{i-1}, x_{i+1},\dots, x_n$ are i.i.d. copies of $\xi_n$ from~\eqref{eqn:alpha}. Recall that $L_{M_{n \times n}}$ is a random
matrix with independent columns $X_i$ sampled from $\CT_i$. Sometimes we will also denote this matrix by $L_{n\times n}$ for short.

 \vskip .2in
{ \bf{I. Proof of Equation~\eqref{eqn:small:L} of Proposition~\ref{prop:L}: treatment for small
    primes}}. In this subsection we modify the approach of Section
~\ref{sec:small} toward the Laplacian setting. 
We first prove the analog of Theorem~\ref{T:MomMat} for the Laplacian.  We will use the same approach as in \cite[Theorem 6.2]{W0} to consider an auxiliary matrix that lets us carry the argument from the i.i.d. case to the Laplacian case.
Let $a$ be the exponent of $G$.  Let $R=\Z/a\Z$ and $V=(\Z/a\Z)^n$.  
%Note that $\#\Sur(S,G)=\#\Sur(S\tensor R,G)$, so throughout this proof we will let $\bS:=S\tensor R$.
%We let $\bL$ be the reduction of the Laplacian $L$ modulo $a$, so $\bL$ is an $n\times n$ matrix with coefficients in $R$. 
We let $M'$ be an $n \times n$ random  matrix with coefficients in $R$ with entries $X_{ij}$ distributed as $(M_{n\times n})_{ij}$ for $i\ne j$ and with $X_{ii}$ distributed uniformly in $R$, with all entries independent.
Let $F_0\in \Hom(V,R)$ be the map that sends each standard basis element to $1$.
Now, $M'$  and $L_{M_{n\times n}}$ do not have the same distribution, as the column sums of
 $M'$ can be anything and the column sums of $L_{M_{n\times n}}$ are zero, i.e. $F_0 L_{M_{n\times n}}=0$. 
However if we condition on $F_0 M'=0$, then we find that this conditioned distribution of $M'$ is the same as the distribution of $L_{M_{n\times n}}$.
Given $M'$ and conditioning on the off diagonal entries, we see that the probability that $F_0 M'=0$ is $a^{-n}$ (for any choice of off diagonal entries).
So any choice of off diagonal entries is equally likely in $L_{M_{n\times n}}$ as in $M'$ conditioned on $F_0 X=0$. 

So for $F\in \Hom(V,G)$, we have
\begin{align*}
\P(FL_{M_{n\times n}}=0)&=\P(FM'=0 | F_0 M'=0)%\\
%&=\P(FM'=0 \textrm{ and} F_0 M'=0)\P(F_0 M'=0)^{-1}\\
=\P(FM'=0 \textrm{ and } F_0 M'=0)a^{n}.
\end{align*}
Let $\tilde{F}\in\Hom(V, G\oplus R)$ be the sum of $F$ and $F_0$. Let $Z\sub V$ denote the vectors whose coordinates sum to $0$, i.e. 
$$Z=\{v\in V \ |\ F_0v=0\}.$$
Let $\Sur^* (V,G)$ denote the maps from $V$ to $G$ that are a surjection when restricted to $Z$.
We wish to estimate
\begin{align*}
\E(\#\Sur(S_{M_{n\times n}},G))&=\E(\#\Sur(Z/L_{M_{n\times n}}R^n,G))\\
&=\sum_{F\in \Sur(Z,G)}   \P(FL_{M_{n\times n}}=0)\\
&=\frac{1}{|G|}\sum_{F\in \Sur^*(V,G)} \P(FL_{M_{n\times n}}=0) \\&={|G|^{-1}a^n}\sum_{F\in \Sur^*(V,G)} \P(\tilde{F}M'=0).
\end{align*}
Note that if $F: V \ra G$ is a surjection when restricted to $Z$, then $\tilde{F}$ is a surjection from $V$ to $G\oplus R$.

Now we need a slight variant on Lemma~\ref{L:newcount} to bound $F\in  \Hom(V,G)$ such that $\tilde{F}$ is robust for a subgroup $H$ of $G$. 
\begin{lemma}[Count of robust $F$ for a subgroup $H$]\label{L:newcountL}
Let $\delta>0$, and $a,n\geq 1$  be integers, and $G$ be  finite abelian group of exponent dividing $a$.
Let $H$ be a subgroup of $G\oplus R$ of index $D>1$ and 
let $H=G_{\ell(D)}\sub \dots \sub G_2\sub G_1\sub G_0=G\oplus R$ be a maximal chain of proper subgroups.
Let $p_j=|G_{j-1}/G_j|$.
For $n$ sufficiently large given $G$, the number of $F\in\Hom(V,G\oplus R)$ such that $F$ composed with the projection onto $R$ is $1$ for each standard basis vector, and $F$ is robust for $H$ 
and for $1\leq j \leq \ell(D)$,  there are $w_j$ elements $i$ of $[n]$ such that $Fv_i \in G_{j-1}\setminus G_j $ is at most
$$
a^{-n} |H|^{n-\sum_j w_j} \prod_{j=1}^{\ell(D)} \binom{n}{w_j}|G_{j-1}|^{w_j} .
$$
\end{lemma}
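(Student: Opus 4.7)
The plan is to adapt the proof of Lemma~\ref{L:newcount} by bookkeeping the additional constraint that the $R$-projection of $F(v_i)$ is fixed to equal $1$ for every standard basis vector $v_i$. Aside from this single modification, the counting argument is structurally identical.

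First I would choose, for each $1 \le j \le \ell(D)$, the subset of $[n]$ of size $w_j$ on which $Fv_i$ should lie in $G_{j-1} \setminus G_j$; this contributes the factor $\prod_{j=1}^{\ell(D)} \binom{n}{w_j}$. For each remaining index I need $Fv_i \in H$, and for each index in the $j$-th layer I need $Fv_i \in G_{j-1}\setminus G_j$, which as in the original argument I bound crudely by $Fv_i \in G_{j-1}$.

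The one new ingredient is the following elementary observation: for any subgroup $K \le G \oplus R$, the set of elements of $K$ whose $R$-projection equals $1$ is either empty (when $1 \notin \im(K \to R)$) or else a coset of $\ker(K \to R)$; in either case its size is at most $|K|/a$. Applying this with $K = H$ and with $K = G_{j-1}$, each admissible choice of $Fv_i$ contributes at most $|H|/a$ or $|G_{j-1}|/a$ possibilities respectively. Multiplying these bounds over the $n$ coordinates produces precisely the global factor $a^{-n}$ appearing in the statement of the lemma, and yields
\[
a^{-n}\, |H|^{n-\sum_j w_j} \prod_{j=1}^{\ell(D)} \binom{n}{w_j}\, |G_{j-1}|^{w_j}.
\]

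The hypothesis that $n$ be sufficiently large given $G$ should only be needed to ensure that the robustness constraints $0 \le w_j \le \ell([G\oplus R : H])\,\delta n$ describe a meaningful (non-vacuous) range, and perhaps to make the crude bound $|G_{j-1}|$ for $|G_{j-1}\setminus G_j|$ wasteful in a harmless way. I do not anticipate any real obstacle: this is essentially a routine adaptation of Lemma~\ref{L:newcount}, and the sole conceptual content is the per-coordinate "divide by $a$" reduction coming from conditioning on a prescribed $R$-coordinate.
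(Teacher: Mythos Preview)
Your argument is correct and follows essentially the same route as the paper. The key per-coordinate observation---that the fiber over $1$ in any subgroup $K\le G\oplus R$ has size at most $|K|/a$---is exactly right, since $1$ generates $R=\Z/a\Z$, so either $1\notin\im(K\to R)$ or $K$ surjects onto $R$.

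One small correction: your guess about the role of ``$n$ sufficiently large'' is off. The paper uses it to observe that robustness forces at least one $Fv_i\in H$ (namely $n-\sum_j w_j\geq n-\ell(D)\delta n\geq 1$), so $H$ contains an element with $R$-projection $1$, hence $H$ surjects onto $R$. Your version of the argument actually does not need this hypothesis at all, since you only use the \emph{upper} bound $|K|/a$ on the fiber, which holds whether or not the fiber is empty. So your proof is in fact slightly cleaner than the paper's on this point.
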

We note that for $n$ sufficiently large in terms of $G$, the condition on the projection onto $R$ implies that $H$ surjects in the projection to $R$, and otherwise the proof of Lemma~\ref{L:newcountL} is analogous to that of Lemma~\ref{L:newcount}.  (See also \cite[Lemma 5.3]{W0}.)
 We can then apply Lemma~\ref{L:probdepthestimatecolumnNEW} as written to the maps $\tilde{F}$ with range $G\oplus R$ and the matrix $M'$.  The proof now follows the proof of Theorem~\ref{T:MomMat}, except that we are estimating
 $${|G|^{-1}a^n}\sum_{F\in \Sur^*(V,G)} \P(\tilde{F}M'=0).$$  The two sums of $|G|^{-n}$ over various $F$ are replaced by sums of $|G|^{-n}a^{-n}$, but proofs of the same bounds can be found in the proof of \cite[Theorem 6.2]{W0}.
We deduce 
\begin{align*}
\left|\E(\#\Sur(S_{M_n\times n},G))  -|G|^{-1} \right| \leq K_2n^{-c_2},
\end{align*}
and then deduce Equation~\eqref{eqn:small:L} of Proposition~\ref{prop:L}, just as we proved Theorem~\ref{th:allsmall} from 
Theorem~\ref{T:MomMat}.

 \vskip .2in
{\bf{II. Proof of  Equations~\eqref{eqn:medu:L} and~\eqref{eqn:med:L} of Proposition~\ref{prop:L}: treatment for  the medium
    primes}.}

 %\hoi{I truncated a couple of treatments; we can elaborate on any of them whenever you think necessary.}

%\hoi{There are a couple of new constants in this part, but they are all
%  local, except the $c_0,K_0$ that we will obtain in the end.}
  
% Our presentation for this part is rather extended because we cannot find any appropriate result from the literature to fully rely on, although our main method will follow \cite{NP}, which in turn completes the ideas of Maples from \cite{M1}. 

In this subsection we fix a prime $p$ and will work with $\Z/p\Z$. As such, if not specified otherwise, all of the vectors and subspaces in this subsection are modulo $p$. For brevity, instead of $X_i/p$ or $W_i/p$, we just write $X_i$ or $W_i$. The co-dimensions (coranks) of subspaces, if not otherwise specified, are with respect to $\Z_0^n/p$. Although our main result, Theorem ~\ref{theorem:corank:L}, works for any subspace $W_{n-k}$ generated by $n-k$ columns of $L_{n\times n}$, for simplicity we assume $W_{n-k}=\lang X_{1},\dots, X_{n-k} \rang$. We show the following variant of Theorem ~\ref{thm:A1}. 
\begin{theorem}\label{theorem:corank:L}  There are sufficiently small constants $c, \eta>0$ and sufficiently large constants $C_0,K>0$ such that the following holds. Let $p$ be a prime, and let $L_{n \times n}$ be a random matrix with independent columns $X_i$ sampled from $\CT_i$ respectively, where we assume that 
\begin{equation}\label{eqn:alpha'}
\max_{r \in \Z/p\Z} \P(\xi_n=r) =1-\a_n \le 1 -\frac{C_0 \log n}{n}.
\end{equation}
Then for $1 \le k\le \eta n$ there exists an event $\CE_{n-k}$ on the $\sigma$-algebra generated by $X_{1},\dots,X_{n-k}$, all of probability at least $1- e^{-c\alpha_n n}$, such that for any $k_0$ with $k-1\le k_0\le \eta n$
$$\left | \P_{X_{n-k+1}}\Big(X_{n-k+1} \in W_{n-k} \big| \CE_{n-k} \wedge \codim(W_{n-k})=k_0\Big) - p^{-k_0} \right | \le  K e^{-c \alpha_n n}.$$
\end{theorem}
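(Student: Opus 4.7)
The plan is to mirror the i.i.d.~approach of \cite{M1,NP} used to prove Theorem \ref{thm:A1}, while handling the two new features of the Laplacian setting: each column $X_j$ has type $\CT_j$ (so its $j$-th coordinate is determined), and columns live in the codimension-one subspace $\Z_0^n/p$. The key observation that reduces the problem to an essentially i.i.d.~Littlewood--Offord problem is that for any vector $w=(w_1,\dots,w_n)$ and any column $X$ of type $\CT_i$,
\[
w\cdot X = \sum_{j\ne i}(w_j-w_i)\,x_j,
\]
where the $x_j$ ($j\ne i$) are i.i.d.~copies of $\xi_n$. Thus the concentration function of $w$ against a type-$\CT_i$ column equals the concentration function of the shifted vector $w^{(i)}=(w_j-w_i)_{j\ne i}$ against a genuinely i.i.d.~vector of length $n-1$. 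Since $w$ represents a normal vector to a subspace of $\Z_0^n/p$, it is well-defined only modulo the all-ones vector $\mathbf{1}$, and a normalization such as $w_i=0$ may be imposed for the particular index $i$ we care about.

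The next step is to define the good event $\CE_{n-k}$ and prove $\P(\bar\CE_{n-k})\le e^{-c\alpha_n n}$. Following the scheme of \cite[Appendix A]{NP}, we take $\CE_{n-k}$ to be the event that every non-zero normal vector $w\in(\Z_0^n/p)^*$ of $W_{n-k}$ satisfies $\rho(w^{(j)})\le e^{-c\alpha_n n}$ for the index $j=n-k+1$ that we will insert next (and more generally for all remaining indices, to allow iteration). To establish this, we run the dyadic decomposition of the ``rich'' normal vectors according to the size of $\rho(w^{(j)})$: for each dyadic level we apply Theorem \ref{theorem:ILO} to extract a structured GAP containing all but a few entries of $w^{(j)}$, count such GAPs following the enumeration of Section \ref{section:structures}, and use Odlyzko's lemma (Lemma \ref{lemma:O:L}, the Laplacian variant) to bound the probability that the columns $X_{1},\dots,X_{n-k}$ are all normal to a given structured vector. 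The constraint $k\le\eta n$ with $\eta$ small ensures that the number of columns ($n-k$ of them) is large enough to overwhelm the entropy of the structured-$w$ count.

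Once $\CE_{n-k}$ is in place, the desired probability estimate follows from a standard Fourier computation: writing $\P(X_{n-k+1}\in W_{n-k})$ as an average of $e^{2\pi i\, w\cdot X_{n-k+1}/p}$ over $w\in W_{n-k}^{\perp}\subset(\Z_0^n/p)^*$, the $w=0$ term gives $p^{-k_0}$, and each non-zero $w$ contributes at most $\rho(w^{(n-k+1)})/|W_{n-k}^{\perp}|$, so the total error is bounded by $Ke^{-c\alpha_n n}$ as claimed.

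The hardest step will be the structural/swapping argument, because in \cite{NP} the swapping is performed between two whole i.i.d.~columns, whereas here swapping $X_a$ with $X_b$ mixes different fixed coordinates. The workaround is an \emph{intra-column} swap: fix a column $X_j$ of type $\CT_j$ and swap two off-diagonal entries $x_{ij}$ and $x_{kj}$ (with $i,k\ne j$) for independent resamples, which preserves the type $\CT_j$ and, after passing through the reduction $w\cdot X_j = w^{(j)}\cdot(\text{i.i.d.})$, brings us exactly into the i.i.d.~Littlewood--Offord framework. Carrying out the swap/replace argument uniformly across all $j\in[n-k]$ and carefully tracking the slight dependence of $w^{(j)}$ on $j$ (in particular, the at most one index $i$ for which $w^{(i)}$ could be zero modulo $\mathbf{1}$) is the technical heart of the proof.
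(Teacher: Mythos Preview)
Your proposal has a genuine threshold mismatch that breaks the main step. You want $\CE_{n-k}$ to be the event that every nonzero normal vector $w$ of $W_{n-k}$ satisfies $\rho(w^{(j)})\le e^{-c\alpha_n n}$, and you plan to control the complementary ``rich'' vectors via the inverse GAP theorem (Theorem~\ref{theorem:ILO}). But Theorem~\ref{theorem:ILO} is stated and proved only for $\rho(w)\ge n^{-C}$ with $C$ \emph{fixed}; its proof uses $\rho^{-1}\le k^{O_C(1)}$ to trigger the long-range Freiman theorem, and this fails once $\rho^{-1}$ is allowed to be as large as $e^{c\alpha_n n}$. When $\alpha_n$ is bounded away from zero (e.g.\ $\alpha_n=1/2$), the richness threshold $e^{-c\alpha_n n}$ is genuinely exponential, and the whole GAP enumeration of Section~\ref{section:structures} (which relies on $|Q|\le C_\eps n^3$) collapses. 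Separately, your concluding Fourier step is not quite right as written: the individual character $|\E e_p(w\cdot X)|$ is \emph{not} bounded by $\rho(w^{(j)})$ (take $w\cdot X$ uniform on $\{0,1\}$: then $\rho=1/2$ but $|\E e_p|\approx 1$); one needs instead to control $\P(X\in V)-p^{-k_0}$ directly at the level of the subspace.

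The paper takes a different route that avoids both problems. It classifies non-sparse subspaces $V$ of codimension $k_0$ by the size of $\max_{i}|\P_{X\in\CT_i}(X\in V)-p^{-k_0}|$ into \emph{semi-saturated} (deviation between $e^{-\lambda\alpha_n n}$ and $16p^{-k_0}$) and \emph{unsaturated} (deviation exceeding $\max(e^{-\lambda\alpha_n n},16p^{-k_0})$). For the semi-saturated case it uses a much cruder structural fact from \cite{M1,NP} (Lemma~\ref{lemma:inverse:semi}): such $V$ must have a normal vector in a deterministic set $\CR$ of size $\le (2\beta^\delta)^n p^n$, and this together with the built-in bound $\P(X\in V)\le 17p^{-k_0}$ is enough to count. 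For the unsaturated case it uses the Kahn--Koml\'os--Szemer\'edi/Tao--Vu distributional swap (replacing $\xi_n$ by a lazier $\nu$, Lemma~\ref{lemma:swapping:L}), not any entry-swap. The non-identically-distributed columns are handled by first assigning each unsaturated $V$ a \emph{type} $i$ (the index achieving the maximum), and then for type-$1$ subspaces running the swap entirely with auxiliary vectors of type $\CT_1$ and $\CT_1'$; a union bound over the $n$ types closes the argument. The good event $\CE_{n-k}$ is simply the intersection of ``not sparse'', ``not semi-saturated'', and ``not unsaturated'', and on it the desired deviation bound holds by definition.
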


Combining with Theorem ~\ref{T:allerror} and with
appropriate choices of $c_0$ and $K_0$ we then deduce the part of Proposition~\ref{prop:L} for medium primes, analogous to the proof of Theorem~\ref{th:allmed}.

Now we give a proof of Theorem ~\ref{theorem:corank:L}. Our overall approach is similar to the proof of \cite[Theorems A.1 and A.4]{NP} (which is built on approaches in \cite{M1,TV}), but for the Laplacian we cannot apply these results because the column vectors, as well as the entries in each column, are not identically distributed any more.

 We would like to emphasize that in our argument below the positive constants $c, \beta, \delta, \eta, \la$ are sufficiently small and allowed to depend only on the constant $C_0$ in the bound \eqref{eqn:alpha'} of $\a_n$. We first introduce a version of Lemma ~\ref{lemma:O} and Corollary ~\ref{cor:fullrank} for $\alpha_n$-dense random variables in the Laplacian setting.

\begin{lemma}\label{lemma:O:L}
For a deterministic subspace $V$ of $\Z_0^n/p$ (or $\Z_0^n$) of dimension $d$ and for any $i$
$$\P_{X \in \CT_i}(X \in V) \le (1-\alpha_n)^{n-d-1}.$$
As a consequence, $X_1,\dots, X_{n-k}$ are linearly independent in $\Z_0^n/p$ with probability at least $1 -n(1-\a_n)^{k-1}.$ 
\end{lemma}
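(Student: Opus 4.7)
The plan is to reduce to Lemma~\ref{lemma:O} by observing that a type-$\CT_i$ column is the image of $n-1$ i.i.d.\ copies of $\xi_n$ under a fixed linear bijection. Without loss of generality (permuting coordinates if necessary) take $i=n$, and let $\pi\colon \Z_0^n/p \to (\Z/p\Z)^{n-1}$ be the projection onto the first $n-1$ coordinates. Since $x_n = -(x_1+\cdots+x_{n-1})$ is determined by the other entries via the zero-sum constraint, $\pi$ is a linear isomorphism. Hence $\pi(V)$ is a subspace of $(\Z/p\Z)^{n-1}$ of the same dimension $d$, and the event $X \in V$ is equivalent to $(x_1,\dots,x_{n-1}) \in \pi(V)$. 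Since the $x_j$ for $j\neq n$ are i.i.d.\ $\alpha_n$-balanced, Lemma~\ref{lemma:O} applied in ambient dimension $n-1$ delivers exactly the bound $(1-\alpha_n)^{(n-1)-d} = (1-\alpha_n)^{n-d-1}$. The integer case $V\subset \Z_0^n$ is handled identically, replacing $\Z/p\Z$ with $\Z$.

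For the stated consequence, I would follow the minimal-index strategy already used in Corollary~\ref{cor:fullrank}. Let $i$ be the least index in $\{0,1,\dots,n-k-1\}$ with $X_{i+1} \in \Sp(X_1,\dots,X_i)$; by the minimality of $i$, the span $\Sp(X_1,\dots,X_i)$ has dimension exactly $i$ and is measurable with respect to the $\sigma$-algebra generated by $X_1,\dots,X_i$. Conditioning on $X_1,\dots,X_i$ and applying the first part of the lemma to the independent column $X_{i+1}$ of type $\CT_{i+1}$, the conditional failure probability is at most $(1-\alpha_n)^{n-1-i}$. A union bound over $0 \le i \le n-k-1$ then yields
\[
\sum_{i=0}^{n-k-1}(1-\alpha_n)^{n-1-i} \;=\; \sum_{j=k}^{n-1}(1-\alpha_n)^j \;\le\; (n-k)(1-\alpha_n)^k \;\le\; n(1-\alpha_n)^{k-1},
\]
which is the claimed estimate.

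I do not anticipate a genuine obstacle. The only points requiring a moment's care are the verifications that $\pi$ is a linear bijection of $\Z_0^n/p$ with $(\Z/p\Z)^{n-1}$ (immediate from the defining relation) and that the conditioning in the second part is legitimate (it is, since $i$ is a stopping index determined by $X_1,\dots,X_i$). Everything else is a direct quotation of Lemma~\ref{lemma:O} and Corollary~\ref{cor:fullrank} adapted from ambient dimension $n$ to $n-1$.
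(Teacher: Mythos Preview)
Your proof is correct and follows essentially the same approach as the paper: the paper's one-line argument is precisely to project onto the coordinates different from $i$ and invoke Lemma~\ref{lemma:O}, which is exactly your bijection $\pi$, and the consequence is left implicit there but your adaptation of the Corollary~\ref{cor:fullrank} union bound with the shifted exponent $n-1-i$ is the intended derivation.
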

\begin{proof} If suffices to verify the first part. But for this we just project the vectors onto the coordinates of indices different from $i$, and then use Lemma ~\ref{lemma:O}.  
\end{proof}

We will also need the following variant of Theorem~\ref{theorem:LO}.

\begin{theorem}[forward Erd\H{o}s-Littlewood-Offord for the Laplacian]\label{theorem:LO:L} Suppose that $w=(w_1,\dots, w_n)\in (\Z/p\Z)^n$ does not have any component $w_j$ with multiplicity larger that $n-m$, then for any $i$
$$\sup_r|\P_{X\in \CT_i}(X \cdot w =r) -\frac{1}{p}| \le \frac{2}{\sqrt{\alpha_n m}}.$$

\end{theorem}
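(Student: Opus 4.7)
The plan is to reduce the Laplacian case to the i.i.d.\ bound of Theorem~\ref{theorem:LO}, by using the linear constraint that defines the type $\CT_i$ to eliminate the dependent coordinate. If $X=(x_1,\dots,x_n)\in \CT_i$, then $x_i=-\sum_{j\neq i}x_j$, while the other $x_j$ are i.i.d.\ copies of $\xi_n$. Substituting this into $X\cdot w$ makes the contribution of coordinate $i$ disappear and shifts every other coefficient by $-w_i$:
$$
X\cdot w=\sum_{j\neq i}x_j(w_j-w_i).
$$
Thus $X\cdot w$ has the same distribution as $Y\cdot w'$, where $Y\in(\Z/p\Z)^{n-1}$ has i.i.d.\ entries distributed as $\xi_n$ and $w':=(w_j-w_i)_{j\neq i}\in(\Z/p\Z)^{n-1}$.

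Next I would count the nonzero coordinates of $w'$. By hypothesis, no value in $w$ is attained more than $n-m$ times, so in particular the value $w_i$ has multiplicity $k\le n-m$ in $w$. The number of $j\neq i$ with $w_j=w_i$ is therefore $k-1$, and the number of $j\neq i$ with $w_j\neq w_i$ (i.e.\ the nonzero coordinates of $w'$) is $(n-1)-(k-1)=n-k\ge m$.

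Finally I would split on whether Theorem~\ref{theorem:LO} can be applied. If $\alpha_n m<4$ then $2/\sqrt{\alpha_n m}>1$ and the claim is vacuously true since $|\P(X\cdot w=r)-1/p|\le 1$. Otherwise $\alpha_n\ge 4/m$, and Theorem~\ref{theorem:LO} applied to the i.i.d.\ vector $Y$ of length $n-1$ and the deterministic $w'$ (which has at least $m$ nonzero entries) gives
$$
\Big|\P_{X\in\CT_i}(X\cdot w=r)-\frac{1}{p}\Big|=\Big|\P(Y\cdot w'=r)-\frac{1}{p}\Big|\le \frac{2}{\sqrt{\alpha_n m}},
$$
uniformly in $r\in\Z/p\Z$, which is what is asserted. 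I do not expect any genuine obstacle here; the only step requiring care is translating the ``multiplicity at most $n-m$'' assumption on $w$ into the ``at least $m$ nonzero entries'' condition on the shifted vector $w'$, which is the small combinatorial computation above.
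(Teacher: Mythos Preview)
Your argument is correct and is essentially identical to the paper's own proof: both substitute $x_i=-\sum_{j\ne i}x_j$ to rewrite $X\cdot w=\sum_{j\ne i}x_j(w_j-w_i)$, observe that at least $m$ of the shifted coefficients $w_j-w_i$ are nonzero, and then invoke Theorem~\ref{theorem:LO}. Your explicit handling of the trivial case $\alpha_n m<4$ is a small extra care the paper omits but is otherwise the same reduction.
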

We remark that the classical Erd\H{o}s-Littlewood-Offord in characteristic zero implies that if
$w=(w_1,\dots, w_n)\in \Z^n$ does not have any component $w_j$ with
multiplicity larger that $n-m$, then for any $i$  
$$\sup_{r\in \Z} \P_{X\in \CT_i}(X \cdot w =r)| \le \frac{2}{\sqrt{\alpha_n m}}.$$
\begin{proof}(of Theorem~\ref{theorem:LO:L}) Assume that $X=(x_1,\dots, x_n) \in \CT_i$ for some $1\le i \le n$. Then 
$$x_1w_1+\dots +x_n w_n = x_1 (w_1-w_i)+ \dots + x_{i-1} (w_{i-1}-w_i)+x_{i+1} (w_{i+1}-w_i)+\dots+x_{n} (w_{n}-w_i).$$
By the assumption, at least $m$ entries $w_1-w_i, \dots, w_n-w_i$
are non-zero. Because $x_1,\dots, x_{i-1}, x_{i+1},\dots, x_n$ are i.i.d.,
we then can apply Theorem ~\ref{theorem:LO}.
\end{proof}

\subsection{Sparse subspace} Let $0<\delta, \eta$ be small constants
(independently from $\alpha_n$). Given a vector space $H \le
(\Z/p\Z)^n$, we call $H$ {\it $\delta$-sparse} if there is a non-zero vector $w$ with $|\supp(w)| \le \delta n$ such that $w \perp H$.

\begin{lemma}[random subspaces are not sparse, Laplacian case]\label{lemma:sparse:L} There exist absolute constant $c'$ and $C'$ such that the following holds  with $\a_n \ge \frac{C_0 \log n}{n}$. Let $\eps, \delta, \eta$ be constants such that $0<\eps<1/12$  and  $0 \le \delta, \eta \le \eps$. 

\begin{itemize}
\item (characteristic $p$) For $\xi_n$ satisfying Equation \eqref{eqn:alpha'}, and for $0\le k < \eta n$
$$\P_{X_{1},\dots, X_{n-k}}\left(W_{n-k}/p \mbox{ is not $\delta$-sparse}\right)\ge 1 -e^{-c'\alpha_n n}.$$
\item  (characteristic zero) For $\xi_n$ satisfying Equation ~\eqref{eqn:alpha}, and for $0\le k < \eta n$
$$\P_{X_{1},\dots, X_{n-k}}\left(W_{n-k} \mbox{ is not $\delta$-sparse in $\Z^n$}\right)\ge 1 -e^{-c'\alpha_n n}.$$
% \melanie{Changed ``in $\Z_0^n$'' to ``in $\Z^n$'' because I think you just meant to emphasize you were not working mod $p$, but putting it in the smaller dimensional space confused me.} \Hoi{OK, thanks!} 
\end{itemize}
\end{lemma}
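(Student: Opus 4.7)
The plan is to adapt the strategy of Lemma \ref{lem:sparse} to the Laplacian setting, using the fact that the columns of $L_{n\times n}$ remain mutually independent even though the entries within a single column satisfy the zero-sum constraint. I will focus on the characteristic $p$ case; the characteristic zero version will follow from the same argument with $(\Z/p\Z)$ replaced by $\Q$ throughout, since the independence structure of the entries of $L_{n\times n}$ and the availability of Odlyzko's lemma are the same in both settings.

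The standard reduction observes that $W_{n-k}/p$ is $\delta$-sparse if and only if there exists some $\sigma\subseteq[n]$ with $|\sigma|=t\le\delta n$ such that the restriction $W_{n-k}|_\sigma$ is a proper subspace of $(\Z/p\Z)^\sigma$, and I would first union-bound over such $\sigma$. For a fixed $\sigma$ of size $t$, the key observation is that the sub-collection of columns $\{X_j:\ j\le n-k,\ j\notin\sigma\}$ contains at least $|S|\ge n-k-t\ge(1-\eta-\delta)n$ vectors, and because $j\notin\sigma$ the restriction $X_j|_\sigma$ avoids the structured diagonal entry of $X_j$. Hence each such $X_j|_\sigma$ consists of $t$ i.i.d.\ copies of $\pm\xi_n$, and the collection $\{X_j|_\sigma:\ j\in S\}$ is an i.i.d.\ family in $(\Z/p\Z)^\sigma$ with $\alpha_n$-balanced entries. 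If these $|S|$ vectors alone span $(\Z/p\Z)^\sigma$, then $W_{n-k}|_\sigma$ is all of $(\Z/p\Z)^\sigma$ and $\sigma$ supports no non-trivial normal vector. Thus the rank-deficiency probability for $M_\sigma$ is bounded above by the probability that these i.i.d.\ restricted columns fail to span $\F^t$.

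To bound the latter, I would use a sharpened Odlyzko/Littlewood--Offord analysis in the spirit of Corollary \ref{cor:fullrank}: grouping hyperplanes in $(\Z/p\Z)^\sigma$ by the support size of their normal vector, small-support hyperplanes are few in number and yield a bound via Odlyzko ($(1-\alpha_n)^{|S|}$ per hyperplane), while large-support hyperplanes have potentially many representatives but give a much sharper per-hyperplane bound through Theorem \ref{theorem:LO} ($(1/p+O(1/\sqrt{\alpha_n s}))^{|S|}$ when the normal has support of size $s$). Optimizing the threshold between the two cases, together with the union bound $\binom{n}{t}\le (en/t)^t$ and the assumption $\alpha_n\ge C'\log n/n$ with $C'$ chosen sufficiently large in terms of $c'$, gives a total estimate bounded by $e^{-c'\alpha_n n}$ provided $\delta$ and $\eta$ are small enough relative to the absolute constants $c',C'$.

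The main subtleties, compared to the i.i.d.\ proof of Lemma \ref{lem:sparse}, will be that (i) one must discard the at most $t$ columns with $j\in\sigma$, since their restriction $X_j|_\sigma$ has a nontrivial linear relation coming from the Laplacian constraint and does not provide a ``free'' i.i.d.\ column; and (ii) the per-hyperplane estimate cannot rely solely on Odlyzko's lemma---which would produce a prohibitive $p^{t-1}$ factor when summing over hyperplanes at large $p$---so Theorem \ref{theorem:LO} is needed to obtain a bound uniform in $p$. Once these two points are in place, the remainder of the argument is a direct transcription of Lemma \ref{lem:sparse}, and the characteristic zero case requires no further modification.
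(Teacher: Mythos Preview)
Your reduction is correct: restricting to the columns $\{X_j|_\sigma : j\in S\}$ with $S=\{j\le n-k : j\notin\sigma\}$ gives a genuinely i.i.d.\ $t\times |S|$ block, and it suffices to show this block has rank $t$. The gap is in how you propose to bound the rank-deficiency probability.

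Union-bounding over hyperplanes in $(\Z/p\Z)^\sigma$ cannot work uniformly in $p$. A hyperplane whose normal has support $s\ge 2$ belongs to a family of size $\binom{t}{s}(p-1)^{s-1}$, while for large $p$ the Littlewood--Offord bound $\bigl(p^{-1}+2/\sqrt{\alpha_n s}\bigr)^{|S|}$ tends to the $p$-independent quantity $(2/\sqrt{\alpha_n s})^{|S|}$; the product therefore diverges as $p\to\infty$, and no choice of threshold in $s$ repairs this. Nor does the pure Odlyzko route of Lemma~\ref{lem:sparse} reach the required range: that lemma only covers support sizes up to $\alpha_n n/(32\log n)$, and its per-$\sigma$ bound $t(1-\alpha_n)^{|S|-t+1}$ is swamped by $\binom{n}{t}$ once $\alpha_n$ is of order $(\log n)/n$ and $t$ is of order $\delta n$. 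So neither half of your dichotomy closes, and ``a direct transcription of Lemma~\ref{lem:sparse}'' does not yield the lemma as stated.

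The paper does not argue here at all; it simply records that Lemma~\ref{lemma:sparse:L} is a special case of Lemmas~\ref{L:supersparse:L} and~\ref{lemma:sparse:moderate:L}. The device you are missing is in the proof of Lemma~\ref{lemma:sparse:moderate:L}: rather than sum over hyperplanes, one sums over the $\binom{n_0}{t-1}$ choices of which $t-1$ columns span the (rank-$(t-1)$) column space of the restricted matrix, conditions on those columns, and then applies Theorem~\ref{theorem:LO} to each remaining column against the now-fixed normal (which has full support on $\sigma$ by minimality). This replaces the uncontrollable factor $p^{t-1}$ by $\binom{n_0}{t-1}$, uniform in $p$. The step requires $t$ at least of order $\alpha_n^{-1}$ so that the anti-concentration bound sits below a fixed constant; the complementary regime of smaller $t$ is handled by the separate combinatorial argument of Lemma~\ref{L:supersparse:L}.
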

This result is actually a special case of Lemma~\ref{L:supersparse:L} and \ref{lemma:sparse:moderate:L}, which will be discussed in due course. In connection to Theorem ~\ref{theorem:LO:L}, it is more useful to connect
the sparseness property to the one of having an entry of high
multiplicity.

\begin{claim}\label{claim:sparse:multi} Assume that the random
  subspace $W_{n-k}/p$ does not accept any normal vector in $(\Z/p\Z)^n$
 of support size at most $\delta$, then it does not accept any normal vector  with an
 entry of multiplicity between $n-\delta n$ and $n-1$
 either. The same holds in the the characteristic zero case $\Z^n$.
\end{claim}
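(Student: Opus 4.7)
The plan is to exploit the fact that the columns of the Laplacian lie in the zero-sum subspace $\Z_0^n$, so the all-ones vector $\mathbf{1} = (1,1,\dots,1)$ is automatically normal to $W_{n-k}$ (in both characteristic $p$ and characteristic zero). I would prove the contrapositive.

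Suppose $W_{n-k}/p$ admits a normal vector $w = (w_1,\dots,w_n)$ whose most common entry is some value $c$ achieved with multiplicity $m$ where $n - \delta n \le m \le n - 1$. Set
\[
w' := w - c \mathbf{1}.
\]
Since $\mathbf{1} \perp X_j$ for every column $X_j$ of $L_{n \times n}$, we have $\mathbf{1} \perp W_{n-k}$, and therefore $w'$ is still normal to $W_{n-k}$ (as a $(\Z/p\Z)$-linear combination of normal vectors). The entries of $w'$ equal to zero are exactly those indices $i$ where $w_i = c$, so
\[
|\supp(w')| = n - m \le \delta n.
\]
Finally $w' \ne 0$, because $m \le n-1$ guarantees that some coordinate of $w$ differs from $c$, and the corresponding coordinate of $w'$ is nonzero. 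Thus $w'$ is a nonzero normal vector to $W_{n-k}/p$ of support size at most $\delta n$, contradicting the hypothesis.

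The argument in the characteristic zero case is identical, replacing $\Z/p\Z$ by $\Z$ throughout; the key fact $\mathbf{1} \perp W_{n-k}$ is valid because every column of the Laplacian $L_{n\times n}$ has entries summing to zero by construction. No real obstacle arises: the proof is essentially a one-line observation once one notices that the zero-sum property of Laplacian columns lets us translate any normal vector by a multiple of $\mathbf{1}$ without losing normality.
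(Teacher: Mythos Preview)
Your proof is correct and follows exactly the same approach as the paper: both exploit the fact that the columns of the Laplacian have zero entry-sum, so the all-ones vector $\mathbf{1}$ is normal to $W_{n-k}$, and hence any shift $w - c\mathbf{1}$ of a normal vector remains normal. The paper states this as the ``invariance property'' in one line; your write-up simply spells out the contrapositive in slightly more detail.
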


\begin{proof} This is because of the invariance property that if
  $w=(w_1,\dots,w_n)$ is normal to $W_{n-k}$ then so is any shifted vector $(w_1-w_0,\dots, w_n-w_0)$ to $W_{n-k}$.
\end{proof}

To conclude our treatment for the sparse case, given constants $\eps, \eta, \delta$ and the parameter $\alpha_n$ from \eqref{eqn:alpha'}, let $\CE_{k, dense}= \CE_{k, dense}(\eps, \eta, \delta)$ denote the event in the $\sigma$-algebra generated by $X_1,\dots, X_{n-k}$ considered in Lemma ~\ref{lemma:sparse:L}, then
\begin{equation}\label{eqn:dense:L}
\P(\CE_{k, dense}) \ge 1 - e^{-c' \alpha_n n}.
\end{equation}

As such we can simply condition on this event without any significant loss. In our next move, we will choose $\lambda>0$ to be a sufficiently small constant and show that it is highly unlikely that $W_{n-k}/p$ is some non $\delta$-sparse subspace (module) $V$ of co-dimension $k_0$ with $k-1\le k_0 \le \eta n$ such that  
$$e^{-\la \alpha_n n} <  \max_{i, X \in \CT_i}|\P(X \in V) - \frac{1}{p^{k_0}}|.$$
Let us simply call $V$ {\it bad} if this holds. For motivation, instead of bounding the probability that $W_{n-k}/p$ is bad, let us simplify it to bounding the probability that $X_1,\dots, X_{n-k}$ all belong to a bad subspace $V$. For this we will use the ``swapping method" from \cite{KKS, TV}, and this was also adapted by Maples in \cite{M1} for the modulo $p$ case. Roughly speaking, by letting the random variable $\xi_n$ be lazier at zero, the associated random vector $Y$ with this lazy random variable will stick to $V$ more often than $X$ does (see Lemma~\ref{lemma:swapping:L}), say $|\P(X\in V) - \frac{1}{p^{k_0}}| \le 0.51 |\P(Y\in V) - \frac{1}{p^{k_0}}|$. Hence if $|\P(X\in V) - \frac{1}{p^{k_0}}|$ is large enough, say larger than $\frac{16}{p^{k_0}}$, then we have that $\P(X\in V) \le (2/3)\P(Y \in V)$, which in turn leads to a very useful bound $\P(X_1,\dots, X_{n-k} \in V) \le (2/3)^{n-k} \P(Y_1,\dots, Y_{n-k} \in V)$. In what follows we will try to exploit this crucial exponential gain toward the Laplacian setting and toward the event that $X_1,\dots, X_{n-k}$ actually span a bad subspace.

\subsection{Semi-saturated subspace}\label{sub:semi} Given $0<\alpha, \delta, \la<1$. We call a subspace $V \le \Z_0^n/p$ of co-dimension $k_0 \le \eta n$ (with respect to $\Z_0^n/p$) {\it semi-saturated} (or {\it semi-sat} for short) with respect to these parameters if $V$ is not $\delta$-sparse and 
\begin{equation}\label{eqn:semi-sat:def}
e^{-\la \alpha_n n} <  \max_{i, X \in \CT_i}|\P(X \in V) - \frac{1}{p^{k_0}}| \le  \frac{16}{p^{k_0}}.
\end{equation}
Here we assume 
$$e^{-\la \alpha_n n}< \frac{16}{p^{k_0}}.$$
If this condition is not satisfied (such as when $p$ is sufficiently large), then the semi-saturated case can be omitted. Our main result of this part can be viewed as a structural theorem which says that semi-saturated subspaces can be ``captured" by a set of significantly fewer than $p^n$ vectors. 

\begin{lemma}\label{lemma:inverse:semi} For all $\beta>0$ and $\delta>0$ there exists $0<\la=\la(\beta, \delta)<1$ in the definition of semi-saturation and a deterministic set $\CR \subset (\Z/p\Z)^n$ of non $\delta$-sparse vectors and of size $|\CR| \le (2\beta^\delta)^n p^n$ such that  every semi-saturated $V$ is normal to a vector $R \in \CR$. In fact the conclusion holds for any subspace $V$ satisfying the LHS of \eqref{eqn:semi-sat:def}.
\end{lemma}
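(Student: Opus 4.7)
The plan is to prove this inverse structural result via Fourier analysis on $(\Z/p\Z)^n$ combined with a Halász-type argument, adapting Maples' treatment of the i.i.d.\ case (see \cite{M1}, or the proof of Theorem~\ref{theorem:ILO}) to the Laplacian distribution $\CT_i$.

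First, I would apply Fourier inversion. Writing $e_p(t):=\exp(2\pi i t/p)$ and $\widehat{\mu}_i(w):=\E_{X\in\CT_i}\,e_p(w\cdot X)$, and using that $V^\perp\sub(\Z/p\Z)^n$ has dimension $k_0+1$ and contains $\mathbf{1}$ (since $V\sub\Z_0^n/p$), we obtain
\begin{equation*}
\P_{X\in\CT_i}(X\in V)-p^{-k_0}
\;=\; p^{-(k_0+1)}\sum_{w\in V^\perp\setminus\langle\mathbf{1}\rangle}\widehat{\mu}_i(w).
\end{equation*}
If $V$ satisfies the LHS of~\eqref{eqn:semi-sat:def} for the maximizing index $i$, an averaging step supplies $w\in V^\perp\setminus\langle\mathbf{1}\rangle$ with $|\widehat{\mu}_i(w)|\geq e^{-\la\alpha_n n}$. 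The Laplacian constraint $x_i=-\sum_{j\neq i}x_j$, with $x_j$ ($j\neq i$) i.i.d.\ copies of $\xi_n$, factors this Fourier coefficient as
$$|\widehat{\mu}_i(w)|\;=\;\prod_{j\neq i}|\widehat{\mu}_\xi(w_j-w_i)|,\qquad \widehat{\mu}_\xi(b):=\E\,e_p(b\xi_n).$$

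Second, I would convert the Fourier lower bound into structural information on $w$. The identity $|\widehat{\mu}_\xi(b)|^2=1-2\sum_{k\neq 0}\P(\xi_n-\xi_n'=k)\sin^2(\pi bk/p)$ (with $\xi_n'$ an independent copy), combined with the $\alpha_n$-balanced hypothesis~\eqref{eqn:alpha'} that forces $\sum_{k\neq 0}\P(\xi_n-\xi_n'=k)\geq\alpha_n$, yields a Halász-type bound of the form
$$|\widehat{\mu}_i(w)|\;\leq\;\exp\!\Bigl(-c\alpha_n\sum_{j\neq i}\eta_p(w_j-w_i)\Bigr),$$
where $\eta_p\geq 0$ vanishes at $0$ and is bounded below on the complement of a small centered set. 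Combining with $|\widehat{\mu}_i(w)|\geq e^{-\la\alpha_n n}$ and applying Markov, for $\la=\la(\beta,\delta)$ chosen sufficiently small one concludes that at least $(1-\delta)n$ of the differences $w_j-w_i$ lie in a centered set $S\sub\Z/p\Z$ with $|S|\leq\beta p$. In particular, the shifted vector $w-w_i\mathbf{1}$---still in $V^\perp$, and still non-zero since $w\notin\langle\mathbf{1}\rangle$---has at least $\delta n$ of its coordinates in $S$.

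Third, take $\CR$ to consist of all vectors of the form $w'+c\mathbf{1}$ with $c\in\Z/p\Z$, where $w'$ satisfies $w'_1=0$, is non-$\delta$-sparse, and admits a centered set $S_0\sub\Z/p\Z$ with $|S_0|\leq\beta p$ and $|\{j:w'_j\in S_0\}|\geq\delta n$. A direct enumeration---choose the $\delta n$ structured positions ($\leq 2^n$), their values in $S_0$ ($\leq (\beta p)^{\delta n}$), the remaining $(1-\delta)n-1$ free coordinates ($\leq p^{(1-\delta)n-1}$), and the shift $c$ ($p$ options)---gives $|\CR|\leq 2^n\beta^{\delta n}p^n=(2\beta^\delta)^n p^n$. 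By Claim~\ref{claim:sparse:multi}, since a semi-saturated $V$ is not $\delta$-sparse, the normal vector produced above is automatically non-$\delta$-sparse and lies in $\CR$ after normalizing by a shift by $\mathbf{1}$. The main obstacle I expect is establishing the Halász-type bound on $|\widehat{\mu}_\xi(b)|$ with sharp enough constants and uniform in $p$: the pointwise control is delicate because $\xi_n$ may be nearly supported on a single residue class mod $p$ (making the Fourier transform close to $1$ for some non-zero $b$), so one must go through pairwise differences $\xi_n-\xi_n'$ and carefully track how their support interacts with the additive structure of $\Z/p\Z$. This arithmetic Littlewood--Offord step is what determines the final dependence $\la=\la(\beta,\delta)$.
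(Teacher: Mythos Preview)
Your approach is sound and, at bottom, is the same reduction the paper uses---just carried out by hand instead of cited as a black box. The identity you write, $|\widehat{\mu}_i(w)|=\prod_{j\ne i}|\widehat{\mu}_\xi(w_j-w_i)|$, is exactly the Fourier-side version of the observation that for $X\in\CT_i$ the restriction $X|_J$ with $J=[n]\setminus\{i\}$ has i.i.d.\ entries and $\P_{X\in\CT_i}(X\in V)=\P(X|_J\in V|_J)$. The paper exploits this directly: once the semi-saturation lower bound transfers to the $(n-1)$-dimensional i.i.d.\ problem for $V|_J$, it simply invokes the known i.i.d.\ structural result (\cite[Proposition~2.5]{M1}, \cite[Lemma~A.12]{NP}) to obtain $R'\in\CR'\subset(\Z/p\Z)^{n-1}$ normal to $V|_J$, and lifts $R'$ back to $(\Z/p\Z)^n$ by appending one coordinate. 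Thus the ``main obstacle'' you flag---the Hal\'asz/level-set control on $\{b:|\widehat{\mu}_\xi(b)|$ large$\}$---is already packaged inside the cited i.i.d.\ lemma, and the entire proof collapses to a few lines. What your approach buys is self-containment; what the paper's approach buys is brevity and the avoidance of re-deriving a delicate arithmetic bound.

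Two minor inconsistencies in your write-up worth fixing: (i) you derive that at least $(1-\delta)n$ of the differences lie in $S$, but your enumeration counts only $\delta n$ structured positions---these don't match (the first would give the stronger bound $(2\beta^{1-\delta})^n p^n$); (ii) your normalization $w'_1=0$ is tailored to the maximizing index $i=1$, whereas the lemma must cover all $i$; since shifting by $\mathbf{1}$ translates the structured set $S$ (which then depends on $w$ and spoils the count), the clean fix is to take a union over the $n$ possible indices $i$, at the cost of a harmless factor of $n$.
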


%This lemma is basically \cite[Proposition 2.5]{M1} (see also \cite[Lemma B.7]{NP}) for random i.i.d. matrices. For our setting we will just need to truncate one coordinate.

\begin{proof}(of Lemma \ref{lemma:inverse:semi}) Without loss of generality, assume that $e^{-\la \alpha_n n} < |\P(X \in V) - \frac{1}{p^{k_0}}| $ where $X\in \CT_1$. Equivalently, with $J=\{2,\dots,n\}$
$$e^{-\la \alpha_n n} < |\P(X|_J \in V|_J) - \frac{1}{p^{k_0}}|.$$
By \cite[Proposition 2.5]{M1} (see also \cite[Lemma A.12]{NP}), there exists a deterministic set $\CR' \subset (\Z/p\Z)^{n-1}$ of non $\delta$-sparse vectors and of size $|\CR'| \le (2\beta^\delta)^{n-1} p^{n-1}$ such that $V|_J$ is normal to a vector $R \in \CR'$. We then define $\CR$ by appending a first coordinate to the vectors of $\CR'$ to make them have zero entry-sum.
\end{proof}

Let $\CF_{n-k,k_0, semi-sat}$ be the event that $\codim(W_{n-k})=k_0$ and $W_{n-k}$ is semi-saturated. 
 
\begin{lemma}[random subspaces are not semi-saturated, Laplacian case]\label{lemma:semi-sat} Let $\beta, \delta>0$ be parameters such that $\beta^\delta <17^{-2}/2$. With $\la=\la(\beta,\delta)$ from Lemma ~\ref{lemma:inverse:semi} we have
$$\P(\CF_{n-k,k_0, semi-sat}) \le e^{-n}.$$
\end{lemma}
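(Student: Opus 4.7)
The plan is to carry out a first-moment union bound over the semi-saturated subspaces $V$ of codimension $k_0$, combining the enumeration of Lemma~\ref{lemma:inverse:semi} with the upper bound built into the definition of semi-saturation. Applying Lemma~\ref{lemma:inverse:semi} with the given $\beta,\delta$ (and letting $\la=\la(\beta,\delta)$ be the output), I obtain a deterministic set $\CR\subset(\Z/p\Z)^n$ of non-$\delta$-sparse vectors with $|\CR|\le (2\beta^\delta)^n p^n$, such that every semi-saturated $V$ admits a normal vector in $\CR$. Since $V\subset \Z_0^n/p$, the normal condition is invariant under adding multiples of $(1,\dots,1)$ to $R$; combined with the non-$\delta$-sparseness of $V$ via Claim~\ref{claim:sparse:multi}, this forces the relevant $R\in\CR$ to be linearly independent from $(1,\dots,1)$, so $R$ cuts the dimension of $V$ nontrivially.

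For each semi-saturated $V$ of codimension $k_0$, the upper part of~\eqref{eqn:semi-sat:def} gives $\P(X_i\in V)\le 17/p^{k_0}$ for every $1\le i\le n-k$, so by independence of the columns
\[
\P(W_{n-k}=V)\le \prod_{i=1}^{n-k}\P(X_i\in V)\le (17/p^{k_0})^{n-k}.
\]
To enumerate such $V$, observe that its normal space is a $(k_0+1)$-dimensional subspace of $(\Z/p\Z)^n$ containing the linearly independent pair $\{(1,\dots,1),R\}$, so $V$ is parametrized by a $(k_0-1)$-dimensional subspace of $R^\perp\cap \Z_0^n/p\cong (\Z/p\Z)^{n-2}$; the number of such subspaces is the Gaussian binomial $\binom{n-2}{k_0-1}_{p}\le 4\,p^{(k_0-1)(n-k_0-1)}$.

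Multiplying the three bounds and collecting powers of $p$, the exponent of $p$ that appears equals $1+k_0(k-k_0)$. Since $W_{n-k}$ is spanned by only $n-k$ vectors, $k_0\ge k-1$, and the worst case $k_0=k-1$ yields exponent $k$. On the other hand, the definition of semi-saturation forces $p^{k_0}\le 16 e^{\la\alpha_n n}$, so for $k\ge 2$ we obtain $p^k\le 16^{k/(k-1)}e^{\la\alpha_n n\cdot k/(k-1)}\le e^{O(\la\alpha_n n)}$, while for $k=1$ one already has $p^1\le 16 e^{\la\alpha_n n}$. Combined with $(2\beta^\delta\cdot 17)^n<17^{-n}$, which follows from $\beta^\delta<17^{-2}/2$, the overall bound reduces to
\[
\P(\CF_{n-k,k_0, semi-sat})\le O(1)\cdot 17^{-n}\,e^{O(\la\alpha_n n)}\le e^{-n},
\]
provided $\la=\la(\beta,\delta)$ is small enough. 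The main obstacle is precisely this final calibration: one must check that the $\la$ delivered by Lemma~\ref{lemma:inverse:semi} can be taken small enough (relative to $\log 17$) that the $e^{O(\la\alpha_n n)}$ loss coming from the allowed size of $p$ in the semi-saturated regime is dominated by the $17^{-n}$ gain coming from the hypothesis $\beta^\delta<17^{-2}/2$, uniformly over $1\le k\le \eta n$ and $k-1\le k_0\le \eta n$.
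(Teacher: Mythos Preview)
Your proof is correct and follows the same first-moment strategy as the paper: bound $\P(X_1,\dots,X_{n-k}\in V)\le (17/p^{k_0})^{n-k}$ from the upper part of \eqref{eqn:semi-sat:def}, enumerate the semi-saturated $V$ via Lemma~\ref{lemma:inverse:semi} (the paper does this inside $\Z_0^n/p$ rather than splitting off $(1,\dots,1)$, but the resulting exponent of $p$ is the same), and finish using $p^{k_0}\le 16e^{\la\alpha_n n}$ together with $2\beta^\delta<17^{-2}$. Your closing worry about calibrating $\la$ is not a genuine obstacle: decreasing $\la$ only shrinks the set of $V$ satisfying the left side of \eqref{eqn:semi-sat:def}, so the $\CR$ from Lemma~\ref{lemma:inverse:semi} still captures them, and one may take $\la<1/2$ (as the paper does), whereupon $17^{-n}e^{2\la\alpha_n n}\le (e/17)^n\le e^{-n}$ since $e^2<17$.
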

In particularly, with $\CE_{n-k, semi-sat}$ denotes the event complements $\wedge_{k-1\le k_0\le \eta n}\overline{\CF}_{n-k, k_0, semi-sat}$ in  the $\sigma$-algebra generated by $X_{1},\dots, X_{n-k}$,  then 
\begin{equation}\label{eqn:semi-sat}
\P(\CE_{n-k, semi-sat}) \ge 1- e^{-n/2}.
\end{equation}

\begin{proof}(of Lemma ~\ref{lemma:semi-sat}) We have
$$\P(\CF_{n-k, k_0, semi-sat}) = \sum_{V semi-sat, \ \codim(V)=k_0} \P(W_{n-k} = V) \le  \sum_{V semi-sat, \ \codim(V)=k_0} \P(X_{1},\dots, X_{n-k} \in V) .$$
Now for each fixed $V\le \Z_0^n/p$ that is semi-saturated of co-dimension $k_0$, by definition 
$$\P(X_{n+k+j} \in V) \le  \max_{i, X \in \CT_i}|\P(X \in V) - \frac{1}{p^{k_0}}| + \frac{1}{p^{k_0}} \le 17 p^{-k_0}.$$
So 
$$\P(X_1,\dots, X_{n-k} \in V) \le 17^{n-k} p^{-k_0(n-k)}.$$
We next use Lemma ~\ref{lemma:inverse:semi} to count the number $N_{semi-sat}$ of semi-saturated subspaces $V$. Each $V$ is determined by its annihilator $V^\perp$ in $\Z_0^n/p$ (of cardinality $p^{k_0}$). For $V^\perp$ we can choose a first vector $v_1 \in \CR$, and then $v_2,\dots, v_{k_0} \in \Z_0^n/p$ (linearly independently). By double counting, we obtain an upper bound
$$N_{semi-sat} =O\Big( (2\beta^\delta)^n p^n \frac{(p^{n-1})^{k_0-1}}{|V^\perp|^{k_0-1}}\Big) = O\Big((2\beta^\delta)^n p^{nk_0 -k_0^2+k_0}\Big).$$
Putting together,
\begin{align*}
\P(\CF_{n-k, k_0, semi-sat}) & \le  \sum_{V semi-sat,\ \codim(V)=k_0} \P(X_{1},\dots, X_{n-k} \in V) =O\Big( (2\beta^\delta)^n p^{nk_0 -k_0^2+k_0} 17^{n-k} p^{-k_0(n-k)} \Big)\\
& = O\Big( 17^{n-k} (2\beta^\delta)^n p^{k_0} p^{k_0(k-k_0)} \Big)= O\Big( 17^{n-k} (2\beta^\delta)^n p^{2k_0} \Big),
\end{align*}
where we noted that $k_0 \ge k-1$. Now recall that $e^{-\la \alpha_n n} \le 16 p^{-k_0}$, and so 
$$\P(\CF_{n-k, k_0, semi-sat}) =O(17^{n-k} (2\beta^\delta)^n p^{2k_0}) = O(17^{n+1-k} (2\beta^\delta)^n e^{2\la \alpha_n n}).$$
We then choose $\beta$ so that $2\beta^\delta <17^{-2}$ and with $\la<1/2$ we have
$$\P(\CF_{n-k, k_0, semi-sat}) \le e^{-n}.$$
\end{proof}

Having worked with subspaces $V$ where $\max_{i, X\in \CT_i}|\P(X \in V) -p^{-k_0}|$ are still small, we now turn to the remaining case to apply the swapping method.

\subsection{Unsaturated subspace}\label{sub:unsat} Let $V$ be a subspace of codimension $k_0$ in $\Z_0^n/p$ for some $k-1 \le k_0 \le \eta n$. We say that $V$ is {\it unsaturated} (or {\it unsat.} for short)  if $V$ is not $\delta$-sparse and 
$$\max( e^{-d \alpha n}, 16 p^{-k_0}) < \max_{i, X\in \CT_i}|\P(X \in V) -p^{-k_0}|.$$
In particularly this implies that 
$$ \max_{i, X\in \CT_i}\P(X \in V)\ge \max\{17 p^{-k_0} , \frac{16}{17} e^{-d \alpha n}\}.$$
In this case, for each $1\le i\le n$ we say that $V$ has {\it type $i$} if 
$$ \P_{X\in \CT_i}(X \in V) =  \max_{1\le j\le n, X\in \CT_j}\P(X \in V).$$
By taking union bound, it suffices to work with unsaturated subspace of type 1. So in what follows $X\in \CT_1$. The following is from \cite[Lemma 2.8]{M1} (see also \cite[Lemma A.15]{NP}).

\begin{lemma}\label{lemma:swapping:L}
There is a $\alpha_n'$-balanced probability distribution $\nu$ on $\Z/p\Z$ with $\alpha_n' = \alpha_n/64$ such that if $Y=(y_1,\dots,y_n) \in (\Z/p\Z)^n$ is a random vector with i.i.d. coefficients $y_2,\dots, y_n$ distributed according to $\nu$ and $y_1=-(y_2+\dots+y_n)$ then for any unsaturated proper subspace $V$ in $\Z_0^n/p$
$$|\P(X\in V) - \frac{1}{p^{k_0}}| \le (\frac{1}{2}+o(1)) |\P(Y\in V) - \frac{1}{p^{k_0}}| .$$ 
\end{lemma}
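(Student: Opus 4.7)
The plan is to reduce the Laplacian swapping statement to the i.i.d.\ version of Maples \cite[Lemma 2.8]{M1} (equivalently \cite[Lemma A.15]{NP}) via the projection bijection used throughout Subsection~\ref{sub:unsat}. Since $X \in \CT_1$ has $x_1 = -(x_2 + \cdots + x_n)$ with $x_2,\dots,x_n$ i.i.d.\ copies of $\xi_n$, the map $\pi\colon \Z_0^n/p \to (\Z/p\Z)^{n-1}$ sending $(x_1,\ldots,x_n)$ to $(x_2,\ldots,x_n)$ is a bijection, and it identifies $X$ with the i.i.d.\ vector $X' = (x_2,\ldots,x_n)$. Setting $V' := \pi(V) \subseteq (\Z/p\Z)^{n-1}$, we have $\codim_{(\Z/p\Z)^{n-1}}(V') = \codim_{\Z_0^n/p}(V) = k_0$ and $\P(X\in V) = \P(X' \in V')$. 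Analogously, if we build $Y = (-(y_2+\cdots+y_n), y_2, \dots, y_n)$ from an i.i.d.\ vector $Y' = (y_2,\dots,y_n)$, then $\P(Y \in V) = \P(Y' \in V')$ as well. So the desired Laplacian inequality follows if we can apply an i.i.d.\ swapping lemma to $V'$.

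Next, I would invoke Maples' construction of $\nu$ for the i.i.d.\ setting: there one takes $\nu$ to be a convex combination of the symmetrization $\mu_{\xi_n - \xi_n'}$ (where $\xi_n'$ is an independent copy of $\xi_n$) with a Dirac mass at $0$, i.e.\ $\nu = q\,\mu_{\xi_n - \xi_n'} + (1-q)\delta_0$ with $q$ chosen small enough so that $\max_r \nu(r) \le 1 - \alpha_n/64$. The reason such a $\nu$ works is Fourier-analytic: for any proper subspace $U \subseteq (\Z/p\Z)^{n-1}$ of codimension $k_0$, one writes
\begin{equation*}
\P(X' \in U) - p^{-k_0} \;=\; p^{-k_0}\!\!\sum_{w \in U^\perp\setminus\{0\}}\!\prod_{j=2}^{n}\phi(w_j), \qquad
\P(Y' \in U) - p^{-k_0} \;=\; p^{-k_0}\!\!\sum_{w \in U^\perp\setminus\{0\}}\!\prod_{j=2}^{n}\psi(w_j),
\end{equation*}
where $\phi = \widehat{\mu_{\xi_n}}$ and $\psi = \widehat{\nu}$. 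The mixture recipe forces $\psi$ to be real, non-negative, and pointwise $\ge (1-q) + q\,|\phi|^2$, so $\psi(w_j) \ge 2|\phi(w_j)| + o(1)$ uniformly over the $w_j$ that carry the bulk of the sum (which, thanks to unsaturation, is where the mass really lives). Summing termwise and using that the $Y'$-side sum is non-negative gives the desired factor $(\tfrac12 + o(1))$.

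To conclude, I would verify that the Laplacian hypotheses transfer to $V'$ under $\pi$. The unsaturation inequality $\max_{i, X\in\CT_i}\bigl|\P(X\in V) - p^{-k_0}\bigr| > \max(e^{-d\alpha_n n}, 16 p^{-k_0})$ for type $i=1$ transfers verbatim since $\pi$ preserves these probabilities. The non-$\delta$-sparseness of $V$ transfers via Claim~\ref{claim:sparse:multi} and the identification $V^\perp/\langle\mathbf{1}\rangle \cong (V')^\perp$ sending $(w_1,\dots,w_n)$ to $(w_2 - w_1,\dots,w_n - w_1)$: a support of size $s$ in $(V')^\perp$ corresponds to a multiplicity of $n-s$ for the value $w_1$ in $w \in V^\perp$, so the absence of short-support normals to $V'$ is exactly the absence of large-multiplicity entries in normals to $V$, which Claim~\ref{claim:sparse:multi} packages together with non-$\delta$-sparseness.

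The main obstacle (and the only place something genuinely has to be checked rather than quoted) is the numerical verification that the mixing weight $q$ in the construction of $\nu$ can be chosen simultaneously (i) small enough that $\nu$ is $\alpha_n/64$-balanced, and (ii) large enough that $\psi \ge 2|\phi|+o(1)$ on the Fourier support that matters. This is a clean calculation done in \cite{M1,NP} and does not interact with the Laplacian structure, so we can import it directly once the reduction above is in place.
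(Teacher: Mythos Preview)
Your approach is correct and matches the paper's own proof, which consists of the single observation that one should ``truncate the first coordinate from all vectors'' to reduce to the i.i.d.\ swapping lemma of \cite[Lemma 2.8]{M1} (equivalently \cite[Lemma A.15]{NP}). You supply more detail than the paper does---in particular the explicit bijection $\pi$, the identification $V^\perp/\langle\mathbf{1}\rangle \cong (V')^\perp$, and the verification that the unsaturation and non-sparseness hypotheses transfer---but the core reduction is identical.
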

(To be more precise, \cite[Lemma 2.8]{M1} and \cite[Lemma A.15]{NP} stated for vectors of i.i.d. entries, but for Lemma~\ref{lemma:swapping:L} we just need to truncate the first coordinate from all vectors.) For short, we will say that the vector $Y$ from Lemma \ref{lemma:swapping:L} has type $\CT_1'$. It follows from the definition of unsaturation and from Lemma~\ref{lemma:swapping:L} that 
$$\P_{X\in \CT_1}(X\in V) \le \frac{2}{3} \P_{Y \in \CT_1'}(Y\in V).$$

\begin{definition} Let $V$ be a subspace in $(\Z/p\Z)^n$. Let $d_{comb}\in \{1/n, \dots, n^2/n\}$. We say that $V$ of type 1 has combinatorial codimension $d_{comb}$ if 
\begin{equation}\label{eqn:comb1}
(1- \alpha)^{d_{comb}} \le \P_{X\in \CT_1}(X\in V) \le (1-\alpha_n)^{d_{comb}-1/n}.
\end{equation}
\end{definition}
Now as we are in the unsaturated case, $\P(X \in V)\ge  \frac{16}{17} e^{-\la \alpha_n n}$, and so
\begin{equation}\label{eqn:comb2}
d_{comb} \le  2 \la n.
\end{equation}
In what follows we will fix $d_{comb}$ from the above range, noting that $d$ is sufficiently small, and there are only $O(n^2)$ choices of $d_{comb}$.

Let be fixed any $0<\delta_1<\delta_2 <1/3$ such that 
\begin{equation}\label{eqn:delta12}
16(\delta_2-\delta_1) (1+\log \frac{1}{\delta_2-\delta_1}) < \delta_1.
\end{equation}
Set 
$$r=\lfloor \delta_1 n \rfloor \mbox{ and } s=n-k-\lfloor \delta_2 n\rfloor.$$

Let $Y_1,\dots, Y_r \in \CT_1'$ be random vectors with entries distributed by $\nu$ obtained by Lemma ~\ref{lemma:swapping:L}, and $Z_1,\dots, Z_s \in \CT_1$ bee i.i.d. copies of a type 1 vector generated by $\mu$. Note that in what follows the subspaces $V$ are of given combinatorial dimension $d_{comb}$ as in Equation \eqref{eqn:comb1} and \eqref{eqn:comb2}.

\begin{lemma}[random subspaces are not unsaturated, Laplacian case]\label{lemma:unsat} 
\begin{align*}
\P\Big(X_{1},\dots, X_{n-k} \mbox{ span an unsat.  $V$ of type 1 of dim. between $r+s$ and $n-k$}\Big) \le (3/2)^{-\delta_1 n/4}.
%\P\Big(X_{k+1},\dots, X_n \mbox{ span an unsat.  $V$ of type 1 of dim. between $r+s$ and $n-k$}\Big) &\le (3/2)^{-r/2} \binom{n-k}{r+s} \\
\end{align*}
\end{lemma}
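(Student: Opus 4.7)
\medskip

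\noindent\textbf{Plan.} The plan is to run the Kahn--Koml\'os--Szemer\'edi swapping argument from \cite{M1,KKS,TV}, adapted to the Laplacian by exploiting the type-$1$ property instead of i.i.d.\ entries. The scheme is: (i) reduce to a single unsaturated $V$ of type $1$ with a fixed combinatorial codimension $d_{comb}$; (ii) use Lemma~\ref{lemma:swapping:L} to trade $r$ original $X$'s for $Y$'s and gain a factor $(2/3)^r=(2/3)^{\lfloor\delta_1 n\rfloor}$; (iii) sum over $V$ by recognizing the sum as an expected count of unsaturated $V$ containing the random span of $Z_1,\dots,Z_s,Y_1,\dots,Y_r$, and bound this count combinatorially using Lemma~\ref{lemma:O:L}. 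The condition \eqref{eqn:delta12} on $\delta_1,\delta_2$ is designed precisely to ensure that the combinatorial losses in step (iii) are dominated by the swap gain in step (ii).

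\medskip

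\noindent\textbf{Reduction.} By a union bound costing a factor of $O(n)$ we fix the type to be $1$, and by another union bound costing $O(n^2)$ we fix the combinatorial codimension $d_{comb}\in\{1/n,\dots,2\lambda n\}$ from \eqref{eqn:comb1}--\eqref{eqn:comb2}. For any such unsaturated $V$, since $V$ is of type $1$ we have $\P(X_i\in V)\le p_V:=\P_{X\in\CT_1}(X\in V)$ for each $i$, and Lemma~\ref{lemma:swapping:L} gives $p_V\le \tfrac{2}{3}q_V$, where $q_V:=\P_{Y\in\CT_1'}(Y\in V)$.

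\medskip

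\noindent\textbf{Swap.} Write $n-k=s+r+(\lfloor\delta_2 n\rfloor-\lfloor\delta_1 n\rfloor)$; since $p_V\le 1$,
$$
\P(X_1,\dots,X_{n-k}\in V)\le p_V^{n-k}\le \left(\tfrac{2}{3}\right)^r p_V^{s}\,q_V^{r}.
$$
Summing over unsaturated $V$ of type $1$ with combinatorial codim $d_{comb}$,
$$
\sum_V \P\Big(X_1,\dots,X_{n-k}\text{ span }V\Big)\le \left(\tfrac{2}{3}\right)^r \sum_V \P(Z_1,\dots,Z_s\in V)\,\P(Y_1,\dots,Y_r\in V)=\left(\tfrac{2}{3}\right)^r \E[N],
$$
where $N$ counts the unsaturated $V$'s of combinatorial codim $d_{comb}$ that contain $\mathrm{span}(Z_1,\dots,Z_s,Y_1,\dots,Y_r)$. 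The dimension hypothesis $\dim V\ge r+s$ is what allows us to make sense of this count: $V$ must contain an $(r+s)$-dimensional subspace furnished by the $Z$'s and $Y$'s.

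\medskip

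\noindent\textbf{Counting and combinatorial balance.} By Lemma~\ref{lemma:O:L} (both parts, since $Y$ also has a zero-sum form with an $\alpha_n/64$-balanced variable), the $Z_i$'s and $Y_j$'s are linearly independent in $\Z_0^n/p$ except on an event of probability $O(ne^{-c\alpha_n n})$. Condition on this event. Then the number of $V$ of dimension between $r+s$ and $n-k$ containing their $(r+s)$-dimensional span is bounded by the Gaussian binomial coefficient $\binom{n-1-(r+s)}{D-(r+s)}_p$ summed over admissible $D$, which in turn is bounded by $\binom{n-k}{n-k-r-s}$ up to polynomial factors arising from choices of the surplus indices. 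Since $n-k-r-s=\lfloor\delta_2 n\rfloor-\lfloor\delta_1 n\rfloor$, Stirling gives
$$
\log\binom{n-k}{n-k-r-s}\le (\delta_2-\delta_1)n\Big(1+\log\tfrac{1}{\delta_2-\delta_1}\Big).
$$
By hypothesis \eqref{eqn:delta12} this is less than $\delta_1 n/16$, so the combinatorial loss is at most $e^{\delta_1 n/16}$. Multiplying by $(2/3)^r=(2/3)^{\lfloor\delta_1 n\rfloor}$ and the earlier $O(n^3)$ union-bound factor easily yields the stated bound $(3/2)^{-\delta_1 n/4}$ for $n$ large.

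\medskip

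\noindent\textbf{Main obstacle.} The delicate point is \textbf{Step (iii)}: controlling the expected number of unsaturated $V$'s containing the random $(r+s)$-dimensional span. One must simultaneously handle the Gaussian binomial count for each fixed dimension $D\in[r+s,n-k]$, the $O(n)$ sum over $D$ (or equivalently over $d_{comb}$), and the degradation caused by those $Z_i$'s or $Y_j$'s that fail to be linearly independent. Condition \eqref{eqn:delta12} is tailor-made so that the resulting combinatorial factor $\binom{n-k}{\lfloor\delta_2 n\rfloor-\lfloor\delta_1 n\rfloor}$ is beaten by the swap gain $(3/2)^r$ with room to spare.
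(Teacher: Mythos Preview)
Your swap step is fine, but the ``Counting and combinatorial balance'' step has a genuine gap that breaks the argument. Once you pass from the event $\{X_1,\dots,X_{n-k}\text{ span }V\}$ to the weaker event $\{X_1,\dots,X_{n-k}\in V\}$ and then to $\{Z_1,\dots,Z_s,Y_1,\dots,Y_r\in V\}$, summing over $V$ produces $\E[N]$ where $N$ counts subspaces $V$ of dimension between $r+s$ and $n-k$ containing a fixed $(r+s)$-dimensional subspace. That count is a Gaussian binomial $\binom{n-1-(r+s)}{D-(r+s)}_p$, which is of order $p^{(D-r-s)(n-1-D)}$, not anything comparable to the ordinary binomial $\binom{n-k}{n-k-r-s}$. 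The unsaturation and fixed-$d_{\mathrm{comb}}$ constraints do not cut this down to polynomial size (compare the semi-saturated count in Lemma~\ref{lemma:inverse:semi}, which is still of order $p^n$ up to an exponentially small factor). So your bound on $\E[N]$ is simply false, and the $(2/3)^r$ gain cannot absorb a loss of order $p^{\Theta(n)}$.

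The paper's proof never relaxes ``span'' to ``in''. Instead it multiplies $\P(X_1,\dots,X_{n-k}\text{ span }V)$ by the probability that $Y_1,\dots,Y_r,Z_1,\dots,Z_s$ are \emph{linearly independent inside} $V$; this factor is shown (via Odlyzko and the swap lemma) to be at least $(3/2)^{r-1}(1-\alpha_n)^{(r+s)d_{\mathrm{comb}}}$. On the joint event, since the $X$'s span $V$ and the $Y$'s, $Z$'s are independent in $V$, one can choose $n-k-r-s$ of the $X$'s that together with the $Y$'s and $Z$'s still span $V$, while the remaining $r+s$ of the $X$'s merely lie in $V$; this is where the \emph{ordinary} binomial $\binom{n-k}{n-k-r-s}$ legitimately enters, as a count of index subsets rather than of subspaces. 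After dividing out and using that $\sum_V \P(\text{fixed vectors span }V)\le 1$ (because any tuple spans a unique subspace), the paper obtains Lemma~\ref{lemma:swapXY} and then the stated bound via \eqref{eqn:delta12}. The essential point you are missing is that retaining the ``span'' event is what makes the sum over $V$ collapse to $1$; your relaxation to ``in'' throws this away and leaves you with an uncontrollable subspace count.
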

%\hoi{We can move the proof to the appendix if you prefer; I presented it here because it is short, and does not follow from previous results.} 
 Note that the event considered here is significantly harder to control than the event discussed in the paragraph preceding Subsection \ref{sub:semi}. This is also the place where \cite{M1} treated incorrectly by relying on \cite[Proposition 2.3]{M1} (although our situation here is more technical with vectors of dependent and extremely sparse entries.) To prove Lemma~\ref{lemma:unsat} we will actually show

\begin{lemma}\label{lemma:swapXY} Assume that $V$ is any subspace of type 1 and of dimension between $r+s$ and $n-k$ and $d_{comb} \le  2\la n$. Then we have
$$\P\Big(X_{1},\dots, X_{n-k} \mbox{ span } V\Big) \le (3/2)^{-r/2} \sum_{(i_1,\dots, i_{n-k-r-s})}\P\Big(Y_1,\dots, Y_r,Z_1,\dots, Z_s, X^{(i_1)},\dots, X^{(i_{n-k-r-s})} \mbox{ span } V\Big),$$
where $(i_1,\dots, i_{n-k-r-s})$ ranges over all subsets of size $n-k-r-s$  of $\{1,\dots,n-k\}$. 
\end{lemma}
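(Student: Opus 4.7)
The plan is to implement Komlós's swapping method in the dependent-column setting of the Laplacian, with Lemma~\ref{lemma:swapping:L} playing the role of the atomic exchange. Since $V$ is of type $1$, each column $X^{(i)} \in \CT_i$ satisfies $\P(X^{(i)} \in V) \le \P(Z \in V)$ where $Z \in \CT_1$. Combined with the unsaturation hypothesis together with Lemma~\ref{lemma:swapping:L}, one obtains the atomic exchange estimate
\[
\P_{X \in \CT_i}(X \in V) \le \tfrac{2}{3}\, \P_{Y \in \CT_1'}(Y \in V) \quad \text{for each } i \in [n-k],
\]
which gives a factor of $2/3$ each time an $X^{(i)}$ is swapped for a $Y$-vector, and no cost when swapped for a $Z$-vector.

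First I would bound the spanning event by the weaker containment event, using independence of the columns:
\[
\P(X_1,\dots,X_{n-k} \text{ span } V) \le \prod_{i=1}^{n-k} \P(X^{(i)} \in V).
\]
For any tripartition $[n-k] = I_Y \sqcup I_Z \sqcup J$ with $|I_Y|=r$, $|I_Z|=s$, $|J|=n-k-r-s$, iterating the atomic swap $r$ times on indices in $I_Y$ and $s$ times on indices in $I_Z$ gives
\[
\prod_{i=1}^{n-k} \P(X^{(i)} \in V) \le (2/3)^r \, \P\bigl(\{Y_\alpha\}_\alpha \cup \{Z_\beta\}_\beta \cup \{X^{(j)}\}_{j\in J} \text{ all lie in } V\bigr).
\]
This is very close to the desired right-hand side of the lemma, the discrepancy being that ``lie in $V$'' must be upgraded to ``span $V$''.

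The key remaining step is this upgrade. I would sum the bound over all tripartitions indexed by $J$, and exploit that inside each summand the fresh vectors $Y_\alpha, Z_\beta$ are independent of the retained columns $\{X^{(j)}\}_{j \in J}$. Conditional on $\{X^{(j)}\}_{j \in J}$ realising a subspace $V' \subseteq V$ of some dimension $d' \le d$, the randomness in the $Y_\alpha$'s and $Z_\beta$'s (balanced vectors of type $\CT_1'$ and $\CT_1$) will span a complementary subspace of $V/V'$ with appreciable probability, by the Odlyzko-type Lemma~\ref{lemma:O:L} applied inside $V$. Averaging the containment bound over the $\binom{n-k}{r+s}$ partitions and then applying Cauchy--Schwarz trades half of the gain from the swap against the combinatorial multiplicity from summing over $J$, yielding the factor $(3/2)^{-r/2}$ in front of the claimed sum.

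The main obstacle will be this final upgrade. Since $\dim V$ can be as large as $n-k$, there may be no ``slack'' columns to discard: one cannot simply drop $r+s$ columns and expect the remaining $\{X^{(j)}\}_{j \in J}$ to span $V$ on their own. The argument must genuinely exploit the independence of $\{Y_\alpha, Z_\beta\}$ from $\{X^{(j)}\}_{j \in J}$, together with the averaging over the $\binom{n-k}{r+s}$ tripartitions, to pay for the containment-to-spanning conversion. It is precisely this balancing, executed via Cauchy--Schwarz, that produces the exponent $r/2$ rather than the naive $r$, and constitutes the technical heart of the lemma.
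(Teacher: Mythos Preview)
Your proposal has a genuine gap at the ``upgrade'' step, and the mechanism you describe cannot close it. Once you pass from the spanning event to the mere containment $\prod_i \P(X^{(i)}\in V)$, the information that the $X^{(i)}$ actually generate $V$ is gone; no amount of averaging over partitions $J$ or Cauchy--Schwarz recovers it, because $\P(\{Y_\alpha,Z_\beta,X^{(j)}\}\subset V)$ is always at least $\P(\{Y_\alpha,Z_\beta,X^{(j)}\}\text{ span }V)$, so your chain of inequalities points the wrong way for the conversion you need. The appearance of the exponent $r/2$ is not a Cauchy--Schwarz artifact; it is just slack absorbing lower-order factors.

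The paper avoids this trap by never discarding the spanning information. It introduces the auxiliary event $B=\{Y_1,\dots,Y_r,Z_1,\dots,Z_s\text{ are linearly independent in }V\}$ and uses the exact identity
\[
\P(X_{1},\dots,X_{n-k}\text{ span }V)=\frac{\P(\{X\text{'s span }V\}\wedge B)}{\P(B)},
\]
which is valid by independence of the $X$'s from the $Y$'s and $Z$'s. On the joint event in the numerator, basis extension is automatic: since the $X$'s span $V$ and the $Y,Z$ vectors are independent inside $V$, some $n-k-r-s$ of the $X$'s complete $\{Y_\alpha,Z_\beta\}$ to a spanning set, and the remaining $r+s$ of the $X$'s merely lie in $V$. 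This gives the sum over subsets $J$ with the extra factor $\prod_{i\notin J}\P(X^{(i)}\in V)\le (1-\alpha_n)^{(r+s)(d_{comb}-1/n)}$. The denominator $\P(B)$ is bounded below by $(3/2)^{r-1}(1-\alpha_n)^{(r+s)d_{comb}}$ via the product rule, Lemma~\ref{lemma:O:L}, and the swapping inequality $\P(Y\in V)\ge \tfrac{3}{2}\P(X\in V)$. The ratio then cancels the $(1-\alpha_n)^{(r+s)d_{comb}}$ factors up to a negligible $(1-\alpha_n)^{-(r+s)/n}$, leaving $(3/2)^{-r+1}$, which is comfortably at most $(3/2)^{-r/2}$. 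The key point you are missing is this ratio trick with the linear-independence event; it is what converts spanning-by-$X$ into spanning-by-mixed without ever relaxing to containment.
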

To conclude Lemma ~\ref{lemma:unsat} we just use $(3/2)^{-r/2} \binom{n-k}{r+s} \le  (3/2)^{-\delta_1 n/4}$ (basing on Equation \eqref{eqn:delta12}) and  the fact that for each fixed $(i_1,\dots, i_{n-k-r-s})$
$$\sum_{V \le (\Z/p\Z)^n, \mbox{ \small{type 1}} , \ \codim(V) \ge k} \P(Y_1,\dots, Y_r, Z_1,\dots, Z_s,  X^{(i_1)},\dots, X^{(i_{n-k-r-s})} \mbox{ span } V)\le 1.$$

\begin{proof}(of lemma ~\ref{lemma:swapXY}) We use the swapping method from \cite{M1,TV}. First of all, by independence between $X_i,Y_j, Z_l$,
\begin{align}\label{eqn:unsat:1}
&\  \  \ \P\Big(X_{1},\dots, X_{n-k} \mbox{ span } V\Big) \times \P\Big(Y_1,\dots, Y_r, Z_1,\dots, Z_s \mbox{ linearly independent in } V\Big) \nonumber \\
&= \P\Big(X_{1},\dots, X_{n-k} \mbox{ span } V \wedge Y_1,\dots, Y_r, Z_1,\dots, Z_s \mbox{ linearly independent in } V\Big).
\end{align}
 Roughly speaking, the linear independence of  $Y_1, \dots, Z_s$ is to guarantee that we then can add a few other $X_i$ to form a new linear span of $V$, and by this way we can free the other $X_i$ from the role of spanning (see for instance Equation \eqref{eqn:unsat:3}). We next estimate $\P(Y_1,\dots, Y_r, Z_1,\dots, Z_s \mbox{ linearly independent in } V)$. By product rule,
\begin{align*}
& \  \  \ \P\Big(Z_1,\dots, Z_s, Y_1,\dots, Y_r \mbox{ linearly independent in } V\Big) \\
&=\P\Big(Y_r \in V\Big) \times \P\Big(Y_{r-1} \in V, Y_{r-1} \notin \lang Y_r \rang | Y_r \in V\Big) \times \dots \times  \P\Big(Y_1 \in V, Y_1 \notin \lang Y_2,\dots, Y_r \rang | Y_2,\dots, Y_r \mbox{ lin. in $V$}\Big) \times \\
&\times \P\Big(Z_{s} \in V, Z_{s} \notin \lang Y_1,\dots, Y_r \rang | Y_1,\dots, Y_r \mbox{ lin. in $V$} \Big) \times \dots \times \\
&\times  \P\Big (Z_1 \in V, Z_1 \notin \lang Z_2,\dots, Z_r, Y_1,\dots, Y_r \rang | Z_2,\dots, Z_s, Y_1,\dots, Y_r \mbox{ lin. in $V$} \Big).
\end{align*}
We first estimate the terms on $Y_i$. By Lemma ~\ref{lemma:O:L}
\begin{align*}
&\P\Big(Y_i \in V, Y_i \notin \lang Y_{i+1},\dots, Y_r \rang | Y_{i+1},\dots Y_r \mbox{ lin. in $V$} \Big)  \ge \P(Y_i\in V) - (1-\alpha_n')^{n-(r-i)-1}.
\end{align*}
This then can be estimated from below by
\begin{align*}
 \P(Y_i\in V) - (1-\alpha_n')^{n-(r-i)-1} \ge & \frac{3}{2}\P(X_i\in V) - (1-\alpha_n')^{n-(r-i)-1} \ge  \frac{3}{2}(1- \alpha)^{d_{comb}}   - (1-\alpha_n')^{n-(r-i)-1} \\
\ge & \frac{3}{2}(1-\alpha_n)^{d_{comb}} (1- (1-\alpha_n)^{n/256 -d_{comb}}),
\end{align*}
where we used that $\alpha_n' =\alpha_n/64$ and $n-r  \ge (1-\delta_1) n \ge n/2$. Similarly,
\begin{align*}
\P\Big(Z_i \in V, Z_i & \notin \lang Z_{i+1},\dots, Z_s, Y_1,\dots, Y_r \rang |  Z_{i+1},\dots, Z_s, Y_1,\dots, Y_r \Big)  \ge \P(Z_i\in V) - (1-\alpha')^{n-(r+s-i)-1}\\
& \ge (1- \alpha_n)^{d_{comb}}   - (1-\alpha_n')^{n-(r+s-i)-1} \ge (1-\alpha_n)^{d_{comb}}- (1-\alpha_n)^{n/256}
\end{align*}
where we used that $r+s = n-k - (\lfloor \delta_2 n \rfloor - \lfloor \delta_1 n \rfloor) \ge n/2$. Putting together
\begin{align}\label{eqn:unsat:2}
\P\Big (Y_1,\dots,Z_s \mbox{ linearly independent in } V \Big) &\ge (3/2)^r (1-\alpha_n)^{(r+s) d_{comb}} \Big(1- (1-\alpha_n)^{n/256 -d_{comb}}\Big)^{r+s} \nonumber \\
&\ge  (3/2)^{r-1} (1-\alpha_n)^{(r+s) d_{comb}},
\end{align}
where we used $d_{comb} \le 2 \la n$ and $\la$ is sufficiently small. 

Now we estimate the probability $\P(X_{1},\dots, X_{n-k} \mbox{ span } V \wedge Y_1,\dots, Y_r, Z_1,\dots, Z_s \mbox{ linearly independent in } V)$. Since $Y_1,\dots, Y_r, Z_1,\dots, Z_s$ are linearly independent in $V$ and $X_{k+1},\dots, X_n$ span  $V$, there exist $n-k-r-s$ vectors $X^{(i_1)
},\dots, X^{(i_{n-k-r-s})}$, which together with $Y_1,\dots,Y_r, Z_1,\dots, Z_s$, span $V$, and the remaining vectors $X^{i_{n-k-r-s+1}},\dots, X^{i_{n-k}}$ belong to $V$. Thus,
\begin{align}\label{eqn:unsat:3}
&\P\Big(X_{1},\dots, X_{n-k} \mbox{ span } V \wedge Y_1,\dots, Y_r, Z_1,\dots, Z_s \mbox{ linearly independent in } V\Big) \nonumber \\
\le & \sum_{(i_1,\dots, i_{n-k-r-s})}\P\Big(Y_1,\dots, Z_s, X^{(i_1)},\dots, X^{(i_{n-k-r-s})} \mbox{ span } V  \wedge X^{i_{n-k-r-s+1}},\dots, X^{i_{n-k}} \in V \Big)\nonumber \\
\le & \sum_{(i_1,\dots, i_{n-k-r-s})}\P\Big(Y_1,\dots, Z_s, X^{(i_1)},\dots, X^{(i_{n-k-r-s})} \mbox{ span } V\Big)  \P\Big(X^{i_{n-k-r-s+1}},\dots, X^{i_{n-k}}\in V\Big)\nonumber \\
\le & \sum_{(i_1,\dots, i_{n-k-r-s})}  \P\Big(Y_1,\dots, Z_s, X^{(i_1)},\dots, X^{(i_{n-k-r-s})} \mbox{ span } V\Big) (1-\alpha)^{(r+s) (d_{comb}-1/n)},
\end{align}
where in the last step we used the upper bound  $(1-\alpha)^{d_{comb}-1/n}$ for each $\P(X^{(i)} \in V)$.

Putting \eqref{eqn:unsat:1}, \eqref{eqn:unsat:2} and \eqref{eqn:unsat:3} together,  
\begin{align*}\label{eqn:unsat:3}
& \P\Big(X_{1},\dots, X_{n-k} \mbox{ span } V\Big)=\frac{\P\Big(X_{1},\dots, X_{n-k} \mbox{ span } V \wedge Y_1,\dots, Z_s \mbox{ linearly independent in } V\Big)}{\P\Big(Y_1, \dots, Z_s \mbox{ linearly independent in } V\Big)} \nonumber \\
& \le (3/2)^{-r+1} (1-\alpha_n)^{-(r+s) d_{comb}} \times  \nonumber  \\
& \times  \sum_{(i_1,\dots, i_{n-k-r-s})}  \P\Big(Y_1, \dots, Z_s,  X^{(i_1)},\dots, X^{(i_{n-k-r-s})} \mbox{ span } V\ \Big) (1-\alpha_n)^{(r+s) (d_{comb}-1/n)} \nonumber  \\
&\le (3/2)^{-r/2}   \sum_{(i_1,\dots, i_{n-k-r-s})}  \P\Big(Y_1,\dots, Z_s,  X^{(i_1)},\dots, X^{(i_{n-k-r-s})} \mbox{ span } V \Big) .
\end{align*}

\end{proof}

Remark that $r+s= n-k- (\lfloor \delta_2 n \rfloor - \lfloor \delta_1 n\rfloor)< n-k -\eta n$ if $\eta$ is sufficiently small. As a consequence, if we let $\CE_{k, unsat}$ denote the complement of the event in Lemma \ref{lemma:unsat} in the $\sigma$-algebra generated by $X_{1},\dots, X_{n-k}$ then
\begin{equation}\label{eqn:unsat}
\P(\CE_{n-k, unsat}) \ge 1-(3/2)^{-\delta_1 n/4}.
\end{equation}

We now conclude the proof of Theorem ~\ref{theorem:corank:L}. Let $\CE_{n-k, dense}, \CE_{n-k, semi-sat}, \CE_{n-k, unsat}$ be the events introduced in \eqref{eqn:dense:L}, \eqref{eqn:semi-sat}, \eqref{eqn:unsat} and let $\CE_{n-k}$ be their intersection. If we choose $c \le \min \{c',\la\}$ then by definition, on these events, if $\codim(W_{n-k})=k_0$ then for a random vector $X$ of any type $\CT_i$ 
$$|\P(X\in W_{n-k})-\frac{1}{p^{k_0}}| \le e^{-c \alpha_n n},$$
completing the proof.

Finally, we conclude this section with an interesting consequence of Theorem ~\ref{theorem:corank:L} in light of singularity bounds for random matrices from \cite{BR,BVW,KKS, Ko,RV,TV}.
\begin{corollary}[Non-singularity of the Laplacian]\label{cor:singularity:L} There exist absolute constants $c_0, K_0>0$ such that the following holds. Assume that the i.i.d. entries are distributed according to a random variable $\xi_n$ taking integral values and such that for any prime $p$
$$\max_{x\in \Z} \P(\xi_n=x) = 1-\a_n \le 1 - \frac{C_0 \log n}{n}, \mbox{ for a sufficiently large constant $C_0$.}$$ 
Then with probability at least $1-K_0e^{-c_0 \alpha_n n}$ the matrix $L_{n \times (n-1)}$ of any $n-1$ columns of $L_{n\times n}$ has rank $n-1$ in $\R^n$.
\end{corollary}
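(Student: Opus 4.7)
The plan is to reduce Corollary~\ref{cor:singularity:L} to the mod-$p$ rank estimate of Proposition~\ref{prop:L} and then let $p\to\infty$. Two elementary observations set the stage. First, every column of $L_{n\times n}$ lies in $\Z_0^n$, so $\rk_\R(L_{n\times n})\leq n-1$; moreover, because any single column is the negative of the sum of the remaining $n-1$, the column span of every $(n-1)$-column submatrix of $L_{n\times n}$ coincides with the column span of $L_{n\times n}$ itself. Hence it suffices to prove $\P(\rk_\R(L_{n\times n})\le n-2)\le K_0 e^{-c_0\alpha_n n}$.

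Second, rank over $\R$ dominates rank modulo any prime. Indeed, any $\R$-linear dependence among the columns can, after clearing denominators and dividing through by the gcd, be realized as an integer dependence with coprime coefficients, and reducing modulo $p$ preserves non-triviality. Consequently, for every prime $p$ satisfying the balance condition~\eqref{eqn:alpha'},
$$\P\bigl(\rk_\R(L_{n\times n})\le n-2\bigr)\le \P\bigl(\rk_{\Z/p\Z}(L_{n\times n})\le n-2\bigr).$$
Inserting Equation~\eqref{eqn:med:L} from Proposition~\ref{prop:L} gives
$$\P\bigl(\rk_\R(L_{n\times n})\le n-2\bigr)\le 2p^{-6}+K\, e^{-c_0\alpha_n n},$$
in which the constants $K, c_0$ are ultimately inherited from Theorem~\ref{theorem:corank:L} and are uniform in $p$. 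Since the left-hand side is independent of $p$, letting $p\to\infty$ through primes satisfying~\eqref{eqn:alpha'} kills the $2p^{-6}$ term and produces the desired bound $K_0 e^{-c_0 \alpha_n n}$. Combined with the paragraph above, this is exactly the content of the corollary.

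The only point requiring a bit of care is the $p\to\infty$ passage: we need both that the constants in~\eqref{eqn:med:L} are uniform in $p$ (verified by inspecting the proof of Theorem~\ref{theorem:corank:L}, where the sparsity, semi-saturation, and unsaturation bounds all yield errors of the form $e^{-c\alpha_n n}$ with $c$ independent of $p$), and that arbitrarily large primes satisfy~\eqref{eqn:alpha'}. Under the natural reading of the hypothesis, namely that $\xi_n$ is $\alpha_n$-balanced modulo every prime, this is immediate; should the hypothesis only be assumed in $\Z$ with $\xi_n$ unbounded, a standard truncation of $\xi_n$ to $|\xi_n|\le T_\varepsilon$ with $\P(|\xi_n|>T_\varepsilon)<\varepsilon$ followed by taking $p>2T_\varepsilon$ recovers the same bound up to negligible additive error, and $\varepsilon\downarrow 0$ closes the argument.
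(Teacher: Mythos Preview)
Your reduction from the $(n-1)$-column submatrix to the full matrix is incorrect. You claim that ``any single column is the negative of the sum of the remaining $n-1$'', i.e.\ that the $n$ columns of $L_{n\times n}$ sum to the zero vector. That would require the \emph{row} sums of $L_{n\times n}$ to vanish, but from the paper's definition $L_{jj}=\sum_k x_{kj}$ and $L_{ji}=-x_{ji}$ for $i\ne j$, so the $j$th row sum equals $\sum_k x_{kj}-\sum_i x_{ji}$, which is generically nonzero (it is an ``in-degree minus out-degree'' quantity for the digraph). Only the \emph{column} sums of $L$ vanish, which is why the columns lie in $\Z_0^n$. Consequently, the column span of a given $(n-1)$-column submatrix need not coincide with that of $L_{n\times n}$, and from $\rk_\R(L_{n\times n})\ge n-1$ you cannot deduce $\rk_\R(L_{n\times(n-1)})\ge n-1$. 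Your inequality $\P(\rk_\R(L_{n\times(n-1)})\le n-2)\le \P(\rk_\R(L_{n\times n})\le n-2)$ therefore has no justification.

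The remedy is to work with $L_{n\times(n-1)}$ directly, which is exactly what the paper does: Theorem~\ref{theorem:corank:L} is stated for the span $W_{n-k}$ of any $n-k$ columns, and the paper uses it (together with Lemma~\ref{lemma:O:L} for the small ranks) to bound $\P(\rk(L_{n\times(n-1)}/p)=(n-1)-k)$ for each $1\le k\le\eta n$ and then lets $p$ be large. Your underlying idea of comparing real rank to mod-$p$ rank for a large prime and sending $p\to\infty$ is the same as the paper's; the missing step is that the mod-$p$ input must be for $L_{n\times(n-1)}$ itself, not for $L_{n\times n}$. (Minor point: you cite \eqref{eqn:med:L}, which concerns rank $\le n-3$ and yields $2p^{-6}$; the relevant bound for rank $\le n-2$ is \eqref{eqn:medu:L} with $2p^{-2}$. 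Either suffices after $p\to\infty$, but more to the point, \eqref{eqn:medu:L} is stated for $L_{n\times n}$, not for $L_{n\times(n-1)}$, so you would need to go back to Theorem~\ref{theorem:corank:L} anyway.)
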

Note that  we do not require $\xi_n$ to be bounded at all, and our sparseness is almost best possible.
\begin{proof}(of Corollary \ref{cor:singularity:L}) We assume $L_{n \times (n-1)}$ to be the matrix of the
  first $n-1$ columns. Choose a prime $p$ sufficiently large and we will show that $L_{n \times
    (n-1)}/p$ has rank $n-1$ with probability at least $1 -e^{-c
    \alpha_n n}$. By Lemma~\ref{lemma:O:L}, it suffices to
  bound the probability that $L_{n \times (n-1)}/p$ has rank  between
  $n-\eta n$ and $n-2$. For this we can deduce from Theorem ~\ref{theorem:corank:L} that if $1\le k\le \eta n$ for some sufficiently small $\eta$, then 
\begin{equation}\label{eqn:corank':bound}
\P\Big(\rank(L_{n \times (n-1)}) = (n-1)-k \Big) = O\Big(n^k (p^{-k^2} + e^{-c \alpha_n n})\Big).
\end{equation}
Indeed, the event $\rank(L_{n \times (n-1)}) = (n-1)-k$ implies that there exist $k$ column vectors $X_{i_1},\dots, X_{i_k}$ which belong to the subspace of $\Z_0^n/p$ of dimension $(n-1)-k$ generated by the remaining column vectors $X_i, i \neq i_1,\dots, i_k$. With a loss of a factor of $\binom{n-1}{k}$ in probability, we assume that $\{i_1,\dots, i_k\}=\{n-k,\dots, n-1\}$. We then use Theorem ~\ref{theorem:corank:L}
\begin{align*}
&\P\Big(X_{n-k},\dots, X_{n-1} \in W_{n-k-1} \wedge \codim(W_{n-k-1})=k\Big) \\
= & \P\Big(X_{n-k},\dots, X_{n-1} \in W_{n-k-1}  \wedge \CE_{n-k-1} \wedge  \codim(W_{n-k-1})=k\Big)+ O(e^{-c \alpha_n n})\\
\le &  \P\Big(X_{n-k},\dots, X_{n-1} \in W_{n-k-1}  | \CE_{n-k-1} \wedge  \codim(W_{n-k-1})=k\Big) + O(e^{-c \alpha_n n})\\
\le & \Big(p^{-k} + O(e^{-c \alpha_n n})\Big)^k + O(e^{-c \alpha n})=O(p^{-k^2} + e^{-c \alpha n}),
\end{align*}
proving ~\eqref{eqn:corank':bound}, and hence the corollary.
\end{proof}
\vskip .2in 

{\bf{III. Proof of  Equations~\eqref{eqn:large:n:L} and
    ~\eqref{eqn:large:n-1:L} of Proposition~\ref{prop:L}: treatment for large
    primes.}} 
Now we modify the approach of Section~\ref{section:largeprimes} and \ref{section:structures} to the Laplacian setting.    
    Let $d>0$ be a constant and 
 $$\CP_n=\Big\{p \textrm{ prime},   p \geq e^{d \alpha_n n} \Big\}.$$ 
% We first prove \eqref{eqn:large:nprop} by focusing on the square matrix $M_{n\times n}$. 
 Let $\CE_{\neq 0}^{(L)}$ be the event that $L_{n \times (n-1)}$ has rank $n-1$ in $\R^n$. It follows from Corollary ~\ref{cor:singularity:L} that 
 $$\P(\CE_{\neq 0}^{(L)}) \ge 1 -K_0e^{- c_0 \alpha_n n}.$$ 
Our strategy is similar to the proof of Proposition~\ref{prop:large}. Recall that $W_{k}$ is the submodule of $\Z_0^n$ spanned by $X_1,\dots,X_k$. Let $\CB_k$ be the set of primes $p\in \CP_n$ such that $\rk(W_k/p) \le k-1$.
Let $\Con_k$ be the event that $|\CB_k|\leq  (2T+1) \log n/(2d \alpha_n )$ (the watch list is not too big).
Note that any $p\in \CB_k$ for $k\leq n$ must divide $\det(L_{n \times (n-1)})$.  By Hadamard's bound, $|\det(L_{n \times (n-1)})|\leq n^{n/2}n^{Tn}$, and so in particular, when $\bar{\Con}_k$ occurs then $\det(L_{n \times (n-1)}) = 0$.  Let $\Drop_k$ be the event that there is a $p\in \CB_k$ such that 
$\rk(W_k/p) \leq k-2$, this is the event we want to avoid for all $p$. 

We will show that $\P(\bar{\Con}_{k+1} \lor \bar{\Drop}_{k+1} | \Con_k \land  \bar{\Drop}_k)$, and hence $\P(\bar{\Con}_{k+1} \lor \bar{\Drop}_{k+1} | \bar{\Con}_k \lor \bar{\Drop}_k)$, are large.
The goal is to conclude that  $\P(\bar{\Con}_{n} \lor \bar{\Drop}_{n})$ is large, and since we know that $\P(\bar{\Con}_{n})$ is small, we can conclude that $\P(\bar{\Drop}_{n})$ is large, as desired.
%Note that since $\CB_k\sub \CB_{k+1}$, we have that  $\bar{\Con}_k\sub \bar{\Con}_{k+1}.$
%Thus
%$$
%\P(\bar{\Con}_{k+1} \lor \bar{\Drop}_{k+1} | \bar{\Con}_k )=1.
%$$
Now to estimate $\P(\bar{\Con}_{k+1} \lor \bar{\Drop}_{k+1} | \Con_k \land  \bar{\Drop}_k)$ we will condition on the exact values of $X_1,\dots, X_k$ where $\Con_k \land  \bar{\Drop}_k$ holds, and so
there are at most $(2T+1)\log n/(2\la \alpha_n )$ primes $p\in \CP_n$ such that $\rk(W_k/p) \leq k-1$ and no prime 
$p\in \CP_n$ such that $\rk(W_k/p) \leq k-2$.  In this case  $\bar{\Drop}_{k+1}$, as long as for each $p\in \CB_k$, we have
$X_{k+1}/p\not\in W_k/p$.  Consider one prime $p\in\CB_k$, and let $V$ be the value of $W_k/p$ that the conditioned $X_1,\dots, X_k$ give.
From Lemma~\ref{lemma:O:L}, $\P(X_{k+1}/p\in V)\leq (1-\alpha_n)^{(n-1)-(k-1)} = (1-\a)^{n-k}.$  Thus, 
$$
\P(\bar{\Con}_{k+1} \lor \bar{\Drop}_{k+1} | \Con_k \land  \bar{\Drop}_k)\geq 1- \left(\frac{(2T+1)\log n}{2d \alpha_n } \right)(1-\alpha_n)^{n-k}.
$$
In particular, we have the same lower bound for $\P(\bar{\Con}_{k+1} \lor \bar{\Drop}_{k+1} | \bar{\Con}_k \lor \bar{\Drop}_k)$, and then inductively, we have
$$
\P(\bar{\Con}_{k} \lor \bar{\Drop}_{k} )\geq 1- \sum_{i=1}^{k-1}\left(\frac{(2T+1)\log n}{2d \alpha_n } \right)(1-\alpha_n)^{n-i}=
1- \left(\frac{(2T+1)\log n}{2d \alpha_n } \right)\frac{(1-\alpha_n)^{n-k+1}}{\alpha_n}.
$$
Set $n_0:=n-\lfloor 3\log n/\alpha_n \rfloor$. Then by using $\alpha_n\geq n^{-1 +\eps}$ we have that 
$$
\P\big(\bar{\Con}_{n_0} \lor \bar{\Drop}_{n_0} \big)\geq 1- O_{d,T}\big( n^{-1/2} \big).
$$
We then have the following analog of Lemma~\ref{lemma:sqrt}.
\begin{lemma}\label{lemma:sqrt:L} 
 Suppose that $\alpha_n\geq 6\log n/n$. Then there is a set of submodules  $\mathcal{S}_L$ of $\Z_0^n$ such that 
$$
\Prob(W_{n_0}\in \mathcal{S}_L)\geq 1-e^{-\alpha_n n/8}
$$
and for any prime $p\ge e^{d \alpha_n n}$, and any submodule $H\in\mathcal{S}_L$, for any proper subspace $H'$ of $\Z_0^n/p$ containing $H/p$,
$$\P\Big( X/p \in H'\Big)=O_{d,T}\left( \frac{ \sqrt{\log n}}{\alpha_n \sqrt{ n}} \right),$$
where $X$ is any column of $L_{n\times n}$.
\end{lemma}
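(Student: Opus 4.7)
The plan is to adapt the proof of Lemma~\ref{lemma:sqrt} to the Laplacian setting, with two modifications. First, since the columns of $L_{n\times n}$ lie in the restricted types $\CT_i$, we must replace Lemma~\ref{lemma:O} by Lemma~\ref{lemma:O:L} and Theorem~\ref{theorem:LO} by Theorem~\ref{theorem:LO:L}. Second, because $\mathbf{1}\in(\Z_0^n)^\perp$, the correct shift-invariant substitute for the ``sparsity'' of a normal vector is the requirement that no coordinate value is repeated more than $n-k$ times, cf.\ Claim~\ref{claim:sparse:multi}.

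Pick a constant $c>0$ sufficiently small in terms of $d$ and $T$, and set $k:=\lfloor c\alpha_n n/\log n\rfloor$; the smallness of $c$ ensures that $e^{d\alpha_n n}$ exceeds the Hadamard threshold $e^{(k\log k)/2+k(T+1)\log n}$ required by Lemma~\ref{lemma:passing} (applied to submodules generated by Laplacian columns, whose entries are bounded by $n^{T+1}$). The first step is the Laplacian analog of Lemma~\ref{lem:sparse}: with probability at least $1-e^{-\alpha_n n/8}$, the module $W_{n_0}$ admits no non-zero integer normal vector of support smaller than $k$. This follows by union-bounding over the at most $\binom{n}{k}$ possible supports and applying Lemma~\ref{lemma:O:L} in place of Lemma~\ref{lemma:O} to the restricted subsystem; the extra factor of $(1-\alpha_n)^{-1}$ from the Laplacian version is absorbed harmlessly. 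Combined with Claim~\ref{claim:sparse:multi}, this yields that, with the same probability, no integer normal vector to $W_{n_0}$ has a coordinate value of multiplicity exceeding $n-k$. Let $\mathcal{S}_L$ be the set of submodules $H\subseteq\Z_0^n$, generated by vectors with entries of size $\leq n^{T+1}$, satisfying this multiplicity property; then $\P(W_{n_0}\in\mathcal{S}_L)\geq 1-e^{-\alpha_n n/8}$.

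Now fix $H\in\mathcal{S}_L$, a prime $p\geq e^{d\alpha_n n}$, and a proper subspace $H'$ of $\Z_0^n/p$ containing $H/p$. Because $H'\subsetneq \Z_0^n/p$, the annihilator $(H')^\perp\subseteq(\Z/p\Z)^n$ strictly contains $\langle \mathbf{1}\rangle$, so we may pick a normal vector $w\in(H')^\perp$ that is not a scalar multiple of $\mathbf{1}$. Let $c_0$ be a most-common coordinate value of $w$ and set $w':=w-c_0\mathbf{1}$; then $w'\neq 0$, $w'$ is normal to $H/p$ (using $\mathbf{1}\in(H/p)^\perp$), and $|\supp(w')|$ equals the number of coordinates of $w$ distinct from $c_0$. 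If this support size were less than $k$, then Lemma~\ref{lemma:passing}(\ref{i:sparse}) would lift $w'$ to a non-zero integer normal vector of $H$ of support less than $k$, contradicting $H\in\mathcal{S}_L$. Hence no value of $w$ has multiplicity exceeding $n-k$, and Theorem~\ref{theorem:LO:L} with $m=k$ applies: for any column $X$ of $L_{n\times n}$,
\begin{equation*}
\P\bigl(X/p\in H'\bigr)\ \leq\ \P\bigl(X/p\cdot w=0\bigr)\ \leq\ \frac{1}{p}+\frac{2}{\sqrt{\alpha_n k}}\ =\ O_{d,T}\!\left(\frac{\sqrt{\log n}}{\alpha_n\sqrt{n}}\right),
\end{equation*}
the $1/p\leq e^{-d\alpha_n n}$ contribution being absorbed into the Littlewood--Offord term.

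The main obstacle I anticipate is the careful handling of shift-invariance at the correct step: Lemma~\ref{lemma:passing} must be applied not to $w$ itself but to the shifted vector $w-c_0\mathbf{1}$, and one must verify that the shift is legitimate both in $\Z$ and in $\Z/p\Z$ (which it is, since $\mathbf{1}$ annihilates $\Z_0^n$ in both settings). Once this reduction to the ordinary ``few non-zero coordinates'' picture is carried out, the estimation proceeds exactly as in Lemma~\ref{lemma:sqrt}.
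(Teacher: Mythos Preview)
Your approach is correct and matches the paper's own sketch, which likewise lifts sparse normal vectors via Lemma~\ref{lemma:passing}, rules them out in characteristic zero (the paper cites Lemma~\ref{lemma:sparse:L}; you redo the analogue of Lemma~\ref{lem:sparse} by hand), and then applies Theorem~\ref{theorem:LO:L} together with Claim~\ref{claim:sparse:multi}. Your handling of the shift $w\mapsto w-c_0\mathbf{1}$ before invoking Lemma~\ref{lemma:passing}(\ref{i:sparse}), and your adjustment $T\rightsquigarrow T+1$ for the Laplacian entry size, are both exactly right.

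There is one imprecision in step 1. The substitution ``Lemma~\ref{lemma:O:L} in place of Lemma~\ref{lemma:O}'' does not apply as written: Lemma~\ref{lemma:O:L} is for vectors of type $\CT_i$ (Laplacian \emph{columns}), whereas the rows of the restricted $l\times n_0$ matrix are not of this type and, more seriously, are not independent of one another---the off-diagonal entry $-x_{jj'}$ of row $j$ feeds into row $j'$'s diagonal $L_{j'j'}=\sum_k x_{kj'}$. The clean fix (implicit in the paper's proofs of Lemmas~\ref{L:supersparse:L}--\ref{lemma:sparse:moderate:L}) is to discard the at most $l$ columns with index in $\sigma$: for $i\in[n_0]\setminus\sigma$ the restriction $X_i|_\sigma$ has genuinely i.i.d.\ entries, so the resulting $l\times(n_0-l')$ submatrix has i.i.d.\ rows and the ordinary Lemma~\ref{lemma:O} applies. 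This costs only an extra factor $(1-\alpha_n)^{-l}$ in the bound, which for $l=O(\alpha_n n/\log n)$ is absorbed harmlessly, just as you anticipated.

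A cosmetic point: the multiplicity condition in the definition of $\mathcal{S}_L$ should read ``no coordinate value has multiplicity in $[n-k+1,n-1]$'' (scalar multiples of $\mathbf{1}$ are always normal to $\Z_0^n$); your argument already handles this correctly by choosing $w\notin\langle\mathbf{1}\rangle$ before shifting.
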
 
Lemma~\ref{lemma:sqrt:L} can be shown exactly the same way Lemma~\ref{lemma:sqrt} was deduced. Indeed,   we can use Lemma~\ref{lemma:passing} to lift the existence of sparse normal vector on any modulo $p$ with $p\ge e^{a \a_n n}$ to the existence of sparse normal vector on characteristic zero, for which we then can use Lemma~\ref{lemma:sparse:L} (or Lemmas~\ref{L:supersparse:L} and~\ref{lemma:sparse:moderate:L}) to show that this event is unlikely. We then apply Theorem~\ref{theorem:LO:L} (combined with Claim~\ref{claim:sparse:multi}) to get the desired probability bound when the normal vectors are non-sparse.

 Now similarly to the iid case, Lemma~\ref{lemma:sqrt:L} allows us to justify Equations~\eqref{eqn:large:n:L} and
    ~\eqref{eqn:large:n-1:L} only for $\a_n \ge n^{-1/6+\eps}$. To extend to $\a_n \ge n^{-1+\eps}$, we will have to need the following analog of Lemma~\ref{L:upgrade}.

\begin{lemma}\label{L:upgrade:L}
There is an absolute constant $\ct>0$ such that the following holds. 
Suppose that $\alpha_n\geq n^{-1+\epsilon}$.  
There is a set $\mathcal{S}_L'$ of submodules of $\Z_0^n$, such that
$$\P(W_{n_0}\in \mathcal{S}'_L)\geq 1- e^{-\ct \alpha_n n}$$ 
and for $n$ sufficiently large given $d,\epsilon, T$, 
 any prime $p\ge e^{d \alpha_n n}$,  any submodule $H\in\mathcal{S}_L'$, and any proper subspace $H'$ of $(\Z_0/p)^n$ containing $H/p$,
$$\max_{i}\P_{X\in \CT_i}\Big( X/p \in H'\Big)\leq n^{-3}.
$$
\end{lemma}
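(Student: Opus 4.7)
\medskip

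\noindent\textbf{Proof proposal for Lemma~\ref{L:upgrade:L}.}

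The plan is to mimic the proof of Lemma~\ref{L:upgrade} while accommodating two Laplacian-specific features: the columns $X \in \CT_i$ are not identically distributed (one coordinate is determined by the others), and a normal vector $w$ to a subspace $H/p \subset \Z_0^n/p$ is only determined up to adding the constant vector $(c,\dots,c)$ (Claim~\ref{claim:sparse:multi}). To deal with the latter, I would say that a vector $w \in (\Z/p\Z)^n$ is \emph{(L-)structured} if some shift $w - (c,\dots,c)$ is structured in the sense used before Lemma~\ref{L:strucnotsp}: either $w$ has an entry of multiplicity $\ge n-\cz n$ (the analog of $|\supp| \le \cz n$ after the Claim), or all but $n'$ of its coordinates lie in a coset $c + Q$ of a symmetric well-bounded GAP $Q$. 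I would then define $\mathcal{S}'_L$ to be the set of admissible submodules $H$ of $\Z_0^n$ such that for every prime $p \ge e^{d\alpha_n n}$, $H/p$ has no L-structured normal vector in $\Z_0^n/p$.

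First I would verify the probability bound $\P(W_{n_0}\in\mathcal{S}'_L)\ge 1 - e^{-\ct\alpha_n n}$. The argument from Lemma~\ref{L:upgrade} has three ingredients. The extreme-sparse and moderate-sparse bounds (Lemmas~\ref{L:supersparse} and \ref{lemma:sparse:moderate}) are already claimed to have Laplacian analogs (Lemmas~\ref{L:supersparse:L} and \ref{lemma:sparse:moderate:L}) which I would use directly; combined with the invariance in Claim~\ref{claim:sparse:multi}, these rule out normal vectors having few distinct values outside a dominant one. For structured but not sparse normal vectors, I would prove a Laplacian version of Lemma~\ref{L:strucnotsp}: after conditioning on one coordinate of $w$ and translating so that coordinate is $0$, the dot product $X \cdot w$ for $X\in\CT_i$ equals $\sum_{j\ne i} x_j(w_j-w_i)$, an iid Littlewood--Offord sum in $n-1$ variables with differences $(w_j-w_i)_{j\ne i}$; I would apply a Laplacian inverse Erd\H{o}s--Littlewood--Offord (obtained by running Theorem~\ref{theorem:ILO} on these differences, which still have at least $\cz n - O(1)$ non-zero entries whenever $w$ does) to produce the GAP, then repeat the counting argument in \eqref{eqn:moderate:1} with the same dyadic decomposition of $\rho(w)$ and the same enumeration of $(Q,\tau,\sigma,\{w_j\}_{j\in[n]\setminus\tau})$. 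The Laplacian analog of Lemma~\ref{lemma:passing'} should go through with essentially the same argument, because the coefficients of the Laplacian columns are still bounded by $n \cdot n^T$ (up to constants) and the Smith normal form argument in Lemma~\ref{lemma:passing} is distribution-free. This lets me reduce the statement ``$H/p$ has an L-structured normal vector for some $p \ge e^{d\alpha_n n}$'' to the same statement for a single prime of size around $e^{n^{1-\eps/3}}$, over which the three sparse/structured estimates can be summed without loss.

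Second, for any $H \in \mathcal{S}'_L$, any prime $p \ge e^{d\alpha_n n}$, and any proper subspace $H' \supseteq H/p$ of $\Z_0^n/p$, I would bound $\max_i \P_{X\in\CT_i}(X/p \in H') \le n^{-3}$ as follows. Pick any non-zero normal vector $w$ to $H'$, which is also normal to $H/p$. By definition of $\mathcal{S}'_L$, $w$ is not L-structured. By Theorem~\ref{theorem:LO:L}, after a harmless shift (allowed by Claim~\ref{claim:sparse:multi}), either $\rho_i(w):=\P_{X\in\CT_i}(X\cdot w=0)\le n^{-3}$ directly, or $\rho_i(w) > n^{-3}$, in which case the Laplacian inverse Littlewood--Offord applied to the differences $(w_j-w_i)_{j\ne i}$ produces a symmetric well-bounded GAP covering all but $n'$ of the $w_j-w_i$, contradicting the fact that $w$ is not L-structured.

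The main obstacle I anticipate is the Laplacian inverse Erd\H{o}s--Littlewood--Offord statement. Theorem~\ref{theorem:ILO} is formulated for iid entries, and naively applying it to $(w_j-w_i)_{j\ne i}$ gives a GAP covering the differences, which translates to a coset GAP covering the $w_j$ themselves---fine in spirit, but one must check that the rank and volume bounds still satisfy the well-boundedness requirement, and that the ``exceptional'' set of $n'$ coordinates is handled uniformly over the choice of the type $i$ (since a different $i$ gives a different shift). One also has to verify that the proof of Theorem~\ref{theorem:ILO} in Appendix~\ref{section:ILO} does not rely on features of the iid distribution that fail after conditioning on the constraint $\sum x_j = 0$; most likely this is clean because the constraint only enters through one coordinate, but it is the step that will require the most care.
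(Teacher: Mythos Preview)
Your proposal is correct and follows essentially the same approach as the paper. The paper states the Laplacian inverse Littlewood--Offord as Theorem~\ref{theorem:ILO:L} (exactly as you anticipate: apply Theorem~\ref{theorem:ILO} to the differences $(w_j-w_i)_{j\ne i}$ for the index $i$ achieving the maximum in $\rho(w)$), and then observes that since $W_{n_0}/p\subset \Z_0^n/p$, the shift $w-w_i\mathbf{1}$ is \emph{also} a normal vector to $W_{n_0}/p$; this lets one reuse the i.i.d.\ definition of ``structured'' verbatim and apply Lemma~\ref{lemma:passing'} without modification, rather than introducing a separate ``L-structured'' notion or a Laplacian lifting lemma. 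Your concern about uniformity over $i$ is therefore not an issue: the inverse theorem is invoked only at the single $i$ where $\rho(w)$ is attained, and the shift by that $w_i$ produces an ordinary structured normal vector.
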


The deduction of Equations~\eqref{eqn:large:n:L} and
    ~\eqref{eqn:large:n-1:L} from this lemma is similar to how Proposition~\ref{prop:medium} was
deduced from Lemma~\ref{L:upgrade}. It remains to verify Lemma ~\ref{L:upgrade:L}.  For this we will make use of Theorem ~\ref{theorem:LO:L} and the following corollary of Theorem ~\ref{theorem:ILO} for vectors from $\CT_i$.

\begin{theorem}[inverse Erd\H{o}s-Littlewood-Offord for the Laplacian]\label{theorem:ILO:L} Let $\eps<1$ and $C$ be positive constants. Assume that $p$ is a prime that is larger than $C'n^C$ for a sufficiently large constant $C'$ depending on $\eps$ and $C$. Let $\xi$ be a random variable taking values in $\Z/p\Z$ which is $\a_n$-balanced with $a_n \ge n^{-1+\eps}$.  Assume $w=(w_1,\dots, w_n) \in (\Z/p\Z)^n$ such that  
 $$\rho (w) = \max_{i, (\xi_1,\dots, \xi_n) \in \CT_i}\sup_{a\in \Z/p\Z} \P(\xi_1 w_1 +\dots + \xi_n w_n=a) \ge  n^{-C},$$
 Then for any $n^{\ep/2} \al^{-1} \le n' \le n$, there exists $1\le i\le n$ and there exists a proper symmetric GAP $Q$ of rank $r=O_{\ep,C}(1)$ which contains all but $n'$ elements of $\{w_1-w_i,\dots, w_n-w_i\}$ (counting multiplicity), where 
$$|Q| \le \max\left \{1, O_{C,\ep}(\rho^{-1}/(\al n')^{r/2})\right \}.$$ 
\end{theorem}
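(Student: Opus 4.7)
The plan is to reduce Theorem~\ref{theorem:ILO:L} to the i.i.d.\ version Theorem~\ref{theorem:ILO} via an elementary algebraic identity. For any $X = (\xi_1, \dots, \xi_n) \in \CT_i$, the defining constraint $\xi_i = -\sum_{j \neq i} \xi_j$ gives
\[
X \cdot w \;=\; \sum_{j \neq i} w_j \xi_j + w_i \xi_i \;=\; \sum_{j \neq i} (w_j - w_i)\, \xi_j,
\]
which is a linear combination of the $n-1$ i.i.d.\ copies $\{\xi_j : j \neq i\}$ of $\xi$ with coefficients $w'_j := w_j - w_i$. Consequently, the Laplacian-type concentration $\rho(w)$ equals the maximum over $i$ of the ordinary i.i.d.\ concentration of the vector $w^{(i)} := (w_j - w_i)_{j \neq i} \in (\Z/p\Z)^{n-1}$.

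Given $\rho(w) \geq n^{-C}$, I would first select an index $i \in [n]$ achieving the maximum in the definition of $\rho$, so that the i.i.d.\ concentration probability of $w^{(i)}$ is $\geq n^{-C}$. Then I would apply Theorem~\ref{theorem:ILO} directly to the vector $w^{(i)}$ of length $n-1$, with the same parameters $\eps$, $C$, $\alpha_n$ and the same prime $p \geq C' n^C$ (noting that we may, if necessary, slightly enlarge $C'$ to absorb the replacement of $n$ by $n-1$ in the hypotheses). The i.i.d.\ theorem then outputs a proper symmetric GAP $Q \subset \Z/p\Z$ of rank $r = O_{C,\eps}(1)$ and volume
\[
|Q| \;\leq\; \max\left\{1,\; O_{C,\eps}\!\bigl(\rho^{-1}/(\alpha_n n')^{r/2}\bigr)\right\}
\]
which contains all but at most $n'$ of the coordinates of $w^{(i)}$ (counted with multiplicity), for any $n^{\eps/2}\alpha_n^{-1} \leq n' \leq n-1$; for $n' = n$ the statement is trivial with $Q = \{0\}$.

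To conclude, I would observe that the coordinates of $w^{(i)}$ are exactly $\{w_j - w_i : j \neq i\}$, and since $Q$ is symmetric it contains $0 = w_i - w_i$. Therefore $Q$ contains all but at most $n'$ of the $n$ elements $\{w_1 - w_i, \dots, w_n - w_i\}$, exactly as required. The main (and only) obstacle is a cosmetic matching of ranges for $n'$ between the i.i.d.\ and Laplacian versions; beyond that, the reduction is immediate, and no new additive-combinatorial input is required.
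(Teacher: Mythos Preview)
Your proposal is correct and matches the paper's approach. The paper states Theorem~\ref{theorem:ILO:L} as a ``corollary of Theorem~\ref{theorem:ILO} for vectors from $\CT_i$'' without writing out a separate proof, relying on exactly the algebraic identity $X\cdot w=\sum_{j\ne i}(w_j-w_i)\xi_j$ that it already displayed in the proof of Theorem~\ref{theorem:LO:L}; your write-up simply spells this reduction out, including the harmless observation that $0=w_i-w_i\in Q$ because $Q$ is symmetric.
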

% In what follows we choose $n' = \lfloor n^{\eps/2} \a_n^{-1} \rfloor.$  

\subsection{Proof of Lemma ~\ref{L:upgrade:L}} Our method is similar to Section~\ref{section:structures}, so we will be brief. First we need an analog of Lemma~\ref{L:supersparse} to estimate the probability that for some large prime $p$ the module $W_{n_0}/p$ of $\Z_0^n/p$ accepts an extremely sparse normal vector.

\begin{lemma}\label{L:supersparse:L}
There are absolute constants $\co, \Co$ such that the following holds. Let $\beta_n:=1- \max_{x\in \Z} \P(\xi_n=x)$, and assume $\beta_n\geq \Co \log n/n$ and $\alpha_n\geq 6 \log n/n$.
For $n\geq 2$,  the following happens with probability at most $e^{-\co \beta_n  n/2}$: for some prime $p>2n^T$,
% or for $p=0$, 
the subspace $W_{n_0}/p$ has a non-zero normal vector with at most $144  \beta_n^{-1}$ non-zero coordinates.
\end{lemma}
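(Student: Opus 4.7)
\smallskip

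\noindent\textbf{Proof proposal.} The plan is to mirror the proof of Lemma~\ref{L:supersparse} from the i.i.d.\ case, with the only modification being a careful choice of which pairs of columns to use. The key structural observation is that the definition of the type $\CT_i$ treats only the $i$-th coordinate of a column specially; all other coordinates are i.i.d.\ copies of $\xi_n$. Consequently, if $j,j'\in[n_0]$ both lie outside a fixed $\sigma\subset[n]$, then the $\sigma$-restrictions $X_j|_\sigma$ and $X_{j'}|_\sigma$ are independent random vectors whose entries are i.i.d.\ copies of $\xi_n$, exactly as in the i.i.d.\ case.

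As in the proof of Lemma~\ref{L:supersparse}, it suffices to show that with probability at least $1-e^{-\co \beta_n n/2}$, for every $\sigma \subset[n]$ with $1\le t=|\sigma|\le 144\beta_n^{-1}$, there is a pair $(j,j')$ with $j,j'\in[n_0]\setminus\sigma$ such that $(X_{j'}-X_j)|_\sigma$ has exactly one non-zero entry in $\Z$: once this is guaranteed, the bound $|x_{ij}|\le n^T$ forces this unique entry to remain non-zero modulo any prime $p>2n^T$, and then any normal vector $w$ of $W_{n_0}/p$ with $\supp(w)=\sigma$ would satisfy $w|_\sigma \cdot (X_{j'}-X_j)|_\sigma=0$, contradicting the choice of $\sigma$.

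To establish the claim, fix $\sigma$ and fix an arbitrary pairing of $[n_0]\setminus\sigma$ into $\lfloor(n_0-t)/2\rfloor$ disjoint pairs; since $\beta_n\ge\Co\log n/n$ with $\Co$ to be chosen and $\alpha_n\ge 6\log n/n$, we have $n_0\ge n/2$ and $t\le 144\beta_n^{-1}\le 24n/(\Co\log n)$, so the number of pairs is at least $n/5$ for $n$ large. For each such pair $(j,j')$, the vector $(X_{j'}-X_j)|_\sigma$ has $t$ i.i.d.\ entries distributed as the symmetrization $\psi=\xi_n-\xi_n'$, and the symmetrization inequality~\eqref{eqn:symmetrization} (applied verbatim) gives $\beta_n\le\beta_n':=\P(\psi\ne 0)\le 2\beta_n$. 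Pairs are independent because they involve disjoint column indices, so letting $p_\sigma$ be the probability that no pair yields a singleton-support $\sigma$-restriction,
\begin{equation*}
p_\sigma \le (1-t\beta_n'(1-\beta_n')^{t-1})^{n/5} \le e^{-n t\beta_n' e^{-(t-1)\beta_n'}/5}.
\end{equation*}
For $t\le 144\beta_n^{-1}$, the factor $e^{-(t-1)\beta_n'}$ is at least a positive absolute constant, giving $p_\sigma\le e^{-\co n t\beta_n}$ for some absolute $\co>0$. Summing over $\sigma$ via a union bound,
\begin{equation*}
\sum_{1\le t\le 144\beta_n^{-1}}\binom{n}{t}e^{-\co n t\beta_n}\le \sum_{t}n^t\cdot n^{-\co\Co t/2}e^{-\co n \beta_n/2}\le e^{-\co n\beta_n/2},
\end{equation*}
provided $\Co$ is chosen large enough (depending only on $\co$) to make the geometric sum converge, exactly as in the i.i.d.\ case.

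The main obstacle is essentially notational rather than conceptual: one must track the exclusion of indices in $\sigma$ when pairing columns so that the dependent diagonal entries of each Laplacian column never appear in the $\sigma$-restriction. Because $t=|\sigma|$ is at most $144\beta_n^{-1}=o(n)$, this exclusion is cheap and the counting is unchanged up to absolute constants, so the same choice of $\co,\Co$ that works for Lemma~\ref{L:supersparse} works here.
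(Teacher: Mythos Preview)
Your proposal is correct and follows essentially the same route as the paper's proof: both reduce to the claim that for every small $\sigma$ there exist two column indices $j,j'\in[n_0]\setminus\sigma$ with $(X_{j'}-X_j)|_\sigma$ having a unique non-zero integer entry, then exploit the key observation that for $j\notin\sigma$ the $\sigma$-restriction of a type-$\CT_j$ column has i.i.d.\ $\xi_n$ entries, pair up indices in $[n_0]\setminus\sigma$, apply the symmetrization bound~\eqref{eqn:symmetrization}, and finish by the same union bound. The only differences are cosmetic (you use $n/5$ where the paper uses $n/4$ for the number of pairs, and your estimate $t\le 24n/(\Co\log n)$ should read $144n/(\Co\log n)$, which is harmless).
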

We also need an analog of Lemma~\ref{lemma:sparse:moderate}, which will allow us to control the event that for some prime $p$ of order $e^{o(n)}$ or $p=0$ the subspace $W_{n_0}/p$ accepts a normal vector of $o(n)$ non-zero entries.

\begin{lemma}\label{lemma:sparse:moderate:L} 
There exist absolute
  constants $\cz,\Cz$ such that the following holds.  
Let $\beta_n:=1- \max_{x\in \Z} \P(\xi_n=x)$.  
  Assume  $\a_n \ge \frac{\Cz \log n}{n}$ and let $p$ be a prime $>2n^T$. %or $0$.  
  The following happens with probability at most  $(2/3)^{n/4}$: 
   the subspace $W_{n_0}/p$ has a non-zero normal vector $w$ with $144\beta_n^{-1} \leq |\supp(w)| \leq \cz n$.
\end{lemma}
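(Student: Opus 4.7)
The plan is to follow the proof of Lemma~\ref{lemma:sparse:moderate} essentially verbatim, with one genuinely new step to handle the single Laplacian-type column that appears in the argument. Fix a prime $p > 2n^T$ so that $\xi_n$ is determined by its residue mod $p$ and $\beta_n = 1 - \max_{x \in \Z/p\Z} \P(\xi_n \equiv x \pmod p)$. For each $\sigma \subset [n]$ with $144\beta_n^{-1} \le t := |\sigma| \le \cz n$, consider the event that $W_{n_0}/p$ is normal to a vector with support exactly $\sigma$ but not to any vector of smaller support. A union bound over $\sigma$ loses a factor $\binom{n}{t}$, and after relabeling we may assume $\sigma = [t]$. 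The matrix $M_{t \times n_0} := (X_1|_\sigma, \dots, X_{n_0}|_\sigma)$ must then have rank at most $t-1$ mod $p$, and with a further loss of $\binom{n_0}{t-1}$ we may assume that its column space is spanned by the first $t-1$ columns, so that a normal vector $w \in (\Z/p\Z)^t$ with all $t$ entries non-zero exists. Condition on $X_1, \dots, X_{t-1}$; since the columns of $L_{n \times n}$ are mutually independent, $X_t, \dots, X_{n_0}$ retain their original distributions.

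The conditional probability is $\prod_{i=t}^{n_0} \P(X_i \cdot w \equiv 0 \pmod p)$, where $w$ is zero-padded to $(\Z/p\Z)^n$. For the $n_0 - t$ columns with $i > t$, we have $i \notin \sigma$, so $X_i|_\sigma$ consists of $t$ iid copies of $\xi_n/p$, and Theorem~\ref{theorem:LO} applied to $w|_\sigma$ (which has $t$ non-zero coefficients) gives the bound $1/p + 2/\sqrt{\beta_n t}$. The one Laplacian-type term is $i = t$, where using $x_{tt} = -\sum_{k \ne t} x_{kt}$ one computes
\[
X_t \cdot w \;=\; x_{tt} w_t + \sum_{k \ne t} x_{kt} w_k \;=\; \sum_{k \ne t} x_{kt} (w_k - w_t),
\]
interpreting $w_k = 0$ for $k \notin \sigma$. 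The coefficient $w_k - w_t$ equals $-w_t \ne 0$ whenever $k \notin \sigma$, so the coefficient vector has at least $n - t$ non-zero entries. Since the $x_{kt}$ for $k \ne t$ are iid copies of $\xi_n$, Theorem~\ref{theorem:LO} again gives
\[
\P(X_t \cdot w \equiv 0 \pmod p) \;\le\; \frac{1}{p} + \frac{2}{\sqrt{\beta_n (n-t)}} \;\le\; \frac{1}{p} + \frac{2}{\sqrt{\beta_n t}},
\]
where the last inequality uses $t \le \cz n \le n/2$.

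Combining, for each $\sigma$ the conditional probability is at most $(1/p + 2/\sqrt{\beta_n t})^{n_0 - t + 1} \le (2/3)^{n/2}$, where $t \ge 144/\beta_n$ bounds $2/\sqrt{\beta_n t} \le 1/6$, $p > 2n^T$ makes $1/p$ negligible, and $\Cz$ chosen sufficiently large with $\cz \le 1/4$ ensures $n_0 - t + 1 \ge n/2$. Summing,
\[
\sum_{144\beta_n^{-1} \le t \le \cz n} \binom{n}{t}^2 \left(\frac{2}{3}\right)^{n/2} \;\le\; \left(\frac{2}{3}\right)^{n/4}
\]
for sufficiently small absolute $\cz$, exactly as in the iid case. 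The only genuinely new work relative to Lemma~\ref{lemma:sparse:moderate} is the computation for the Laplacian column $X_t$; there is no real obstacle, since the $n - t$ indices outside $\sigma$ supply enough non-zero entries to the coefficient vector $(w_k - w_t)_{k \ne t}$ for Theorem~\ref{theorem:LO} to apply with undiminished strength.
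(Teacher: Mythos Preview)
Your argument is correct and close to the paper's, but the two proofs diverge on one point and your presentation hides a small wrinkle.

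The paper's proof simply \emph{discards} all columns $X_i$ with $i\in\sigma$ (in addition to the $t-1$ spanning columns), precisely because for those $i$ the entries of $X_i|_\sigma$ are not i.i.d. This costs an extra $t$ in the exponent, giving $(1/p+2/\sqrt{\beta_n t})^{n_0-2t-1}$, which is harmless since $t\le c_0 n$. Your route instead keeps these columns and handles them via the shift identity $X_i\cdot w=\sum_{k\ne i}x_{ki}(w_k-w_i)$, which is exactly the content of Theorem~\ref{theorem:LO:L} and Claim~\ref{claim:sparse:multi} in the paper. This yields the slightly better exponent $n_0-t+1$, at the cost of the extra computation. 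Both approaches are fine; the paper's is a touch simpler, yours a touch sharper.

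One presentational caution: in the Laplacian setting the columns are not exchangeable, so ``with a loss of $\binom{n_0}{t-1}$ we may assume the first $t-1$ columns span'' is not automatic. For a general choice $I\in\binom{[n_0]}{t-1}$ of spanning columns there may be up to $t$ (not just one) remaining columns with index in $\sigma$, each requiring your shift computation. Your bound $1/p+2/\sqrt{\beta_n(n-t)}\le 1/p+2/\sqrt{\beta_n t}$ handles every such column uniformly, so the union bound is legitimate---but a sentence making this explicit would close the loop.
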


Lemmas ~\ref{L:supersparse:L} and \ref{lemma:sparse:moderate:L} will be verified in Appendix~\ref{appendix:sparse} by following the proofs of Lemmas~\ref{L:supersparse} and \ref{lemma:sparse:moderate}. 

We next discuss an analog of Lemma~\ref{L:strucnotsp} on the existence of structured but not sparse normal vectors of $W_{n_0}/p$. Similarly to Section \ref{section:structures}, we let $n'=\lceil n^{\eps/2} \a_n^{-1}\rceil$ and $m=n-n'$, and we will apply Theorem~\ref{theorem:ILO:L} with this choice of $n'$ and $C=3$. By replacing $w=(w_1,\dots,w_n)$ by $(w_1-w_{i},\dots, w_n-w_{i})$ if needed (note that this shifted vector is again a normal vector of $W_{n_0}/p$ because this subspace consists of vectors of zero entry sum modulo $p$), we can simply say that Theorem~\ref{theorem:ILO:L} implies structure for $w_1,\dots, w_n$. 
%\Hoi{This is the only difference in the structure part, that by Theorem \ref{theorem:ILO:L} we gain structure only on some %shifted $(w_1-w_{i},\dots, w_n-w_{i})$, but then we just set $w$ to be this new vector because this is also orthogonal to %$W_{n_0}$; this is somewhat similarly to Claim \ref{claim:sparse:multi}} 
We call a GAP $Q$ well-bounded if it is of rank $\leq C_\eps$ and $|Q|\leq C_\eps n^3$, where $C_\eps$ is the maximum of $C'$ and the constants from the $O_{C,\eps}$ bounding the rank of volume of $|Q|$. Motivated by this, and similarly to Section \ref{section:structures}, we call a vector $w$ structured if it is non-zero, and there exists a symmetric well-bounded GAP $Q$ such that all but $n'$ coordinates of $w$ belong to $Q$.

\begin{lemma}\label{L:strucnotsp:L}
Let $\alpha_n\geq n^{-1+\eps}$.
Let $p$  be a  prime $p\geq  C_\eps n^{3}$.
The following event happens with probability $O_{\eps}(p^{C_\eps} n^{-\eps n/5})$: the space
$W_{n_0}/p$ has a structured normal vector $w$, and 
$W_{n_0}/p$ does not have a non-zero normal vector $w'$ 
such that $|\supp(w')|\leq \cz n$ with $\cz$ from Lemma~\ref{lemma:sparse:moderate:L}.
\end{lemma}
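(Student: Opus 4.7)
The plan is to mirror the proof of Lemma~\ref{L:strucnotsp} in the Laplacian setting, replacing the forward/inverse Littlewood--Offord inputs by their type-$\CT_i$ analogues (Theorem~\ref{theorem:LO:L} and Theorem~\ref{theorem:ILO:L}). Throughout, assume $n$ is large in terms of $\eps$, and let $n' = \lceil n^{\eps/2}\alpha_n^{-1}\rceil$, $m = n - n'$, as in Section~\ref{section:structures}. Suppose $W_{n_0}/p$ has a structured normal vector $w$ while $W_{n_0}/p$ has no non-zero normal vector of support $\leq \cz n$. By Claim~\ref{claim:sparse:multi} (invariance under shifts of $w$), $w$ cannot have any coordinate of multiplicity $\geq n - \cz n$, so Theorem~\ref{theorem:LO:L} with $m \geq \cz n$ gives $\rho(w) \leq p^{-1} + 2/\sqrt{\alpha_n \cz n} = O(n^{-\eps/2})$, since $p \geq C_\eps n^3$ and $\alpha_n \geq n^{-1+\eps}$.

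First I would dispose of the case $\rho(w) \leq n^{-9}$. As in the iid proof, we enumerate the data $(Q, \tau, \sigma, \{w_j\}_{j \in [n] \setminus \tau})$, where $Q$ is the well-bounded symmetric GAP from the definition of structured ($r \leq C_\eps$, $|Q| \leq C_\eps n^3$), $\tau \sub [n]$ with $|\tau| = n'$ is the exceptional set, and $\sigma \sub [n_0]$ with $|\sigma| = n'$ is a set where we condition on the columns. Counting: $p^{C_\eps}$ choices of generators, $(C_\eps n^3)^{C_\eps}$ choices of dimensions, $4^n$ choices of $\tau, \sigma$, and $(C_\eps n^3)^m$ choices of $\{w_j\}_{j \in [n] \setminus \tau}$. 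After conditioning on $X_i$ for $i \in \sigma$, one shows as in the iid case that the rows $R_j|_\sigma$ for $j \in \tau$ are linearly independent (else there would be a normal vector of support $\leq n' + |\sigma| < \cz n$), so the $w_j$ for $j \in \tau$ are determined. The probability that $w \cdot (X_k/p) = 0$ for each $k \in [n_0] \setminus \sigma$ is at most $\rho(w) \leq n^{-9}$ by definition of $\rho$, uniformly over $k$ (each $X_k$ being of some type $\CT_k$). Since the remaining columns are independent, the total bound is $O_\eps(p^{C_\eps} n^{-n})$.

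Next, for $\rho(w) > n^{-9}$, I would dyadically decompose the range $[n^{-9}, O(n^{-\eps/2})]$ into $O(\log n)$ intervals $I_\ell = [\rho_\ell, 2\rho_\ell]$, and for $\rho = \rho(w) \in I_\ell$ apply Theorem~\ref{theorem:ILO:L} with $C = 3$. This produces some $i^* \in [n]$ and a symmetric proper GAP $Q$ of rank $r = O_\eps(1)$ such that all but $n'$ of the shifted coordinates $\{w_j - w_{i^*}\}$ lie in $Q$, with $|Q| = O_\eps(\rho^{-1}/(\alpha_n n')^{r/2})$; the rank-$0$ case is excluded because $|\supp(w - w_{i^*}\mathbf{1})| > \cz n$ via Claim~\ref{claim:sparse:multi}. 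Since $\rho^{-1} \geq \Omega(n^{\eps/2})$ and $\alpha_n n' \geq n^{\eps/2}$, one verifies $r \geq 1$ and $|Q| = O_\eps(\rho^{-1}/(\alpha_n n')^{1/2})$. Counting as before, the extra index $i^* \in [n]$ and shift $w_{i^*} \in \Z/p\Z$ contribute only a factor $np$, and the total probability is bounded by
\[
\sum_{\ell} np \cdot p^{C_\eps} \cdot O_\eps(\rho_\ell^{-1}/(\alpha_n n')^{1/2})^{C_\eps + m} \cdot 4^n \cdot (2\rho_\ell)^{n_0 - n'} = O_\eps(p^{C_\eps} n^{-\eps n/5}),
\]
using $(\alpha_n n')^{-1/2} \leq n^{-\eps/4}$, $n - n_0 \leq 3n^{1-\eps}\log n$, and $\rho_\ell^{-1} \leq n^9$, just as in~\eqref{eqn:moderate:1}.

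The main technical obstacle, compared with Lemma~\ref{L:strucnotsp}, is that the columns $X_k \in \CT_k$ are not i.i.d.\ and each $X_k$ has a deterministic $k$-th coordinate given by a sum of the other entries. This affects both the conditioning step (in the $\rho \leq n^{-9}$ case we must check that the restrictions $R_j|_\sigma$ are still linearly independent, which follows from the non-existence of a small-support normal vector together with the fact that $\tau \cup \sigma$ has size $2n' < \cz n$) and the application of $\rho(w)$ as an upper bound for $\P(w \cdot X_k = 0)$ (which is fine since $\rho(w)$ as defined in Theorem~\ref{theorem:ILO:L} already takes the sup over types $\CT_i$). All other steps transfer verbatim once $w$ is replaced by the shifted vector $w - w_{i^*}\mathbf{1}$, which has the same set of normal-vector relations with $W_{n_0}/p$ because $W_{n_0}/p \sub \Z_0^n/p$ is annihilated by the all-ones vector.
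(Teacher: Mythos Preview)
Your approach is essentially the paper's: the paper states that the proof is identical to that of Lemma~\ref{L:strucnotsp}, only swapping in Theorems~\ref{theorem:LO:L} and~\ref{theorem:ILO:L} for Theorems~\ref{theorem:LO} and~\ref{theorem:ILO}, and (as noted just before the lemma statement) absorbing the shift $w\mapsto w-w_{i^*}\mathbf 1$ so that the argument runs on the shifted vector. Your extra factor $np$ is harmless.

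There is one genuine imprecision in your write-up. Your justification for the linear independence of $R_j|_\sigma$ for $j\in\tau$ is not correct: a linear dependence among these \emph{restricted} rows would only yield a vector supported on $\tau$ that is orthogonal to the columns indexed by $\sigma$, not a normal vector to all of $W_{n_0}/p$; and the phrase ``$\tau\cup\sigma$ has size $2n'$'' mixes row and column index sets. The correct argument (as in the i.i.d.\ proof) is that on the event in question the \emph{full} rows $R_j$ for $j\in\tau$ must be linearly independent, since otherwise a nontrivial dependence among them gives a normal vector of $W_{n_0}/p$ supported on $\tau$, of size $\le n'<\cz n$. From this one can \emph{choose} some $\sigma\subset[n_0]$ with $|\sigma|=n'$ for which the $R_j|_\sigma$ remain independent, and the factor $2^n$ in the enumeration absorbs the union over such $\sigma$. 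With this correction your proof is complete.
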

Lemma~\ref{L:upgrade:L} then can be shown by combining Lemmas~\ref{L:supersparse:L},~\ref{lemma:sparse:moderate:L} and~\ref{lemma:passing'} the way Lemma~\ref{L:upgrade} was concluded in the end of Section~\ref{section:structures}. Finally, the proof of Lemma~\ref{L:strucnotsp:L} is almost identical to that of Lemma~\ref{L:strucnotsp}, the only difference is that we need to apply Theorems~\ref{theorem:LO:L} and ~\ref{theorem:ILO:L} instead of Theorems~\ref{theorem:LO} and~\ref{theorem:ILO} in the argument leading to Equations~\eqref{eqn:moderate:Q} and ~\eqref{eqn:moderate:1}, and thus we again omit the detail.

\section*{Acknowledgements}
We would like to thank Wesley Pegden and Philip Matchett Wood for helpful conversations. We are also grateful to Nathan Kaplan, Lionel Levine and Philip Matchett Wood for valuable comments on an earlier version of this manuscript. The first author is partially supported by National Science Foundation grants DMS-1600782 and DMS-1752345.
The second author is partially supported by a Packard Fellowship for Science and Engineering, a Sloan Research Fellowship,  National Science Foundation grants DMS-1301690 and DMS-1652116, and a Vilas Early Career Investigator Award.

\appendix

\section{Inequality Lemmas}

We have a straightforward inequality (easily checked by considering the cases when $A\leq 1$ and $A>1$).
\begin{lemma}\label{L:twoways}
Let $0<\ao<1$ and $A>0$, and  $n\geq 1$ be an integer.  Then we have
$$
\left(\ao+(1-\ao)\exp(-A) \right)^n \leq \max (\exp(-(1-\ao)An/2), \left((1+\ao)/2\right)^n).
$$ 
\end{lemma}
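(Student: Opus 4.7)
The plan is to reduce to a pointwise inequality in $A$ (removing $n$), then split into two regimes depending on whether $A$ is small or large relative to $\ln 2$.

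First, since both factors on the right are positive and we are raising everything to the $n$-th power, the claim
$$x^n \leq \max(y^n, z^n)$$
with $x = \ao+(1-\ao)e^{-A}$, $y = \exp(-(1-\ao)A/2)$, $z = (1+\ao)/2$ all positive, is equivalent to the statement with $n=1$:
$$\ao + (1-\ao)e^{-A} \leq \max\!\Bigl(\exp(-(1-\ao)A/2),\ \tfrac{1+\ao}{2}\Bigr).$$
So it suffices to establish this.

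The first case is $A \geq \ln 2$. Then $e^{-A}\leq 1/2$, so
$$\ao + (1-\ao)e^{-A} \leq \ao + \tfrac{1-\ao}{2} = \tfrac{1+\ao}{2},$$
which handles this range.

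The second (and main) case is $0 < A \leq \ln 2$, where I would show
$$\ao + (1-\ao)e^{-A} \leq \exp(-(1-\ao)A/2).$$
Setting $t=1-\ao\in(0,1)$ and $h(A):=e^{-tA/2}-(1-t)-te^{-A}$, one checks $h(0)=0$ and
$$h'(A) = -\tfrac{t}{2}e^{-tA/2} + te^{-A} = t\bigl(e^{-A} - \tfrac{1}{2}e^{-tA/2}\bigr).$$
The unique zero of $h'$ on $(0,\infty)$ occurs where $e^{-A(1-t/2)} = 1/2$, i.e. at $A^* = \frac{2\ln 2}{2-t}$. For $t\in(0,1)$ this gives $A^* > \ln 2$, so $h'>0$ throughout $[0,\ln 2]$. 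Combined with $h(0)=0$, we get $h\geq 0$ on $[0,\ln 2]$, which is exactly the desired inequality.

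The only step with any content is the monotonicity analysis of $h$ on $[0,\ln 2]$; everything else (the reduction to $n=1$ and the easy case $A\geq \ln 2$) is essentially a one-line manipulation. I do not anticipate any obstacle.
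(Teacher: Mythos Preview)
Your proof is correct and follows essentially the same approach the paper hints at: reduce to $n=1$ and split into two cases according to the size of $A$. The only cosmetic difference is that the paper suggests splitting at $A=1$ while you split at $A=\ln 2$; your choice is the natural one since $A\ge\ln 2$ is exactly the condition $e^{-A}\le 1/2$ that makes the comparison with $(1+\ao)/2$ immediate, and it makes the monotonicity argument for $h$ on the complementary interval cleaner (since $A^*=\tfrac{2\ln 2}{2-t}>\ln 2$ is automatic, whereas showing $h(1)\ge 0$ for all $t\in(0,1)$ requires an extra small computation).
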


The following is a standard estimate with binomial coefficients.
\begin{lemma}\label{L:sumexpbound}
Let $\Ao>0$.  For every $\at >0$, for all $\delta>0$ sufficiently small (given $\Ao,\at $), we have that for all
sufficiently large $n$ (given $\Ao,\at ,\delta$), that
$$
\sum_{k=0}^{\lfloor \delta n \rfloor} \binom{n}{k} \Ao^k \leq \exp(\at  n).
$$
\end{lemma}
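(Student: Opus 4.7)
\noindent\textbf{Proof proposal for Lemma~\ref{L:sumexpbound}.}

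The plan is to show that, once $\delta$ is small enough relative to $\Ao$, the summand $\binom{n}{k}\Ao^k$ is geometrically increasing in $k$ over the range $0\leq k\leq \lfloor\delta n\rfloor$, so that the full sum is dominated (up to a constant factor) by its last term, which we then estimate with the standard inequality $\binom{n}{K}\leq (en/K)^K$.

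First I would compute the ratio of consecutive summands, $\binom{n}{k+1}\Ao^{k+1}/(\binom{n}{k}\Ao^k)=(n-k)\Ao/(k+1)$, and observe that this ratio is at least $2$ whenever $k\leq (n\Ao-2)/(\Ao+2)$. Consequently, if we impose $\delta<\Ao/(\Ao+2)$, then for all $0\leq k\leq K:=\lfloor\delta n\rfloor$ and all sufficiently large $n$ we have $\binom{n}{k}\Ao^k\leq 2^{-(K-k)}\binom{n}{K}\Ao^K$, yielding the telescoping bound
\[
\sum_{k=0}^{K}\binom{n}{k}\Ao^k \;\leq\; 2\binom{n}{K}\Ao^K \;\leq\; 2\left(\frac{en}{K}\right)^{K}\Ao^K \;\leq\; 2\left(\frac{e\Ao}{\delta}\right)^{\delta n}.
\]

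Taking logarithms, the exponent is $\log 2+\delta n\log(e\Ao/\delta)$. The key elementary observation is that $\delta\log(e\Ao/\delta)\to 0$ as $\delta\to 0^{+}$ for any fixed $\Ao>0$ (since $\delta\log(1/\delta)\to 0$ and $\delta\log(e\Ao)\to 0$). Therefore, given $\Ao>0$ and $\at>0$, we can choose $\delta>0$ small enough (depending on $\Ao$ and $\at$) so that simultaneously $\delta<\Ao/(\Ao+2)$ and $\delta\log(e\Ao/\delta)\leq \at/2$. Then for all $n$ large enough that $\log 2/n\leq \at/2$, we conclude
\[
\sum_{k=0}^{\lfloor\delta n\rfloor}\binom{n}{k}\Ao^k\;\leq\;\exp(\at n),
\]
as required.

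There is essentially no obstacle here; the only slightly delicate point is the dependence of the threshold $\delta$ on $\Ao$ through both the monotonicity condition $\delta<\Ao/(\Ao+2)$ and the smallness requirement $\delta\log(e\Ao/\delta)\leq \at/2$, but both are handled by choosing $\delta$ sufficiently small given $\Ao$ and $\at$, which is exactly the quantifier order allowed by the lemma statement.
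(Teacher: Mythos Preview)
Your proof is correct. The paper does not actually prove this lemma, labeling it ``a standard estimate with binomial coefficients,'' so there is no approach to compare against; your geometric-domination-plus-$\binom{n}{K}\leq(en/K)^K$ argument is a clean way to fill the gap. The only step worth a word of justification is the inequality $(en/K)^K\Ao^K\leq(e\Ao/\delta)^{\delta n}$: it holds because $x\mapsto x\log(en\Ao/x)$ is increasing on $(0,n\Ao)$, and your constraint $\delta<\Ao/(\Ao+2)<\Ao$ guarantees $K\leq\delta n<n\Ao$.
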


We put these lemma together to obtain the inequality below.
\begin{lemma}\label{L:dealwithonesum}
Let $\Aot>0$ and $0<\ao<1$.  
Then for positive $\ad$ sufficiently small (given $\Aot$ and $\ao$), the following holds.
Let $\Delta'> 2/(1-\ao)$.
For $n$ sufficiently large (given $\Aot,\ao,\ad,\Delta'$)
 we have
$$
\sum_{k=1}^{\lfloor \ad n \rfloor} \binom{n}{k} \Aot^k (\ao+(1-\ao)\exp( -(\Delta' \log n/n) k) )^n\leq  3 n^{-((1-\ao)\Delta'/2-1)}.
$$
\end{lemma}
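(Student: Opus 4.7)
Write $\beta:=(1-\ao)\Delta'/2-1$, which is positive by the hypothesis $\Delta'>2/(1-\ao)$, so the target bound reads $3n^{-\beta}$. My plan is to apply Lemma~\ref{L:twoways} to each summand with $A=(\Delta'\log n/n)k$, which yields
$(\ao+(1-\ao)\exp(-A))^n\le n^{-(1-\ao)\Delta' k/2}+((1+\ao)/2)^n$, and this splits the sum into a polynomial part
$S_1=\sum_k\binom{n}{k}\Aot^k n^{-(1-\ao)\Delta' k/2}$ and an exponential part
$S_2=((1+\ao)/2)^n\sum_k\binom{n}{k}\Aot^k$, which I will control separately. A sharper, direct analysis will be needed for the $k=1$ summand, since Lemma~\ref{L:twoways} is too lossy there.

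For $S_2$, I would choose $\at$ with $0<\at<\log(2/(1+\ao))$, so that $e^{\at}(1+\ao)/2<1$. By Lemma~\ref{L:sumexpbound}, provided $\ad$ is sufficiently small in terms of $\Aot,\at$ (hence in terms of $\Aot,\ao$), one has $\sum_{k=0}^{\lfloor \ad n\rfloor}\binom{n}{k}\Aot^k\le e^{\at n}$, so $S_2\le (e^{\at}(1+\ao)/2)^n$ decays exponentially and is much smaller than $n^{-\beta}$ for $n$ large.

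For $S_1$ restricted to $k\ge 2$, I would use $\binom{n}{k}\le n^k$ to obtain $\binom{n}{k}\Aot^k n^{-(1-\ao)\Delta' k/2}\le(\Aot n^{-\beta})^k$, and sum the geometric tail: once $n$ is large enough that $\Aot n^{-\beta}\le 1/2$, one has $\sum_{k\ge 2}(\Aot n^{-\beta})^k\le 2\Aot^2 n^{-2\beta}$, which is at most $n^{-\beta}$ as soon as $n^{\beta}\ge 2\Aot^2$.

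The main obstacle is the $k=1$ summand, which Lemma~\ref{L:twoways} only bounds by $\Aot n^{-\beta}$, possibly exceeding the target when $\Aot$ is large. To bypass this, I would analyze $k=1$ directly using $1-e^{-A}\ge A-A^2/2$ together with $(1-x)^n\le e^{-nx}$: this yields
$(\ao+(1-\ao)e^{-\Delta'\log n/n})^n\le n^{-(1-\ao)\Delta'}\exp\!\bigl((1-\ao)\Delta'^2(\log n)^2/(2n)\bigr)\le 2n^{-(1-\ao)\Delta'}$
for $n$ large. Since $1-(1-\ao)\Delta'=-1-2\beta$, the $k=1$ contribution is at most $2\Aot n^{-1-2\beta}$, which is $o(n^{-\beta})$. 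Adding the three pieces gives a total bounded by $3n^{-\beta}$ for $n$ sufficiently large in terms of $\Aot,\ao,\ad,\Delta'$, completing the argument.
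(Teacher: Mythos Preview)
Your argument is correct and follows essentially the paper's route: apply Lemma~\ref{L:twoways} to bound each summand and split into a polynomial part $S_1=\sum_k\binom{n}{k}\Aot^k n^{-(1-\ao)\Delta'k/2}$ and an exponential part $S_2$, then control $S_2$ via Lemma~\ref{L:sumexpbound}. The one substantive difference is your separate treatment of $k=1$. The paper handles $S_1$ by showing that the ratio of consecutive terms is at most $1/2$ and then bounding the whole sum by twice its first term; but that first term equals $n\Aot\, n^{-(1-\ao)\Delta'/2}=\Aot\, n^{-\beta}$, so the paper's argument actually delivers $2\Aot\, n^{-\beta}$ rather than the claimed $2n^{-\beta}$, which exceeds the target $3n^{-\beta}$ whenever $\Aot>1$. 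Your direct estimate at $k=1$ via $1-e^{-A}\ge A-A^2/2$ recovers the genuinely sharper bound $O(n^{-1-2\beta})$ for that summand and therefore does yield the constant $3$ as stated. (For the downstream use in Theorem~\ref{T:MomMat} the discrepancy is harmless, since an $\Aot$-dependent constant would simply be absorbed into $K_3$, but your version matches the lemma as written.)
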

\begin{proof}
For $n\geq 2$, we have that $(\Delta' \log n/n) k>0$.  So by Lemma~\ref{L:twoways} with $A=(\Delta' \log n/n) k$, we have
\begin{align*}
&\sum_{k=1}^{\lfloor \ad n \rfloor} \binom{n}{k} \Aot^k (\ao+(1-\ao)\exp( -(\Delta' \log n/n) k) )^n
\\ %\leq & \sum_{k=1}^{\lfloor \ad n \rfloor} \binom{n}{k} \Aot^k \max (\exp(-(1-\ao)An/2), \left((1+\ao)/2\right)^n)\\
\leq %& \sum_{k=1}^{\lfloor \ad n \rfloor} \binom{n}{k} \Aot^k \left(\exp(-(1-\ao)An/2) + \left((1+\ao)/2\right)^n \right)\\
%&= 
&\sum_{k=1}^{\lfloor \ad n \rfloor} \binom{n}{k} \Aot^k\exp(-(1-\ao)An/2) + \sum_{k=1}^{\lfloor \ad n \rfloor} \binom{n}{k} \Aot^k \left((1+\ao)/2\right)^n\\
=& \sum_{k=1}^{\lfloor \ad n \rfloor} \binom{n}{k} \Aot^k\exp(-(1-\ao)\Delta' (\log n) k /2) + \sum_{k=1}^{\lfloor \ad n \rfloor} \binom{n}{k} \Aot^k \left((1+\ao)/2\right)^n.
\end{align*}
We have
\begin{align*}
r \cdot \binom{n}{k} \Aot^k\exp(-(1-\ao)\Delta' (\log n) k /2)\geq \binom{n}{k+1} \Aot^{k+1}\exp(-(1-\ao)\Delta' \log n (k+1) /2)
\end{align*}
%if and only if
%\begin{align*}
%r \cdot \exp(-(1-\ao)\Delta' \log n k /2)\geq \frac{n-k}{k+1} \Aot\exp(-(1-\ao)\Delta' \log n (k+1) /2)
%\end{align*}
%if and only if
%\begin{align*}
%r \cdot \geq \frac{n-k}{k+1} \Aot\exp(-(1-\ao)\Delta' \log n  /2)
%\end{align*}
if and only if
\begin{align*}
r \geq \frac{n-k}{k+1} \frac{\Aot}{n^{(1-\ao)\Delta'/2} }.
\end{align*}
If $(1-\ao)\Delta'/2>1$, then for $n$ sufficiently large given $\Aot,\ao$ and $\Delta'$, we have
$$
\frac{n-k}{k+1} \frac{\Aot}{n^{(1-\ao)\Delta'/2} } \leq \frac{\Aot n}{2n^{(1-\ao)\Delta'/2} } 
= \frac{\Aot}{2n^{(1-\ao)\Delta'/2-1} } \leq
\frac{1}{2}.
$$
So, if $(1-\ao)\Delta'/2>1$, then for $n$ sufficiently large given $\Aot,\ao$ and $\Delta'$, we have
\begin{align*}
\frac{1}{2} \cdot \binom{n}{k} \Aot^k\exp(-(1-\ao)\Delta' (\log n) k /2)\geq \binom{n}{k+1} \Aot^{k+1}\exp(-(1-\ao)\Delta' \log n (k+1) /2).
\end{align*}
In particular, that implies
\begin{align*}
\sum_{k=1}^{\lfloor \ad n \rfloor} \binom{n}{k} \Aot^k\exp(-(1-\ao)\Delta' (\log n) k /2) \leq
% &n\exp(-(1-\ao)\Delta' \log n  /2)(1+\frac{1}{2}+\frac{1}{4}+\cdots)\\
%= &2 n\exp(-(1-\ao)\Delta' \log n  /2)\\
%= &2 \exp(-((1-\ao)\Delta'/2-1) \log n )\\
%=
 &2 n^{-((1-\ao)\Delta'/2-1)}.
\end{align*}

For the second sum, let $\at =- \log(1+\ao/2)/2>0$ and note that $\at  +\log((1+\ao/2))<0$.  
Then by Lemma~\ref{L:sumexpbound}, we have that for $\ad$ sufficiently small given $\Aot, \ao$ that
for all $n$ sufficiently large (given $\Aot,\ao,\ad$)
$$
\sum_{k=1}^{\lfloor \ad n \rfloor} \binom{n}{k} \Aot^k \left((1+\ao)/2\right)^n \leq \exp(\at  n + \log(1+\ao/2) n )=
\exp( \log(1+\ao/2) n/2 ).
$$
 For $n$ sufficiently large given $\ao,\Delta'$, we have
 $$
 \exp( \log(1+\ao/2) n/2 ) \leq n^{-((1-\ao)\Delta'/2-1)}.
 $$
The lemma follows.
\end{proof}

\section{Approximate transition probabilities: proof of Theorem~\ref{T:allerror}}\label{sec:Gdel}

 First, we prove the following simplified version of the theorem.  Using a coupling, we show that if the transition probabilities of two sequences of random variables are close, then the distribution of the random variables must be close.

\begin{lemma}\label{L:allgooderrorwalk}
 Let $w$ and $w'$ be sequences of random variables with $w_0=w'_0=0$, for each $i\geq 0$
\begin{align*}
&\P(w'_{i+1}=a|w'_i=b)=\P(w_{i+1}=a|w_i=b) +\delta(i,b,a) \\
&\textrm{ for all $a$ and $b$ such that $\P(w'_i=b)=\P(w_i=b)\ne0$ },
\end{align*}
and $w_i$ and $w'_{i}$ only take on countably many values.
Then for any $n\geq0$ and any set $A$ of values taken by $w_n$ or $w'_n$, we have
$$
|\P(w_n\in A)-\P(w'_n\in A)|\leq \frac{1}{2}\sum_{0\leq i\leq n-1} \sum_{b} \sum_c |\delta(i,b,c)| \P(w_i =b) \leq  \frac{1}{2} \sum_{0\leq i\leq n-1} \max_b \sum_c |\delta(i,b,c)|,
$$
where 
$b$ is summed over $\{ b\ |\ \P(w_{k}=b)\ne 0 \textrm{ and } \P(w'_{k}=b)\ne 0)\}$ and
$c$ is summed over $\{ c\ |\ \P(w_{i+1}=c)\ne 0 \textrm{ or } \P(w'_{i+1}=c)\ne 0)\}.$
\end{lemma}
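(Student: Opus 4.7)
The plan is to work with the $\ell^1$ distance $\eta_n := \sum_b |\P(w_n=b) - \P(w'_n=b)|$ between the two distributions at time $n$, establish a recursive bound on $\eta_n$ in terms of $\eta_{n-1}$ and the $\delta$'s, and then iterate from the base case $\eta_0 = 0$ (both chains start at $0$). The final conversion from $\eta_n$ to $|\P(w_n \in A) - \P(w'_n \in A)|$ is the classical fact $|\P(w_n\in A)-\P(w'_n\in A)| \le \tfrac12 \eta_n$, which follows because the signed measure $\P(w_n=\cdot)-\P(w'_n=\cdot)$ has zero total mass, so the positive and negative parts of the difference have equal total, each bounded by $\tfrac12\eta_n$.

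First I would expand the one-step evolution using the law of total probability, valid without any Markov assumption:
\begin{equation*}
\P(w_{n+1}=c) = \sum_b \P(w_n=b)\,\P(w_{n+1}=c\mid w_n=b),
\end{equation*}
with the convention that terms where $\P(w_n=b)=0$ are zero (so undefined conditionals never contribute), and similarly for $w'$. Then apply the telescoping identity $pq - p'q' = p(q-q') + (p-p')q'$ with $p=\P(w_n=b)$, $q=\P(w_{n+1}=c\mid w_n=b)$, $p'=\P(w'_n=b)$, $q'=\P(w'_{n+1}=c\mid w'_n=b)$:
\begin{align*}
\P(w_{n+1}=c)-\P(w'_{n+1}=c)
&= -\sum_b \P(w_n=b)\,\delta(n,b,c) \\
&\quad + \sum_b \bigl[\P(w_n=b)-\P(w'_n=b)\bigr]\,\P(w'_{n+1}=c\mid w'_n=b).
\end{align*}
Taking absolute values, summing over $c$, and using $\sum_c \P(w'_{n+1}=c\mid w'_n=b)=1$ yields
\begin{equation*}
\eta_{n+1} \;\le\; \sum_b \sum_c \P(w_n=b)\,|\delta(n,b,c)| \;+\; \eta_n.
\end{equation*}

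Iterating this inequality from $\eta_0=0$ gives
\begin{equation*}
\eta_n \;\le\; \sum_{i=0}^{n-1}\sum_b\sum_c \P(w_i=b)\,|\delta(i,b,c)|,
\end{equation*}
and the first claimed bound follows by the $\tfrac12\eta_n$ inequality above. The weaker second inequality follows from $\sum_b \P(w_i=b)\le 1$, so each $i$-slice is at most $\max_b\sum_c|\delta(i,b,c)|$.

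The only real subtlety, and the step I would check most carefully, is the bookkeeping for states $b$ with $\P(w_n=b)=0$ or $\P(w'_n=b)=0$, where one of the conditional transition probabilities is undefined. In the telescoping split, whenever $\P(w_n=b)=0$ the first term vanishes outright; whenever $\P(w'_n=b)=0$ the factor $\P(w'_{n+1}=c\mid w'_n=b)$ in the second term multiplies a perfectly defined $\P(w_n=b)-\P(w'_n=b)=\P(w_n=b)$, and one can use the crude bound $\P(w'_{n+1}=c\mid w'_n=b)\le 1$ before summing over $c$, which gives $\sum_c|\cdot|\le \sum_b \P(w_n=b)\cdot 1$ — but summing this over $c$ uses only the marginal identity $\sum_c \P(w'_{n+1}=c\mid w'_n=b)=1$, which I take as the definition in this degenerate case. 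With this interpretation the inductive step goes through cleanly, and the restriction of the $b$-sum in the stated bound to $\{b:\P(w_i=b)\P(w'_i=b)\ne 0\}$ is valid because the other values of $b$ contribute $0$ to $\eta_n$'s growth by the same argument.
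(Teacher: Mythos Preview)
Your approach is correct and genuinely different from the paper's. The paper constructs an explicit coupling: it realizes $w$ and $w'$ as Markov chains $x,x'$ driven by common uniform $[0,1]$ samples, with transition functions $\phi_{i,b},\phi'_{i,b}$ chosen so that $\mu\{\phi_{i,b}\ne\phi'_{i,b}\}=\tfrac12\sum_c|\delta(i,b,c)|$, and then bounds $\P(x_n\ne x'_n)$ by a union bound over the first time the coupled chains diverge. Your argument is purely analytic: you track the $\ell^1$ distance $\eta_i$ between the marginals and show it grows by at most $\sum_{b,c}\P(w_i=b)|\delta(i,b,c)|$ at each step, then invoke the standard $|\P(\cdot\in A)-\P'(\cdot\in A)|\le\tfrac12\eta_n$. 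Both yield the same bound; yours is more elementary (no coupling to construct), while the paper's coupling is more probabilistically explicit and would be reusable if one wanted to compare path functionals rather than just marginals.

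One small loose end in your edge-case discussion: when $\P(w'_n=b)=0$ but $\P(w_n=b)\ne 0$, you handle the second telescoped term $(\P(w_n=b)-0)\cdot q'$ but say nothing about the first term $\P(w_n=b)(q-q')$, which need not vanish for an arbitrary choice of the undefined $q'$. The cleanest fix is simply not to telescope for such $b$: the contribution of such a $b$ to $\P(w_{n+1}=c)-\P(w'_{n+1}=c)$ is $\P(w_n=b)\,\P(w_{n+1}=c\mid w_n=b)$, whose absolute values summed over $c$ give exactly $\P(w_n=b)=|\P(w_n=b)-\P(w'_n=b)|$, already inside $\eta_n$. (Equivalently, declare $q':=q$ in this degenerate case so the first term vanishes.) With that tweak the recursion $\eta_{n+1}\le\eta_n+\sum_{b\in B_1}\sum_c\P(w_n=b)|\delta(n,b,c)|$ holds with $B_1$ exactly the set of $b$ where both probabilities are nonzero, matching the restriction in the lemma's conclusion.
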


\begin{proof}
Let $S_i$ be the set of values taken on by $w_i$ and $w'_i$. Let $\mu$ be Lebesgue measure on the interval $[0,1]$.
For each $i\geq 0$ and $b\in S_i$, we can choose measurable functions $\phi_{i,b}: [0,1]\ra S_{i+1}$ and $\phi'_{i,b}: [0,1]\ra S_{i+1}$
such that for all $c\in S_i,$
$$
\P(w_{i+1}=c|w_i=b)= \mu(\phi_{i,b}^{-1}(c)) \quad \textrm{and} \quad \P(w'_{i+1}=c|w'_i=b)= \mu((\phi')_{i,b}^{-1}(c)),
$$
and $\mu(\{x\in[0,1] | \phi_{i,b}(x)\ne \phi'_{i,b}(x)\})=\frac{1}{2}\sum_{c\in S_{i+1}} |\delta(i,b,c)|.$  (If $b$ isn't a value taken by one of the variables, we will just take $\phi_{i,b}=\phi'_{i,b}$.)  Then we construct Markov chains $x_i$ and $x_i'$, with $x_0=x'_0=0$, and to determine $x_{i+1}$ and $x'_{i+1}$, we pick a random $x\in[0,1]$, and then let $x_{i+1}=\phi_{i,x_i}(x)$ and $x_{i+1}=\phi'_{i,x_i}(x).$
Note that  for all $i\geq0$ and all $b\in S_i$ and $c\in S_{i+1}$, we have
$$
\P(x_{i+1}=c|x_i=b)= \P(w_{i+1}=c|w_i=b) \quad \textrm{and} \quad \P(x'_{i+1}=c|x'_i=b)= \P(w'_{i+1}=c|w'_i=b).
$$
Thus, for all $n\geq 0$, we have $\Prob(x_i=a)=\Prob(w_i=a)$ and $\Prob(x'_i=a)=\Prob(w'_i=a).$
Note that $x_n$ and $x'_n$ are equal, unless for some $0\leq i \leq n-1$, we have that $x_i=x'_{i}$, but  $x_{i+1}\neq x'_{i+1},$
and in particular, $\phi_{i,x_i}(x)\ne \phi'_{i,x_i}(x).$  To see how likely this is for a given $i$, we sum over all $b\in S_i$, and have
\begin{align*}
\P(x_i=x'_{i} \land x_{i+1}\neq x'_{i+1} )\leq &\sum_{b\in S_i} \P(x_i=b)\P(x_i=x'_{i}=b \land x_{i+1}\neq x'_{i+1} |x_i=b)\\
\leq &\sum_{b\in S_i} \P(x_i=b)\P(\phi_{i,b}(x)\ne \phi'_{i,b}(x)  |x_i=b)\\
\leq &\sum_{b\in S_i}  \P(x_i=b) \frac{1}{2}\sum_{c\in S_{i+1}} |\delta(i,b,c)|.
\end{align*}
So 
$$\P(x_n\neq x_n') \leq \frac{1}{2} \sum_{i=0}^{n-1} \sum_{b\in S_i} \sum_{c\in S_{i+1}} \P(x_i=b)  |\delta(i,b,c)|,
$$
and from this the result follows, since 
$$|\P(w_n\in A)-\P(w'_n\in A)|=|\P(x_n\in A)-\P(x'_n\in A)|\leq \max(\P(x_n\in A \land x'_n\not\in A),\P(x_n\not\in A \land x'_n\in A)).$$
\end{proof}

Theorem~\ref{T:allerror} will follow by applying Lemma~\ref{L:allgooderrorwalk} to a modified sequence.

\begin{proof}[Proof of Theorem~\ref{T:allerror} ]
%We prove this by applying Lemma~\ref{L:allgooderrorwalk} to modified sequences.  
We insert half-steps $x_{i+1/2}=(x_i,g_i)$ and
$y_{i+1/2}=(y_i,1)$.  We compare the transitional probabilities as follows, first for $i$ integral:
$$
\P(y_{i+1/2}=(r,1)|y_i=r)- \P(x_{i+1/2}=(r,1)|x_i=r)=\P(g_i\ne 1 |x_i=r)
$$
and
$$
\P(y_{i+1/2}=(r,0)|y_i=r)- P(x_{i+1/2}=(r,1)|x_i=r)=-\P(g_i\ne 1 |x_i=r).
$$
Also for $i$ integral, we have
$$
\P(y_{i+1}=s|y_{i+1/2}=(r,1))- \P(x_{i+1}=s|x_{i+1/2}=(r,1))=\delta(i,r,s).
$$
Then applying Lemma~\ref{L:allgooderrorwalk}, we have
\begin{align*}
&|\P(x_n\in A)-\P(y_n\in A)|\\
&\leq \frac{1}{2}\sum_{i=0}^{n-1} \sum_{r} 2 \P(g_i\ne 1 |x_i=r) \P(x_i =r)  + 
\frac{1}{2}\sum_{i=0}^{n-1} \sum_{r} \sum_s |\delta(i,r,s)| \P(x_i =r)  \\
&= \sum_{i=0}^{n-1}  \P(g_i\ne 1)  + 
\frac{1}{2}\sum_{i=0}^{n-1} \sum_{r} \sum_s |\delta(i,r,s)| \P(
i =r)  .
\end{align*}
\end{proof}

\section{Inverse theorem: proof of Theorem \ref{theorem:ILO}}\label{section:ILO} 

%\melanie{We need to include more discussion here or above when we state Theorem \ref{theorem:ILO} of how exact the proof compares to what is already in the literature.}\hoi{I discussed briefly after Cor. C.4. I can also truncate the proof furthermore, but as it is short so I present in full for completeness.}

%Let us first recall a fundamental theorem of Freiman and Ruzsa \cite[Chapter 5]{TVbook}.

%\begin{theorem}\cite[Theorem 5.44]{TVbook}\label{theorem:Freiman}
%Let $\gamma$ be a positive constant and  $X$ a subset of $Z$ such that $|2X| \le \gamma |X|$. Then  there
%is  a  coset progression $H+Q$ of rank at most $r=O_\gamma(1) $ and cardinality $O_\gamma(|X|)$ such that $ X \subset H+Q$.
%\end{theorem}

%In our analysis, we will need to deal with an assumption of the form $|kX| \le k^\gamma|X|$, where $\gamma$ is a constant, but
%$k$ is not. (Typically, $k$ will be a positive power of $|X|$.)  To this end, we will be using the following results from \cite{TVjohn} (see also \cite{TVbook}). 

We first introduce a more general structure in finite additive groups.

\begin{definition} A set $P$ in a given finite additive group $G$ is a {\it coset progression} of rank $r$ if it can be expressed as in the form of
$$H+Q,$$
where $H$ is a finite subgroup of $G$, and $Q= \{a_0+ x_1a_1 + \dots +x_r a_r| M_i \le x_i \le M_i' \hbox{ and $x_i\in \Z$ for all } 1 \leq i \leq r\}$ is a GAP of rank $r$.  
\begin{itemize}
\item We say that $P$ with this presentation (i.e. choice of $H$, $a_i$, $M_i$, $M_i'$) is \emph{proper} if  the sums $h+ a_0+ x_1a_1 + \dots +x_r a_r, h\in H,  M_i \le x_i \le M_i' $ are all distinct. 
\vskip .05in
\item More generally, given a positive integer $t$ we say that $P$ is $t$-proper with this presentation if $H+tQ$ is proper.
%\melanie{Do we need the term $t$-proper outside of the appendix?}\hoi{I think it was a good idea to also include this in Section 6. Although we did not use this directly, at the end of the proof of Lemma 6.20 we used Cor. C.2.}
\vskip .05in
\item If $-M_i=M_i'$ for all $i\ge 1$ and $a_0=0$, then we say that $P$ with this presentation is {\it symmetric}.
\end{itemize}
\end{definition}

To prove Theorem \ref{theorem:ILO} we will make use of two results from \cite{TVjohn} by Tao and Vu. The first result allows one to pass from coset progressions to proper coset progressions without any substantial loss.

\begin{theorem}\cite[Corollary 1.18]{TVjohn}\label{thm:proper} There exists a positive integer $C_1$ such hat the following statement holds. Let $Q$ be a symmetric coset progression of rank $d\ge 0$ and let $t\ge 1$ be an integer. Then there exists a $t$-proper symmetric coset progression $P$ of rank at most $d$ such that we have
$$Q \subset P \subset Q_{{(C_1d)}^{3d/2}t}.$$ 
We also have the size bound
$$|Q| \le |P| \le t^d {(C_1d)}^{3d^2/2} |Q|.$$
\end{theorem}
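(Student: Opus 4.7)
The plan is to prove this by induction on the rank $d$, with the base case $d=0$ being trivial (take $P = Q = H$). For the inductive step, the core idea is to analyze the lattice of relations among the generators of $Q$ and apply tools from the geometry of numbers (Minkowski's second theorem) to decide whether $Q$ is already essentially $t$-proper, or whether we can reduce to a lower-rank coset progression with only a controlled blow-up of the dimensions.

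More precisely, write $Q = H + \Phi(B)$, where $B = \prod_{i=1}^d [M_i, M_i'] \subset \Z^d$ and $\Phi \colon \Z^d \to G/H$ sends $e_i \mapsto a_i + H$. Consider the relation lattice
\[
\Lambda = \ker(\Phi) = \{n \in \Z^d : n_1 a_1 + \cdots + n_d a_d \in H\}.
\]
By definition, $tQ$ is proper if and only if $(tB - tB) \cap \Lambda = \{0\}$. Applying Minkowski's second theorem to $\Lambda$ with respect to the symmetric convex body $B - B$ gives successive minima $\lambda_1 \leq \cdots \leq \lambda_d$ satisfying $\lambda_1 \cdots \lambda_d \leq 2^d / \mathrm{covol}(\Lambda)$ (up to a factor polynomial in $d$). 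If every $\lambda_i \geq 2t$, then $Q$ is already $t$-proper and we are done.

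Otherwise, let $k \geq 1$ be the number of $\lambda_i < 2t$, and let $v_1, \ldots, v_k \in \Lambda$ be linearly independent short vectors realizing these minima. Extending to a basis of $\Z^d$ adapted to the sublattice generated by the $v_j$'s, we can change generators from $a_1, \ldots, a_d$ to a new set $a_1', \ldots, a_d'$ (integral linear combinations of the $a_i$'s) with the property that some $k$ of the new generators, say $a_{d-k+1}', \ldots, a_d'$, satisfy $N_j a_j' \in H + \langle a_1', \ldots, a_{d-k}' \rangle$ for small integers $N_j = O(t)$. Folding the small-range generators into the subgroup $H$ (after enlarging $H$ to a finite subgroup $H'$ of size at most $|H| \cdot \prod N_j$) yields a new symmetric coset progression $Q'$ of rank $d - k < d$ with $Q \subset Q'$ and $Q' \subset Q_{C(d,t)}$ for some explicit $C(d,t) = O((C d)^{O(d)} t)$. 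Apply the induction hypothesis to $Q'$ to obtain the desired $t$-proper $P$.

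The main obstacle is the bookkeeping: one must carefully track (i) that symmetry is preserved under every basis change and subgroup enlargement, (ii) that the size of $H$ grows only by the stated factor at each reduction, and (iii) that after at most $d$ iterations the composite blow-up factor in dimensions is $(C_1 d)^{3d/2} t$ and in size is $t^d (C_1 d)^{3d^2/2}$. The $d^2$ in the exponent arises because each of the $\leq d$ reduction steps can multiply the dimensions by a factor of $(Cd)^{O(d)}$, and the $t^d$ factor reflects that each short direction absorbed into $H$ contributes a factor of $O(t)$ to $|P|/|Q|$. The symmetry is delicate but manageable: one works throughout with symmetric generators and chooses the basis of $\Lambda$ via the Mahler-compactness / successive-minima recipe so that the resulting new coset progression inherits a symmetric presentation.
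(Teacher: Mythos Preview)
This theorem is not proved in the paper at all: it is quoted verbatim as \cite[Corollary 1.18]{TVjohn} and used as a black box in the proof of Theorem~\ref{theorem:ILO}. There is therefore no ``paper's own proof'' to compare against.

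That said, your sketch is essentially the argument Tao and Vu use in \cite{TVjohn}. The reduction via the relation lattice $\Lambda = \ker\Phi$, the appeal to Minkowski's second theorem to locate short relations, and the dichotomy ``all successive minima large $\Rightarrow$ already $t$-proper, otherwise fold a short direction into the subgroup and drop the rank'' is exactly their strategy. The bookkeeping you flag (preserving symmetry under basis change, tracking the blow-up factors through at most $d$ iterations to get $(C_1 d)^{3d/2}t$ and $t^d(C_1 d)^{3d^2/2}$) is the genuine content of the proof, and your outline identifies the right places where the constants accumulate. If you want to turn this into a self-contained argument you would need to be more careful about the step ``extending to a basis of $\Z^d$ adapted to the sublattice generated by the $v_j$'s'': one needs a basis-reduction lemma (e.g.\ Mahler's theorem or an LLL-type statement) guaranteeing that the change of basis distorts the box $B$ by only a factor $(Cd)^{O(d)}$, and this is where the specific exponents come from. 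But as a high-level plan your proposal matches the source.
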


The second result, which is directly relevant to us, says that as long as $|kX|$ grows slowly compared to $|X|$, then it can be contained in a structure. This is a long-ranged version of the Freiman-Ruzsa theorem.

\begin{theorem}\label{thm:longrange}\cite[Theorem 1.21]{TVjohn} There exists a positive integer $C_2$ such hat the following statement holds: whenever $d,k\ge 1$ and $X \subset G$ is a non-empty finite set such that 
$$k^d|X| \ge 2^{2^{C_2 d^2 2^{6d}}} |kX|,$$
then there exists a proper symmetric coset progression $H+Q$ of rank $0\le d'\le d-1$ and size $|H+Q| \ge 2^{-2^{C_2 d^2 2^{6d}}} k^{d'}|X|$ and $x,x' \in G$ such that 

$$x + (H+Q) \subset kX \subset x' + 2^{2^{C_2 d^2 2^{6d}}} (H+Q).$$

\end{theorem}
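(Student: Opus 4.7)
The plan is to follow the standard template for long-range Freiman-Ruzsa theorems in arbitrary finite abelian groups, which combines Plünnecke-Ruzsa sumset inequalities with the Green-Ruzsa coset-progression theorem, and then calibrates the rank by examining polynomial growth of $|kX|$ in $k$. Throughout, all constants will be allowed to depend on $d$ in a tower-of-exponentials way, which is what the bound $2^{2^{C_2 d^2 2^{6d}}}$ is absorbing.

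First, from the hypothesis $k^d|X| \geq K |kX|$ with $K = 2^{2^{C_2 d^2 2^{6d}}}$, I would derive a doubling bound on $X$. By the standard Plünnecke-Ruzsa inequality applied along the dyadic scale, if $|X+X| \leq \sigma|X|$ then $|kX| \leq \sigma^{O(\log k)}|X|$, so controlling $|kX|/|X|$ by a polynomial in $k$ gives in turn a bound of the form $|X+X| \leq K_1|X|$, where $K_1$ depends only on $d$ and the implicit constant $C_2$. The constants here grow rapidly in $d$, which is why $K$ in the hypothesis must be a double-exponential in a polynomial in $d$.

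Second, with a controlled doubling constant, I would apply the Green-Ruzsa theorem (the analogue of Freiman's theorem valid in arbitrary abelian groups): there exists a proper symmetric coset progression $H+Q$ of rank $r = O_d(1)$ containing $X$, with $|H+Q| \leq e^{O_d(1)}|X|$. At this stage, Theorem~\ref{thm:proper} can be used to arrange that $H+Q$ is $t$-proper for any desired $t$ (paying the usual multiplicative price), which will be convenient for counting dilates. The rank bound $r \leq d-1$ is then forced by the hypothesis: if $r \geq d$, then because $H+Q$ is $k$-proper, $|k(H+Q)| \gtrsim k^{r}|H+Q| \gtrsim k^d |X|$, and since $kX \supset k\{x_0\} + k(\{0\} \cup (X-x_0)) $ can be related back to dilates of $H+Q$ (up to bounded multiplicative losses), this would violate the assumption $|kX| \leq k^d|X|/K$ once $K$ is large enough.

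Third, to obtain the conclusions $x + (H+Q) \subset kX$ and $kX \subset x' + 2^{2^{C_2 d^2 2^{6d}}}(H+Q)$, I would invoke the Ruzsa covering lemma in both directions: from $X \subset H+Q$ and the comparable sizes $|X| \asymp |H+Q|$, one covers $kX$ by a bounded number of translates of $H+Q$ (yielding the upper containment, at the cost of dilating the progression by a factor absorbed in the tower constant), and conversely a single translate $x+(H+Q)$ can be placed inside $kX$ by a pigeonhole / covering argument since $kX$ is large and $k(H+Q)$ is proper. The main obstacle will be the bookkeeping of constants to ensure they all fit inside the single tower $2^{2^{C_2 d^2 2^{6d}}}$ rather than blowing up, and in particular to force the rank down to exactly $d-1$ rather than some $O_d(1)$ bound; this is exactly where the $t$-proper refinement from Theorem~\ref{thm:proper} and a careful iteration of Plünnecke-Ruzsa are needed.
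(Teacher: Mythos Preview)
The paper does not prove this theorem: it is quoted verbatim as \cite[Theorem 1.21]{TVjohn} and used as a black box in the proof of Theorem~\ref{theorem:ILO}. So there is no ``paper's own proof'' to compare your proposal against.

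That said, your sketch has a genuine gap in the first step. You claim that from $|kX|\le k^d|X|/K$ one can deduce $|X+X|\le K_1|X|$ with $K_1$ depending only on $d$. This is false: the doubling constant of $X$ itself can depend on $k$. For instance, take $X$ a generic (Sidon-like) set of size $\sqrt{N}$ inside $\{0,\dots,N\}$; then $|2X|\asymp N$ so $|2X|/|X|\asymp\sqrt{N}$, while for $k\gtrsim \sqrt{N}$ one has $|kX|\asymp kN$ and hence $k^2|X|/|kX|\asymp k/\sqrt{N}$, which can be made as large as one likes by taking $k$ large. So the hypothesis holds with $d=2$ while $|2X|/|X|$ is unbounded. (Your auxiliary inequality ``$|2X|\le\sigma|X|\Rightarrow|kX|\le\sigma^{O(\log k)}|X|$'' is also not a Pl\"unnecke--Ruzsa statement; Pl\"unnecke gives exponent $k$, not $\log k$, and in any case the contrapositive does not yield what you need.)

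The fix, which is essentially how Tao and Vu proceed, is to pigeonhole on dyadic scales of the iterate, not on $X$ itself. From $|2^m X|/|X|\le (2^m)^d$ with $2^m\asymp k$ one gets $\sum_{j=0}^{m-1}\log_2(|2^{j+1}X|/|2^jX|)\le dm$, so some $Y:=2^jX$ satisfies $|2Y|\le 2^d|Y|$. One then applies Green--Ruzsa (together with Theorem~\ref{thm:proper}) to $Y$, obtaining a proper symmetric coset progression containing $Y$ of rank $O_d(1)$; since $X\subset Y\subset kX$ (after translating so $0\in X$), this captures both $X$ and $kX$ up to bounded dilates. The reduction of the rank to at most $d-1$ and the two-sided containment $x+(H+Q)\subset kX\subset x'+C_d(H+Q)$ then follow from the John-type comparison of Theorem~\ref{thm:proper} and the growth hypothesis, but only after one is working at the correct scale $Y$. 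Your steps two and three are in the right spirit once step one is repaired in this way.
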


Note that any GAP $ Q=\{a_0+ x_1a_1 + \dots +x_r a_r|  -N_i \le x_i \le N_i \hbox{ for all } 1 \leq i \leq r\}$ is contained in a symmetric GAP  $ Q'= \{x_0 a_0+ x_1a_1 + \dots +x_r a_r| -1\le x_0 \le 1,  -N_i \le x_i \le N_i \hbox{ for all } 1 \leq i \leq r\}$. Thus, by combining Theorem \ref{thm:longrange} with Theorem \ref{thm:proper} we obtain the following

\begin{corollary}\label{cor:longrange} Whenever $d,k\ge 1$ and $X \subset G$ is a non-empty finite set such that 
$$k^d|X| \ge 2^{2^{C_2 d^2 2^{6d}}} |kX|,$$
then there exists a 2-proper symmetric coset progression $H+P$ of rank $0\le d'\le d$ and size $|H+P| \le 2^d (C_1 d)^{3d^2/2} 2^{d2^{C_2 d^2 2^{6d}}} |kX|$ such that 
$$ kX \subset H+P.$$

\end{corollary}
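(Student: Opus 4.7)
The plan is to first invoke Theorem~\ref{thm:longrange} to enclose $kX$ in a translate of a dilate of a proper symmetric coset progression, then symmetrize the translated dilate (paying one extra generator), and finally apply Theorem~\ref{thm:proper} with $t=2$ to restore properness. Throughout let $L := 2^{C_2 d^2 2^{6d}}$ for brevity.

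First, under the hypothesis $k^d|X|\ge 2^L|kX|$, Theorem~\ref{thm:longrange} supplies a proper symmetric coset progression $H+Q$ of rank $0\le d''\le d-1$ and some $x,x'\in G$ satisfying
\[
x+(H+Q)\subset kX\subset x'+2^L(H+Q).
\]
In particular $|H+Q|\le|kX|$. Next I would rewrite $x'+2^L(H+Q)=H+(x'+2^L Q)$; this is a coset progression of rank $d''$, but the GAP factor $x'+2^L Q$ is now a translated (hence non-symmetric) GAP. Using the observation quoted just before the corollary, $x'+2^L Q$ embeds into a symmetric GAP $Q^\ast$ of rank $d''+1\le d$, obtained by appending the base point as an extra generator with $x_0\in\{-1,0,1\}$. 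Thus $kX\subset H+Q^\ast$, a symmetric coset progression of rank at most $d$.

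For the size of $H+Q^\ast$: if $N_1,\dots,N_{d''}$ are the dimensions of the symmetric GAP $Q$, then $|2^L Q|\le\prod_{i=1}^{d''}(2\cdot 2^L N_i+1)\le 2^{Ld''}\prod_i(2N_i+1)=2^{Ld''}|Q|$ (using $2^{-L}\le 1$ and properness of $Q$), so $|Q^\ast|\le 3\cdot 2^{Ld''}|Q|$, and hence $|H+Q^\ast|\le 3\cdot 2^{Ld}|H+Q|\le 3\cdot 2^{Ld}|kX|$.

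Finally, I would apply Theorem~\ref{thm:proper} with $t=2$ to $H+Q^\ast$ to produce a $2$-proper symmetric coset progression $H'+P$ of rank at most $d$ with $H+Q^\ast\subset H'+P$ (so $kX\subset H'+P$) and $|H'+P|\le 2^{d}(C_1 d)^{3d^2/2}|H+Q^\ast|\le 3\cdot 2^{d}(C_1d)^{3d^2/2}2^{Ld}|kX|$, which matches the stated bound (absorbing the constant factor $3$ into, e.g., the base of $2^{Ld}$, or noting $d\ge 1$ permits the minor loosening). The rank bound $0\le d'\le d$ and the inclusion $kX\subset H'+P$ are then immediate. I do not expect any genuine obstacle here; the only subtle bookkeeping point is the single-generator cost of symmetrizing a translated GAP, which must be carried out before invoking Theorem~\ref{thm:proper} so that the final properness guarantee is genuine.
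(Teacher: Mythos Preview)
Your proposal is correct and follows essentially the same approach as the paper: apply Theorem~\ref{thm:longrange} to obtain $kX\subset x'+2^L(H+Q)$, use the paper's own observation just preceding the corollary to symmetrize the translated GAP at the cost of one extra generator, and then invoke Theorem~\ref{thm:proper} with $t=2$ to produce the required $2$-proper symmetric coset progression. The paper's write-up is terser (one sentence), but the content is the same; the stray factor of $3$ you flagged is harmless for the downstream application and is absorbed without comment in the paper's stated bound.
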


%\hoi{I added a remark.} 

As for Theorem \ref{theorem:ILO}, the explicit constants in Corollary \ref{cor:longrange} will not be important. (Although a more careful analysis would allow $\a_n$ to be as small as $n^{-1+O(\frac{1}{\log \log \log n})}$ here, and hence in our main theorems. But in order to keep our presentation simple we will not work with this technical assumption, only staying with $\a_n \ge n^{-1+\eps}$.) Now we give a detailed proof of Theorem~\ref{theorem:ILO}. In general our method follows that of \cite{NgV}, but the details are more complicated because we have to obtain an actual inverse result in $\Z/p\Z$, as well as we need to take into account the almost sharp sparsity of the randomness. 

\begin{proof}(of Theorem \ref{theorem:ILO})
%\mq{Where in the proof do we use $p>C'n^C$?}\hoi{We need to assume this as the bound (in the first paragraph of "Large level sets") becomes vacuous. In any case, if $p\ll n^{C}$ then $\rho \ge 1/p \gg n^{-C}$ by pigeonholing.}
 First, for convenience we will pass to symmetric distributions. Let $\psi = \nu-\nu'$ be the symmetrization and let $\psi'$ be a lazy version of $\psi$ that 
\[
\P(\psi'=x) = \begin{cases}
\frac{1}{2}\P(\psi=x) \mbox{ if } x\neq 0\\ 
\P(\psi'=x)= \frac{1}{2}\P(\psi=x) + \frac{1}{2}, \mbox{ if } x=0.
\end{cases}
\]
Notice that $\psi'$ is symmetric as $\psi$ is symmetric. Similarly to \eqref{eqn:symmetrization}, we can check that $\max_x \P(\psi =x)  \le 1 -\alpha_n$, and so
$$ \sup_x \P(\psi'=x) \le 1 -\alpha_n/2.$$
We assume that $\P(\psi' =t_j) = \P(\psi' =-t_j) = \beta_j/2$ for $1\le j\le l$, and that $\P(\psi'=0) = \beta_0$, where $t_{j_1} \pm t_{j_2} \neq 0 \mod p$ for all $j_1 \neq j_2$.

Consider $a  \in \Z/p\Z$ where the maximum is attained, $\rho= \rho(w)=\P( S=a)$, here $S=\xi_1 w_1 +\dots + \xi_n w_n=a$. Using the standard notation $e_p(x)$ for $\exp(2\pi \sqrt{-1} x/p )$, we have
\begin{equation}\label{eqn:fourier1} \rho= \P(S=a)= \E \frac{1}{p} \sum_{x \in \Z/p\Z} e_p (x (S-a)) = \E \frac{1}{p} \sum_{x \in \Z/p\Z} e_p (\xi S) e_p(-x a) \le \frac{1}{p} \sum_{  x\in \Z/p\Z} |\E e_p (x S)|.
\end{equation}
By independence
\begin{equation*} \label{eqn:fourier2}  |\E e_p(x S)| = \prod_{i=1}^n |\E e_p(x \eta_i w_i)|\le \prod_{i=1}^n (\frac{1}{2}(|\E e_p(x \eta_i w_i)|^2+1)) = \prod_{i=1}^n |\E e_p( x \psi'  w_i)|  = \prod_{i=1}^n ( \beta_0 +\sum_{j=1}^l \beta_j  \cos \frac{2\pi x t_j w_i}{p}).  \end{equation*}
It follows that
\begin{equation} \label{eqn:fourier3} \rho  \le \frac{1}{p} |\sum_{x \in \Z/p\Z}  \prod_{i=1}^n (\beta_0 + \sum_{j=1}^l \beta_j  \cos \frac{2\pi x t_j w_i}{p}) |  \le \frac{1}{p} \sum_{x \in \Z/p\Z}  \prod_{i=1}^n (\beta_0 + \sum_{j=1}^l \beta_j  |\cos \frac{\pi x t_j w_i}{p} |)  , \end{equation}
where we made the change of variable $x \rightarrow x /2$ (in $\Z/p\Z$) and used the triangle inequality. 

By convexity, we have that   $|\sin  \pi z | \ge 2 \|z\|$ for any $z\in \R$, where $\|z\|:=\|z\|_{\R/\Z}$ is the distance of $z$ to the nearest integer. Thus,
\begin{equation} \label{eqn:fourier3-1}| \cos \frac{\pi x}{p}|  \le  1- \frac{1}{2} \sin^2 \frac{\pi x}{p}  \le 1 -2 \|\frac{x}{p} \|^2. \end{equation}
Hence for each $w_i$
$$\beta_0 + \sum_{j=1}^l \beta_j  |\cos \frac{\pi x t_j w_i}{p} | \le 1 - 2 \sum_{j=1}^l \beta_j \|\frac{x t_j w_i}{p} \|^2 \le \exp(-2 \sum_{j=1}^l \beta_j \|\frac{x t_j w_i}{p} \|^2).$$
Consequently, we obtain a key inequality
\begin{equation} \label{eqn:fourier4}
\rho \le  \frac{1}{p} \sum_{x \in \Z/p\Z}  \prod_{i=1}^n (\beta_0 + \sum_{j=1}^l \beta_j  |\cos \frac{\pi x t_j w_i}{p} |)   \le  \frac{1}{p} \sum_{x \in F_p} \exp( - 2 \sum_{i=1}^n \sum_{j=1}^l \beta_j \|\frac{x t_j w_i}{p} \|^2).
\end{equation}

{\it Large level sets.}  Now we consider the level sets $S_m:=\{\xi| \sum_{i=1}^n \sum_{j=1}^l \beta_j \|\frac{x t_j w_i}{p} \|^2  \le m  \} $.  We have
$$n^{-C} \le \rho  \le   \frac{1}{p} \sum_{x \in F_p} \exp( - 2 \sum_{i=1}^n \sum_{j=1}^l \beta_j \|\frac{x t_j w_i}{p} \|^2)  \le \frac{1}{p} + \frac{1}{p} \sum_{m \ge 1} \exp(-2(m-1)) |S_m| .$$
As $p$ is assumed to be much larger than $n^C$, and as $\sum_{m\ge 1} \exp(-m) < 1$, there must be  is a large level set $S_m$ such that
\begin{equation} \label{eqn:level1} |S_m| \exp(-m+2) \ge  \rho  p. \end{equation}
In fact, since $\rho \ge n^{-C}$, 
we can assume that $m=O(\log n)$.
\vskip .1in

{\it Double counting and the triangle inequality.} By  double counting we have
$$ \sum_{i=1}^n \sum_{x \in S_m} \sum_{j=1}^l \beta_j \|\frac{x t_j w_i}{p} \|^2 =   \sum_{x \in S_m} \sum_{i=1}^n  \sum_{j=1}^l \beta_j \|\frac{x t_j w_i}{p} \|^2  \le m |S_m |.$$
So, for most $v_i$
\begin{equation} \label{eqn:double1} \sum_{x \in S_m}\sum_{j=1}^l \beta_j \|\frac{x t_j w_i}{p} \|^2  \le \frac{m }{n'} |S_m| \end{equation} 
 for some large constant $C_0$.

By averaging, the set of $w_i$ satisfying \eqref{eqn:double1}
 has size at least $n-n'$.  We call this set $W'$. The set $\{w_1,\dots, w_n\}\backslash W'$ has size at most $n'$ and this is the
exceptional set that appears in Theorem \ref{theorem:ILO}. In the rest of the proof, we are going to show that $W'$ is a dense subset of a proper GAP.

Since $\|\cdot \|$ is a norm, by the triangle inequality, we have  for any $a  \in k W' $
\begin{equation} \label{eqn:double2} \sum_{x \in S_m} \sum_{j=1}^l \beta_j \|\frac{x t_j a}{p} \|^2  \le  k^2 \frac{m}{n'} |S_m| . \end{equation}
More generally, for any $k'  \le k $ and $a \in k'V'$
\begin{equation} \label{eqn:double3} \sum_{x \in S_m} \sum_{j=1}^l \beta_j \|\frac{x t_j a}{p} \|^2   \le  {k'}^2 \frac{m}{n'} |S_m| . \end{equation}

{\it Dual sets.} Set
$$\alpha_n' := \sum_{j=1}^l \beta_j =1-\beta_0.$$ 
Then by definition of $\xi$, we have 
$$\alpha_n' \ge \al_n/2 \ge n^{-1+\eps}.$$
Define  
$$S_m^{\ast} :=\{ a | \sum_{x\in S_m}  \sum_{j=1}^l \beta_j \|\frac{x t_j a}{p} \|^2  \le \frac{ \al_n' }{200} |S_m |\}$$ 
where the constant $200$ is ad hoc and any sufficiently large constant would do.
We have
\begin{equation} \label{eqn:dual1} |S_m^{\ast} |  \le \frac{8 p}{|S_m |} . \end{equation}
To see this, define $T_a :=\sum_{x \in S_m} \sum_{j=1}^l \beta_j \cos \frac{2\pi a t_j x}{p}$. Using the fact that $\cos 2\pi z \ge 1 -100 \|z\|^2 $ for any $z \in \R$, we have, for any $a \in S_m^{\ast}$
$$T_a \ge  \sum_{x \in S_m} (1- 100 \sum_{j=1}^l \beta_j \|\frac{x t_j a}{p} \|^2) \ge \frac{\al_n' }{2} |S_m |. $$
One the other hand, using the basic identity $\sum_{a \in \Z/p\Z} \cos \frac{2\pi  a z}{p} = p\I_{z=0} $, we have (taking into account that $t_{j_1} \neq t_{j_2} \mod p$)
$$\sum_{a \in \Z/p\Z} T_a^2 \le 2p |S_m| \sum_j \beta_j^2 \le  2p |S_m|  \max_{1\le j\le l} \beta_j (\sum_{j=1}^l \beta_j)  \le  2p |S_m|  {\al_n'}^2.$$
Equation \eqref{eqn:dual1} then follows from the last two estimates and averaging.

Next, for a properly chosen  constant $c_1$ we set
$$k := c_1 \sqrt{\frac{\al_n' n'}{m}}.$$ 
 By \eqref{eqn:double3} we have $\cup_{k'=1}^k  k' W'  \subset  S_m^{\ast} $. Next, set 
$$W^{''} := W' \cup \{0\}.$$ 
We have $k W^{''} \subset S_m^{\ast} \cup \{0\} $. This results in the critical bound
\begin{equation}  \label{eqn:dual2} |k W^{''} |  = O( \frac{p}{|S_m|}) = O(\rho^{-1} \exp(-m+2)) .  \end{equation}

{\it The long range inverse theorem.} We are now in the position to apply Corollary \ref{cor:longrange} with 
$X$ as the set of distinct elements of $W^{''}$. As $k = \Omega (\sqrt {\frac{\al_n' n'}{m}}) =\Omega(\sqrt{\frac{ \al_n' n'}{\log n}})$, 
\begin{equation}\label{eqn:LO:grow}
\rho^{-1} \le n^{C} \le k^{4C/\eps+1}.
\end{equation}

It follows from Corollary \ref{cor:longrange}  that $kX$ is a subset of
a 2-proper symmetric coset progression $H+P$ of rank $r=O_{C, \epsilon_0} (1)$ and cardinality
$$|H+P| \le O_{C,\eps} |kX|.$$
Now we use the special property of $\Z/p\Z$ that it has only trivial proper subgroup. As $|kX| =O(n^C)$, and as $p\gg n^C$, the only possibility that $|kX| \gg |H+P|$ is that $H=\{0\}$. 
Consequently,  $kX$ is now a subset of $P$, a 2-proper symmetric GAP of rank $r=O_{C, \epsilon_0} (1)$ and cardinality 
\begin{equation}\label{eqn:P:size}
|P| \le O_{C,\eps} |kX|.
\end{equation}
To this end, we apply the following dividing trick  from \cite[Lemma A.2]{NgV}.
\begin{lemma}\label{lemma:divide} 
Assume that $0 \in X$ and that $P=\{ \sum_{i=1}^r
x_ia_i: |x_i|\le N_i\}$ is a  2-proper  symmetric GAP that contains $kX$. Then 
$$X\subset \{\sum_{i=1}^r x_ia_i: |x_i|\le
2N_i/k\}.$$
\end{lemma}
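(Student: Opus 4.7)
The key observation is that for any $x\in X$, every integer multiple $jx$ with $0\le j\le k$ lies in $P$: since $0\in X$, we can write $jx=\underbrace{x+\cdots+x}_{j\text{ copies}}+\underbrace{0+\cdots+0}_{k-j\text{ copies}}\in kX\subset P$. By the 2-properness of $P$, each such $jx$ admits a \emph{unique} representation $jx=\sum_{i=1}^r y_i^{(j)}a_i$ with $|y_i^{(j)}|\le N_i$, since any two such representations would be tuples in $\{(u_i):|u_i|\le 2N_i\}$, on which the map $(u_i)\mapsto\sum u_i a_i$ is injective.

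The plan is to exploit this uniqueness via a simple additivity/telescoping argument. For each $0\le j<k$, taking the difference of the representations of $(j+1)x$ and $jx$ gives
\[
\sum_{i=1}^r\bigl(y_i^{(j+1)}-y_i^{(j)}\bigr)a_i \;=\; x \;=\; \sum_{i=1}^r y_i^{(1)}a_i.
\]
Both coefficient tuples satisfy $|y_i^{(j+1)}-y_i^{(j)}|\le 2N_i$ and $|y_i^{(1)}|\le N_i\le 2N_i$, so the 2-properness of $P$ forces $y_i^{(j+1)}-y_i^{(j)}=y_i^{(1)}$ for every $i$ and every $0\le j<k$. Combined with $y_i^{(0)}=0$, this yields $y_i^{(j)}=j\,y_i^{(1)}$ for all $0\le j\le k$.

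Specializing to $j=k$ gives $|k\,y_i^{(1)}|=|y_i^{(k)}|\le N_i$, whence $|y_i^{(1)}|\le N_i/k\le 2N_i/k$, and the representation $x=\sum_{i=1}^r y_i^{(1)}a_i$ witnesses the desired inclusion. The main (and only) point to keep in mind is that the obvious representation $(kx_i)_i$ of $kx$ obtained by multiplying a representation of $x$ by $k$ may have coordinates exceeding $2N_i$, so 2-properness cannot be applied to it directly; the trick is to work instead with the canonical 2-proper-unique representations of each $jx$ and to pass between consecutive $j$'s, whose differences always remain safely inside the admissible range $[-2N_i,2N_i]$.
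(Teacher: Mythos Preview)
Your proof is correct and in fact yields the slightly sharper bound $|y_i^{(1)}|\le N_i/k$ (you note this and then weaken to $2N_i/k$ to match the statement). The route, however, differs from the paper's. The paper first replaces $k$ by the largest power of $2$ not exceeding it (this is where the extra factor of $2$ in the statement comes from), and then argues a single halving step: if $x\in 2^{l-1}X\subset P$ has representation $x=\sum x_ia_i$ with $|x_i|\le N_i$, then $2x\in 2^lX\subset P$ also has a representation with coefficients in $[-N_i,N_i]$, and comparing with $2x=\sum(2x_i)a_i$ inside the $2$-proper box forces $|x_i|\le N_i/2$; iterating $l$ times gives the result. Your telescoping argument avoids both the reduction to powers of $2$ and the iteration: by tracking the unique representations of $0,x,2x,\dots,kx$ and observing that consecutive differences all represent the same element $x$ within the $2$-proper box, you get $y_i^{(j)}=j\,y_i^{(1)}$ in one stroke. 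This is a cleaner and more direct argument that also recovers the optimal constant.
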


\begin{proof}(of Lemma \ref{lemma:divide})
Without loss of generality, we  can assume that $k=2^l$. It is enough to
show that $2^{l-1}X \subset  \{\sum_{i=1}^r x_ia_i: |x_i|\le
N_i/2\}$. Since $0 \in X$, $2^{l-1}X \subset 2^lX\subset P$, any element
$x$ of $2^{l-1}X$ can be written as $x= \sum_{i=1}^r x_ia_i$, with
$|x_i|\le N_i$. Now, since $2x\in P \subset 2P$ and   $2P$ is proper GAP (as $P$ is 2-proper), we must
have $0\le |2x_i|\le N_i$.
\end{proof}
Combining \eqref{eqn:P:size} and Lemma \ref{lemma:divide} we thus obtain a GAP $Q$ that contains $X$ and
\begin{align*}
|Q| = O_{C, \epsilon_0}(k^{-r}|kX|)= O_{C, \epsilon_0} (k^{-r} |kW^{''}|)&= O_{C, \epsilon_0} \left( \rho^{-1} \exp(-m) (\sqrt {\frac{ \alpha_n' n'}{m}})^{-r}\right)\\
&= O_{C, \epsilon_0} (  \rho^{-1} (\alpha_n' n')^{-r} ),
\end{align*}
concluding the proof.
\end{proof}

\section{Sparse subspaces for the Laplacian case: proof of Lemmas~\ref{L:supersparse:L} and ~\ref{lemma:sparse:moderate:L}}\label{appendix:sparse}
Our methods are almost identical to those of Lemma~\ref{L:supersparse} and ~\ref{lemma:sparse:moderate}, with a few minor exceptions.

\begin{proof}(of Lemma \ref{L:supersparse:L})
% \Hoi{the only difference here is that I have to choose $i,j \notin \sigma$ so that $\CT_i|_\sigma,\CT_j|_\sigma$ behave %like the iid case.}
Argue similarly as in the proof of Lemma~\ref{L:supersparse}, it suffices to show that the following holds with probability at least $1- e^{-\co \beta_n  n/2}$. For any $1\le t \le 144  \beta_n^{-1}$, and any $\sigma \in \binom{[n]}{t}$, there are at least two columns $X_i,X_{j}$ with $i, j \notin \sigma$ whose restriction $(X_{j}-X_i)|_\sigma$ has exactly one non-zero entry. 

For a given $\sigma$ of size $t$, assume that $\{i_1,\dots, j_{n_0-t}\} \subset [n_0]\bs \sigma$. For $i\in \{1, 3,\dots, 2\lfloor (n_0-t)/2 \rfloor -1\}$, consider the vectors $Y_i = X_{j_{i+1}}|_\sigma-X_{j_i}|_\sigma$. Note that as $j_i, j_{i+1} \notin \sigma$ and $X_{j_i}\in \CT_{j_i}$ and $X_{j_{i+1}}\in \CT_{j_{i+1}}$, the entries of $Y_i$ are iid copies of the symmetrized random variable $\psi =\xi-\xi'$, where $\xi',\xi$ are independent and have distribution $\xi_n$. Recall that with $1-\beta_n'=\P(\psi=0)$, then $\beta_n \le \beta_n'  \le 2\beta_n$. Now let $p_\sigma$ be the probability that all $Y_i|_\sigma, i\in \{1, 3,\dots, 2\lfloor (n_0-t)/2 \rfloor -1 \}$ fail to have exactly one non-zero entry (in $\Z$), then by independence of the columns and of the entries
$$p_\sigma = (1- t  \beta_n' (1- \beta_n')^{t-1})^{\lfloor (n_0-t)/2 \rfloor} \le (1 - t  \beta_n' e^{- (t-1) \beta_n'})^{\lfloor (n_0-t)/2 \rfloor} \le e^{-n t \beta_n' e^{-(t-1)  \beta_n'}/4},$$
where we used $n_0-t > n/2$ because $\alpha_n\geq 6 \log n/n$ and $t \le 144  \beta_n^{-1}$. The rest of the proof is similar to that of Lemma~\ref{L:supersparse}.
\end{proof}

\begin{proof}(of Lemma \ref{lemma:sparse:moderate:L}) 
%When $p=0$, we treat $W_{n_0}/p$ as a subspace of $\R^n$.
%We start the argument exactly as in Lemma~\ref{lem:sparse}. \melanie{not exactly true, but should compare}
For $\sigma \subset [n]$ with $144\beta_n^{-1}\le t=|\sigma| \le \cz n$, consider the event that 
 $W_{n_0}/p$ is normal to a vector $w$ with $\supp(w) =\sigma$ but not with any other vector of smaller support size.
With a loss of a multiplicative factor $\binom{n}{t}$ in probability, we assume that $\sigma=\{1,\dots,t\}$. 
Consider the submatrix $L_{t \times n_0}$ of $L_{n\times n}$ consisting of the first $t$ rows and first $n_0$ columns of $L_{n\times n_0}$. Since the restriction $w|_\sigma$ of $w$ to the first $t$ coordinates is normal to all the columns of 
$L_{t \times n_0}/p$, the matrix $L_{t \times n_0}/p$ has rank $t-1$ (if $p=0$, we mean rank over $\R$).  We assume that the column space of $L_{t \times n_0}/p$ is spanned the columns $\{X_{i_1},\dots, X_{i_{t-1}}\}$ for some $\{i_1,\dots, i_{t-1}\} \subset [n_0]$.  

Note that for $p>2n^T$, the value of $\xi_n$ is determined by its value mod $p$, and so $\beta_n=1- \max_{x\in \Z/p\Z} \P(\xi_n/p=x)$.   If we fix $X_{i_1}|_{\sigma},\dots,X_{i_{t-1}}|_{\sigma}$ such that the subspace $W_{i_1,\dots,i_{t-1}}|_\sigma/p$ generated by these vectors has a normal vector with all $t$ coordinates non-zero, then  
by Theorem \ref{theorem:LO} , the probability that $X_i|_\sigma/p\in W_{i_1,\dots,i_{t-1}}|_\sigma/p$ for all $i \in [n_0]\bs (\sigma \cup \{i_1,\dots, i_{t-1}\})$ (as for these vectors the entries of $\CT_i$ restricted to $\sigma$ are independent) is at most% \Hoi{here too the only difference is that I have to choose $i \notin \sigma$ so that $\CT_i|_\sigma$ behaves like the iid case.}
$$(\frac{1}{p} + \frac{2}{\sqrt{ \beta_n  t}})^{n_0-2t-1}\le ( \frac{1}{p} + \frac{2}{\sqrt{ \beta_n t}})^{(1-3c_0) n} \le  (\frac{2}{3})^{n/2} $$
as long as $\Cz$ is sufficiently large and $\cz$ is sufficiently small. Thus the total probability of the event in the lemma is at most
$$ \sum_{144  \beta_n^{-1} \le t \le c_0 n}\binom{n}{t} \binom{n_0}{t-1} (\frac{2}{3})^{n/2} \le (\frac{2}{3})^{n/4}.$$
\end{proof}

% \melanie{I commented out some references that we aren't currently citing in the paper}\Hoi{Ok, thanks.}

\end{document}